\documentclass{article}
\usepackage{graphicx} 

\usepackage{amsmath, mathrsfs, amssymb, amsthm, fullpage, graphicx, mathtools, bbm, hyperref}
\usepackage[ruled,vlined]{algorithm2e}

\newtheorem{theorem}{Theorem}
\newtheorem{lemma}{Lemma}

\newtheorem*{example*}{Example}

\makeatletter
\def\BState{\State\hskip-\ALG@thistlm}
\makeatother



\newtheorem{corollary}{Corollary}
  {
      \theoremstyle{plain}
      
  }

\newtheorem{conjecture}{Conjecture}

\numberwithin{equation}{section}


\usepackage{caption}
\captionsetup[figure]{font=footnotesize}
\captionsetup{labelfont=bf}

\usepackage{chngcntr}

\newcommand\independent{\protect\mathpalette{\protect\independenT}{\perp}}
\def\independenT#1#2{\mathrel{\rlap{$#1#2$}\mkern2mu{#1#2}}}

\newcommand{\mme}[0]{\mathbb{E}}
\newcommand{\mmp}[0]{\mathbb{P}}
\newcommand{\mmr}[0]{\mathbb{R}}
\newcommand{\mmn}[0]{\mathbb{N}}

\newcommand{\bone}[0]{\mathbbm{1}}



\DeclarePairedDelimiterX{\inp}[2]{\langle}{\rangle}{#1, #2}


\usepackage[nameinlink, noabbrev]{cleveref}

\usepackage{multirow,float}

\usepackage{relsize}

\usepackage{accents}

\title{Characterizing the Training-Conditional Coverage of Full Conformal Inference in High Dimensions}
\author{Isaac Gibbs\footnote{Code for reproducing the experiments in this article is available at \url{https://github.com/isgibbs/high-dim-fc}.}\ \footnote{Department of Statistics, University of California, Berkeley.  Email: \href{mailto:igibbs@berkeley.edu}{igibbs@berkeley.edu}} \and Emmanuel J. Cand\`{e}s\footnote{Departments of Mathematics and Statistics, Stanford University.}}
 \date{}

\usepackage{xcolor}
\usepackage{natbib}

\allowdisplaybreaks

\begin{document}

\maketitle

 \begin{abstract}
We study the coverage properties of full conformal regression in the proportional asymptotic regime where the ratio of the dimension and the sample size converges to a constant. In this setting, existing theory tells us only that full conformal inference is unbiased, in the sense that its average coverage lies at the desired level when marginalized over both the new test point and the training data. Considerably less is known about the behaviour of these methods conditional on the training set. As a result, the exact benefits of full conformal inference over much simpler alternative methods is unclear. This paper investigates the behaviour of full conformal inference and natural uncorrected alternatives for a broad class of $L_2$-regularized linear regression models. We show that in the proportional asymptotic regime the training-conditional coverage of full conformal inference concentrates at the target value. On the other hand, simple alternatives that directly compare test and training residuals realize constant undercoverage bias. While these results demonstrate the necessity of full conformal in correcting for high-dimensional overfitting, we also show that this same methodology is redundant for the related task of tuning the regularization level. In particular, we show that full conformal inference still yields asymptotically valid coverage when the regularization level is selected using only the training set, without consideration of the test point. Simulations show that our asymptotic approximations are accurate in finite samples and can be readily extended to other popular full conformal variants, such as full conformal quantile regression and the LASSO, that do not directly meet our assumptions.
 \end{abstract}

\section{Introduction}

We consider the generic problem of forming prediction sets around the outputs of a point-prediction model. Let $\{(X_i,Y_i)\}_{i=1}^{n+1}$ be an i.i.d. dataset of covariate-response pairs where $\{(X_i,Y_i)\}_{i=1}^{n}$ and $X_{n+1}$ denote the training data and test covariates (respectively) which we must use to predict the unobserved outcome $Y_{n+1}$. Let $\mathcal{M}$ denote a model-fitting procedure which we would like to use to forecast $Y_{n+1}$. Our goal is to quantify the accuracy of models fit using $\mathcal{M}$ and apply this information to construct a prediction set $\hat{C}$ such that $\mmp(Y_{n+1} \in \hat{C}) \geq 1-\alpha$ for some target level $\alpha \in (0,1)$.

The key difficulty in this problem comes from the fact that we have access to only a single dataset, which must be used to both fit the predictive model and measure its accuracy. More precisely, suppose we are given a scoring function $S(\mathcal{M}(\mathcal{D}),X,Y)$ that measures the accuracy of model $\mathcal{M}$ fit on dataset $\mathcal{D}$ at the point $(X,Y)$. For instance, if $\mathcal{M}$ is a regression model that outputs a point-predictor, $\hat{\mu}(X)$ of the conditional mean of $Y$ given $X$ we may set $S(\mathcal{M}(D),X,Y) = |Y - \hat{\mu}(X)|$ to be the absolute residuals. Perhaps the most straightforward solution to our problem would be to train the model on the observed data, $\mathcal{D}_{\text{train}} = \{(X_i,Y_i)\}_{i=1}^n$ and directly compare the test score $S(\mathcal{M}(\mathcal{D}_{\text{train}}),X_{n+1},Y_{n+1})$ to the training scores $\{S(\mathcal{M}(\mathcal{D}_{\text{train}}),X_{i},Y_i)\}_{i=1}^n$. If $\mathcal{M}$ is a simple model (e.g. linear regression on a small number of covariates) this may be a reasonable approach. However, for even moderately complex models it is well-known that overfitting yields incomparable training and test errors. 

To address this issue, full conformal inference proposes the clever idea of including the test point in the model fit. The idea is that by fitting with all $n+1$ data points the training and test data will be treated identically and thus their scores will be directly comparable. Of course, the true value of $Y_{n+1}$ is unknown and thus cannot itself be used to construct the model. So, rather than fitting with this value exactly, full conformal inference imputes guesses in its place. More precisely, let $\mathcal{D}_y := \{(X_i,Y_i)\}_{i=1}^n \cup \{(X_{n+1},y)\}$ denote an augmented dataset containing both the training data and an imputed guess for the test point. Let $S_{n+1}^y :=  S(\mathcal{M}(\mathcal{D}_y),X_{n+1},y)$ and $S_j^y :=  S(\mathcal{M}(\mathcal{D}_y),X_j,Y_j)$, for $j \in \{1,\dots,n\}$ denote the  test and training scores and $\text{Quantile}(1-\alpha,P)$ denote the $1-\alpha$ quantile of distribution $P$. Then, full conformal inference  outputs the prediction set,
\[
\hat{C}_{\text{full}} := \left\{y : S_{n+1}^y \leq \text{Quantile}\left(1-\alpha, \frac{1}{n+1} \sum_{i=1}^{n+1} \delta_{S_i^y}  \right) \right\}.
\]
This single idea of fitting with the imputed test point turns out to be quite powerful and in the pioneering work of \citet{VovkBook} full conformal inference was shown to satisfy the marginal coverage guarantee $\mmp(Y_{n+1} \in \hat{C}_{\text{full}}) \geq 1-\alpha$ (see also \cite{Gammerman1998}, \cite{Saunders1999}, and \cite{Vovk1999} for earlier proposals of fitting with the unknown test point in a different context).

Since its introduction, the study of full conformal inference has largely focused on computational issues. In all cases, the goal is to avoid the naive computation of the prediction set in which one must fit the model for each unique value of $y \in \mmr$, or more practically, for all $y$ in some dense grid. This includes work on efficient full conformal algorithms for linear and ridge regression (\cite{Nouretdinov2001}), $k$-nearest neighbours regression (\cite{VovkBook}), the LASSO (\cite{Hebiri1010, Chen2016, Lei2019}), and quantile regression (\cite{CondConf}), among other methods (\cite{Cherubin2021, Guha2023, Papadopoulos2023, Diptesh2024}), as well as a number procedures for computing  approximations to the full conformal prediction set (\cite{Chen2018, Ndiaye2019, Fong2021, Ndiaye2022a, Ndiaye2022b, Abad2023}).

On the other hand, comparatively little has been said about the coverage properties of full conformal inference beyond the initial work of \citet{VovkBook}. Most critically, the aforementioned coverage guarantee, $\mmp(Y_{n+1} \in \hat{C}_{\text{full}}) \geq 1-\alpha$, is a marginal result that averages over both the new test point and the training data. In essence, it says that full conformal inference is unbiased. This contrasts sharply with the requirements of many practical settings in which one is often given a single training dataset from which they must construct predictions for many test points. For example, a diagnostic model fit on historical patient records is typically deployed to predict the outcomes of many new individuals. Similarly, an insurance company may set the rates of many customers based on a single model of claim rates and risk factors built on outcomes from previous years. In these scenarios, the performance of conformal inference is dictated heavily by the variability of the training-conditional coverage, $\mmp(Y_{n+1} \in \hat{C}_{\text{full}} | \{(X_i,Y_i)\}_{i=1}^n)$ around the target level. 

Here, the status of full conformal inference is quite unclear. Perhaps the first answer that comes to mind is to argue that when $n$ is large the fitted model, $\mathcal{M}(\{(X_i,Y_i)\}_{i=1}^n \cup \{(X_{n+1},y)\})$ will be close close to some fixed limit, $\mathcal{M}^*$ and thus, the full conformal set will behave as if it were constructed directly with $\mathcal{M}^*$. For instance, in the regresssion setting where $S(M(\mathcal{D}),X,Y) = |Y - \hat{\mu}(X)|$ we may argue that $\hat{\mu}$ is converging to some limit $\mu^*$ and thus,
\[
\hat{C}_{\text{full}} \approx \{y : |y - \mu^*(X_{n+1})| \leq \text{Quantile}(1-\alpha,|Y_i - \mu^*(X_i)|)\},
\]
where $\text{Quantile}(1-\alpha,|Y_i - \mu^*(X_i)|)$ denotes the $1-\alpha$ quantile of the random variable $|Y_i - \mu^*(X_i)|$. However, if this argument holds it becomes quite unclear why full conformal inference should be preferred to the much simpler approach of fitting the model using only the training set. Indeed, any argument predicated on the fact that the fitted model converges will apply equally well to both full conformal inference and this simpler approach. Thus, in light of the fact that even the most efficient implementations of full conformal inference incur substantial computational costs, this argument is extremely unsatisfactory.

Results on more complex, non-converging models are sparse. In recent work, \citet{Bian2023} give negative results demonstrating the existence of a model fitting procedure for which the training-conditional coverage of full conformal inference can deviate arbitrarily from the target level. However, this model is quite unnatural and does not correspond to any common implementation of full conformal inference. Returning to the regression setting, \citet{Liang2023} demonstrate that if the algorithm for estimating $\hat{\mu}(\cdot)$ is stable (in a sense that we will formally define below), then the training-conditional coverage of a modified version of full conformal inference can be shown to satisfy a conservative coverage guarantee. However, it is not immediately clear what model fitting procedures satisfy their stability assumption, and, most critically, how their results pertain to the performance of full conformal inference relative to more simple methods.

In this article, we show that full conformal inference offers substantial benefits in high-dimensional regression. We study the proportional asymptotic regime, in which the ratio of the covariate dimension and the sample size converges to a constant. For a large class of $L_2$-regularized regressions, we demonstrate that the training-conditional coverage of full conformal inference converges in probability to the target level, while the uncorrected method that does not include the new test point in the fit exhibits constant order bias. Simulations show that our results are tight in finite samples and accurately predict model performance even in what might typically be considered low dimensional settings (e.g.~$n=200$, $d=20$). Additionally, through a combination of heuristic calculations and empirical results, we demonstrate that our results can be extended to other popular methods, such as the full conformal LASSO and full conformal quantile regression, that do not directly meet our assumptions. 

Interestingly, we find that while the full conformal correction is critical in fitting the regression, it is not universally necessary. In particular, we show that the regularization level in our model can be freely fit using only the training data without consideration of the test point. This contrasts with a variety of prior work in conformal inference in which a heavy emphasis is placed on ensuring complete symmetry of the model-fitting procedure. On a practical level, this result is particularly interesting due to the computational complexity of accounting for the test point in model selection. Indeed, while a plethora of methods are available for computing the full conformal prediction set at fixed regularization, existing methods for hyperparameter tuning are much more limited and involve additional data splitting (\cite{Yang2024, Liang2024}). While our results are far from universal, they show that perfect symmetry can be redundant and significant computational savings can be realized by implementing the full conformal correction in only the most critical aspects of the model-fitting pipeline. 

\section{Full conformal inference for stable regression models}

Before presenting our main results, it is useful to first give a more detailed account of the work of \citet{Liang2023}. As discussed above, they consider a regression setting with score $S(\mathcal{M}(D),X,Y) := |Y - \hat{\mu}(X)|$ defined to be the absolute residuals of the fitted model $\hat{\mu}(\cdot)$. Let $\hat{\mu}_n(\cdot)$ and $\hat{\mu}_{n+1}(\cdot)$ denote the models fit on i.i.d. datasets $\{(X_i,Y_i)\}_{i=1}^n$ and $\{(X_i,Y_i)\}_{i=1}^{n+1}$. We say that the model is in-sample stable if,
\begin{equation}\label{eq:stab_condition}
\mme[|\hat{\mu}_n(X_1) - \hat{\mu}_{n+1}(X_1)|] = o(1).
\end{equation}
Here the phrase ``in-sample" denotes that we are evaluating the stability of the model predictions at one of the training points. 

To construct a prediction set from this assumption, let $\hat{\mu}_{n+1}^y(\cdot)$ denote the model fit on the imputed data set $\{(X_i,Y_i)\}_{i=1}^n \cup \{(X_{n+1},y)\}$ and $\{\rho_n\}_{n=1}^{\infty}$ denote a positive non-increasing sequence such that $\mme[|\hat{\mu}_n(X_1) - \hat{\mu}_{n+1}(X_1)|]/\rho_n \to 0$. As above, let $S_{n+1}^y := |y - \hat{\mu}^y_{n+1}(X_{n+1})|$ and $S_{i}^y := |Y_i - \hat{\mu}^y_{n+1}(X_{i})|$, for $i \in \{1,\dots,n\}$ denote the test and training scores. Then, \citet{Liang2023} define the $\rho_n$-conservative full conformal prediction set as
\begin{equation}\label{eq:adjusted_prediction_set}
\hat{C}_{\text{full}, \text{cons.}}^{\rho_n} := \left\{y : S_{n+1}^y \leq \text{Quantile}\left(1-\alpha, \frac{1}{n+1} \sum_{i=1}^{n+1} \delta_{S_i^y}   \right) + \rho_n \right\},
\end{equation}
and they obtain the following coverage guarantee.

\begin{theorem}\label{thm:stab_cov}[Theorem 3.7 of \cite{Liang2023}]
Assume that $\{(X_i,Y_i)\}_{i=1}^n$ are i.i.d.~and $\hat{\mu}(\cdot)$ is in-sample stable. Let $\{\rho_n\}_{n=1}^{\infty}$ be defined as above. Then,
\[
\mmp(Y_{n+1} \in \hat{C}_{\textup{full}, \textup{cons.}}^{\rho_n} \mid \{(X_i,Y_i)\}_{i=1}^n) \geq 1-\alpha - o_{\mmp}(1).
\]
\end{theorem}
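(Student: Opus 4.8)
The plan is to reduce the statement to a concrete inequality about residuals, recover the marginal guarantee of \citet{VovkBook} in a form that survives conditioning, and then trade it for a training‑conditional bound using in‑sample stability. \textbf{First, the reduction.} Plug $y = Y_{n+1}$ into the definition of $\hat{C}_{\text{full}, \text{cons.}}^{\rho_n}$. Fitting $\mathcal{M}$ on $\mathcal{D}_{Y_{n+1}} = \{(X_i,Y_i)\}_{i=1}^{n+1}$ with the true label returns the honest full‑sample estimator $\hat\mu_{n+1}$, so the scores are $S_i^{Y_{n+1}} = R_i \defeq |Y_i - \hat\mu_{n+1}(X_i)|$ for $i = 1,\dots,n+1$ and the coverage event is exactly
\[
E \defeq \Bigl\{ R_{n+1} \le \text{Quantile}\Bigl(1-\alpha,\tfrac{1}{n+1}\sum_{i=1}^{n+1}\delta_{R_i}\Bigr) + \rho_n \Bigr\}.
\]
Two facts follow at once. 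First, \emph{deterministically} $\tfrac{1}{n+1}\sum_{k=1}^{n+1}\mathbf{1}\{R_k \le \text{Quantile}(1-\alpha,\tfrac{1}{n+1}\sum_{i}\delta_{R_i})+\rho_n\} \ge 1-\alpha$, straight from the definition of the empirical quantile. Second, since $(Z_1,\dots,Z_{n+1})$ is i.i.d.\ and $\hat\mu_{n+1}$ is permutation symmetric, $(R_1,\dots,R_{n+1})$ is exchangeable, giving $\mmp(E) \ge 1-\alpha$ — the usual marginal guarantee, already at $\rho_n = 0$. Writing $g_n \defeq \mmp(E^c \mid \{(X_i,Y_i)\}_{i=1}^n)$ for the conditional miscoverage, the marginal bound says $\mme[g_n] \le \alpha$, and it remains to upgrade this to $g_n \le \alpha + o_{\mmp}(1)$.

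\textbf{The transfer step.} I would show $g_n = \mme[g_n] + o_{\mmp}(1)$ by arguing that $g_n$ is insensitive to replacing any one of the $n$ training points by an independent copy, then invoking an Efron--Stein variance bound followed by Chebyshev. The feature of \emph{full} conformal that makes this go through is that the augmented estimator $\hat\mu_{n+1}^y$ is trained on $\mathcal{D}_n \cup \{(X_{n+1},y)\}$, so $X_{n+1}$ is itself an in‑sample input; hence in‑sample stability controls the perturbation not only of the training scores $S_i^y$ ($i \le n$) but also of the test score $S_{n+1}^y = |y - \hat\mu_{n+1}^y(X_{n+1})|$, and at an $L^1$ rate that is $o(\rho_n)$ by the hypothesis $\mme[|\hat\mu_n(X_1)-\hat\mu_{n+1}(X_1)|]/\rho_n \to 0$ (after the standard relabelling used to compare an $n$‑point fit to an $(n+1)$‑point fit, via remove‑then‑add). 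An essentially equivalent route sidesteps concentration: conditioning the deterministic inequality above on $\{(X_i,Y_i)\}_{i=1}^n$ gives $\tfrac{1}{n+1}\sum_{k=1}^{n+1}\mmp(R_k \le \text{Quantile}(1-\alpha,\cdot)+\rho_n \mid \{(X_i,Y_i)\}_{i=1}^n) \ge 1-\alpha$, so it suffices to show the test term ($k = n+1$) lies within $o_{\mmp}(1)$ of the average of the $n$ training terms; this is again where in‑sample stability does the work, now used to interchange the roles of the test point and a training point while only perturbing $\hat\mu_{n+1}$ on the remaining in‑sample inputs.

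\textbf{The main obstacle.} The delicate point, and where I expect most of the effort to go, is that the map from a cloud of residuals to its empirical $(1-\alpha)$‑quantile is Lipschitz in $\ell^\infty$ but not in $\ell^1$, so the averaged $L^1$ control supplied by in‑sample stability does not pass automatically to the quantile, nor to the indicator in $E$. This is precisely what the conservative inflation $\rho_n$ is for: because membership in $\hat{C}_{\text{full}, \text{cons.}}^{\rho_n}$ is robust to $o(\rho_n)$ perturbations of all the scores, only a boundary strip of $y$‑values of width $(1+o(1))\rho_n$ can flip membership under a one‑point change of the data, and $\rho_n \to 0$ makes this strip shrink. Converting ``narrow strip in $y$'' into ``small change in $g_n$'' then requires a mild regularity input on the conditional law of $Y_{n+1}$ given $X_{n+1}$ near the active threshold (a bounded conditional density, or a subsequence argument using continuity of the relevant distribution functions); handling this regularity, together with the bookkeeping that turns per‑coordinate $L^1$ stability into an $o(1)$‑fluctuation bound for $g_n$, is the crux, whereas the reduction and the construction of the sequence $\rho_n$ are routine.
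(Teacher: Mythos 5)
First, a point of reference: the paper does not prove this statement at all---it is imported verbatim from \citet{Liang2023} (their Theorem 3.7), and everything downstream (e.g.\ Corollary \ref{corr:ridge_stab_cov}) simply invokes it. So there is no in-paper proof to compare against, and your proposal has to stand on its own. Your reduction to the event $\{R_{n+1}\le \hat q+\rho_n\}$ with $y=Y_{n+1}$, the deterministic quantile count, and the marginal guarantee $\mme[g_n]\le\alpha$ are all correct and standard. The problem is the transfer step, which is exactly where the content of the theorem lies, and neither of your two routes closes it under the stated hypotheses (i.i.d.\ data, in-sample stability $\mme|\hat\mu_n(X_1)-\hat\mu_{n+1}(X_1)|=o(\rho_n)$, and nothing else).

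Concretely: (i) you yourself introduce a ``mild regularity input'' (bounded conditional density of the score near the threshold), but the theorem assumes no such anti-concentration---the entire purpose of the $\rho_n$-inflation and of the one-sided, $o_{\mmp}(1)$ conclusion in \citet{Liang2023} is to dispense with distributional regularity, replacing it by Markov-type counting of how many points move by more than a fraction of $\rho_n$; a proof that needs a density assumption proves a different theorem. (ii) Even granting bounded density, the Efron--Stein computation does not vanish: swapping one training point perturbs the fitted model (hence all scores and the threshold) by an amount of order the stability, which is only $o(\rho_n)$, with $\rho_n\to 0$ possibly arbitrarily slowly; so each coordinate influence on $g_n$ is of order $o(\rho_n)$, and Efron--Stein yields $\mathrm{Var}(g_n)\lesssim n\cdot o(\rho_n^2)$, which tends to zero only if $\rho_n=o(n^{-1/2})$---a far stronger requirement than the hypothesis ($o(1)$ stability). (iii) Your ``equivalent route''---show the conditional test term is within $o_{\mmp}(1)$ of the average of the training terms, ``using in-sample stability to interchange the roles of the test point and a training point''---is a restatement of the theorem, not an argument: conditioning on $\{(X_i,Y_i)\}_{i=1}^n$ destroys the exchangeability you would need for the interchange, and the only obvious way to deploy stability (replace the augmented fit $\hat\mu_{n+1}^{Y_{n+1}}$ by the training-only fit $\hat\mu_n$) requires controlling the prediction change at $X_{n+1}$, which is \emph{not} in-sample for $\hat\mu_n$, and would in any case reduce the claim to the coverage of the uncorrected set $\hat{C}_{\textup{uncorr.}}$---which Theorems \ref{thm:ridge_asymp_cov} and \ref{thm:conv_tau_fixed} of this paper show is bounded away from $1-\alpha$ in the proportional regime. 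So the crux you flag is genuinely the crux, and the proposal leaves it open; the known proof in \citet{Liang2023} closes it by a counting/exchangeability argument over the augmented sample in which the inflation $\rho_n$ absorbs the stability error, with no regularity conditions on the law of $Y$.
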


While appealing, this result has the downsides that it is a one-sided bound that requires the use of a hyperparameter, $\rho_n$ whose range of permissible values depends on the unknown stability of the model. As a result, it is not immediately clear how one should select $\rho_n$ to obtain good performance in practice. By going beyond stability, we will demonstrate that for the high-dimensional linear regression models we consider the standard full conformal prediction set with $\rho_n = 0$ asymptotically achieves exact training-conditional coverage. Moreover, by leveraging additional information about the model fitting procedure, we will derive a much more precise characterization of the behaviour of the full conformal residuals, estimated quantile function, and coverage properties of alternative methods.

\section{Setting and assumptions} \label{sec:high_dim_setting}

In what follows we will focus on the setting in which the model fit on the imputed dataset, $\{(X_i,Y_i)\}_{i=1}^n \cup \{(X_{n+1},y)\} \subseteq \mmr^d \times \mmr$ is a linear $L_2$-regularized regression of the form,
\[
\hat{\beta}^y := \text{argmin}_{\beta \in \mmr^d} \frac{1}{n+1} \sum_{i=1}^{n} \ell(Y_i - X_i^\top \beta) + \frac{1}{n+1} \ell(y - X_{n+1}^\top\beta) + \tau \|\beta\|_2^2.
\]
Here, $\ell(\cdot)$ is a convex loss and $\tau > 0$ is a hyperparameter controlling the degree of regularization. More detailed assumptions on $\ell(\cdot)$ are given in the following sections. We will set the score to be the absolute residuals from this regression. Under these definitions, full conformal inference outputs the prediction set,
\begin{equation}\label{eq:full_set}
\hat{C}_{\text{full}} := \left\{y : |y - X_{n+1}^\top\hat{\beta}^y|  \leq \text{Quantile}\left(1-\alpha, \frac{1}{n+1} \sum_{i=1}^n \delta_{|Y_i - X_i^\top \hat{\beta}^y|} + \frac{1}{n+1}\delta_{|y - X_{n+1}^\top\hat{\beta}^y|}  \right) \right\}.
\end{equation}

Throughout, we will work in a high-dimensional elliptical model in which $d/n \to \gamma \in (0,\infty)$ and $X_i = \lambda_i W_i$ for some bounded, positive i.i.d.~random variables $(\lambda_i)_{i=1}^{n+1}$ independent from $(W_{ij})_{i=1}^{n+1}$. We will assume that $(W_{ij})_{i \in [n+1], j \in [d]}$ are i.i.d.,~mean zero, unit variance, sub-Gaussian random variables and that the response is sampled from the linear model, $Y_i = X_i^\top \beta^* + \epsilon_i$ for some i.i.d., continuous random variables $(\epsilon_i)_{i=1}^{n+1}$ independent of $(\lambda_i,W_i)_{i=1}^{n+1}$, and coefficients, $\beta^*_j$ sampled independently from the data and such that $\{\sqrt{d}\beta^*_j\}_{j=1}^d$ are i.i.d.. For convenience, we will assume throughout that $\epsilon_i$ has a bounded density and a unique $1-\alpha$ quantile. Our additional assumptions on the distributions of $\sqrt{d}\beta^*_j$ and $\epsilon_i$ will vary throughout this article and we will specify them more explicitly in the relevant sections. One common set of assumptions under which all of our results hold is the case of Gaussian errors, $\epsilon_i \sim N(0,\sigma_{\epsilon}^2)$ and  $\sqrt{d}\beta^*_j$ having bounded support. It is perhaps useful to note that the scaling of the above quantities has been chosen so that $X_i^\top \beta^* = O_{\mmp}(1)$ and thus we have a constant signal-to-noise ratio. 

While these assumptions are strong, we expect our results to go through under other common settings (e.g non-isotropic $W_i$). Our primary concern is garnering a conceptual understanding of the functionality of full conformal inference and thus we aim only to have a reasonable set of assumptions that facilitates this.

Finally, it is crucial to note that under our assumptions the estimated regression coefficients do not converge. In particular, as we will discuss shortly, under appropriate assumptions on the loss and data distributions, $\|\hat{\beta}^{Y_{n+1}} - {\beta}^*\|_2$ converges to a non-zero constant and, as a result, $\hat{\beta}^{Y_{n+1}}$ is sensitive to the specific values in the dataset. There is an extensive literature on the behaviour of coefficient estimates in high-dimensional linear regression. For some seminal work on this topic, we refer the interested reader to \cite{EK2013a, EK2013, EK2018, Donoho2013, Thram2018}, and \cite{ThramThesis}. Our results will, in particular, be built on the leave-one-out arguments developed in \cite{EK2013} and \cite{EK2018}. \\

\textbf{Additional notation:} Before presenting our results it is useful to define a few additional pieces of notation. First, note that to determine if $Y_{n+1} \in \hat{C}_{\text{full}}$ we just need to consider the case $y = Y_{n+1}$. Thus, to ease notation we will let $\hat{\beta} := \hat{\beta}^{Y_{n+1}}$ denote the regression coefficients obtained when the model is fit using the full dataset, $\{(X_i,Y_i)\}_{i=1}^{n+1}$. We let $\hat{\beta}_{(j)}$ denote the same quantity when the $j$-th sample is left out of the fit, i.e.
\[
\hat{\beta}_{(j)} := \text{argmin}_{\beta \in \mmr^d} \frac{1}{n} \sum_{i \in [n+1]-\{j\}} \ell(Y_i - X_i^\top \beta) + \tau \|\beta\|_2^2.
\]
We take $X$ to be the matrix with rows $X_1,\dots,X_{n+1}$ and $I_d$ to denote the identity matrix in $d$ dimensions. Finally, abusing notation, we let $\text{Quantile}(1-\alpha, P)$ and $\text{Quantile}(1-\alpha, Z)$ denote the $1-\alpha$ quantiles of the distribution $P$ and the random variable $Z$, respectively.

\section{Results}

\subsection{Full conformal ridge regression}\label{sec:high_dim_ridge}

We will begin by considering the case of ridge regression, i.e., the case $\ell(r) = r^2$. Here, many of the quantities we are interested in have simple closed-form expressions. As a result, this will be a convenient setting for developing the main ideas of our work. Analogous results for a broader class of robust loss functions are presented in Section \ref{sec:general_losses}.

Our first result shows that high-dimensional ridge regression satisfies the stability condition (\ref{eq:stab_condition}) of \citet{Liang2023}.
\begin{lemma}\label{lem:ridge_stab}
    Assume that $\epsilon_i$ and $\sqrt{d}\beta^*_i$ have $8$ bounded moments. Then, under the setting of Section \ref{sec:high_dim_setting} with $\ell(r) = r^2$,
    \[
    \mme[|X_1^\top\hat{\beta} - X_1^\top \hat{\beta}_{(n+1)}|] = o(1).
    \]
\end{lemma}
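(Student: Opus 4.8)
The plan is to obtain a closed-form expression for the difference $\hat\beta - \hat\beta_{(n+1)}$ using the standard ridge/Sherman–Morrison leave-one-out identity, and then bound the resulting scalar expression in expectation. Write $\lambda = \tau(n+1)$ and let $G_{(n+1)} = \sum_{i=1}^{n} X_i X_i^\top + \lambda I_d$ be the (unnormalized) Gram matrix with the $(n+1)$-th point removed, and $G = G_{(n+1)} + X_{n+1}X_{n+1}^\top$ the full one. Since $\ell(r)=r^2$, both estimators are explicit: $\hat\beta_{(n+1)} = G_{(n+1)}^{-1}\sum_{i=1}^n X_i Y_i$ and $\hat\beta = G^{-1}(\sum_{i=1}^n X_i Y_i + X_{n+1}Y_{n+1})$. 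A short computation with the Sherman–Morrison formula gives
\[
\hat\beta - \hat\beta_{(n+1)} = \frac{G_{(n+1)}^{-1}X_{n+1}\bigl(Y_{n+1} - X_{n+1}^\top\hat\beta_{(n+1)}\bigr)}{1 + X_{n+1}^\top G_{(n+1)}^{-1}X_{n+1}},
\]
so that
\[
X_1^\top\hat\beta - X_1^\top\hat\beta_{(n+1)} = \frac{\bigl(X_1^\top G_{(n+1)}^{-1}X_{n+1}\bigr)\bigl(Y_{n+1} - X_{n+1}^\top\hat\beta_{(n+1)}\bigr)}{1 + X_{n+1}^\top G_{(n+1)}^{-1}X_{n+1}}.
\]
The denominator is $\geq 1$, so it can be dropped for an upper bound, and I would take absolute values and then expectations.

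The next step is to control the three factors. Note $X_1$ and $X_{n+1}$ are independent of $G_{(n+1)}$ (which only involves samples $2,\dots,n$ once we also peel off sample $1$ — more precisely, I would first further condition, or simply use that $G_{(n+1)} \succeq \lambda I_d$ deterministically). The cleanest route: bound $|X_1^\top G_{(n+1)}^{-1} X_{n+1}| \leq \|G_{(n+1)}^{-1/2}X_1\|_2 \, \|G_{(n+1)}^{-1/2}X_{n+1}\|_2$. Using $G_{(n+1)} \succeq \lambda I_d = \tau(n+1) I_d$ gives the crude bound $\|G_{(n+1)}^{-1/2}X_i\|_2^2 \leq \|X_i\|_2^2/(\tau(n+1))$, and since $\|X_i\|_2^2 = \lambda_i^2\|W_i\|_2^2 = O_{\mmp}(d) = O_{\mmp}(n)$ with controlled moments (here is where the $8$ bounded moments on the sub-Gaussian coordinates — automatic — and boundedness of $\lambda_i$ enter), each factor is $O_{\mmp}(1)$ in a strong enough sense. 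Actually a sharper and still simple bound is available: conditionally on $\lambda_1,\lambda_{n+1}$ and $G_{(n+1)}$, $\mme[(X_1^\top G_{(n+1)}^{-1}X_{n+1})^2] = \lambda_1^2\lambda_{n+1}^2\,\mathrm{tr}(G_{(n+1)}^{-2}) \leq \lambda_1^2\lambda_{n+1}^2\, d/(\tau(n+1))^2 = O(1/n)$ since the $W_i$ are isotropic. That extra $1/n$ is more than we need but confirms the factor is well-behaved. For the residual factor $|Y_{n+1} - X_{n+1}^\top\hat\beta_{(n+1)}|$, I would write it as $|\epsilon_{n+1} + X_{n+1}^\top(\beta^* - \hat\beta_{(n+1)})|$; the first term has bounded moments by assumption, and the second is bounded by $\|X_{n+1}\|_2\|\beta^* - \hat\beta_{(n+1)}\|_2$, where $\|\beta^*\|_2^2 = O_{\mmp}(1)$ by the $\sqrt d\beta^*_j$ scaling and $\|\hat\beta_{(n+1)}\|_2 \leq \|G_{(n+1)}^{-1}\sum X_i Y_i\|_2$ is controlled by a similar moment computation (or simply by the fact that the ridge objective at $\beta=0$ bounds $\tau\|\hat\beta_{(n+1)}\|_2^2$ by the average loss, hence $\|\hat\beta_{(n+1)}\|_2^2 = O_{\mmp}(1/\tau)\cdot O_{\mmp}(1)$). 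Then Cauchy–Schwarz across the product of the two (or three) factors, using the moment bounds to guarantee the needed integrability, yields $\mme[|X_1^\top\hat\beta - X_1^\top\hat\beta_{(n+1)}|] \leq C/\sqrt{n} = o(1)$, or at worst $O(1/\sqrt n)$ after being generous with the denominator.

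The main obstacle is purely bookkeeping with moments: $X_1$ appears both in the factor $X_1^\top G_{(n+1)}^{-1}X_{n+1}$ and (potentially) inside $G_{(n+1)}$ if one is not careful about which samples are in the leave-one-out fit — but since $G_{(n+1)}$ uses samples $1,\dots,n$ while we evaluate at $X_1$, there is a genuine dependence between $X_1$ and $G_{(n+1)}$. The standard fix is a second leave-one-out step: compare $G_{(n+1)}^{-1}$ with $G_{(1,n+1)}^{-1}$ (both samples $1$ and $n+1$ removed) via Sherman–Morrison again, observing that this swap changes $X_1^\top G_{(n+1)}^{-1}X_{n+1}$ by a lower-order amount and makes $X_1, X_{n+1}$ independent of the Gram matrix; then all the expectations factor cleanly. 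Alternatively — and this is what I'd actually do to keep it short — I would just use the deterministic bound $G_{(n+1)} \succeq \tau(n+1)I_d$ throughout, which sidesteps the independence issue entirely at the cost of a slightly worse (but still $o(1)$, in fact $O(1/\sqrt n)$) rate, since $\tau$ is a fixed constant. The sub-Gaussianity of the $W_{ij}$ ensures $\|X_i\|_2^2$ concentrates at $\lambda_i^2 d$ with all the moments one could want, so the only real inputs from the hypothesis are the $8$ bounded moments of $\epsilon_i$ and $\sqrt d\beta^*_j$ (used generously via Cauchy–Schwarz / Hölder across the product) and boundedness of $\lambda_i$.
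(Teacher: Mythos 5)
Your overall skeleton (leave-one-out/Sherman--Morrison representation, the norm bound $\tau\|\hat\beta_{(n+1)}\|_2^2\leq\frac{1}{n}\sum_i \ell(Y_i)$ from comparing the objective at $\beta=0$, Cauchy--Schwarz with the assumed moments) matches the paper, but the route you say you would actually take has a genuine gap. In the proportional regime the purely deterministic bound $G_{(n+1)}\succeq\tau(n+1)I_d$ does \emph{not} give $o(1)$: Cauchy--Schwarz yields $|X_1^\top G_{(n+1)}^{-1}X_{n+1}|\leq \|X_1\|_2\|X_{n+1}\|_2/(\tau(n+1))\asymp \lambda_1\lambda_{n+1}\,d/(\tau(n+1))=O(1)$ since $d/n\to\gamma>0$, so the whole expression is only $O(1)$, exactly the scale at which the lemma could fail. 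To get the extra $1/\sqrt{n}$ you must exploit that at least one of $W_1,W_{n+1}$ is mean zero and independent of the rest of the quadratic form. Your ``sharper'' bound $\mme[(X_1^\top G_{(n+1)}^{-1}X_{n+1})^2\mid \lambda_1,\lambda_{n+1},G_{(n+1)}]=\lambda_1^2\lambda_{n+1}^2\,\mathrm{tr}(G_{(n+1)}^{-2})$ is not valid as written, because $X_1X_1^\top$ sits inside $G_{(n+1)}$; only your fix (a) (a second leave-one-out/SMW step to remove $X_1$ from the inverse, which is what the paper does, paying a factor $\max\{1,\|X_1\|_2^2/(\tau(n+1))\}$) is sound, or alternatively the simpler observation that $W_{n+1}\independent(X_1,G_{(n+1)})$, so conditioning on $(X_1,G_{(n+1)},\lambda_{n+1})$ gives $\mme[(X_1^\top G_{(n+1)}^{-1}X_{n+1})^2\mid\cdot]=\lambda_{n+1}^2\|G_{(n+1)}^{-1}X_1\|_2^2\leq \lambda_{n+1}^2\|X_1\|_2^2/(\tau(n+1))^2=O_{L_1}(1/n)$. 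Either way, the independence step cannot be ``sidestepped.''

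A secondary issue: the paper defines $\hat\beta_{(n+1)}$ with the $\frac{1}{n}\sum_{i\leq n}$ normalization (effective unnormalized penalty $n\tau$), whereas your Sherman--Morrison identity implicitly gives it penalty $\tau(n+1)$. With the paper's definition the identity acquires an extra term proportional to $\frac{\tau}{n+1}\left(\frac{1}{n+1}X^\top X+\tau I_d\right)^{-1}\hat\beta_{(n+1)}$, which the paper bounds separately (its claim 2, of size $O(1/\sqrt{n})$ using the coefficient-norm bound). This is easily repaired, but as stated your closed form is not exact for the estimator in the lemma.
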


Following Theorem 3.6 of \citet{Liang2023} (Theorem \ref{thm:stab_cov} above), this Lemma immediately implies that the conservative full conformal ridge prediction set, (\ref{eq:adjusted_prediction_set}) satisfies a conservative training-conditional coverage guarantee.

\begin{corollary}\label{corr:ridge_stab_cov}
Under the assumptions of Lemma \ref{lem:ridge_stab}, the conclusion of Theorem \ref{thm:stab_cov} holds with ridge regression as the base predictor.
\end{corollary}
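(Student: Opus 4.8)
The plan is to use the closed form of the ridge solution together with the classical leave-one-out identity for ridge, modified to account for the fact that $\hat{\beta}$ divides all $n+1$ loss terms by $n+1$ while $\hat{\beta}_{(n+1)}$ divides its $n$ loss terms by $n$. Write $M \defeq \frac{1}{n+1}\sum_{i=1}^{n+1}X_iX_i^\top + \tau I_d$ and $v \defeq \frac{1}{n+1}\sum_{i=1}^{n+1}X_iY_i$, so that $\hat{\beta} = M^{-1}v$. Multiplying the normal equations for $\hat{\beta}_{(n+1)}$ through by $n/(n+1)$ shows that $\hat{\beta}_{(n+1)} = \tilde{M}^{-1}\tilde{v}$ with $\tilde{v} \defeq \frac{1}{n+1}\sum_{i=1}^{n}X_iY_i$ and $\tilde{M} \defeq M - \frac{1}{n+1}X_{n+1}X_{n+1}^\top - \frac{\tau}{n+1}I_d$ --- that is, $\hat{\beta}_{(n+1)}$ is the ridge fit on $X_1,\dots,X_n$ with the same $\frac{1}{n+1}$ normalization but regularization level $\frac{n}{n+1}\tau$. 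Substituting $v = \tilde{v} + \frac{1}{n+1}X_{n+1}Y_{n+1}$ and $M = \tilde{M} + \frac{1}{n+1}X_{n+1}X_{n+1}^\top + \frac{\tau}{n+1}I_d$ into $M^{-1}v - \tilde{M}^{-1}\tilde{v}$ and simplifying with the resolvent identity $M^{-1} - \tilde{M}^{-1} = M^{-1}(\tilde{M} - M)\tilde{M}^{-1}$ yields the exact identity
\[
X_1^\top\hat{\beta} - X_1^\top\hat{\beta}_{(n+1)} = \frac{X_1^\top M^{-1}X_{n+1}}{n+1}\left(Y_{n+1} - X_{n+1}^\top\hat{\beta}_{(n+1)}\right) - \frac{\tau}{n+1}X_1^\top M^{-1}\hat{\beta}_{(n+1)},
\]
so it suffices to show that each term on the right is $o(1)$ in $L^1$.

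The second term is handled by crude bounds: since $M \succeq \tau I_d$ we have $\|M^{-1}\|_{\mathrm{op}} \le 1/\tau$, so in absolute value the term is at most $\frac{1}{n+1}\|X_1\|\,\|\hat{\beta}_{(n+1)}\|$; combining $\mme\|X_1\|^2 = \mme[\lambda_1^2]\,d = O(n)$ and the a priori bound $\mme\|\hat{\beta}_{(n+1)}\|^2 = O(1)$ (a standard estimate for the norm of a ridge fit, from $\|\hat{\beta}_{(n+1)}\| \le \frac{2}{\tau}\|\tilde{v}\|$, $\mme\|\tilde{v}\|^2 = O(1)$, and the moment hypotheses) via Cauchy--Schwarz gives an $L^1$ bound of order $1/\sqrt{n}$.

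The first term is the crux. The point is that the new covariate $X_{n+1}$ is independent of the training data and, being isotropic, is nearly orthogonal to any training-determined direction. First peel $X_{n+1}$ out of $M^{-1}$ by a Sherman--Morrison step: with $A \defeq \frac{1}{n+1}\sum_{i=1}^{n}X_iX_i^\top + \tau I_d$, which depends only on $X_1,\dots,X_n$,
\[
X_1^\top M^{-1}X_{n+1} = \frac{X_1^\top A^{-1}X_{n+1}}{1 + X_{n+1}^\top A^{-1}X_{n+1}/(n+1)},
\]
so $|X_1^\top M^{-1}X_{n+1}| \le |X_1^\top A^{-1}X_{n+1}|$ because the denominator is at least $1$. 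Conditioning on everything except the fresh vector $W_{n+1}$ (recall $X_{n+1} = \lambda_{n+1}W_{n+1}$), $X_1^\top A^{-1}X_{n+1} = \lambda_{n+1}(A^{-1}X_1)^\top W_{n+1}$ is a mean-zero linear functional of $W_{n+1}$ with conditional second moment $\lambda_{n+1}^2\, X_1^\top A^{-2}X_1 \le \lambda_{n+1}^2\|X_1\|^2/\tau^2$, hence $O(n)$ after taking expectations; and the test residual $Y_{n+1} - X_{n+1}^\top\hat{\beta}_{(n+1)} = \epsilon_{n+1} + X_{n+1}^\top(\beta^* - \hat{\beta}_{(n+1)})$ has conditional second moment $\epsilon_{n+1}^2 + \lambda_{n+1}^2\|\beta^* - \hat{\beta}_{(n+1)}\|^2$, hence $O(1)$ after taking expectations (using $\mme\|\beta^*\|^2 = \mme[(\sqrt{d}\,\beta_1^*)^2] = O(1)$ and the bound on $\mme\|\hat{\beta}_{(n+1)}\|^2$ from the previous paragraph). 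A conditional Cauchy--Schwarz applied to the product, then an outer Cauchy--Schwarz, together with the $1/(n+1)$ prefactor, bound $\mme$ of the first term by a constant times $1/\sqrt{n}$ as well; adding the two estimates gives $\mme[|X_1^\top\hat{\beta} - X_1^\top\hat{\beta}_{(n+1)}|] = O(1/\sqrt{n}) = o(1)$.

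The eight-moment hypotheses enter only to legitimize the various H\"older steps above --- they control the higher moments of $\|X_1\|$, of the design's operator norm, of the response vector's norm, of $\|\beta^*\|$, and hence of $\|\hat{\beta}_{(n+1)}\|$ (the argument in fact uses only a few second and fourth moments, but the stated hypotheses are comfortably sufficient). The one genuinely essential observation, and the main obstacle, is the cross-term bound: one must recognize that $X_1^\top M^{-1}X_{n+1}$ is of order $\sqrt{n}$, not the trivial $\|X_1\|\,\|X_{n+1}\|/\tau = O(n)$, since it is precisely this factor-$\sqrt{n}$ saving --- coming from the near-orthogonality of the independent isotropic test covariate to the training data --- that the $1/(n+1)$ prefactor needs in order for the perturbation to be negligible. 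Everything else (operator-norm bounds for random designs, Hanson--Wright-type control of quadratic forms in sub-Gaussian vectors, and the moment bookkeeping) is routine.
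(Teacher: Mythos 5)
Your proposal is correct, and it is worth being explicit that the corollary itself is immediate: once one has the in-sample stability bound $\mme[|X_1^\top\hat{\beta} - X_1^\top\hat{\beta}_{(n+1)}|] = o(1)$, the conclusion follows by citing Theorem \ref{thm:stab_cov} with any $\rho_n \to 0$ slower than this rate (e.g.\ $\rho_n = n^{-1/4}$ given your $O(n^{-1/2})$ bound). So the substance of your argument is an alternative proof of Lemma \ref{lem:ridge_stab}, and it follows the same leave-one-out skeleton as the paper: you arrive at exactly the paper's identity
$X_1^\top\hat{\beta} - X_1^\top\hat{\beta}_{(n+1)} = \frac{1}{n+1}X_1^\top M^{-1}X_{n+1}\,(Y_{n+1}-X_{n+1}^\top\hat{\beta}_{(n+1)}) - \frac{\tau}{n+1}X_1^\top M^{-1}\hat{\beta}_{(n+1)}$
(derived from the rescaled normal equations rather than by subtracting first-order conditions, which is the same computation), and your treatment of the $\tau$-term and of $\|\hat{\beta}_{(n+1)}\|$ mirrors the paper's (the paper bounds the coefficient norm by an optimality comparison, Lemma \ref{lem:coef_bound}, rather than through the closed form; both give $O_{L_2}(1)$). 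The one genuine difference is the cross term: you peel $X_{n+1}$ out of the resolvent, so that $A$ is training-measurable, and exploit the isotropy of the fresh $W_{n+1}$, with a conditional Cauchy--Schwarz (conditioning on everything but $W_{n+1}$) that simultaneously absorbs the leave-one-out residual; the paper instead peels out $X_1$, conditions on $\{X_i\}_{i>1}$, and runs an $L_4$ Cauchy--Schwarz across three factors. Your variant is slightly leaner for this lemma, requiring only conditional second moments; the paper's variant is organized so that the quadratic form $\frac{1}{n+1}X_{n+1}^\top M^{-1}X_{n+1}$ appears explicitly, which is the object it must later characterize via the Stieltjes transform in Lemma \ref{lem:ridge_loo_lemma}, so its choice of peel is dictated by reuse rather than necessity. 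No gaps: the Sherman--Morrison denominator is indeed at least one, the conditional variance computations are valid since $W_{n+1}$ is independent of the training data, $\lambda_{n+1}$, $\epsilon_{n+1}$, and $\beta^*$, and the stated eight-moment assumptions comfortably cover the moments you invoke.
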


To prove Lemma \ref{lem:ridge_stab}, we perform a direct calculation on the form of the leave-one-out coefficients, $\hat{\beta}_{(n+1)}$. We will give a sketch of this argument here, leaving formal details to the Appendix. 

\begin{proof}[Proof sketch of Lemma \ref{lem:ridge_stab}]
By the optimally of $\hat{\beta}$ and $\hat{\beta}_{(n+1)}$ we have the first-order conditions
\[
 \frac{1}{n+1}\sum_{i=1}^{n+1} (X_i^\top \hat{\beta} - Y_i) X_i + \tau \hat{\beta} = 0 \qquad { \text{and} } \qquad \frac{1}{n} \sum_{i=1}^n (X_i^\top \hat{\beta}_{(n+1)} - Y_i) X_i + \tau \hat{\beta}_{(n+1)} = 0.
\]
Letting $\hat{\beta}_{\Delta} = \hat{\beta} - \hat{\beta}_{(n+1)}$, and taking the difference between the two equations above it follows that, 
\begin{align*}
0 & = \sum_{i=1}^{n+1} (X_i^\top \hat{\beta} - Y_i) X_i + (n+1)\tau \hat{\beta} -  \sum_{i=1}^n (X_i^\top \hat{\beta}_{(n+1)} - Y_i) X_i - n\tau \hat{\beta}_{(n+1)}\\
& = \sum_{i=1}^{n+1} X_iX_i^\top \hat{\beta}_{\Delta} + (X_{n+1}^\top \hat{\beta}_{(n+1)} - Y_{n+1}) X_{n+1} + \tau  \hat{\beta}_{(n+1)} + (n+1) \tau \hat{\beta}_{\Delta},
\end{align*}
and rearranging we obtain the leave-one-out representation,
\[
\hat{\beta}_{(n+1)} = \hat{\beta} - \left(\frac{1}{n+1}X^\top X + \tau I_d\right)^{-1}\left( \frac{1}{n+1}(Y_{n+1} - \hat{X}_{n+1}^\top \hat{\beta}_{(n+1)})X_{n+1} - \frac{\tau}{n+1} \hat{\beta}_{(n+1)}\right).
\]
Thus, to prove Lemma \ref{lem:ridge_stab} it is sufficient to show that, 
\begin{enumerate}
\item 
$\mme\left[\left|(Y_{n+1} - X_{n+1}^\top \hat{\beta}_{(n+1)}) \frac{1}{n+1}X_1^\top \left(\frac{1}{n+1} X^\top X + \tau I_d \right)^{-1}X_{n+1} \right|\right] = o(1),$
\item $\mme\left[\left| \frac{\tau}{n+1} X_1^\top\left(\frac{1}{n+1} X^\top X + \tau I_d \right)^{-1}\hat{\beta}_{(n+1)} \right| \right]  = o(1).$
\end{enumerate}
We will discuss the bound on the first term leaving the second claim to the Appendix. Let $X_{2:(n+1)}$ denote the matrix with rows $X_2,\dots,X_{n+1}$. Using the Sherman-Morrison-Woodbury identity we have, 
\begin{align*}
& \left(\frac{1}{n+1} X^\top X + \tau I_d \right)^{-1}   = \left(\frac{1}{n+1} X_{2:(n+1)}^\top X_{2:(n+1)} + \tau I_d \right)^{-1}\\
& \hspace{3cm} - \frac{\left(\frac{1}{n+1} X_{2:(n+1)}^\top X_{2:(n+1)} + \tau I_d \right)^{-1} \frac{1}{n+1} X_1X_1^\top \left(\frac{1}{n+1} X_{2:(n+1)}^\top X_{2:(n+1)} + \tau I_d \right)^{-1} }{1+ \frac{1}{n+1}X_1^\top\left(\frac{1}{n+1} X_{2:(n+1)}^\top X_{2:(n+1)} + \tau I_d \right)^{-1} X_1}.
\end{align*}
Now, conditioning on $\{X_i\}_{i>1}$ we have that $\frac{1}{n+1}W_1^\top \left(\frac{1}{n+1} X_{2:(n+1)}^\top X_{2:(n+1)} + \tau I_d \right)^{-1}X_{n+1}$ is a weighted mean of $n+1$ mean-zero, sub-Gaussian random variables and thus is of size $O_{L_2}(1/\sqrt{n})$. Applying this fact to the above expressions and performing some minor additional calculation proves claim 1.

\end{proof}

The leave-one-out calculations presented above can be used to obtain much more than stability. As a starting point, let us consider the behaviour of the $(n+1)$-st residual, $Y_{n+1} - X_{n+1}^\top\hat{\beta}$. Ignoring asymptotically negligible terms, the calculations above give us the representation,
\[
Y_{n+1} - X_{n+1}^\top\hat{\beta} \approx \left(1 - \frac{1}{n+1}X_{n+1}^\top \left(\frac{1}{n+1}X^\top X + \tau I_d \right)^{-1} X_{n+1} \right) (Y_{n+1} - X_{n+1}^\top \hat{\beta}_{(n+1)}).
\]
The first term in this product is closely related to the Stieltjes transform of $\frac{1}{n+1}X^\top X$. This quantity has received extensive study in the literature across a variety of contexts dating back to the foundational work of \citet{Marcenko1967}. Here, we will build in particular on the results of \citet{Rubio2011} to demonstrate its convergence. In what follows, recall that our covariates have the representation $X_{i} = \lambda_{i} W_{i}$ for some $\lambda_{i} \in \mmr$ and $W_{i} \in \mmr^d$ with i.i.d.~entries. Then, by demonstrating convergence of the requisite Stieltjes transform we obtain the following representation of the fitted residuals in terms of their leave-one-out counterparts.

\begin{lemma} \label{lem:ridge_loo_lemma}
Assume that $\epsilon_i$ and $\sqrt{d}\beta^*_i$ have $8$ bounded moments. Then, under the setting of Section \ref{sec:high_dim_setting} there exists a constant $c_{\infty} > 0$ such that
\[
(Y_{n+1} - X_{n+1}^\top\hat{\beta}) - \frac{1}{1+2\lambda^2_{n+1}c_{\infty}}(Y_{n+1} - X_{n+1}^\top \hat{\beta}_{(n+1)}) \stackrel{\mmp}{\to} 0.
\]
\end{lemma}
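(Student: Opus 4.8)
The plan is to start from the exact leave-one-out identity derived in the proof sketch of Lemma~\ref{lem:ridge_stab}. Multiplying that identity by $X_{n+1}^\top$ and rearranging gives, up to the two error terms already shown to be $o_{\mmp}(1)$ (the $O_{L_2}(1/\sqrt n)$ weighted-mean bound and the $\tau/(n+1)$ term),
\[
Y_{n+1}-X_{n+1}^\top\hat\beta \;\approx\; \left(1-\tfrac{1}{n+1}X_{n+1}^\top\bigl(\tfrac{1}{n+1}X^\top X+\tau I_d\bigr)^{-1}X_{n+1}\right)\bigl(Y_{n+1}-X_{n+1}^\top\hat\beta_{(n+1)}\bigr).
\]
Since $X_{n+1}=\lambda_{n+1}W_{n+1}$, the bracketed scalar is $1-\lambda_{n+1}^2\cdot\tfrac{1}{n+1}W_{n+1}^\top(\tfrac{1}{n+1}X^\top X+\tau I_d)^{-1}W_{n+1}$. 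The key probabilistic step is to show that this quadratic form concentrates around a deterministic limit. Conditioning on $\{X_i\}_{i\le n}$, so that $\tfrac{1}{n+1}X^\top X = \tfrac{1}{n+1}X_{1:n}^\top X_{1:n}+\tfrac{1}{n+1}\lambda_{n+1}^2W_{n+1}W_{n+1}^\top$, one first uses Sherman--Morrison to replace $(\tfrac1{n+1}X^\top X+\tau I_d)^{-1}$ by the resolvent of the matrix with the $(n+1)$-st row removed (the correction is lower order), and then applies a standard trace/quadratic-form concentration lemma (e.g.\ the Hanson--Wright bound for the sub-Gaussian $W_{n+1}$) to get
\[
\tfrac{1}{n+1}W_{n+1}^\top\bigl(\tfrac{1}{n+1}X_{1:n}^\top X_{1:n}+\tau I_d\bigr)^{-1}W_{n+1}-\tfrac{1}{n+1}\operatorname{tr}\!\bigl(\tfrac{1}{n+1}X_{1:n}^\top X_{1:n}+\tau I_d\bigr)^{-1}\stackrel{\mmp}{\to}0.
\]
The remaining normalized trace is exactly $\tfrac{d}{n+1}\cdot\tfrac1d\operatorname{tr}(\cdots)^{-1}$, i.e.\ (a deterministic multiple of) the Stieltjes transform of the sample covariance evaluated at $-\tau$, which converges to a constant by the elliptical Mar\v{c}enko--Pastur theory of \citet{Rubio2011} invoked in the paragraph preceding the lemma. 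Calling that limit $2c_\infty$ (absorbing the $d/(n+1)\to\gamma$ factor into the constant) yields $1-\lambda_{n+1}^2\cdot 2c_\infty$ in the wrong direction, so instead I will write the bracketed factor using the identity $1-\tfrac1{n+1}X_{n+1}^\top(\cdot)^{-1}X_{n+1}=\bigl(1+\tfrac1{n+1}X_{n+1}^\top(\cdot_{(n+1)})^{-1}X_{n+1}\bigr)^{-1}$, where $\cdot_{(n+1)}$ is the leave-one-out Gram matrix; the denominator then converges to $1+2\lambda_{n+1}^2c_\infty$, which is the form stated in the lemma.

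With the scalar factor pinned down, the last ingredient is to control the leave-one-out residual $Y_{n+1}-X_{n+1}^\top\hat\beta_{(n+1)}$ itself: it must be shown to be $O_{\mmp}(1)$ so that multiplying it by an $o_{\mmp}(1)$ discrepancy in the scalar factor gives an $o_{\mmp}(1)$ error. This follows from $Y_{n+1}=X_{n+1}^\top\beta^*+\epsilon_{n+1}$, the independence of $(X_{n+1},\epsilon_{n+1})$ from $\hat\beta_{(n+1)}$, the scaling $X_{n+1}^\top\beta^*=O_{\mmp}(1)$ built into the model, and the bound $\|\hat\beta_{(n+1)}\|_2=O_{\mmp}(1)$ (immediate from the first-order conditions and the $8$-moment assumptions, since $\|\hat\beta_{(n+1)}\|_2\le\|Y_{1:n}\|_2\,\|X_{1:n}\|_{\mathrm{op}}/((n)\cdot 2\tau)$-type estimates, or more simply by comparing objective values at $\beta=0$). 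Then $X_{n+1}^\top\hat\beta_{(n+1)}=\lambda_{n+1}W_{n+1}^\top\hat\beta_{(n+1)}$ is, conditionally on $\hat\beta_{(n+1)}$, sub-Gaussian with variance $\lambda_{n+1}^2\|\hat\beta_{(n+1)}\|_2^2=O_{\mmp}(1)$, hence $O_{\mmp}(1)$.

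The main obstacle I anticipate is making the first paragraph's two-step reduction fully rigorous with $\stackrel{\mmp}{\to}$ rather than in expectation: the Sherman--Morrison rank-one correction, the Hanson--Wright concentration, and the Stieltjes-transform convergence from \citet{Rubio2011} each need to be combined, and care is required because the resolvent of $\tfrac1{n+1}X_{1:n}^\top X_{1:n}$ has operator norm $O_{\mmp}(1/\tau)$ but the extreme-eigenvalue behavior must be controlled uniformly (the sub-Gaussian and bounded-$\lambda_i$ assumptions are exactly what deliver this, via a standard largest-singular-value bound for sub-Gaussian matrices). Once these estimates are assembled — and they are essentially the same leave-one-out machinery of \citet{EK2013, EK2018} already cited — the assembly of the three pieces (scalar factor $\to (1+2\lambda_{n+1}^2c_\infty)^{-1}$, leave-one-out residual $O_{\mmp}(1)$, and the two $o_{\mmp}(1)$ error terms from Lemma~\ref{lem:ridge_stab}'s proof) into the claimed convergence is routine. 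I would relegate the moment bookkeeping and the precise statement of the resolvent concentration to the Appendix.
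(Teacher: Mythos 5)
Your proposal is correct and follows essentially the same route as the paper's proof: the leave-one-out identity from Lemma~\ref{lem:ridge_stab}, the exact Sherman--Morrison--Woodbury rewriting of the prefactor as $\bigl(1+\tfrac{\lambda_{n+1}^2}{n+1}W_{n+1}^\top(\tfrac{1}{n+1}X_{1:n}^\top X_{1:n}+\tau I_d)^{-1}W_{n+1}\bigr)^{-1}$, concentration of this quadratic form around its trace, convergence of the normalized trace to a positive constant via the Rubio--Mestre Stieltjes-transform theory (which the paper works out in Lemmas~\ref{lem:ridge_tr_1}--\ref{lem:ridge_tr_final}, including the extension from complex $z$ to real $\tau$ that you cite as a black box), and the $O_{\mmp}(1)$ control of the leave-one-out residual via independence and the coefficient-norm bound. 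The only cosmetic difference is that you invoke Hanson--Wright where the paper uses a direct variance computation for the quadratic-form concentration.
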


To fully characterize the residuals, it remains to determine the behaviour of $\hat{\beta}_{(n+1)}$. This quantity has received extensive study in the literature and a number of authors have shown that in a variety of settings the estimation error $\|\beta^* - \hat{\beta}_{(n+1)}\|_2$ converges to a constant (\cite{EK2013a, EK2013, EK2018, Donoho2013, ThramThesis, Thram2018}). Our present setting does not directly meet the assumptions of any of these works and so here we give a self contained proof of this fact for our problem.  

\begin{lemma}\label{lem:ridge_norm_conv}
    Assume that $\epsilon_i$ and $\sqrt{d}\beta^*_i$ have $8$ bounded moments. Then, under the setting of Section \ref{sec:high_dim_setting}  with $\ell(r) = r^2$, there exists a constant $N_{\infty} > 0$ such that
    \[
    \|\beta^* - \hat{\beta}_{(n+1)}\|_2 \stackrel{\mmp}{\to} N_{\infty}.
    \]
\end{lemma}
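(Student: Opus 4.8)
The plan is to establish the convergence of $\|\beta^* - \hat{\beta}_{(n+1)}\|_2$ via a leave-one-out fixed-point argument in the style of \cite{EK2013, EK2018}, specialized to the ridge loss where everything is explicit. Write $\delta := \hat{\beta}_{(n+1)} - \beta^*$. Since $Y_i = X_i^\top \beta^* + \epsilon_i$, the first-order condition for $\hat{\beta}_{(n+1)}$ reads $\frac{1}{n}\sum_{i \in [n+1]-\{n+1\}} (X_i^\top \delta - \epsilon_i) X_i + \tau(\beta^* + \delta) = 0$, i.e. $\delta = \left(\frac{1}{n}\sum_i X_iX_i^\top + \tau I_d\right)^{-1}\left(\frac{1}{n}\sum_i \epsilon_i X_i - \tau \beta^*\right)$, where sums run over the $n$ samples used in the fit. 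Squaring and using the elliptical representation $X_i = \lambda_i W_i$, the quantity $\|\delta\|_2^2$ is a smooth function of the resolvent $\left(\frac{1}{n}\sum_i \lambda_i^2 W_iW_i^\top + \tau I_d\right)^{-1}$ and of its bilinear forms against the random vectors $\frac{1}{n}\sum_i \epsilon_i \lambda_i W_i$ and $\beta^*$. The heart of the matter is then a concentration statement: these bilinear forms concentrate around deterministic limits governed by the companion Stieltjes transform, which is exactly the object whose convergence is already invoked in the proof of \cref{lem:ridge_loo_lemma} (via \cite{Rubio2011}).

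The key steps, in order, are: (i) derive the closed form for $\delta$ above from the first-order conditions; (ii) introduce the leave-one-out coefficients $\hat{\beta}_{(n+1,j)}$ (drop sample $j$ in addition to $n+1$) so that, by a Sherman–Morrison expansion identical in spirit to the one used in the proof sketch of \cref{lem:ridge_stab}, each coordinate-type functional $W_j^\top \delta$ decouples from $W_j$; (iii) use this decoupling to show $\frac{1}{n}\sum_j \epsilon_j \lambda_j W_j^\top (\frac{1}{n}X^\top X + \tau I_d)^{-1}(\cdot)$ and the analogous $\beta^*$-term concentrate, yielding a self-consistent (fixed-point) equation for the limit of $\|\delta\|_2^2$ in terms of $\gamma$, $\tau$, the law of $\lambda$, $\mme[\epsilon^2]$, and $\mme[d(\beta_j^*)^2]$; (iv) invoke the eight-moment hypotheses to upgrade the requisite variance bounds (e.g. controlling $\mme[\|\delta\|_2^4]$ and the fluctuations of quadratic forms in $W$) and conclude $\|\delta\|_2^2 \stackrel{\mmp}{\to}$ the unique positive root $N_\infty^2$ of that equation; positivity of $N_\infty$ follows because the $-\tau\beta^*$ bias term and the noise term are both nonvanishing. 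Much of the analytic scaffolding — convergence of the relevant Stieltjes transform, the trace/bilinear-form concentration for $(\frac{1}{n}X^\top X + \tau I_d)^{-1}$ — is shared with the proof of \cref{lem:ridge_loo_lemma}, so I would cite those intermediate facts rather than reprove them.

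The main obstacle I anticipate is establishing the concentration of the bilinear and quadratic forms with enough uniformity to close the fixed-point argument, rather than merely identifying a candidate limit heuristically. Concretely, the cross term $\frac{1}{n}\sum_{i} \epsilon_i \lambda_i W_i^\top (\frac{1}{n}X^\top X + \tau I_d)^{-1} \beta^*$ couples the resolvent to the same vectors $W_i$ appearing inside it, so a naive second-moment bound does not suffice; one needs the leave-one-out trick to remove this dependence, and then to control the resulting remainder terms (the Sherman–Morrison correction denominators $1 + \frac{1}{n}\lambda_j^2 W_j^\top(\cdot)W_j$ must be bounded away from zero, and the numerators must be shown negligible). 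The sub-Gaussianity of the $W_{ij}$ together with the eight-moment assumptions on $\epsilon_i$ and $\sqrt{d}\beta_j^*$ should give exactly the $O_{L_2}(1/\sqrt{n})$ and $O_{L_1}(1/n)$ fluctuation bounds needed, by the same reasoning sketched after claim 1 in the proof of \cref{lem:ridge_stab}; the bookkeeping, however, is where the real work lies, and I would relegate it to the Appendix.
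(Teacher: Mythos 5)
Your starting point coincides with the paper's: for ridge one has the closed form $\hat{\beta}_{(n+1)}-\beta^* = \left(\frac{1}{n}X_{1:n}^\top X_{1:n}+\tau I_d\right)^{-1}\left(\frac{1}{n}X_{1:n}^\top \epsilon_{1:n}-\tau\beta^*\right)$, so that $\|\hat{\beta}_{(n+1)}-\beta^*\|_2^2$ decomposes into quadratic and bilinear forms in $\epsilon_{1:n}$ and $\beta^*$ against resolvent-type matrices. From there, however, the paper's argument is considerably more elementary than your plan, and your diagnosis of the difficulty is off. Because in this model $\beta^*$ and $\epsilon_{1:n}$ are both independent of $X_{1:n}$, no leave-one-out decoupling is needed at all: conditioning on $(X_{1:n},\beta^*)$ makes the cross term you single out as the ``main obstacle'' a mean-zero linear form in the independent $\epsilon_i$, and the naive conditional second-moment bound you dismiss gives $O_{\mmp}(\|\beta^*\|_2^2\|n^{-1/2}X_{1:n}\|_{op}^6/(n\tau^4)) = o_{\mmp}(1)$ directly --- the coupling of $W_i$ with the resolvent is irrelevant since the randomness exploited is that of $\epsilon$. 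Likewise the pure quadratic forms in $\beta^*$ and in $\epsilon_{1:n}$ concentrate around $\frac{\mme[(\sqrt{d}\beta^*_i)^2]}{d}\textup{tr}(\cdot)$ and $\frac{\mme[\epsilon_i^2]}{n}\textup{tr}(\cdot)$ by the same direct conditional variance computation, with operator-norm bounds on the resolvent; no Sherman--Morrison corrections or fixed-point machinery enter.

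The genuine gap is in how you identify the limit. Since the resolvent appears squared in $\|\hat{\beta}_{(n+1)}-\beta^*\|_2^2$, the traces that arise involve $\left(\frac{1}{n}X_{1:n}^\top X_{1:n}+\tau I_d\right)^{-2}$, i.e., the derivative $e_{\infty}'(\tau)$ of the companion Stieltjes transform, not only $e_{\infty}(\tau)$. The convergence ``already invoked in the proof of \cref{lem:ridge_loo_lemma}'' (via \citet{Rubio2011}) supplies only the first-order trace; the paper must prove the second-order limit separately (\cref{lem:ridge_tr_2}, using Theorem 11 of \citet{Dobriban2020} plus an analyticity/Lipschitz argument, and \cref{lem:ridge_tr_final} to pass from complex $z$ to real $\tau$), and only then can $N_{\infty}^2$ be written explicitly and seen to be positive. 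Your alternative route --- closing an EK-style self-consistent fixed-point equation for the limit of $\|\hat{\beta}_{(n+1)}-\beta^*\|_2^2$ --- could in principle substitute for this, but your proposal neither derives that equation nor shows it has a unique positive solution, so as written the limiting constant is never actually pinned down; this is precisely the step that cannot be delegated to ``bookkeeping in the Appendix.''
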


Now, recalling that $X_{n+1} = \lambda_{n+1} W_{n+1}$, the leave-one-out residual can be written as 
\[
Y_{n+1} - X_{n+1}^\top\hat{\beta}_{(n+1)} = \epsilon_{n+1} + \lambda_{n+1} W_{n+1}^\top(\beta^*-\hat{\beta}_{(n+1)}).
\]
Moreover, by Lemma \ref{lem:ridge_norm_conv}, we have that conditional on $\{(X_i,Y_i)\}_{i=1}^n$, $W_{n+1}^\top (\beta^* - \hat{\beta}_{(n+1)})$ is simply a weighted sum of i.i.d. random variables with total variance $\|\beta^* - \hat{\beta}_{(n+1)}\|_2^2 \approx N_{\infty}^2$. Thus, it is reasonable to expect that  $W_{n+1}^\top (\beta^* - \hat{\beta}_{(n+1)}) \mid \{(X_i,Y_i)\}_{i=1}^n \stackrel{D}{\approx} \mathcal{N}(0,N_{\infty}^2)$. Our next lemma verifies this fact and completes our characterization of the residuals. 

\begin{lemma}\label{lem:ridge_resid_characterization}
    Assume that $\epsilon_i$ and $\sqrt{d}\beta^*_i$ have $8$ bounded moments. Let $(\lambda, \epsilon)$ denote a copy of $(\lambda_{i},\epsilon_{i})$ independent of $Z \sim N(0,1)$. Then, for any bounded, continuous function $\psi$,
    \[
    \mme\left[\psi\left(\frac{1}{1+2\lambda^2_{n+1}c_{\infty}}\left(\epsilon_{n+1} + X_{n+1}^\top(\beta^* - \hat{\beta}_{(n+1)})\right) \right) \mid \{(X_i,Y_i)\}_{i=1}^n \right] \stackrel{\mmp}{\to} \mme\left[\psi\left( \frac{1}{1+2\lambda^2 c_{\infty}}(\epsilon + \lambda N_{\infty} Z)\right) \right].
    \]
\end{lemma}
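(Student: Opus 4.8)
The plan is to prove a conditional central limit theorem for $X_{n+1}^\top(\beta^* - \hat{\beta}_{(n+1)})$ given the training data and then combine it with the convergence statements already established. Write $\mathcal{D}_n := \{(X_i,Y_i)\}_{i=1}^n$, $v_n := \beta^* - \hat{\beta}_{(n+1)}$, and recall $X_{n+1} = \lambda_{n+1}W_{n+1}$, with the entries of $W_{n+1}$ i.i.d.\ mean-zero, unit-variance, sub-Gaussian. First I would reduce the conditioning: let $\mathcal{G}_n := \sigma(\mathcal{D}_n,\beta^*) \supseteq \mathcal{D}_n$. Since $\hat{\beta}_{(n+1)}$ is $\mathcal{D}_n$-measurable and $v_n$ is $\mathcal{G}_n$-measurable, while $(\lambda_{n+1},W_{n+1},\epsilon_{n+1})$ is independent of $\mathcal{G}_n$ and keeps its original joint law, it suffices to prove the statement with $\mathcal{D}_n$ replaced by $\mathcal{G}_n$: the conditional expectations in play are bounded because $\psi$ is bounded, so if $\mme[\psi(\cdot)\mid\mathcal{G}_n]$ converges in probability to the constant right-hand side, then so does $\mme[\psi(\cdot)\mid\mathcal{D}_n] = \mme[\mme[\psi(\cdot)\mid\mathcal{G}_n]\mid\mathcal{D}_n]$ by the $L_1$-contraction property of conditional expectation.

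Working conditionally on $\mathcal{G}_n$, the vector $v_n$ is deterministic, $(\lambda_{n+1},W_{n+1},\epsilon_{n+1})$ is a fresh independent draw, and after additionally conditioning on $\lambda_{n+1}$ the quantity $R_n := W_{n+1}^\top v_n = \sum_{j=1}^d W_{n+1,j}\,v_{n,j}$ is a sum of independent mean-zero random variables with deterministic coefficients and total variance $\|v_n\|_2^2$. The crux is to show that $R_n\mid\mathcal{G}_n$ is asymptotically $\mathcal{N}(0,N_\infty^2)$. By Lemma~\ref{lem:ridge_norm_conv}, $\|v_n\|_2^2 \stackrel{\mmp}{\to} N_\infty^2 > 0$, so it remains to check a Lyapunov/Lindeberg condition, for which it is enough that $\|v_n\|_\infty = o_{\mmp}(1)$ (this forces $\|v_n\|_\infty/\|v_n\|_2 \stackrel{\mmp}{\to} 0$). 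For this delocalization I would use the closed form $v_n = \tau(\hat{\Sigma}_n+\tau I)^{-1}\beta^* - (\hat{\Sigma}_n+\tau I)^{-1}\tfrac{1}{n}X_{1:n}^\top\epsilon_{1:n}$, where $\hat{\Sigma}_n := \tfrac{1}{n}X_{1:n}^\top X_{1:n}$ and $X_{1:n},\epsilon_{1:n}$ collect the first $n$ covariates and errors. Conditionally on $X_{1:n}$, each coordinate of $v_n$ is a weighted sum of the independent entries of $\beta^*$ (squared-weight sum at most $\|(\hat{\Sigma}_n+\tau I)^{-1}\|_{\mathrm{op}}^2 \le \tau^{-2}$, and $\mathrm{Var}(\beta^*_j)=O(1/d)$) plus a weighted sum of the $\epsilon_i$ (squared-weight sum $\tfrac{1}{n}\|(\hat{\Sigma}_n+\tau I)^{-1}\hat{\Sigma}_n^{1/2}\|_{\mathrm{op}}^2 = O(1/n)$), so each coordinate is $O_{\mmp}(n^{-1/2})$; a union bound over the $d$ coordinates using the $8$ bounded moments of $\beta^*$ and $\epsilon$ then gives $\|v_n\|_\infty = O_{\mmp}(d^{1/8}n^{-1/2}) = o_{\mmp}(1)$. (If $\beta^*$ is not centered, the deterministic-mean part requires the additional estimate $e_k^\top(\hat{\Sigma}_n+\tau I)^{-1}\mathbf{1}=O_{\mmp}(1)$, a standard consequence of resolvent concentration / the leave-one-out expansion already used in the proof sketch of Lemma~\ref{lem:ridge_stab}.)

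With these ingredients the assembly is routine. Fix $\eta>0$; on the probability-$(1-o(1))$ event $\{\,|\|v_n\|_2^2-N_\infty^2|<\eta,\ \|v_n\|_\infty/\|v_n\|_2<\eta\,\}$, a quantitative CLT (Berry--Esseen, using sub-Gaussianity to bound third absolute moments) shows the conditional law of $R_n$ given $\mathcal{G}_n$ is within $o(1)+O(\eta)$ of $\mathcal{N}(0,N_\infty^2)$ in, say, the bounded-Lipschitz metric, uniformly over realizations in this event. Hence for bounded continuous $\psi$ and \emph{each fixed} value of the independent pair $(\lambda_{n+1},\epsilon_{n+1})$,
\[
\mme\!\left[\psi\!\left(\tfrac{1}{1+2\lambda_{n+1}^2c_\infty}\bigl(\epsilon_{n+1}+\lambda_{n+1}R_n\bigr)\right)\,\big|\, \mathcal{G}_n,\lambda_{n+1},\epsilon_{n+1}\right]
- \mme_{Z}\!\left[\psi\!\left(\tfrac{1}{1+2\lambda_{n+1}^2c_\infty}\bigl(\epsilon_{n+1}+\lambda_{n+1}N_\infty Z\bigr)\right)\right] \stackrel{\mmp}{\to} 0,
\]
and this convergence is uniform in $(\lambda_{n+1},\epsilon_{n+1})$ because $\lambda_{n+1}$ is bounded and $\psi$ is bounded and uniformly continuous on the relevant range. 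Integrating out $(\lambda_{n+1},\epsilon_{n+1})$, which are independent of $\mathcal{G}_n$ and equal in law to the fresh copy $(\lambda,\epsilon)$, and invoking bounded convergence, I conclude $\mme[\psi(\cdot)\mid\mathcal{G}_n] \stackrel{\mmp}{\to} \mme[\psi(\tfrac{1}{1+2\lambda^2c_\infty}(\epsilon+\lambda N_\infty Z))]$; the first-step reduction then delivers the statement conditional on $\mathcal{D}_n$. I expect the main obstacle to be exactly the $\ell_\infty$-delocalization $\|\hat{\beta}_{(n+1)}\|_\infty = o_{\mmp}(1)$: operator-norm control of the resolvent does not by itself bound individual coordinates of the estimator, and pinning them down (in particular any non-centered component of $\beta^*$) needs genuine resolvent concentration of anisotropic-local-law type, or a careful coordinatewise leave-one-out expansion along the lines of the argument used for Lemma~\ref{lem:ridge_stab}.
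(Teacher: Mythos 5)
Your proposal follows essentially the same architecture as the paper's proof of this lemma (which appears as the first part of Lemma~\ref{lem:ridge_CLT} in the appendix): reduce to a conditional CLT for $W_{n+1}^\top(\beta^*-\hat{\beta}_{(n+1)})$ given the training data (and $\beta^*$ -- your $\mathcal{G}_n$ reduction is a clean way to handle the fact that $\beta^*$ is not measurable with respect to the training data, a point the paper glosses over), invoke Lemma~\ref{lem:ridge_norm_conv} for the variance, verify a Lindeberg-type condition via coordinatewise smallness of $v_n=\beta^*-\hat{\beta}_{(n+1)}$, and then integrate out $(\lambda_{n+1},\epsilon_{n+1})$ by independence, bounded convergence, and approximation of bounded continuous $\psi$ by smoother test functions. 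Where you diverge is the delocalization step: the paper proves the entrywise bound $\beta^*_j-\hat{\beta}_j=O_{L_4}(d^{-1/2})$ by a leave-one-\emph{column}-out computation (Lemma~\ref{lem:ridge_entry_control}, with Lemmas~\ref{lem:ridge_loo_predic_norm_bound} and~\ref{lem:ridge_loo_norm_bound}) and then controls the Lyapunov ratio $\sum_j|v_{n,j}|^3/\|v_n\|_2^3$, whereas you work from the ridge closed form $v_n=\tau(\hat{\Sigma}_n+\tau I)^{-1}\beta^*-(\hat{\Sigma}_n+\tau I)^{-1}\frac1n X_{1:n}^\top\epsilon_{1:n}$ and a union bound to get $\|v_n\|_\infty=o_{\mmp}(1)$.

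The genuine gap is the one you flag and then defer: the non-centered component of $\beta^*$. The paper only assumes $\sqrt{d}\beta^*_j$ i.i.d.\ with eight moments, so $\mu:=\mme[\sqrt{d}\beta^*_1]$ may be nonzero, and $v_n$ then contains the term $\tau\mu d^{-1/2}(\hat{\Sigma}_n+\tau I)^{-1}\mathbf{1}$, whose coordinates the operator-norm bound controls only at order $O(1)$ -- exactly the regime in which the Lindeberg condition can fail for general sub-Gaussian $W$. What you need is $\max_{k\le d}\bigl|e_k^\top(\hat{\Sigma}_n+\tau I)^{-1}\mathbf{1}\bigr|=o(\sqrt{d})$, uniformly in $k$ (your union bound requires per-coordinate control with probability $1-o(1/d)$, not just $O_{\mmp}(1)$ for a fixed $k$), and this is \emph{not} a consequence of the leave-one-observation-out expansion in the sketch of Lemma~\ref{lem:ridge_stab}, which removes rows rather than coordinates. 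Under the paper's assumptions (elliptical scalings $\lambda_i$, generic sub-Gaussian entries) there is no off-the-shelf anisotropic local law to cite, and proving the needed estimate is essentially the same work as the paper's leave-one-column-out Lemma~\ref{lem:ridge_entry_control}; so the parenthetical ``standard consequence of resolvent concentration'' is where the real content of the lemma lives, and as written your argument is complete only for centered $\beta^*$. (Two minor points: your centered-signal and noise bounds via Rosenthal-type moment inequalities and the $d^{1/8}n^{-1/2}$ rate are fine; and the claim that the final convergence is ``uniform in $(\lambda_{n+1},\epsilon_{n+1})$'' because $\psi$ is uniformly continuous is not available for merely bounded continuous $\psi$ on an unbounded range, but the usual smoothing/bounded-Lipschitz approximation -- which the paper also uses -- repairs this routinely.)
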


Our study of full conformal ridge regression is nearly complete. In particular, to fully determine the asymptotic training-conditional coverage it only remains to characterize the behaviour of the empirical quantile. Lemma \ref{lem:ridge_resid_characterization} shows that the asymptotic distribution of the test residual is invariant to the training data. Our final lemma of this section strengthens this result by demonstrating that in the limit the empirical residual distribution matches this marginal limit.

\begin{lemma}\label{lem:ridge_quant_conv}
     Assume that $\epsilon_i$ and $\sqrt{d}\beta^*_i$ have $8$ bounded moments. Let $(\lambda, \epsilon)$ denote a copy of $(\lambda_{i},\epsilon_{i})$ independent of $Z \sim N(0,1)$. Then, under the setting of Section \ref{sec:high_dim_setting} with $\ell(r) = r^2$, it holds that for any bounded, continous function $\psi$,
    \[
    \frac{1}{n+1} \sum_{i=1}^{n+1} \psi(Y_i - X_i^\top\hat{\beta}) \stackrel{\mmp}{\to} \mme\left[\psi\left(\frac{1 }{1+2\lambda^2 c_{\infty}}(\epsilon + \lambda N_{\infty} Z) \right) \right].
    \]
    Moreover,
    \[
    \textup{Quantile}\left(1-\alpha, \frac{1}{n+1} \sum_{i=1}^{n+1} \delta_{| Y_i - X_i^\top\hat{\beta}|} \right) \stackrel{\mmp}{\to}  \textup{Quantile}\left(1-\alpha,  \left| \frac{1 }{1+2\lambda^2 c_{\infty}}(\epsilon + \lambda N_{\infty} Z)\right| \right).
    \]
\end{lemma}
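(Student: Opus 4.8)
Write $V := \frac{1}{1+2\lambda^2 c_\infty}(\epsilon+\lambda N_\infty Z)$, so that both displays are functionals of $\mathrm{Law}(V)$, and set $r_i := \frac{1}{1+2\lambda_i^2 c_\infty}(Y_i - X_i^\top\hat\beta_{(i)})$ for the rescaled leave-one-out residuals. The plan is to (i) reduce the first display to an average of the $\psi(r_i)$, (ii) show that this average concentrates at $\mme[\psi(V)]$ by a first- and second-moment computation, and (iii) deduce the quantile statement by sandwiching the empirical CDF of the $|Y_i - X_i^\top\hat\beta|$. For (i), since the limit is deterministic it suffices, by a routine approximation argument exploiting tightness of the empirical residual distribution (a consequence of the moment bounds in the proofs of Lemmas \ref{lem:ridge_loo_lemma}--\ref{lem:ridge_resid_characterization}), to treat $L$-Lipschitz $\psi$. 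By exchangeability of $\{(X_i,Y_i)\}_{i=1}^{n+1}$ the leave-one-out identity from the proof of Lemma \ref{lem:ridge_loo_lemma} holds for each index, and its $L_2$ estimates upgrade to $\mme\bigl[\frac{1}{n+1}\sum_{i=1}^{n+1}|(Y_i - X_i^\top\hat\beta) - r_i|\bigr] = o(1)$; combined with the Lipschitz bound and Markov's inequality, this reduces the first display to proving $T_n := \frac{1}{n+1}\sum_{i=1}^{n+1}\psi(r_i)\stackrel{\mmp}{\to}\mme[\psi(V)]$.

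For (ii), exchangeability gives $\mme[T_n] = \mme[\psi(r_1)]$, and $r_1$ falls exactly under Lemma \ref{lem:ridge_resid_characterization} (with indices $1$ and $n+1$ swapped), so $\mme[\psi(r_1)\mid\{(X_i,Y_i)\}_{i\neq 1}]\stackrel{\mmp}{\to}\mme[\psi(V)]$ and hence $\mme[T_n]\to\mme[\psi(V)]$ by bounded convergence. Decomposing $\mathrm{Var}(T_n) = \frac{1}{n+1}\mathrm{Var}(\psi(r_1)) + \frac{n}{n+1}\mathrm{Cov}(\psi(r_1),\psi(r_2))$, the first term is $O(1/n)$, so it remains to show $\mathrm{Cov}(\psi(r_1),\psi(r_2))\to 0$. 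Let $\hat\beta_{(1,2)}$ be the ridge fit omitting points $1$ and $2$. A Sherman--Morrison expansion as in the proof sketch of Lemma \ref{lem:ridge_stab} (together with a negligible rescaling of the regularization level) shows that $X_1^\top\hat\beta_{(1)} - X_1^\top\hat\beta_{(1,2)}$ and $X_2^\top\hat\beta_{(2)} - X_2^\top\hat\beta_{(1,2)}$ are $O_{\mmp}(1/\sqrt n)$, since the relevant bilinear forms $\frac{1}{n+1}X_1^\top(\cdot)^{-1}X_2$ are mean zero (both covariates being independent of each other and of the resolvent built from points $3,\dots,n+1$) with Frobenius norm $O(1/\sqrt n)$. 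Because $\psi$ is bounded and continuous this yields $\psi(r_j) = \psi(\tilde r_j) + o_{L_1}(1)$ with $\tilde r_j := \frac{1}{1+2\lambda_j^2 c_\infty}(Y_j - X_j^\top\hat\beta_{(1,2)})$, so $\mathrm{Cov}(\psi(r_1),\psi(r_2)) = \mathrm{Cov}(\psi(\tilde r_1),\psi(\tilde r_2)) + o(1)$. Now $\tilde r_1$ and $\tilde r_2$ are conditionally independent given $\mathcal{G} := \sigma(\hat\beta_{(1,2)},\lambda_1,\lambda_2)$, so $\mathrm{Cov}(\psi(\tilde r_1),\psi(\tilde r_2)) = \mathrm{Cov}(h_1,h_2)$ with $h_j := \mme[\psi(\tilde r_j)\mid\mathcal{G}]$; the conditional central limit argument underlying Lemma \ref{lem:ridge_resid_characterization} (with $\hat\beta_{(n+1)}$ replaced by $\hat\beta_{(1,2)}$, the sample size $n-1$ being immaterial since $d/(n-1)\to\gamma$) gives $h_j\stackrel{\mmp}{\to}\mme[\psi(V)]$, and since $|h_j|\le\|\psi\|_\infty$ this forces $\mathrm{Cov}(h_1,h_2)\to 0$. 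Hence $\mathrm{Var}(T_n)\to 0$, so $T_n\stackrel{\mmp}{\to}\mme[\psi(V)]$, which proves the first display.

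For (iii), put $F(t) := \mmp(|V|\le t)$. Since $\lambda>0$ a.s.\ and $N_\infty>0$, conditionally on $\lambda$ the law of $V$ is a convolution with a nondegenerate Gaussian and thus has a strictly positive density; consequently $V$ has a density positive on all of $\mmr$, $|V|$ has a continuous and strictly increasing distribution function on $[0,\infty)$, and $q := \mathrm{Quantile}(1-\alpha,|V|) > 0$ is unique with $F(q-\varepsilon) < 1-\alpha < F(q+\varepsilon)$ for every $\varepsilon\in(0,q)$. For fixed $t,h>0$, taking piecewise-linear $\psi^{\pm}$ with $\bone_{[-t+h,t-h]}\le\psi^-\le\bone_{[-t,t]}\le\psi^+\le\bone_{[-t-h,t+h]}$ and applying the already-established first display to $\psi^{\pm}$ sandwiches $\hat F_n(t) := \frac{1}{n+1}\sum_{i=1}^{n+1}\bone\{|Y_i - X_i^\top\hat\beta|\le t\}$; letting $n\to\infty$ and then $h\to 0$, and using that $|V|$ has no atoms, gives $\hat F_n(t)\stackrel{\mmp}{\to}F(t)$ for every $t>0$. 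Applied at $t = q\pm\varepsilon$, this shows that with probability tending to one $\hat F_n(q-\varepsilon) < 1-\alpha\le\hat F_n(q+\varepsilon)$, i.e.\ $\mathrm{Quantile}(1-\alpha,\hat F_n)\in[q-\varepsilon,q+\varepsilon]$; since $\varepsilon$ is arbitrary the empirical quantile converges in probability to $q$.

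The main obstacle is the covariance bound in step (ii): controlling the leave-two-out perturbation $X_1^\top(\hat\beta_{(1)} - \hat\beta_{(1,2)})$ and re-running the conditional central limit theorem of Lemma \ref{lem:ridge_resid_characterization} for the doubly-reduced fit, while handling the low-probability events on which the resolvents are ill-conditioned and upgrading in-probability bounds to the $L_1$ statements used in the covariance decomposition. All of these follow the template of the earlier lemmas but depend on the uniform moment control established in their proofs.
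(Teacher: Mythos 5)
Your argument is correct and follows essentially the same route as the paper: a first/second-moment computation using exchangeability, a leave-two-out swap plus conditional independence to kill the covariance, and a CDF-sandwich argument for the quantile based on the continuity and strict monotonicity of the limiting law. The only difference is one of packaging — the covariance step you derive by hand is exactly the second assertion of Lemma \ref{lem:ridge_CLT}, and the quantile step is the paper's generic Lemmas \ref{lem:quantile_conv_fact} and \ref{lem:asymptotic_prox_dist_facts} — so you have re-proved, rather than cited, ingredients the paper already has in place.
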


With these preliminary results in hand, we are now ready to state the main result of this section. In particular, we show that the training-conditional coverage of full conformal ridge regression converges to the target level, while the uncorrected method that does not fit with an imputed test point fails to account for the shrinkage induced by high-dimensional overfitting and thus systematically undercovers.

\begin{theorem}\label{thm:ridge_asymp_cov}
    Assume that $\epsilon_i$ and $\sqrt{d}\beta^*_i$ have $8$ bounded moments. Then, under the setting of Section \ref{sec:high_dim_setting} with $\ell(r) = r^2$,
    \[
    \mmp\left(Y_{n+1} \in \hat{C}_{\textup{full}} \mid \{(X_i,Y_i)\}_{i=1}^n \right) \stackrel{\mmp}{\to} 1-\alpha.
    \]
    Moreover, the uncorrected prediction set, 
\[    \hat{C}_{\textup{uncorr.}} := \left\{y : |y - X_{n+1}^\top\hat{\beta}_{(n+1)}| \leq \textup{Quantile}\left(1-\alpha, \frac{1}{n} \sum_{i=1}^{n} \delta_{|Y_i - X_{i}^\top\hat{\beta}_{(n+1)}|} \right) \right\},
\]
has asymptotic undercoverage,
    \[
     \mmp\left(Y_{n+1} \in \hat{C}_{\textup{uncorr.}} \mid \{(X_i,Y_i)\}_{i=1}^n \right)  \stackrel{\mmp}{\to} \mmp\left( \left|\epsilon + \lambda N_{\infty}Z \right| \leq \textup{Quantile}\left(1-\alpha,  \left|\frac{\epsilon + \lambda N_{\infty}Z}{1+2\lambda^2c_{\infty}}\right| \right) \right) < 1-\alpha,
    \]
    where $(\lambda, \epsilon)$ denotes a copy of $(\lambda_{i},\epsilon_{i})$ independent of $Z \sim N(0,1)$.
\end{theorem}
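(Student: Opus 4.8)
The plan is to deduce the theorem from Lemmas~\ref{lem:ridge_loo_lemma}--\ref{lem:ridge_quant_conv}: essentially all of the analytic content has been isolated in those lemmas, so the remaining work is a careful assembly, the one genuine subtlety being that the conformal quantile is not a function of the training data alone.

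\textbf{Full conformal.} I would write $R_{n+1} := Y_{n+1} - X_{n+1}^\top\hat\beta$ and $\hat q_n := \textup{Quantile}(1-\alpha, \tfrac1{n+1}\sum_{i=1}^{n+1}\delta_{|Y_i - X_i^\top\hat\beta|})$, so that $\{Y_{n+1}\in\hat C_{\textup{full}}\} = \{|R_{n+1}|\le\hat q_n\}$. By Lemma~\ref{lem:ridge_loo_lemma}, $R_{n+1} = \tilde R_{n+1} + o_{\mmp}(1)$ with $\tilde R_{n+1} := \tfrac{1}{1+2\lambda_{n+1}^2 c_\infty}(Y_{n+1} - X_{n+1}^\top\hat\beta_{(n+1)})$; by Lemma~\ref{lem:ridge_resid_characterization} the conditional law of $\tilde R_{n+1}$ given the training data converges weakly, in probability, to that of $T := \tfrac{1}{1+2\lambda^2 c_\infty}(\epsilon + \lambda N_\infty Z)$; and by Lemma~\ref{lem:ridge_quant_conv}, $\hat q_n \stackrel{\mmp}{\to} q_\infty := \textup{Quantile}(1-\alpha, |T|)$. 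Fixing $\eta > 0$, on the event $G_\eta := \{|R_{n+1} - \tilde R_{n+1}|\le\eta\}\cap\{|\hat q_n - q_\infty|\le\eta\}$ one has the pointwise bound $\indic{|\tilde R_{n+1}|\le q_\infty - 2\eta}\le\indic{|R_{n+1}|\le\hat q_n}\le\indic{|\tilde R_{n+1}|\le q_\infty + 2\eta}$, while $\mmp(G_\eta^c\mid\text{train}) \to 0$ in $L_1$ (hence in probability) because $\mmp(G_\eta^c)=o(1)$. I would then take conditional expectations, squeeze the two outer indicators between bounded Lipschitz functions, invoke Lemma~\ref{lem:ridge_resid_characterization}, and send $\eta\downarrow 0$ using the fact that $|T|$ has a continuous distribution (a Gaussian convolution, since $N_\infty,\lambda>0$); this gives $\mmp(Y_{n+1}\in\hat C_{\textup{full}}\mid\text{train}) \stackrel{\mmp}{\to} \mmp(|T|\le q_\infty) = 1-\alpha$.

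\textbf{Uncorrected.} Two additional ingredients are needed. First, rerunning the argument behind Lemma~\ref{lem:ridge_resid_characterization} without the shrinkage factor---equivalently, combining the conditional central limit theorem for $W_{n+1}^\top(\beta^* - \hat\beta_{(n+1)})$ proved there with Lemma~\ref{lem:ridge_norm_conv}, conditioning also on $\lambda_{n+1}$---shows that the conditional law of $Y_{n+1} - X_{n+1}^\top\hat\beta_{(n+1)} = \epsilon_{n+1} + X_{n+1}^\top(\beta^* - \hat\beta_{(n+1)})$ given the training data converges to that of $\epsilon + \lambda N_\infty Z$. Second, the empirical $1-\alpha$ quantile of the uncorrected residuals converges to the \emph{same} $q_\infty$: for bounded Lipschitz $\psi$, $\tfrac1n\sum_{i=1}^n\psi(Y_i - X_i^\top\hat\beta_{(n+1)})$ differs from $\tfrac1{n+1}\sum_{i=1}^{n+1}\psi(Y_i - X_i^\top\hat\beta)$ by (i) the dropped $i=n+1$ term and the change of normalization, both $O(1/n)$ as $\psi$ is bounded, and (ii) a term bounded by $\textup{Lip}(\psi)\cdot\tfrac1n\sum_{i=1}^n|X_i^\top(\hat\beta - \hat\beta_{(n+1)})|$, whose mean equals $\textup{Lip}(\psi)\cdot\mme|X_1^\top(\hat\beta - \hat\beta_{(n+1)})| = o(1)$ by exchangeability of the first $n$ samples and Lemma~\ref{lem:ridge_stab}; so Lemma~\ref{lem:ridge_quant_conv} and continuity of the limit yield $\hat q_n^{\,\textup{unc}} \stackrel{\mmp}{\to} q_\infty$. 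Since $\hat q_n^{\,\textup{unc}}$ \emph{is} measurable with respect to the training data, no bad-event correction is required here; combining with the conditional weak convergence of the test residual and the same Lipschitz squeeze gives $\mmp(Y_{n+1}\in\hat C_{\textup{uncorr.}}\mid\text{train}) \stackrel{\mmp}{\to} \mmp(|\epsilon + \lambda N_\infty Z|\le q_\infty)$.

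\textbf{Strict undercoverage and the main obstacle.} It remains to show $\mmp(|\epsilon + \lambda N_\infty Z|\le q_\infty) < 1-\alpha$. Set $W := \epsilon + \lambda N_\infty Z$ and $U := |W|$, and note $|T| = U/(1+2\lambda^2 c_\infty)$. Since $\lambda>0$ and $c_\infty>0$ almost surely and $W\ne 0$ almost surely, $|T| < U$ almost surely, hence $\{U\le q_\infty\}\subseteq\{|T|\le q_\infty\}$ and $\mmp(U\le q_\infty) = 1-\alpha - \mmp\big(q_\infty < U\le q_\infty(1+2\lambda^2 c_\infty)\big)$. Because $N_\infty>0$, conditionally on $\lambda$ the variable $W$ has full support on $\mmr$ and the conditioning interval is nonempty, so the subtracted probability is strictly positive, giving the claim. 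I expect the delicate points of the full argument to be: (a) transferring the \emph{unconditional} $o_{\mmp}(1)$ conclusions of Lemmas~\ref{lem:ridge_loo_lemma} and \ref{lem:ridge_quant_conv} into statements about the training-conditional coverage---because $\hat q_n$ depends on $Y_{n+1}$ one cannot simply substitute, and the bad-event/$L_1$ passage must be handled with care; and (b) checking the mild regularity (no atom of $|T|$ at $q_\infty$, full support of $W$) that pins the full-conformal limit at exactly $1-\alpha$ and yields strict undercoverage for the uncorrected set. Everything else is bookkeeping on top of the lemmas.
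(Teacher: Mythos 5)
Your proposal is correct and follows essentially the same route as the paper, which proves this result via the argument given for Theorem \ref{thm:conv_tau_fixed}: decompose the conditional coverage event, convert the marginal $o_{\mmp}(1)$ errors from Lemmas \ref{lem:ridge_loo_lemma} and \ref{lem:ridge_quant_conv} into conditional statements by Markov's inequality (the paper's Lemma \ref{lem:marg_to_cond_conv}), squeeze the indicator between bounded continuous functions, invoke Lemma \ref{lem:ridge_resid_characterization}, and use continuity and strict monotonicity of the limiting law to send the slack to zero, treating the uncorrected set analogously. Your interval-probability decomposition for the strict undercoverage and your explicit stability/exchangeability argument for the convergence of the uncorrected quantile (which the paper compresses into ``repeat the arguments from above'') are only minor, equivalent variants of the paper's reasoning.
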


We conclude this section with a simulation demonstrating the guarantee of Theorem \ref{thm:ridge_asymp_cov}. Data for this experiment are sampled from the Gaussian linear model given by, $W_i \sim \mathcal{N}(0,I_d)$, $\lambda_i \sim \text{Uniform}([0,1])$, $\epsilon_i \sim \mathcal{N}(0,1)$, and $Y_i = \lambda_i W_i^\top\beta^* + \epsilon_i$. In each trial of the experiment we sample $\beta^*$ from $\mathcal{N}(0,I_d/d)$ and evaluate the training-conditional coverage empirically over 2000 test points taken from the same distribution. 

\begin{figure}[ht]
    \centering 
    \includegraphics[width=\textwidth]{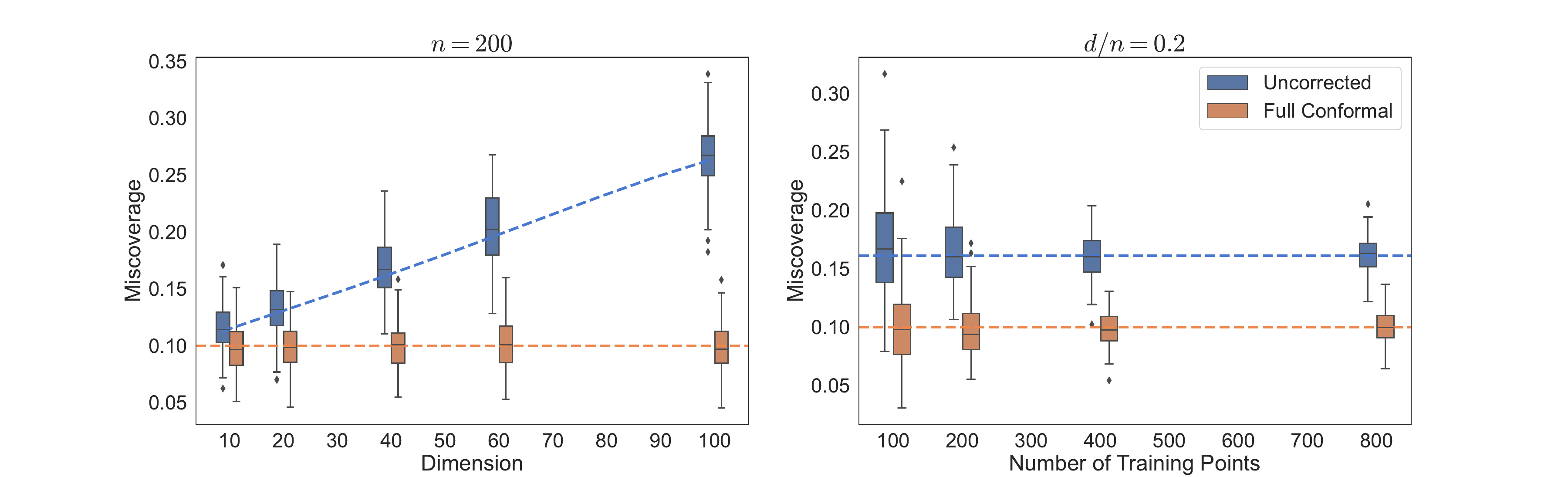}
    \caption{Empirical distribution of the training-conditional miscoverage of full conformal ridge regression (orange) and the uncorrected method (blue) that does not include the new test point in the fit. Boxplots in the figure show results from 100 trials, where in each trial the training-conditional miscoverage is evaluated empirically by averaging over 2000 test points. Dotted lines display the asymptotic miscoverages predicted by our theory. Note that since full conformal obtains the desired miscoverage asymptotically, the orange line also displays the target level of $\alpha = 0.1$.}
    \label{fig:ridge_cov}
\end{figure}

The boxplots in Figure \ref{fig:ridge_cov} display the observed distributions of the training-conditional miscoverage of full conformal ridge regression and of  the uncorrected method that does not include the new test point in the fit. Both methods are run at regularization level $\tau = 0.1$.\footnote{This value was chosen using data independent from the data used in the our evaluation of full conformal inference based on the fact that it gave small test error across all sample sizes and dimensions.} As the left panel shows, full conformal inference is extremely robust to the dimension, obtaining nearly exact training-conditional coverage for all values of $d$. This contrasts sharply with the uncorrected method, which displays undercoverage throughout. Notably, the improved coverage of full conformal inference is not limited to the high-dimensional examples and can be seen clearly at even what might typically be considered moderate levels of $d/n$, e.g., $n=200$, $d \in \{10,20\}$. Finally, we find that the convergence of the training-conditional coverage is quite rapid and the coverage of both methods appears tightly concentrated around their asymptotic values at a modest sample size of $n=800$ (right panel).

\subsection{Extension to generic loss functions}\label{sec:general_losses}

In this section, we show that our results on full conformal ridge regression can be readily extended to a more generic class of loss functions. More precisely, we will study a class of convex losses with three bounded derivatives. Interest in these functions comes from their application in high-dimensional robust regression and this class includes, for example, smooth approximations of the Huber function (see Section 2.3.5 of \cite{EK2018} for details). Our proofs will make extensive use of the work of \citet{EK2018}, which gives asymptotic characterizations of the residuals and leave-one-out residuals in this context that are analogous to our results for ridge regression from the previous section. 

While our assumptions on the loss may be strong, we expect our results to readily extend to other common contexts. For example, \citet{EK2013} gives a similar asymptotic characterization of the residuals under the assumption that the first two derivatives of $\ell(\cdot)$ grow polynomially, i.e. for $|x|$ large, $|\ell'(x)|, \ell''(x) \leq O(|x|^m)$. Thus, we expect that our results can be extended to this regime as well. Unfortunately, rigorously proving such an extension requires significant technical effort that we do not undertake here.

Before stating our results in detail, we first give a precise description of the assumptions of this section.\\

\noindent \textbf{Assumptions:}
\begin{enumerate}
    \item 
    The loss is convex with three bounded derivatives. Moreover, for all $x$, $\text{sign}(\ell'
    (x)) = \text{sign}(x)$, $\ell(x) \geq \ell(0) = 0$, and there exists $\delta > 0$ such that $\ell''(r) > 0$  for all $0 < |r| < \delta$.
    \item 
    The covariates are distributed as $X_i = \lambda_i W_i$, where $\lambda_i \stackrel{i.i.d}{\sim} P_{\lambda}$, $W_{ij} \stackrel{i.i.d.}{\sim} P_W$, and $\{\lambda_i\}_{i=1}^{n+1}$ is independent of $\{W_i\}_{i=1}^{n+1}$. Moreover, $P_{\lambda}$ has a bounded support and $\mmp(\lambda > 0) = 1$. Finally, $P_W$ is sub-Gaussian, mean zero, unit variance, and has the property that there exists constants $c_1, c_2, c_3 \geq 0$, such that for any $k \in \mmn$ with $k>1$ and any 1-Lipschitz, convex function $f : \mmr^k \to \mmr$,
    \begin{equation}\label{eq:lip_tail_condition}
    \mmp_{\tilde{w}_1,\dots,\tilde{w}_k \stackrel{i.i.d.}{\sim} P_W}(\left|f(\tilde{w}_1,\dots,\tilde{w}_k) - m_f \right| \geq t) \leq c_1 \exp\left(-\frac{c_2 t^2}{\log(k)^{c_3}}\right),\ \forall t >0,
    \end{equation}
    where $m_f$ denotes a median of $f(\tilde{w}_1,\dots,\tilde{w}_k)$.
    \item 
    The response is distributed as $Y_i = X_i^\top \beta^* + \epsilon_i$, where $\epsilon_{i} \stackrel{i.i.d.}{\sim} P_{\epsilon}$ and $\beta^*$ is a random vector sampled as $\sqrt{d}\beta^*_i \stackrel{i.i.d.}{\sim} P_{\beta}$. Moreover, $\beta^*$, $\{\epsilon_i\}_{i=1}^{n+1}$, $\{\lambda_i\}_{i=1}^{n+1}$, and $\{W_i\}_{i=1}^{n+1}$ are jointly independent, $P_{\beta}$ has bounded support, and $P_{\epsilon}$ is symmetric and unimodal with a bounded, positive, differentiable density $f_{\epsilon}$ satisfying $\lim_{|x| \to \infty} xf_{\epsilon}(x) = 0$.
\end{enumerate}

While our assumptions on the distribution of $P_W$ may appear somewhat unusual, we note that these conditions are met by many common distributions. For example, our assumption (\ref{eq:lip_tail_condition}) is met if $W_{ij}$ is normally distributed or has a bounded support. Further discussion of this condition can be found in \citet{EK2018} and the monograph \citet{Ledoux2001}. 

Now, under Assumptions 1-3, Theorem C.6 of \citet{EK2018} shows that $L_2$-regularized regression with loss $\ell(\cdot)$ meets the stability condition of \citet{Liang2023} (\ref{eq:stab_condition}). Thus, we may immediately conclude that the conservative prediction set (\ref{eq:adjusted_prediction_set}) achieves an asymptotic training conditional coverage of at least $1-\alpha$. 

Once again, by going beyond stability we can attain a much more precise characterization of the behaviour of full conformal inference. Recall that for any function $f$, the proximal map is the function $\text{prox}(f) : \mmr \to \mmr$ defined by
\[
\text{prox}(f)(x) := \text{argmin}_v f(v) + \frac{1}{2}(x-v)^2.
\]
Then, leveraging the results of \citet{EK2013}, we obtain the following asymptotic characterization of the residuals. 
\begin{lemma}\label{lem:gen_resid_char}
Under Assumptions 1-3 above, there exists a constant $c_{\infty} > 0$ such that
\[
(Y_{n+1} - X_{n+1}^\top \hat{\beta}) - \textup{prox}(\lambda_{n+1}^2 c_{\infty} \ell)\left(Y_{n+1} - X_{n+1}^\top \hat{\beta}_{(n+1)}\right) \stackrel{\mmp}{\to} 0.
\]
\end{lemma}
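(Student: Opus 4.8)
The plan is to obtain an approximate leave-one-out representation of the test residual $R_{n+1} := Y_{n+1} - X_{n+1}^\top\hat\beta$ by a second-order expansion of the fitting objective and then to recognize the reduced problem as a proximal operator. Write $g(\beta) := \frac{1}{n+1}\sum_{i=1}^n \ell(Y_i - X_i^\top\beta) + \tau\|\beta\|_2^2$, so that $\hat\beta = \text{argmin}_\beta\{g(\beta) + \frac{1}{n+1}\ell(Y_{n+1} - X_{n+1}^\top\beta)\}$. The leave-one-out fit $\hat\beta_{(n+1)}$ minimizes an objective differing from $g$ only by an $O(1/n)$ rescaling of the ridge penalty, so $\nabla g(\hat\beta_{(n+1)}) = O_{\mmp}(1/n)$, and expanding $g$ to second order about $\hat\beta_{(n+1)}$ with Hessian $H := \frac{1}{n+1}\sum_{i=1}^n \ell''(Y_i - X_i^\top\hat\beta_{(n+1)})X_iX_i^\top + 2\tau I_d \succeq 2\tau I_d$ replaces the definition of $\hat\beta$, up to an $o_{\mmp}(1)$ error, by $\text{argmin}_\delta\{\tfrac12\delta^\top H\delta + \tfrac{1}{n+1}\ell(\tilde R_{n+1} - X_{n+1}^\top\delta)\}$, where $\delta = \beta - \hat\beta_{(n+1)}$ and $\tilde R_{n+1} := Y_{n+1} - X_{n+1}^\top\hat\beta_{(n+1)}$. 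This objective depends on $\delta$ only through $\delta^\top H\delta$ and the scalar $X_{n+1}^\top\delta$, so at the optimum $\delta$ is proportional to $H^{-1}X_{n+1}$; eliminating all other directions, multiplying through by the positive scalar $X_{n+1}^\top H^{-1}X_{n+1}$, and substituting $u = \tilde R_{n+1} - X_{n+1}^\top\delta$ (which equals $R_{n+1}$ at the optimum) turns the problem into $\min_u\{\tfrac12(\tilde R_{n+1} - u)^2 + c_{n+1}\ell(u)\}$ with $c_{n+1} := \frac{1}{n+1}X_{n+1}^\top H^{-1}X_{n+1} \ge 0$. That is,
\[
R_{n+1} = \text{prox}(c_{n+1}\ell)(\tilde R_{n+1}) + o_{\mmp}(1).
\]

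It then remains to show $c_{n+1} \stackrel{\mmp}{\to} \lambda_{n+1}^2 c_\infty$ for a deterministic $c_\infty > 0$ and to pass this limit through the prox. Writing $X_{n+1} = \lambda_{n+1}W_{n+1}$ with $W_{n+1}$ independent of $H$, the quadratic form $\frac{1}{n+1}W_{n+1}^\top H^{-1}W_{n+1}$ concentrates around $\frac{1}{n+1}\mathrm{tr}(H^{-1})$ by the sub-Gaussian tail assumption (\ref{eq:lip_tail_condition}) applied to the $1$-Lipschitz map $w \mapsto (n+1)^{-1/2}\|H^{-1/2}w\|$, and $\frac{1}{n+1}\mathrm{tr}(H^{-1})$ is bounded because $2\tau I_d \preceq H$ has bounded operator norm. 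The convergence $\frac{1}{n+1}\mathrm{tr}(H^{-1}) \to c_\infty$ is exactly the Stieltjes-transform/fixed-point analysis of \citet{EK2013, EK2018}: the one nonstandard feature is that the Hessian weights $\ell''(Y_i - X_i^\top\hat\beta_{(n+1)})$ are random, but their joint behaviour is controlled by the leave-one-out residual characterization of those works, which applies here after conditioning on $\{(X_i,Y_i)\}_{i=1}^n$, and $c_\infty$ is identified with the limiting normalized trace appearing there (for $\ell(r) = r^2$ this recovers the constant of Lemma \ref{lem:ridge_loo_lemma}, since $\text{prox}(c\ell)(x) = x/(1+2c)$). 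Finally, $c_{n+1}$ lies in a fixed compact subset of $(0,\infty)$ and, since $\hat\beta_{(n+1)}$ is independent of $(X_{n+1}, Y_{n+1})$, $\tilde R_{n+1} = \epsilon_{n+1} + \lambda_{n+1}W_{n+1}^\top(\beta^* - \hat\beta_{(n+1)}) = O_{\mmp}(1)$ using the boundedness of $\|\beta^* - \hat\beta_{(n+1)}\|_2$ from \citet{EK2018}; since $(c,x) \mapsto \text{prox}(c\ell)(x)$ is jointly continuous (hence uniformly continuous on compacta) — immediate from the implicit function theorem, as $\partial_v[c\ell'(v) + v - x] = c\ell''(v) + 1 \ge 1$ — we may replace $\text{prox}(c_{n+1}\ell)$ by $\text{prox}(\lambda_{n+1}^2 c_\infty\ell)$ in the display above, which proves the lemma.

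The main obstacle is making the second-order reduction rigorous, i.e. controlling the $o_{\mmp}(1)$ error in the expansion of $\hat\beta$'s objective. This requires a uniform bound on the third-order Taylor remainder of $g$, combining the bound on $\ell'''$ with the smallness of the perturbation $\hat\beta - \hat\beta_{(n+1)}$ — quantitatively, $\|\hat\beta - \hat\beta_{(n+1)}\|_2 = O_{\mmp}(n^{-1/2})$ and $\max_{i \le n}|X_i^\top(\hat\beta - \hat\beta_{(n+1)})| = o_{\mmp}(1)$ — and then propagating this error through the strongly convex reduced problem (whose curvature is at least $2\tau$) to bound $\delta$, and hence $R_{n+1}$. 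These perturbation bounds follow from the stability estimate (Theorem C.6 of \citet{EK2018}) together with the concentration inequality (\ref{eq:lip_tail_condition}), but verifying them in our elliptical model still demands care; in practice the cleanest route, which we follow, is to check that our setting meets the hypotheses under which \citet{EK2013} establish their leave-one-out residual characterization and then to read off Lemma \ref{lem:gen_resid_char} as its specialization to the index $n+1$, so that the bulk of this error analysis can be cited rather than reproduced.
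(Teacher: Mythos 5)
Your proposal is correct and follows essentially the same route as the paper: the paper likewise obtains $R_{n+1} - \textup{prox}(c_{n+1}\ell)(R_{n+1}^{(n+1)}) \stackrel{\mmp}{\to} 0$ and $c_{n+1} - \lambda_{n+1}^2 c_{\infty} \stackrel{\mmp}{\to} 0$ by citing the leave-one-out analysis of \citet{EK2018} (Theorem 3.9, Theorem 2.6, Corollary 3.17, Lemma 3.26) rather than redoing the error analysis, and then swaps $c_{n+1}$ for $\lambda_{n+1}^2 c_{\infty}$ inside the prox via joint continuity of $(c,x)\mapsto\textup{prox}(c\ell)(x)$ together with $R_{n+1}^{(n+1)} = \epsilon_{n+1} + X_{n+1}^\top(\beta^*-\hat{\beta}_{(n+1)}) = O_{\mmp}(1)$, exactly as you do. The one detail you pass over is the strict positivity $c_{\infty}>0$ (the cited results only give nonnegativity), which the paper verifies from the fixed-point system of \citet{EK2018} — though your identification of $c_{\infty}$ as the limiting normalized trace of the inverse Hessian would also yield it with little effort.
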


Now, identical to our results for ridge regression, it is also shown in \citet{EK2018} that in this setting the estimation error $\|\beta^* - \hat{\beta}_{(n+1)}\|_2$ converges asymptotically to a constant, $N_{\infty} > 0$. Combining this with the result of Lemma \ref{lem:gen_resid_char}, and arguing as the previous section, \citet{EK2018} shows that asymptotically the residuals are approximately distributed as
\[
(Y_{n+1} - X_{n+1}^\top \hat{\beta}) \stackrel{D}{\approx} \textup{prox}(\lambda^2 c_{\infty} \ell)\left(\epsilon + \lambda N_{\infty} Z\right),
\]
where $(\lambda, \epsilon)$ denotes a draw of $(\lambda_i,\epsilon_i)$ independent of $Z \sim \mathcal{N}(0,1)$. 

As in the previous section, to fully characterize the behaviour of full conformal inference it only remains to control the empirical quantile of the residuals. Our final lemma does exactly this.

\begin{lemma}\label{lem:gen_quant_conv}
    Under Assumptions 1-3 above,
    \[
    \textup{Quantile}\left(1-\alpha, \frac{1}{n+1} \sum_{i=1}^{n+1} \delta_{|Y_i - X_i^\top \hat{\beta}|} \right) \stackrel{\mmp}{\to} \textup{Quantile}\left(1-\alpha, |\textup{prox}(\lambda^2 c_{\infty} \ell)\left(\epsilon + \lambda N_{\infty} Z\right)|\right),
    \]
    where $(\lambda, \epsilon)$ denotes a draw of $(\lambda_i,\epsilon_i)$ independent of $Z \sim \mathcal{N}(0,1)$.
\end{lemma}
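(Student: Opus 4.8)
The plan is to follow the same two-step template used for ridge regression in Lemma~\ref{lem:ridge_quant_conv}: first establish that the empirical distribution of the fitted residuals converges weakly in probability to the law $\mathcal L$ of $\textup{prox}(\lambda^2 c_\infty \ell)(\epsilon + \lambda N_\infty Z)$, and then transfer this to convergence of the $1-\alpha$ quantile of the \emph{absolute} residuals. For the first step I would prove the generic-loss analogue of the first display of Lemma~\ref{lem:ridge_quant_conv}, namely that for every bounded continuous $\psi$,
\[
\frac{1}{n+1}\sum_{i=1}^{n+1} \psi\!\left(Y_i - X_i^\top \hat\beta\right) \stackrel{\mmp}{\to} \mme\!\left[\psi\!\left(\textup{prox}(\lambda^2 c_\infty \ell)(\epsilon + \lambda N_\infty Z)\right)\right].
\]
By exchangeability of the $n+1$ data points, Lemma~\ref{lem:gen_resid_char} holds with any index $i$ in place of $n+1$, so $Y_i - X_i^\top\hat\beta = \textup{prox}(\lambda_i^2 c_\infty \ell)(Y_i - X_i^\top\hat\beta_{(i)}) + o_{\mmp}(1)$, where $\hat\beta_{(i)}$ omits observation $i$; what is actually needed is control of this error \emph{averaged} over $i$, which follows from the residual-vector estimates of \citet{EK2018} since those are uniform across coordinates. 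Because $X_i$ is independent of $\hat\beta_{(i)}$, one then writes $Y_i - X_i^\top\hat\beta_{(i)} = \epsilon_i + \lambda_i W_i^\top(\beta^* - \hat\beta_{(i)})$; conditioning on the remaining data, the second term is a weighted sum of i.i.d.\ mean-zero sub-Gaussian variables with total variance $\lambda_i^2\|\beta^*-\hat\beta_{(i)}\|_2^2$, and the delocalization bounds of \citet{EK2018} supply a Lindeberg condition so that, together with $\|\beta^*-\hat\beta_{(i)}\|_2 \stackrel{\mmp}{\to} N_\infty$, this is distributionally $\epsilon_i + \lambda_i N_\infty Z + o_{\mmp}(1)$. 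The displayed convergence then follows from a second-moment argument, for which the covariances of $\psi(Y_i - X_i^\top\hat\beta)$ and $\psi(Y_j - X_j^\top\hat\beta)$ for $i \neq j$ must be shown to vanish; this is handled by a leave-two-out perturbation, again leaning on \citet{EK2018}. Much of this can likely be imported directly from the empirical-residual-distribution results already in \citet{EK2018}, combined with Lemma~\ref{lem:gen_resid_char}.

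Next I would record the regularity of the limit law needed to make the quantile well-behaved. For each fixed $\lambda > 0$, $\textup{prox}(\lambda^2 c_\infty \ell) = (\mathrm{id} + \lambda^2 c_\infty \ell')^{-1}$ is a strictly increasing $C^1$ bijection of $\mmr$ with derivative in $[\,1/(1+\lambda^2 c_\infty \sup \ell''),\,1\,]$, bounded away from $0$ because $\ell''$ is bounded by Assumption~1. Hence, conditional on $\lambda$, $\textup{prox}(\lambda^2 c_\infty\ell)(\epsilon + \lambda N_\infty Z)$ is a bi-Lipschitz strictly monotone transform of the Gaussian convolution $\epsilon + \lambda N_\infty Z$, which has an everywhere-positive density, so it too has an everywhere-positive density; averaging over $\lambda$, $\mathcal L$ has an everywhere-positive density and in particular no atoms. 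Therefore $G(t) := \mmp(|\textup{prox}(\lambda^2 c_\infty\ell)(\epsilon + \lambda N_\infty Z)| \le t)$ is continuous and strictly increasing on $(0,\infty)$ with $G(0)=0$, so its $1-\alpha$ quantile $q$ is the unique solution of $G(q)=1-\alpha$ and $G$ is strictly increasing at $q$.

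Finally, combining these: sandwiching $x \mapsto \bone\{|x| \le t\}$ between bounded continuous functions and using the first step together with the absence of atoms of $\mathcal L$, one gets $\widehat G_n(t) := \frac{1}{n+1}\sum_{i=1}^{n+1}\bone\{|Y_i - X_i^\top\hat\beta| \le t\} \stackrel{\mmp}{\to} G(t)$ for every $t \ge 0$. For any $\eta > 0$, the regularity of $\mathcal L$ gives $G(q-\eta) < 1-\alpha < G(q+\eta)$, so with probability tending to $1$ we have $\widehat G_n(q-\eta) < 1-\alpha \le \widehat G_n(q+\eta)$, which forces the empirical $1-\alpha$ quantile to lie in $[q-\eta,q+\eta]$ (the $O(1/n)$ discrepancy between the order-statistic and generalized-inverse conventions is immaterial); letting $\eta \downarrow 0$ gives the claim. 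The main obstacle is the first step, and specifically its two ``uniform'' ingredients: controlling the leave-one-out approximation error averaged over all $n+1$ indices rather than for a single index as in Lemma~\ref{lem:gen_resid_char}, and the covariance bound needed for the empirical average to concentrate, which requires a leave-two-out analysis layered on top of the \citet{EK2018} machinery. The regularity of $\mathcal L$ and the passage from empirical-measure convergence to quantile convergence are routine once the first step is in place.
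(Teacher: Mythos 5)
Your proposal is correct and follows essentially the same route as the paper: the paper proves this lemma (as Corollary \ref{corr:full_quantile_conv}) by first establishing weak convergence in probability of the empirical residual distribution via a first/second-moment and leave-two-out argument built on the \citet{EK2018} estimates (Lemma \ref{lem:conv_in_dist}), then showing the limit law $|\textup{prox}(\lambda^2 c_{\infty}\ell)(\epsilon+\lambda N_{\infty}Z)|$ has a continuous, strictly increasing CDF (Lemma \ref{lem:asymptotic_prox_dist_facts}), and finally transferring to quantile convergence by sandwiching indicators with bounded continuous functions (Lemma \ref{lem:quantile_conv_fact}). The only cosmetic difference is that where you re-run a Lindeberg CLT for the leave-one-out residuals, the paper imports the conditional distributional convergence directly from \citet{EK2018} (part 5 of Theorem \ref{thm:EK_facts}).
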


The above lemmas are already sufficient to show that the training-conditional coverage of full conformal inference converges to the target level. To understand the behaviour of the uncorrected method that does not fit with the imputed test point, note that the proximal function performs shrinkage. Namely, a short computation (Lemma \ref{lem:cont_diff_prox} in the Appendix) shows that 
\[
\frac{d}{dx} \text{prox}(\lambda_{n+1}^2 c_{\infty} \ell)(x) = \frac{1}{1+\lambda_{n+1}^2 c_{\infty} \ell''(\text{prox}(\lambda_{n+1}^2 c_{\infty} \ell)(x))} < 1.
\]
Combining this with Lemma \ref{lem:gen_resid_char} above, we conclude that in the limit, the leave-one-out residuals are larger (in absolute value) than the full test residuals and thus, the uncorrected method will undercover. These results are summarized in the following theorem.

\begin{theorem}\label{thm:conv_tau_fixed}
    Under Assumptions 1-3, the training-conditional coverage of full conformal inference converges in probability to the nominal level. Namely,
    \[
    \mmp\left(Y_{n+1} \in \hat{C}_{\textup{full}} \mid \{(X_i,Y_i)\}_{i=1}^n \right) \stackrel{\mmp}{\to} 1-\alpha.
    \]
    Moreover, the uncorrected prediction set,
    \begin{equation}\label{eq:uncorr_set}
    \hat{C}_{\textup{uncorr.}} := \left\{y : |y - X_{n+1}^\top\hat{\beta}_{(n+1)}| \leq \textup{Quantile}\left(1-\alpha, \frac{1}{n} \sum_{i=1}^{n} \delta_{|Y_i - X_{i}^\top\hat{\beta}_{(n+1)}|} \right) \right\},
    \end{equation}
    has asymptotic undercoverage,
    \begin{align*}
     \mmp\left(Y_{n+1} \in \hat{C}_{\textup{uncorr.}} \mid \{(X_i,Y_i)\}_{i=1}^n \right) & \stackrel{\mmp}{\to} \mmp\left( \left|\epsilon + \lambda N_{\infty}Z \right| \leq \textup{Quantile}\left(1-\alpha,  \left|\textup{prox}(\lambda^2c_{\infty}\ell)(\epsilon + \lambda N_{\infty}Z)\right| \right) \right) \\
     & < 1-\alpha,
    \end{align*}
    where $(\lambda, \epsilon)$ denotes a copy of $(\lambda_{i},\epsilon_{i})$ independent of $Z \sim N(0,1)$.
\end{theorem}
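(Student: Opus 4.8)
The plan is to derive both halves of the theorem from the residual characterizations in Lemmas~\ref{lem:gen_resid_char}--\ref{lem:gen_quant_conv} together with the convergence $\|\beta^* - \hat\beta_{(n+1)}\|_2 \stackrel{\mmp}{\to} N_\infty$ from \citet{EK2018}, exactly paralleling the structure used for ridge regression in Theorem~\ref{thm:ridge_asymp_cov}. First I would note that $Y_{n+1} \in \hat C_{\textup{full}}$ if and only if $|Y_{n+1} - X_{n+1}^\top \hat\beta| \le \textup{Quantile}(1-\alpha, \frac{1}{n+1}\sum_{i=1}^{n+1}\delta_{|Y_i - X_i^\top\hat\beta|})$, since the augmented fit at $y = Y_{n+1}$ coincides with $\hat\beta$. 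By Lemma~\ref{lem:gen_resid_char}, the left side is, up to $o_{\mmp}(1)$, equal to $|\textup{prox}(\lambda_{n+1}^2 c_\infty \ell)(Y_{n+1} - X_{n+1}^\top\hat\beta_{(n+1)})|$, and the right side converges in probability (by Lemma~\ref{lem:gen_quant_conv}) to the deterministic constant $q_\alpha := \textup{Quantile}(1-\alpha, |\textup{prox}(\lambda^2 c_\infty \ell)(\epsilon + \lambda N_\infty Z)|)$. So the conditional coverage is asymptotically $\mmp(|\textup{prox}(\lambda_{n+1}^2 c_\infty \ell)(Y_{n+1} - X_{n+1}^\top\hat\beta_{(n+1)})| \le q_\alpha \mid \{(X_i,Y_i)\}_{i=1}^n)$, and writing $Y_{n+1} - X_{n+1}^\top\hat\beta_{(n+1)} = \epsilon_{n+1} + \lambda_{n+1} W_{n+1}^\top(\beta^* - \hat\beta_{(n+1)})$ and applying the analogue of Lemma~\ref{lem:ridge_resid_characterization} (the conditional CLT for $W_{n+1}^\top(\beta^* - \hat\beta_{(n+1)})$ given the training data, which follows from $\|\beta^* - \hat\beta_{(n+1)}\|_2 \stackrel{\mmp}{\to} N_\infty$ and sub-Gaussianity), this conditional probability converges in probability to $\mmp(|\textup{prox}(\lambda^2 c_\infty\ell)(\epsilon + \lambda N_\infty Z)| \le q_\alpha)$, which equals $1-\alpha$ by the definition of $q_\alpha$ (using continuity of the limiting law, which holds since $\epsilon$ has a density and $\textup{prox}(\lambda^2 c_\infty\ell)$ is continuous and strictly increasing, hence a bijection).

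For the uncorrected set, the argument is the same but now the test statistic is the \emph{leave-one-out} residual itself rather than its proximal image, while the threshold is still built from proximal-shrunk residuals. Precisely, $Y_{n+1} \in \hat C_{\textup{uncorr.}}$ iff $|Y_{n+1} - X_{n+1}^\top\hat\beta_{(n+1)}| \le \textup{Quantile}(1-\alpha, \frac{1}{n}\sum_{i=1}^n \delta_{|Y_i - X_i^\top\hat\beta_{(n+1)}|})$. I would show the empirical quantile on the right converges in probability to the same $q_\alpha$: the training residuals $Y_i - X_i^\top\hat\beta_{(n+1)}$ for $i \le n$ have the same asymptotic description as the $Y_i - X_i^\top\hat\beta$ residuals appearing in Lemma~\ref{lem:gen_quant_conv} up to the negligible difference between the $n$- and $(n+1)$-sample fits (controlled by the stability estimate from \citet{EK2018}, Theorem~C.6, which gives $\mme|X_i^\top\hat\beta - X_i^\top\hat\beta_{(n+1)}| = o(1)$, plus a leave-one-out adjustment absorbed by the prox map as in Lemma~\ref{lem:gen_resid_char}); this yields the same empirical CDF limit and hence the same quantile. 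The left side, by the conditional CLT again, has conditional law converging to that of $|\epsilon + \lambda N_\infty Z|$. Therefore the conditional coverage converges in probability to $\mmp(|\epsilon + \lambda N_\infty Z| \le q_\alpha)$, which is the stated expression.

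The strict inequality $\mmp(|\epsilon + \lambda N_\infty Z| \le q_\alpha) < 1-\alpha$ is the crux of the undercoverage claim and I would prove it from the strict contraction property $\frac{d}{dx}\textup{prox}(\lambda^2 c_\infty\ell)(x) = (1 + \lambda^2 c_\infty \ell''(\textup{prox}(\lambda^2 c_\infty\ell)(x)))^{-1} < 1$ established in Lemma~\ref{lem:cont_diff_prox}, together with $\textup{prox}(\lambda^2 c_\infty\ell)(0) = 0$ (which follows from $\ell(0) = 0$ and $\textup{sign}(\ell'(x)) = \textup{sign}(x)$). These two facts give $|\textup{prox}(\lambda^2 c_\infty\ell)(u)| < |u|$ for all $u \neq 0$ (integrate the derivative from $0$), hence $\{|\epsilon + \lambda N_\infty Z| \le q_\alpha\} \supseteq \{|\textup{prox}(\lambda^2 c_\infty\ell)(\epsilon + \lambda N_\infty Z)| \le q_\alpha\}$ with the containment strict on an event of positive probability — here I use that the density of $\epsilon$ is positive everywhere and that $c_\infty > 0$, $\lambda > 0$ a.s., so the shrinkage is genuinely active — so the coverage of the uncorrected set is strictly below $1-\alpha$.

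The main obstacle I anticipate is not the chain of limits, which is routine given the cited lemmas, but rather making fully rigorous the claim that the uncorrected set's empirical quantile converges to the \emph{same} $q_\alpha$ as the full conformal set's: one must carefully track that replacing $\hat\beta$ by $\hat\beta_{(i)}$ in the $i$-th training residual (the natural leave-one-out object underlying $\hat\beta_{(n+1)}$-based residuals) introduces only a prox-type distortion that is common across all $i$, and that the discrepancy between fitting on $n$ versus $n+1$ points is uniformly negligible for the purpose of empirical-CDF convergence. This requires invoking the stability bound of \citet{EK2018} together with a Glivenko--Cantelli-type argument for the triangular array of residuals, analogous to (but slightly more delicate than) Lemma~\ref{lem:ridge_quant_conv}, and handling the case where the limiting CDF might be flat at $q_\alpha$ — ruled out here by unimodality and positivity of $f_\epsilon$, which guarantee the limiting law of $|\textup{prox}(\lambda^2 c_\infty\ell)(\epsilon + \lambda N_\infty Z)|$ has a positive density in a neighborhood of $q_\alpha$.
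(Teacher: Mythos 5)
Your proposal follows essentially the same route as the paper's proof: approximate the test residual by $\textup{prox}(\lambda_{n+1}^2 c_\infty \ell)\bigl(Y_{n+1}-X_{n+1}^\top\hat\beta_{(n+1)}\bigr)$ (Lemma \ref{lem:gen_resid_char}), replace the empirical quantile by its limit $q_\alpha$ (Lemma \ref{lem:gen_quant_conv}), pass to the conditional limiting law of the leave-one-out residual, and deduce the undercoverage of $\hat{C}_{\textup{uncorr.}}$ from the shrinkage of the proximal map; your handling of the uncorrected set (residuals under $\hat\beta_{(n+1)}$ are uniformly close to those under $\hat\beta$, so the empirical quantile has the same limit) is also how the paper proceeds, via part 1 of Theorem \ref{thm:EK_facts}. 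Two local points need fixing. First, the conditional CLT for $W_{n+1}^\top(\beta^*-\hat\beta_{(n+1)})$ does not follow from $\|\beta^*-\hat\beta_{(n+1)}\|_2\stackrel{\mmp}{\to}N_\infty$ and sub-Gaussianity alone: one also needs a Lindeberg-type control showing that no single coordinate of $\beta^*-\hat\beta_{(n+1)}$ carries non-negligible mass (this is exactly the role of Lemma \ref{lem:ridge_entry_control} in the ridge case); for general losses the paper does not re-derive this but imports the conditional limit directly from \citet{EK2018} (part 5 of Theorem \ref{thm:EK_facts}). Second, your set containment in the undercoverage step is reversed: since $|\textup{prox}(\lambda^2 c_\infty\ell)(u)|<|u|$ for $u\neq 0$, the correct inclusion is $\{|\epsilon+\lambda N_\infty Z|\le q_\alpha\}\subseteq\{|\textup{prox}(\lambda^2 c_\infty\ell)(\epsilon+\lambda N_\infty Z)|\le q_\alpha\}$, and strictness of this inclusion on an event of positive probability gives $\mmp(|\epsilon+\lambda N_\infty Z|\le q_\alpha)<1-\alpha$; the $\supseteq$ you wrote would, read literally, yield the opposite inequality (overcoverage). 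The paper reaches the same strict inequality slightly differently, by noting both limiting CDFs are strictly increasing (Lemma \ref{lem:asymptotic_prox_dist_facts}) and comparing the two $1-\alpha$ quantiles directly; either route works once the direction is corrected. Finally, the $\xi$-margin/bump-function bookkeeping needed to turn marginal $o_\mmp(1)$ approximations into conditional statements (Lemma \ref{lem:marg_to_cond_conv}) is glossed over in your sketch, but, as you anticipate, it is routine.
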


\subsection{Coverage with data-dependent regularization}\label{sec:tau_random}

The results from the previous sections require the regularization level to be fixed in advance of seeing the data. In practice, this is rarely what occurs and it is much more common for $\tau$ to be fit using cross-validation or another data-dependent procedure. The following result shows that full conformal inference seamlessly adapts to this setting and produces the desired coverage level regardless of the choice of regularization. Notably, this result does not require that the regularization level is stable or converging to any fixed optimum. Instead, we allow $\tau$ to be chosen arbitrarily with the small caveats that it cannot depend on the test point, $(X_{n+1},Y_{n+1})$ and it must stay bounded away from zero.

\begin{theorem}\label{thm:conv_with_cv}
    Suppose that the assumptions of either Theorem \ref{thm:ridge_asymp_cov} or Theorem \ref{thm:conv_tau_fixed} hold. Consider running full conformal prediction with random regularization level $ \tau_{\text{rand.}} = \tau_{\text{rand.}}(\{(X_i,Y_i)\}_{i=1}^n) \independent (X_{n+1},Y_{n+1})$ chosen dependent solely on the training data and with the property that there exists $\tau_0 > 0$ such that $\mmp(\tau_{\text{rand.}} \geq \tau_0) \to 1$. Then,
    \[
    \mmp\left(Y_{n+1} \in \hat{C}_{\textup{full}} \mid \{(X_i,Y_i)\}_{i=1}^n \right) \stackrel{\mmp}{\to} 1-\alpha.
    \] 
\end{theorem}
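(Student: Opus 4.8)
The plan is to reduce the statement to a version of Theorems~\ref{thm:ridge_asymp_cov} and~\ref{thm:conv_tau_fixed} that is uniform in the regularization level, and then evaluate that uniform bound at the random level $\tau_{\text{rand.}}$. Since $\tau_{\text{rand.}}$ is a function of the training data and of randomness independent of the test point, the pair $(X_{n+1},Y_{n+1})$ retains its original law conditionally on $\mathcal{D}_n := \{(X_i,Y_i)\}_{i=1}^n$ and on $\tau_{\text{rand.}}$. Writing $g_n(t) := \mmp(Y_{n+1} \in \hat{C}_{\text{full}}^{(t)} \mid \mathcal{D}_n)$ for the training-conditional coverage that full conformal at the fixed level $t$ would attain on $\mathcal{D}_n$, the quantity in the theorem equals $\mme[g_n(\tau_{\text{rand.}})\mid \mathcal{D}_n]$, so it suffices to show $g_n(\tau_{\text{rand.}}) \stackrel{\mmp}{\to} 1-\alpha$. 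Theorems~\ref{thm:ridge_asymp_cov} and~\ref{thm:conv_tau_fixed} already give $g_n(t) \stackrel{\mmp}{\to} 1-\alpha$ for each \emph{fixed} $t>0$; the aim is to upgrade this to $\sup_{t \in [\tau_0,M]} |g_n(t) - (1-\alpha)| \stackrel{\mmp}{\to} 0$ for every fixed $M$, together with a crude bound on $g_n(t)$ for $t \ge M$ (the hypothesis only bounds $\tau_{\text{rand.}}$ from below).

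On the compact range I would argue by a net: fix a $\delta$-net $\tau_0 = t_1 < \dots < t_K = M$, bound $\max_j |g_n(t_j) - (1-\alpha)|$ using the pointwise results and a union bound, and control the oscillation of $g_n$ between net points. The oscillation rests on the regularity of the path in $t$. Differentiating the first-order conditions, $\partial_t \hat{\beta}^{(t)} = -2 ( \tfrac{1}{n+1}\sum_{i} \ell''(Y_i - X_i^\top \hat{\beta}^{(t)}) X_i X_i^\top + 2t I_d )^{-1}\hat{\beta}^{(t)}$, so strong convexity gives $\|\partial_t \hat{\beta}^{(t)}\|_2 \le \|\hat{\beta}^{(t)}\|_2 / t$ and $\tfrac{1}{n+1}\|X \partial_t \hat{\beta}^{(t)}\|_2^2 \le \lambda_{\max}(\tfrac{1}{n+1}X^\top X)\,\|\hat{\beta}^{(t)}\|_2^2 / t^2$, both $O_{\mmp}(1)$ on $[\tau_0,M]$ since $\|\hat{\beta}^{(t)}\|_2$ is non-increasing in $t$ (a one-line convexity comparison) and $\lambda_{\max}(\tfrac{1}{n+1}X^\top X) = O_{\mmp}(1)$. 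Moreover, the leave-one-out expansion used to prove Lemma~\ref{lem:ridge_stab} shows that the test residual $Y_{n+1} - X_{n+1}^\top \hat{\beta}^{(t)}$ has $t$-derivative of size $O_{\mmp}(1)$ on $[\tau_0,M]$: the $\sqrt d$ norm of $X_{n+1}$ is offset by the $1/(n+1)$ prefactor in the rank-one correction and by the mean-zero averaging in $W_{n+1}^\top(\cdot)\hat{\beta}_{(n+1)}^{(t)}$. Feeding these bounds into the empirical residual c.d.f.\ (together with the fact that the limiting residual law has a bounded density, so an $L^2$-small perturbation of the residuals moves the empirical $1-\alpha$ quantile by a proportionally small amount) and noting that the test residual has a bounded conditional density (being a bi-Lipschitz increasing function of $\epsilon_{n+1}$ via the $\textup{prox}$ identity), one gets that $g_n$ is H\"older continuous in $t$ on $[\tau_0,M]$ with an $O_{\mmp}(1)$ modulus; plugging in a $\delta$-net and letting $\delta \to 0$ slowly gives the uniform bound on $[\tau_0,M]$.

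For $t \ge M$, comparing the penalized objective at $\hat{\beta}^{(t)}$ with its value at $0$ and using $\ell(0)=\ell'(0)=0$ and $\ell''\le C$ gives $\|\hat{\beta}^{(t)}\|_2^2 \le C' t^{-1}\,\tfrac{1}{n+1}\sum_{i=1}^{n+1} Y_i^2 = O_{\mmp}(t^{-1})$ uniformly, so the empirical residuals differ from $\{|Y_i|\}_{i=1}^{n+1}$ by $O_{\mmp}(M^{-1/2})$ in mean square and (again using that the $\sqrt d$ factors cancel in the leave-one-out expansion) the test residual differs from $|Y_{n+1}|$ by $O_{\mmp}(M^{-1/2})$. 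Since $Y_{n+1}\mid \mathcal{D}_n$ has a bounded density, $g_n(t)$ is then within $O_{\mmp}(M^{-1/2}) + o_{\mmp}(1)$ of the coverage of the degenerate rule that compares $|Y_{n+1}|$ to the empirical $1-\alpha$ quantile of $\{|Y_i|\}_{i=1}^{n+1}$, which equals $1-\alpha + o_{\mmp}(1)$ by exchangeability of i.i.d.\ samples and continuity of the law of $|Y_1|$. Assembling the three pieces — $\mmp(\tau_{\text{rand.}} < \tau_0)\to 0$, the uniform bound on $[\tau_0,M]$, and the tail bound on $(M,\infty)$ — and sending $n\to\infty$ and then $M\to\infty$ yields $g_n(\tau_{\text{rand.}}) \stackrel{\mmp}{\to} 1-\alpha$.

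The step I expect to be the real obstacle is the uniform-in-$t$ control of the empirical quantile of the residuals: the pointwise quantile-convergence lemmas (Lemmas~\ref{lem:ridge_quant_conv} and~\ref{lem:gen_quant_conv}) must be shown to hold with a modulus of continuity in $t$ that does not deteriorate with $n$, which requires both the H\"older-in-$t$ estimate on the empirical residual c.d.f.\ sketched above and a lower bound, uniform over $t$ in compact sets, on the density of the limiting residual law near its $1-\alpha$ quantile. Everything else — the conditioning/independence reduction, the convexity facts about the path, and the large-$t$ degeneracy — is comparatively routine.
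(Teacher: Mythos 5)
Your plan---prove the coverage statement uniformly in $\tau$ over a compact range $[\tau_0,M]$ by combining the fixed-$\tau$ results on a finite net with Lipschitz-in-$\tau$ control of the coefficient path (the convexity comparison and the monotonicity of $\|\hat{\beta}(\tau)\|_2$), handle $\tau > M$ through the degeneracy $\|\hat{\beta}(\tau)\|_2^2 \lesssim \tau^{-1}\frac{1}{n+1}\sum_i \ell(Y_i)$ so the residuals collapse to $|Y_i|$, invoke a bounded-density/quantile-regularity property of the limiting residual law, and finish using the independence of $\tau_{\text{rand.}}$ from the test point---is essentially the same route the paper takes, whose proof is built on exactly these ingredients (Lemmas \ref{lem:continuity_of_beta_in_tau}, \ref{lem:monotone_norm}, \ref{lem:large_tau_beta_norm_bound}, the uniform leave-one-out/prox and quantile lemmas \ref{lem:uniform_loo_resid_swap}, \ref{lem:uniform_c_n_swap}, \ref{lem:uniform_quant_conv}, the uniform bounded-c.d.f. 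Lemma \ref{lemma:uniformly_bounded_cdf}, and the ``$\tau=\infty$'' convention for large regularization). The step you single out as the real obstacle---making the leave-one-out residual representation and the empirical-quantile convergence uniform in $\tau$---is indeed where the paper spends its technical effort (polynomially sized grids in $\tau$, $\sqrt{n}$-Lipschitz operator-norm bounds, and sub-Gaussian concentration at grid points), so your proposal is correct in approach and matches the paper's argument.
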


The proof of Theorem \ref{thm:conv_with_cv} is conceptually quite similar to that of Theorems \ref{thm:ridge_asymp_cov} and \ref{thm:conv_tau_fixed} above. In particular, we show that the asymptotic approximations of the residuals and quantile function that we obtained in the previous section for fixed levels of the regularization are in fact accurate uniformly over $\tau \geq \tau_0$. The details of this argument are somewhat delicate and we defer a complete discussion to the Appendix.

\begin{figure}[ht]
    \centering
\includegraphics[width=\textwidth]{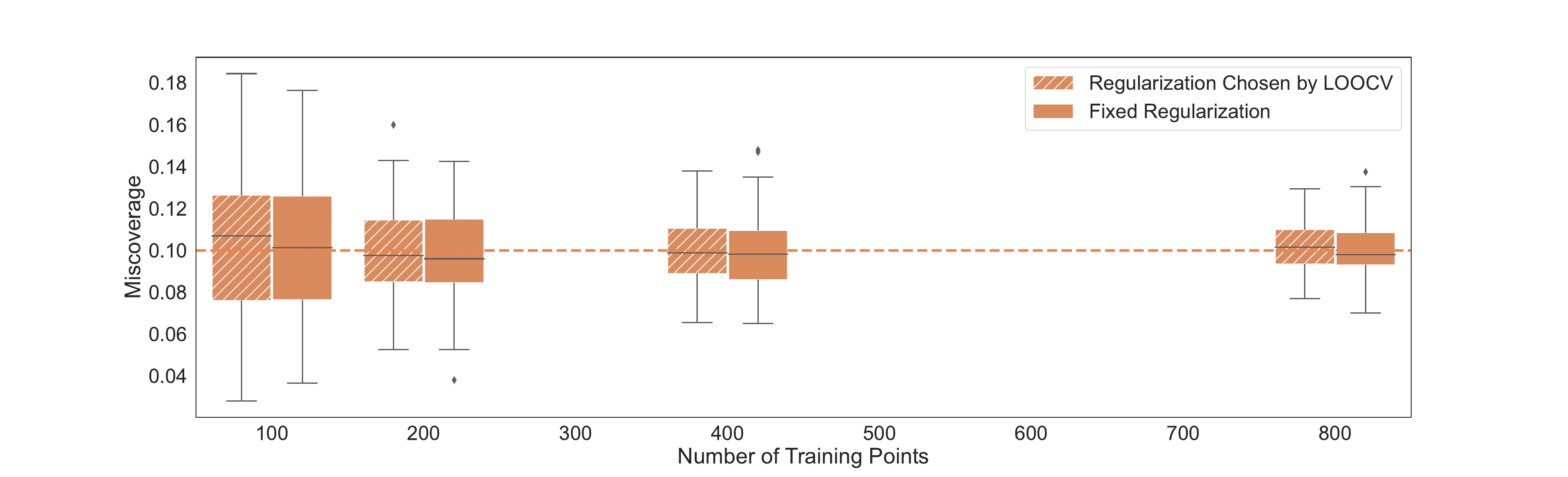}
    \caption{Empirical distribution of the training-conditional miscoverage of full conformal ridge regression with constant regularization level $\tau = 0.1$ (solid) and $\tau$ chosen using leave-one-out cross validation (striped) from the set $\{0,0.01,0.02,\dots,2\}$. Boxplots in the figure show results from 100 trials where in each trial the training-conditional coverage is evaluated empirically by averaging over 2000 test points. The orange dotted line shows the target level of $\alpha = 0.1$. For all samples sizes the dimension is set so that $d/n=0.25$ and data is generated as in Figure \ref{fig:ridge_cov}.}
    \label{fig:cv_validation}
\end{figure}

The utility of this result is shown in Figure \ref{fig:cv_validation}. Here we compare the coverage of full conformal ridge regression with $\tau$ chosen using leave-one-out cross-validation on the training data against the coverage of the same method with $\tau$ fixed at $0.1$. As predicted by our theory, we find that the training-conditional coverage of both methods converges to $1-\alpha$. Moreover, the rate of this convergence appears to be approximately the same for both methods. Thus, in this setting selection of the regularization level can essentially be done ``for free" using just the training data and without a full conformal correction. As discussed in the introduction, this contrasts with prior work on conformal inference in which a heavy emphasis is typically placed on treating the training and test data symmetrically across the entire model-fitting pipeline.

\section{Other applications of full conformal regression}

Beyond $L_2$-regularized regression, a variety of authors have considered full conformal prediction sets arising from alternative prediction algorithms (e.g.~\cite{VovkBook, Guha2023, Papadopoulos2023, Diptesh2024}). Two particularly popular prediction methods for which full conformal has been studied are the LASSO (\cite{Hebiri1010, Chen2016, Lei2019}) and quantile regression (\cite{CondConf}). The lack of differentiability and, in the case of quantile regression, strong convexity raises significant technical challenges to obtaining fully rigorous descriptions of the high-dimensional behaviour of these methods. Nevertheless, heuristic calculations suggest that the convergence results given in the previous sections can be extended to these settings. 

In this section, we give an informal treatment of full conformal prediction sets for the LASSO and quantile regression. Building on our previous theory, we derive formulas predicting the high dimensional behaviour of the fitted residuals, coefficients, and quantile function as well as the asymptotic training-conditional coverage of full conformal inference and its uncorrected alternative. Using simulated experiments, we then demonstrate that the predictions of our theory are extremely accurate at even moderate sample sizes and dimensions. Overall, these results demonstrate the generality of our approach and its potential applicability to a variety of other regression methods.

\subsection{The full conformal LASSO}

We begin by characterizing the behaviour of the full conformal LASSO. Our results will rely heavily on heuristic calculations performed in \citet{Bean2012} which suggest that the high-level conclusions of Sections \ref{sec:high_dim_ridge} and \ref{sec:general_losses} above also hold in this setting. More formally, let
\[
\hat{\beta} := \text{argmin}_{\beta \in \mmr^d} \frac{1}{n+1} \sum_{i=1}^{n+1} (Y_i - X_i^\top\beta)^2 + \frac{\tau}{\sqrt{n}} \|\beta\|_1
\]
denote the LASSO estimator fit on all $n+1$ datapoints, and 
\[
\hat{\beta}_{(n+1)} := \text{argmin}_{\beta \in \mmr^d} \frac{1}{n} \sum_{i=1}^{n} (Y_i - X_i^\top\beta)^2 + \frac{\tau}{\sqrt{n}}  \|\beta\|_1
\]
denote the same estimate fit on just the training data. Note that in our setting the true population regression coefficients obey $\|\beta^*\|_1 = O_{\mmp}(\sqrt{n})$ and thus we have normalized the penalty term to ensure that $\frac{\tau}{\sqrt{n}}  \|\hat{\beta}\|_1 = O_{\mmp}(1)$. 

Using non-rigorous calculations and simulated experiments \citet{Bean2012} provides evidence for the following conjecture.

\begin{conjecture}\label{conj:lasso_asymptotics}[Informal results of \citet{Bean2012}]
Under the assumptions on the data-generating process given in Section \ref{sec:general_losses}, there exist asymptotic constants $N_{\infty}$, $c_{\infty}$ such that
\[
 Y_i - X_i^\top\hat{\beta} - \frac{1}{1+2\lambda_{n+1}^2c_{\infty}}(Y_{n+1}-X_{n+1}^\top\hat{\beta}_{(n+1)}) \stackrel{\mmp}{\to} 0,
\]
and 
\[
\|\hat{\beta} - \hat{\beta}_{(n+1)}\|_2 \stackrel{\mmp}{\to} N_{\infty}.
\]
Moreover, $N_{\infty}$ and $c_{\infty}$ can be computed exactly as solutions to a system of equations that depends only on the distributions of $\epsilon_i$, $\lambda_i$, and $\beta^*_i$ as well as the regularization level, $\tau$ and dimension-to-sample size ratio, $\gamma = \lim_{n,d\to\infty} d/n$.\footnote{A detailed description of this system can be found in Section 3.2.3 of \citet{Bean2012}.}
\end{conjecture}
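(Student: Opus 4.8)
Since the statement is posed as a conjecture, the aim of the plan is to indicate how the non-rigorous derivation of \citet{Bean2012} could be turned into a proof, staying as close as possible to the leave-one-out strategy used for ridge regression in Section~\ref{sec:high_dim_ridge}. The starting point is the KKT condition for the LASSO fit on all $n+1$ points,
\[
\frac{2}{n+1}\sum_{i=1}^{n+1}(X_i^\top\hat\beta - Y_i)X_i + \frac{\tau}{\sqrt n}\, g = 0, \qquad g \in \partial\|\hat\beta\|_1,
\]
together with the analogous condition for $\hat\beta_{(n+1)}$. Let $S$ be the set of coordinates on which $\hat\beta$ and $\hat\beta_{(n+1)}$ are both nonzero with matching sign. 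On $S$ the subgradient $g$ is locally constant and cancels when the two KKT conditions are subtracted, so on the restricted problem the LASSO behaves exactly like least squares on the submatrix $X_S$ with a fixed linear offset. The argument of Lemmas~\ref{lem:ridge_stab}--\ref{lem:ridge_loo_lemma} then goes through on $X_S$: a Sherman--Morrison expansion produces a leave-one-out representation $\hat\beta_{(n+1)} = \hat\beta - M^{-1}v$ with $M$ equal to $\tfrac{1}{n+1}X_S^\top X_S$ plus an effective ridge term coming from the curvature of the penalty, $v$ a rank-one-type correction, the relevant Stieltjes transform of $\tfrac{1}{n+1}X_S^\top X_S$ concentrates by \citet{Rubio2011}, and because the loss is quadratic the induced map on residuals is linear, giving precisely the factor $1/(1+2\lambda_{n+1}^2 c_\infty)$ with $c_\infty$ the limiting value of that transform. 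The same restricted computation yields $\|\hat\beta - \hat\beta_{(n+1)}\|_2 \stackrel{\mmp}{\to} N_\infty$ once $\|\beta^* - \hat\beta_{(n+1)}\|_2$ is known to converge, exactly as in Lemma~\ref{lem:ridge_norm_conv}.

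To identify the constants $N_\infty$ and $c_\infty$, and to supply the missing ingredient that $\|\beta^* - \hat\beta_{(n+1)}\|_2$ and the active-set fraction converge, I would import the approximate message passing / state evolution characterization of the LASSO in the proportional regime, or equivalently a Convex Gaussian Min-Max argument in the spirit of \citet{Thram2018, ThramThesis}: both show that $\hat\beta$ is, coordinatewise, asymptotically a soft-thresholding of $\beta^*$ perturbed by independent Gaussian noise, with the effective noise level and threshold determined by a low-dimensional fixed-point system in $(N_\infty, c_\infty)$ depending only on $P_\epsilon, P_\lambda, P_\beta$, the ratio $\gamma$, and $\tau$. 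Substituting the resulting scalar distributional limit into the residual identity of the previous paragraph recovers the two displayed limits of the conjecture.

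The main obstacle is the step glossed over above: controlling the active set. Globally the LASSO objective is not strongly convex, so one must show both that $\mathrm{supp}(\hat\beta)$ and $\mathrm{supp}(\hat\beta_{(n+1)})$ agree up to $o(n)$ coordinates --- so that the restricted-least-squares picture is not corrupted by coordinates that toggle on or off when the $(n+1)$-st point is added --- and that the restricted Gram matrix $\tfrac{1}{n+1}X_S^\top X_S$ has smallest eigenvalue bounded away from zero with high probability, which is what makes the Sherman--Morrison expansion stable. Both are delicate because $|S|$ is itself a random quantity of order $n$ fixed by the state-evolution system, and rigorous control appears to require either the AMP concentration machinery (which tracks the joint empirical distribution of $(\beta^*, \hat\beta)$ directly) or a union bound over sign patterns combined with restricted-eigenvalue bounds for elliptical sub-Gaussian designs. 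A secondary difficulty is that the proximal map of the $\ell_1$ penalty, soft-thresholding, is non-differentiable at the threshold, so the smooth perturbation identity of Lemma~\ref{lem:cont_diff_prox} must be replaced by one-sided estimates; this is manageable but is the reason the LASSO result is stated here only as a conjecture.
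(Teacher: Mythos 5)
The paper does not actually prove this statement: it is labelled a conjecture and is supported only by the non-rigorous calculations of \citet{Bean2012} and the simulations in Figure \ref{fig:lasso_validation}, so there is no proof of record to compare yours against line by line. Read as a research plan, your outline is in the same spirit as the heuristic the paper leans on: fix the active set, note that on it the LASSO behaves like least squares with a constant offset, and rerun the ridge leave-one-out/Stieltjes-transform machinery of Section \ref{sec:high_dim_ridge}; and you correctly isolate the two hard steps, namely stability of the active set under addition of the $(n+1)$-st point and a lower bound on the smallest eigenvalue of the restricted Gram matrix.

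The genuine gap is the step you treat as an off-the-shelf import: invoking AMP/state evolution or CGMT to obtain convergence of $\|\beta^* - \hat{\beta}_{(n+1)}\|_2$, of the active-set fraction, and the fixed-point system identifying $(N_{\infty}, c_{\infty})$. Those results are established for i.i.d.\ Gaussian (or comparable universality-class) designs, whereas the design here is elliptical, $X_i = \lambda_i W_i$, and the conjectured constants depend explicitly on $P_{\lambda}$; the paper makes exactly this point for quantile regression, where the CGMT analysis of \citet{Bai2021} covers only Gaussian covariates and ``provides no insight into the leave-one-out approximations.'' So the constant-identification step is itself open at essentially the same level as the conjecture, not a citation, and this is the main reason the statement remains conjectural. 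Two smaller corrections: on the common active set the $\ell_1$ penalty contributes no curvature, so there is no ``effective ridge term'' in your matrix $M$ --- the restricted problem is unregularized least squares, which is precisely why the restricted-eigenvalue control you mention is indispensable rather than a technicality; and the quantity that should converge to $N_{\infty}$ is the estimation error $\|\beta^* - \hat{\beta}_{(n+1)}\|_2$ (this is what feeds $\epsilon + \lambda N_{\infty} Z$ in (\ref{eq:lasso_asymp_cov})), while $\|\hat{\beta} - \hat{\beta}_{(n+1)}\|_2$ should vanish by stability --- your plan, which conditions on convergence of the former, in effect targets the right object, but state this explicitly rather than echoing the display as written.
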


Following this conjecture, we expect both full conformal inference and the uncorrected prediction set to exhibit the same behaviour for the LASSO as we observed for ridge regression. Namely, letting $\hat{C}_{\textup{full}}$ and $\hat{C}_{\textup{uncorr.}}$ denote these sets (i.e., the analogs of (\ref{eq:full_set}) and (\ref{eq:uncorr_set}) for the LASSO) we conjecture that $\mmp(Y_{n+1} \in \hat{C}_{\textup{full}} \mid \{(X_i,Y_i)\}_{i=1}^n) \stackrel{\mmp}{\to} 1-\alpha$, while 
\begin{equation}\label{eq:lasso_asymp_cov}
\mmp(Y_{n+1} \in \hat{C}_{\textup{uncorr.}} \mid \{(X_i,Y_i)\}_{i=1}^n) \stackrel{\mmp}{\to} \mmp\left( |\epsilon + \lambda N_{\infty}Z| \leq \textup{Quantile}\left(1-\alpha, \left|\frac{\epsilon + \lambda N_{\infty}Z}{1+2\lambda^2c_{\infty}}\right| \right) \right),
\end{equation}
where $(\lambda, \epsilon)$ denotes a copy of $(\lambda_{i},\epsilon_{i})$ independent of $Z \sim N(0,1)$.

\begin{figure}[ht]
    \centering
\includegraphics[width=\textwidth]{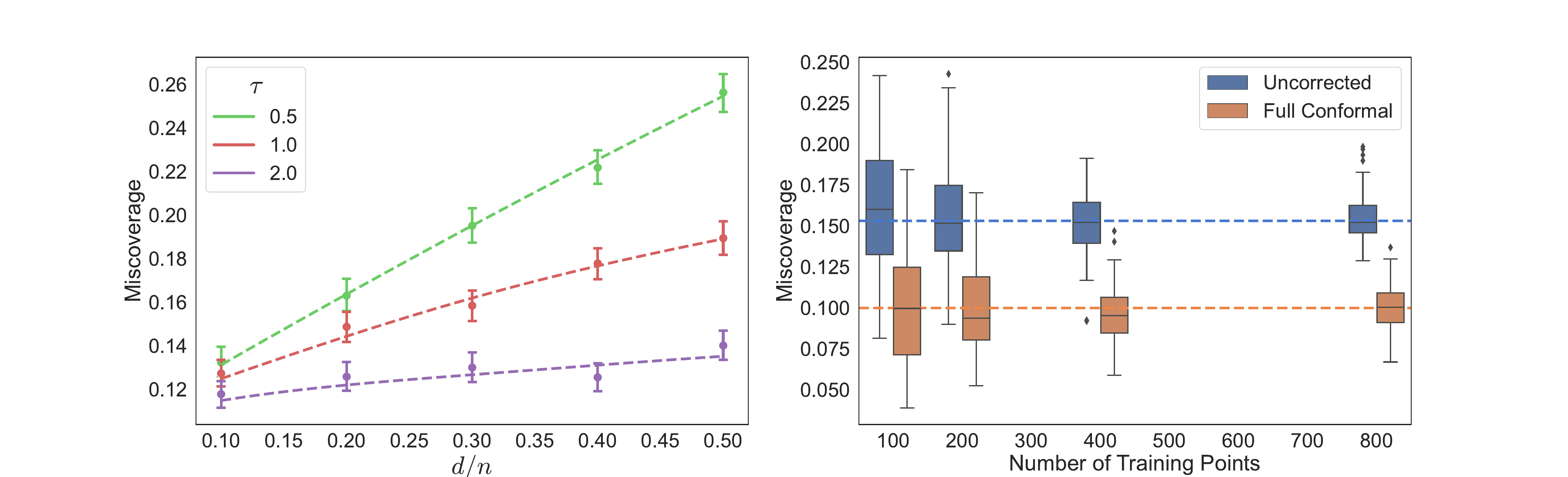}
    \caption{Empirical evaluation of our conjectures for the high-dimensional LASSO. Dots and error bars in the left panel display point estimates and 95\% confidence intervals for $\mmp(Y_{n+1} \notin \hat{C}_{\textup{uncorr.}})$ obtained by averaging over 10000 trials at $n=800$, while dotted lines show the values predicted by our theory (namely the value appearing on the right-hand side of (\ref{eq:lasso_asymp_cov})). Boxplots in the right panel show the empirical distribution of the training-conditional miscoverage of the full conformal LASSO (orange) and its uncorrected variant (blue) taken over $100$ trials where in each trial the training-conditional miscoverage is evaluated empirically by averaging over 2000 test points. Results in this panel are for $\tau = 1$ and $d/n = 0.25$. Once again dotted lines display the asymptotic miscoverages predicted by our theory. Note that since full conformal obtains the desired miscoverage asymptotically, the orange line also displays the target level of $\alpha = 0.1$ used throughout. Data for both panels were generated as in Figure \ref{fig:ridge_cov}.}
    \label{fig:lasso_validation}
\end{figure}

Empirical evidence supporting these conclusions is shown in Figure \ref{fig:lasso_validation}. The left-hand panel compares estimates of $\mmp(Y_{n+1} \notin \hat{C}_{\textup{uncorr.}})$ (dots and error bars) against the values predicted by our theory (equation (\ref{eq:lasso_asymp_cov}), dotted lines), while the right-hand panel displays boxplots demonstrating the convergence of the empirical distribution of the training-conditional miscoverage. We find that the predictions of our theory are highly accurate across all sample sizes, dimensions, and regularization levels.

\subsection{Full conformal quantile regression}

The final method we will consider is the full conformal quantile regression procedure of \citet{CondConf}. Analogous to the previous sections, prior work has shown that standard quantile regression suffers from a systematic undercoverage bias in high-dimensions (\cite{Bai2021}), which can be removed using a full conformal correction (\cite{CondConf}). Using the theory developed in the previous sections, here we provide heuristic calculations and simulated experiments demonstrating the mechanism through which full conformal inference succeeds in this problem. 

The construction of the full conformal prediction set for quantile regression is somewhat different from the methods considered in the previous section. To understand this procedure, let $\ell_{\alpha}(t) = \max\{t,0\} - \alpha t $ denote the pinball loss at level $\alpha$. As above, let 
\begin{equation}\label{eq:QR}
(\hat{\beta}_0^y,\hat{\beta}^y) \in \textup{argmin}_{(\beta_0,\beta) \in \mmr^{d+1}} \frac{1}{n+1} \sum_{i=1}^n \ell_{\alpha}(Y_i - \beta_0 -  X_i^\top \beta) + \frac{1}{n+1}\ell_{\alpha}(y - \beta_0 - X_{n+1}^\top\beta),
\end{equation}
denote the coefficients fit using quantile regression with the imputed test point $(X_{n+1},y)$. Then, we will begin by considering the set $\hat{C}^{\textup{QR}} := \{y : y \leq \hat{\beta}_0^y + X_{n+1}^\top \hat{\beta}^y\}$.

We make three remarks about this set. First, unlike the full conformal predictions sets considered in the previous sections, $\hat{C}^{\textup{QR}}$ is one-sided and may extend to $-\infty$. Two-sided analogs of this set can be obtained using a nearly identical procedure in which separate quantile regressions are used for the upper and lower bounds. To streamline our presentation, we omit these details. Second, $\hat{C}^{\textup{QR}}$ does not utilize the empirical quantile of the fitted residuals. Letting $R_i = Y_i - \hat{\beta}_0^y - X_i^\top \hat{\beta}^y$ and $R_{n+1}^y = y - \hat{\beta}_0^y - X_{n+1}^\top \hat{\beta}^y$ denote the fitted residuals, one can check that the first-order condition for the intercept in (\ref{eq:QR}) implies that the $1-\alpha$ quantile of $\frac{1}{n+1} \sum_{i=1}^{n+1} \delta_{R_i^y}$ is equal to zero.\footnote{Note that since $\hat{C}^{\textup{QR}}$ is one-sided we use the signed residuals here, not the absolute value.} Thus, in our definition of $\hat{C}^{\text{QR}}$ we simply plug-in this value. Third, our quantile regression includes an intercept term. Handling an intercept term in high-dimensions creates additional technical issues that we have chosen to avoid in the previous sections. Here, we include it because its presence is necessary to guarantee coverage (recall the aforementioned first-order condition or see Theorem 2 of \citet{CondConf} for further details).

Now, unlike the full conformal methods proposed in the previous sets, the marginal coverage of $\hat{C}^\textup{QR}$ is not unbiased. This is a result of the fact that similar to the $L_1$ penalty in the LASSO, the lack of curvature in the pinball loss at the origin pushes many of the residuals to be identically zero. In the high-dimensional setting, this creates a point-mass in the asymptotic residual distribution and overcoverage in the full conformal set. 

To overcome this shortcoming, we now define an alternative prediction set based on a dual representation of the quantile regression. First, note that by introducing the primal variables $v \in \mmr^{n+1}$ and constraints $v_i = Y_i - \beta_0 - X_i^\top\beta$ for $1 \leq i \leq n$ and $v_{n+1} = y - \beta_0 - X_{n+1}^\top\beta$, the quantile regression (\ref{eq:QR}) can be rewritten as  
\begin{align*}
\underset{(\beta_0,\beta) \in \mmr^{d+1}, v \in \mmr^{n+1}}{\text{minimize}} \quad &  
  \frac{1}{n+1}\sum_{i=1}^{n+1} \ell_{\alpha}(v_i)\\
  \text{subject to } \ \ \ \  \quad &  v_i = Y_i - \beta_0 - X_i^\top\beta,\  \forall 1 \leq i \leq n,\\
& v_{n+1} = y - \beta_0 - X_{n+1}^\top\beta.
\end{align*}
Then, letting $\eta \in \mmr^{n+1}$
denote the dual variables for these equality constraints, standard results in convex optimization tell us that primal-dual solutions of (\ref{eq:QR}) can be obtained as saddle points to the min-max problem
\begin{equation}\label{eq:qr_min_max}
\min_{(\beta_0,\beta) \in \mmr^{d+1}, v \in \mmr^{n+1}} \max_{\eta \in \mmr^{n+1}} \sum_{i=1}^{n+1} \ell_{\alpha}(v_i) + \sum_{i=1}^n \eta_i(Y_i - \beta_0 - X_i^\top\beta - v_i) + \eta_{n+1}(y - \beta_0 - X_{n+1}^\top\beta - v_{n+1}).
\end{equation}
Now, the critical observation is that the solution for $\eta$ in this program is tightly linked to the value of the residuals. More precisely, if $(\hat{\beta}^y_0,\hat{\beta}^y,\hat{v}^y,\hat{\eta}^y)$ is a saddle point of (\ref{eq:qr_min_max}, one can show that for all $1 \leq i \leq n+1$, $\hat{\eta}^y_i \in [-\alpha,1-\alpha]$ and that $\hat{\eta}^y_{n+1}$ can be directly related to the $(n+1)-$st residual through the conditions:
\begin{equation}\label{eq:eta_first_order}
\eta^y_{n+1} \in \begin{cases}
& \{1-\alpha\}, \text{ if } y > \hat{\beta}^y_0 + X_{n+1}^\top\hat{\beta}^y,\\
& [-\alpha,1-\alpha], \text{ if } y =  \hat{\beta}^y_0 + X_{n+1}^\top\hat{\beta}^y,\\
& \{-\alpha\}, \text{ if } y <  \hat{\beta}^y_0 + X_{n+1}^\top\hat{\beta}^y.
\end{cases}
\end{equation}
These observations, along with some additional calculations, lead us to the prediction set $\hat{C}^{\textup{QR}}_{\textup{dual}} = \{y : \hat{\eta}_{n+1}^y < U\}$ with $U \sim \text{Uniform}(-\alpha,1-\alpha)$ generated independently from $\{(X_i,Y_i)\}_{i=1}^{n+1}$. As anticipated, this dual-derived set is perfectly unbiased and realizes exact marginal coverage.
\begin{theorem}[Special case of Proposition 4 of \citet{CondConf}]
    Assume that $\{(X_i,Y_i)\}_{i=1}^{n+1}$ are independent and identically distributed. Then, $\mmp(Y_{n+1} \in \hat{C}^{\textup{QR}}_{\textup{dual}}) = 1-\alpha$.
\end{theorem}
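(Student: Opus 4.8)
The plan is to establish exchangeability of a suitable collection of random variables and then apply the standard conformal argument to the dual variable $\hat\eta_{n+1}^{Y_{n+1}}$. The key object is the augmented dataset with $y = Y_{n+1}$ plugged in, i.e.\ $\{(X_i,Y_i)\}_{i=1}^{n+1}$ itself. For this dataset let $(\hat\beta_0,\hat\beta,\hat v,\hat\eta)$ be a saddle point of the min-max problem \eqref{eq:qr_min_max} with $y = Y_{n+1}$, so that $\hat\eta \in \mmr^{n+1}$ is a vector of fitted dual variables, one for each of the $n+1$ data points. The first step is to observe that this dual vector can be chosen as a \emph{symmetric} function of the dataset: the objective in \eqref{eq:qr_min_max} is invariant under simultaneously permuting the indices of $(Y_i,X_i,v_i,\eta_i)$, so if there is a unique saddle point it is automatically equivariant, and in the non-unique case one selects a saddle point via a permutation-equivariant tie-breaking rule (e.g.\ the solution returned by a fixed deterministic solver applied to the canonically ordered problem, then relabelled). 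Call the resulting dual coordinate assigned to data point $i$ the value $E_i := \hat\eta_i$.

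Given equivariance, the tuple $(E_1,\dots,E_{n+1})$ is an exchangeable function of the i.i.d.\ sample $\{(X_i,Y_i)\}_{i=1}^{n+1}$, hence $(E_1,\dots,E_{n+1})$ is itself an exchangeable random vector; in particular $E_{n+1}$ has the same distribution as $E_1$, and more usefully, the rank of $E_{n+1}$ among $\{E_1,\dots,E_{n+1}\}$ is (conditionally on the multiset of values) uniform. The second step is to translate membership in $\hat C^{\textup{QR}}_{\textup{dual}}$ into a statement about this exchangeable vector. By construction $\hat C^{\textup{QR}}_{\textup{dual}} = \{y : \hat\eta_{n+1}^y < U\}$, and the content of \eqref{eq:eta_first_order} together with the monotonicity of $y \mapsto \hat\eta_{n+1}^y$ (which follows from convexity/monotone-comparative-statics of the pinball program in the imputed value) means that checking $Y_{n+1} \in \hat C^{\textup{QR}}_{\textup{dual}}$ is equivalent to checking the single inequality $E_{n+1} = \hat\eta_{n+1}^{Y_{n+1}} < U$ evaluated at the augmented fit. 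So it suffices to show $\mmp(E_{n+1} < U) = 1-\alpha$.

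The third step is the randomized-quantile / smoothing argument. We know $E_i \in [-\alpha,1-\alpha]$ for every $i$, and $U \sim \mathrm{Uniform}(-\alpha,1-\alpha)$ is independent of the data. Condition on the entire vector $(E_1,\dots,E_{n+1})$ but integrate over $U$ alone: $\mmp(E_{n+1} < U \mid E_1,\dots,E_{n+1}) = \frac{(1-\alpha) - E_{n+1}}{(1-\alpha)-(-\alpha)} = (1-\alpha) - E_{n+1}$. Taking expectations, $\mmp(E_{n+1} < U) = (1-\alpha) - \mme[E_{n+1}]$, so the result reduces to showing $\mme[E_{n+1}] = 0$. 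Here exchangeability does the work: $\mme[E_{n+1}] = \frac{1}{n+1}\sum_{i=1}^{n+1}\mme[E_i] = \frac{1}{n+1}\mme\big[\sum_i \hat\eta_i\big]$, and the sum of dual variables is pinned down by a first-order (stationarity) condition of \eqref{eq:qr_min_max} — differentiating in $\beta_0$ forces $\sum_{i=1}^{n+1}\hat\eta_i = 0$ at any saddle point (this is exactly the intercept first-order condition already invoked in the text to argue the empirical residual quantile is zero). Hence $\mme[E_{n+1}] = 0$ and $\mmp(Y_{n+1}\in\hat C^{\textup{QR}}_{\textup{dual}}) = 1-\alpha$.

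The main obstacle is the non-uniqueness of the quantile-regression saddle point: in high dimensions, or whenever $d+1 \le n+1$ fails to pin down a unique optimal $\eta$ (degenerate configurations where several residuals vanish simultaneously), one must be careful that the tie-breaking rule used to select $\hat\eta$ is genuinely permutation-equivariant, since the exchangeability of $(E_1,\dots,E_{n+1})$ — which every subsequent step rests on — depends entirely on this. The clean way to handle it, which I would adopt, is to define the selection on the \emph{unordered} dataset (solve the canonical problem, then push the solution back through the observed labelling), which makes equivariance a tautology; one then only needs to check that $E_{n+1}$ so defined still satisfies \eqref{eq:eta_first_order} and the stationarity identity $\sum_i E_i = 0$, both of which hold for \emph{any} saddle point and hence for the selected one. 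The monotonicity of $y\mapsto\hat\eta_{n+1}^y$ used in step two is a second, milder technical point but follows from standard sensitivity analysis of parametric convex programs.
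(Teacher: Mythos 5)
Your argument is correct and is essentially the proof the paper relies on: the theorem is not proved in this paper but imported from Proposition 4 of \citet{CondConf}, whose proof is exactly your combination of a permutation-equivariant saddle-point selection (giving exchangeability of $(\hat{\eta}_1,\dots,\hat{\eta}_{n+1})$), the intercept stationarity condition $\sum_{i=1}^{n+1}\hat{\eta}_i=0$, and integration over the independent $U\sim\mathrm{Uniform}(-\alpha,1-\alpha)$ to obtain $\mmp(\hat{\eta}_{n+1}<U)=(1-\alpha)-\mme[\hat{\eta}_{n+1}]=1-\alpha$. One small remark: the monotonicity of $y\mapsto\hat{\eta}^y_{n+1}$ you invoke in step two is unnecessary, since $Y_{n+1}\in\hat{C}^{\textup{QR}}_{\textup{dual}}\iff\hat{\eta}^{Y_{n+1}}_{n+1}<U$ holds by the very definition of the set.
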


To understand the behaviour of this set in the high-dimensional regime, we proceed as follows. Let $(\hat{\beta}_0,\hat{\beta},\hat{v},\hat{\eta})$ denote a saddle point of (\ref{eq:qr_min_max}) when $y = Y_{n+1}$. Recalling the constraint $\hat{v}_{n+1} = Y_{n+1} - \hat{\beta}_0 - X_{n+1}^\top\hat{\beta}$ and applying the first order condition for $\hat{v}_{n+1}$ gives us that $
\hat{\eta}_{n+1} \in \partial \ell_{\alpha}(Y_{n+1} - \hat{\beta}_0 - X_{n+1}^\top\hat{\beta})$ is a subgradient of the pinball loss (i.e.~(\ref{eq:eta_first_order}) above with $y = Y_{n+1}$). Let $(\hat{\beta}_{0,(n+1)}, \hat{\beta}_{(n+1)})$ denote the quantile regression coefficients fit on just the training data $\{(X_i,Y_i)\}_{i=1}^n$. For ease of notation, let $R_{n+1}^{(n+1)} = Y_{n+1} - \hat{\beta}_{0,(n+1)} - X_{n+1}^\top\hat{\beta}_{(n+1)} $ denote the leave-one-out test residual. Then, following the previous sections, we expect 
\[
\hat{\eta}_{n+1} \approx \partial \ell_{\alpha}(\textup{prox}(\lambda_{n+1}^2c_{\infty}\ell_{\alpha})(R_{n+1}^{(n+1)})),
\]
for some constant $c_{\infty} > 0$. Now, by definition the proximal function also satisfies the equation,
\[
\frac{x - \textup{prox}(c\ell_{\alpha})(x)}{c} \in \partial \ell_{\alpha}(\textup{prox}(c\ell_{\alpha})(x)),
\]
for all $c > 0$ and $x \in \mmr$. If $\ell_{\alpha}$ were differentiable this would give us the approximation,
\[
\hat{\eta}_{n+1} \approx \frac{R_{n+1}^{(n+1)} - \textup{prox}(\lambda_{n+1}^2c_{\infty}\ell_{\alpha})(R_{n+1}^{(n+1)})}{\lambda_{n+1}^2c_{\infty}}.
\]
Since $\ell_{\alpha}$ is not differentiable everywhere this argument does not directly go through at $R_{n+1}^{(n+1)} = 0$. Nevertheless, the final approximation for $\hat{\eta}_{n+1}$ does not involve any derivatives of $\ell_{\alpha}$ and we conjecture that by, e.g., taking a smooth approximation for $\ell_{\alpha}$ one can expect this final expression to hold. So, putting this all together, we conjecture that asymptotically, 
\[
Y_{n+1} \in \hat{C}^{\textup{QR}}_{\textup{dual}} \iff \frac{ R_{n+1}^{(n+1)} - \textup{prox}(\lambda_{n+1}^2c_{\infty}\ell_{\alpha})(R_{n+1}^{(n+1)})}{\lambda_{n+1}^2c_{\infty}} < U.
\]

To conclude our characterization, it only remains to determine the behaviour of the leave-one-out residuals. As in the previous sections, we expect that asymptotically, $\|\hat{\beta}_{(n+1)} - \beta^*\|_2 \stackrel{\mmp}{\to} N_{\infty}$ for some constant $N_{\infty} > 0$. Moreover, previous work on high-dimensional quantile regression shows that when the covariates are Gaussian the intercept is asymptotically deterministic, i.e., $\hat{\beta}_{0,(n+1)} \stackrel{\mmp}{\to} \beta_{0,\infty} \in \mmr$ (\cite{Bai2021}). We expect the same conclusion to hold for the more general data-generating distribution considered here. Thus, overall, we expect that $R_{n+1}^{(n+1)} \stackrel{D}{\approx} \epsilon - \beta_{0,\infty} + \lambda N_{\infty} Z$ with $(Z,\lambda,\epsilon) \sim N(0,1) \otimes P_{\lambda} \otimes P_{\epsilon}$.

In Section \ref{sec:app_qr_calcs} of the Appendix we provide heuristic calculations justifying a system of three equations for the values of $(c_{\infty}, N_{\infty}, \beta_{0,\infty})$. Most critically, these equations imply that
\[
\mmp\left( \frac{  \epsilon - \beta_{0,\infty} + \lambda N_{\infty} Z - \textup{prox}(\lambda_{n+1}^2c_{\infty}\ell_{\alpha})( \epsilon - \beta_{0,\infty} + \lambda N_{\infty} Z)}{\lambda_{n+1}^2c_{\infty}} < U\right) = 1-\alpha,
\]
and thus informally suggest that $\mmp(Y_{n+1} \in \hat{C}^{\text{QR}}_{\text{dual}} \mid \{(X_i,Y_i)\}_{i=1}^n) \stackrel{\mmp}{\to} 1-\alpha$. Finally, combining these equations with the previous approximations also gives us an exact formula for the high-dimensional coverage bias of standard quantile regression. These results are summarized in the following conjecture.

\begin{figure}[ht]
    \centering
\includegraphics[width=\textwidth]{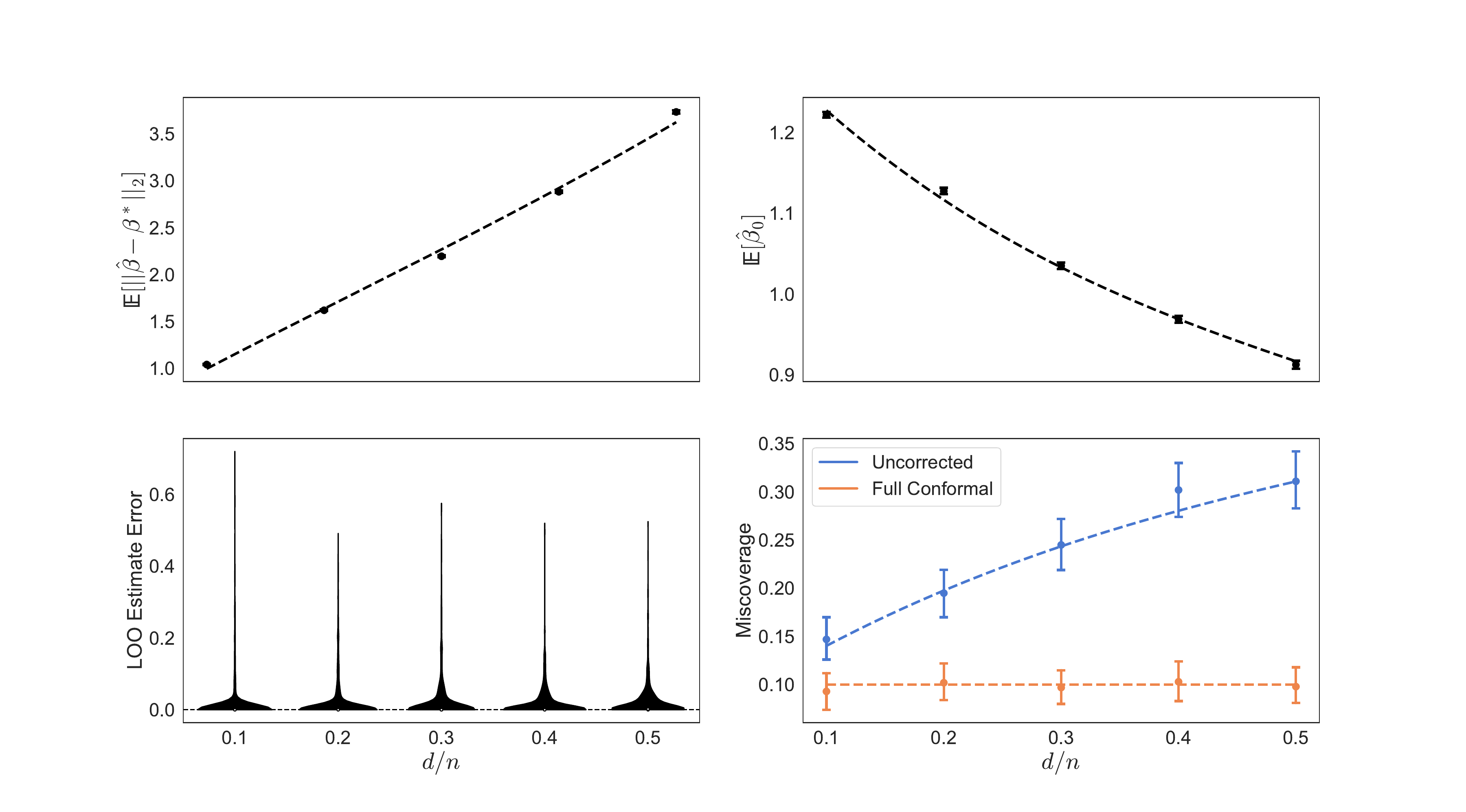}
    \caption{Empirical validation of our conjectures for high-dimensional quantile regression. Dots and error bars display point estimates and 95\% confidence intervals for the values of $\mme[\|\hat{\beta} - \beta^*\|_2]$ (top left panel), $\mme[\hat{\beta}_0]$ (top right panel), $\mmp(Y_{n+1} \leq \hat{\beta}_{0,(n+1)} + X_{n+1}^\top \hat{\beta}_{(n+1)})$ (bottom right panel, blue) and $\mmp(Y_{n+1} \in \hat{C}_{\text{dual}}^{\text{QR}})$ (bottom right panel, orange), while dotted lines show the values predicted by our theory. Finally, violin plots in the bottom left panel show the empirical distribution of the error in our leave-one-out formula for $\eta_{n+1}$ (namely the absolute value of the right-hand side of (\ref{eq:eta_loo_rep})). All figures contain results from 1000 trials at $n=400$ and $\alpha = 0.1$ with data generated as in Figure \ref{fig:ridge_cov}.}
    \label{fig:qr_validation}
\end{figure}

\begin{conjecture}\label{conj:qr}
    Let $(\lambda, \epsilon)$ denote a copy of $(\lambda_{i},\epsilon_{i})$ independent of $Z \sim N(0,1)$. Then, under the assumptions on the data-generating process given in Section \ref{sec:general_losses} there exist asymptotic constants $c_{\infty}$, $N_{\infty}$, $\beta_{0,\infty}$ such that
    \begin{equation}\label{eq:eta_loo_rep}
    \hat{\eta}_{n+1} - \frac{R_{n+1}^{(n+1)} - \textup{prox}(\lambda_{n+1}^2c_{\infty}\ell_{\alpha})(R_{n+1}^{(n+1)})}{\lambda_{n+1}^2c_{\infty}} \stackrel{\mmp}{\to} 0,
    \end{equation}
    and
    \[
     R_{n+1}^{(n+1)} \stackrel{D}{\to} \epsilon - \beta_{0,\infty} + \lambda N_{\infty} Z.
   \]
    Moreover, letting $W := \epsilon - \beta_{0,\infty} + \lambda N_{\infty} Z$,  the triple $(c_{\infty}, N_{\infty}, \beta_{0,\infty})$ satisfies the system of equations,
    \[
    \begin{cases}
    &   \mme\left[ \frac{(W - \textup{prox}(\lambda^2c_{\infty}\ell_{\alpha}(W))^2}{\lambda^2} \right] = \gamma N_{\infty}^2,\\
    & \mmp\left(W \in [-\lambda^2c_{\infty}\alpha,\lambda^2c_{\infty}(1-\alpha)] \right) = \gamma,\\
    & \mme\left[ \frac{W - \textup{prox}(\lambda^2c_{\infty}\ell_{\alpha})(W))}{\lambda^2c_{\infty}} \right] = 0.
    \end{cases}
    \]
    Following from this last equation it holds that,
    \[
    \mmp(Y_{n+1} \in \hat{C}_{\textup{dual}}^{\textup{QR}} \mid \{(X_i,Y_i)\}_{i=1}^{n}) = \mmp(\hat{\eta}_{n+1} < U \mid \{(X_i,Y_i)\}_{i=1}^{n}) = 1- \alpha -\mme[\hat{\eta}_{n+1} \mid \{(X_i,Y_i)\}_{i=1}^{n}] \stackrel{\mmp}{\to} 1-\alpha,
    \]
    while for standard quantile regression,
    \[
    \mmp(Y_{n+1} \leq \hat{\beta}_{0,(n+1)}  + X_{n+1}^\top \hat{\beta}_{(n+1)} \mid \{(X_i,Y_i)\}_{i=1}^{n}) \stackrel{\mmp}{\to} \mmp(W \leq 0).
    \]
   
\end{conjecture}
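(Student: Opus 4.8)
\textbf{Proof proposal for Conjecture~\ref{conj:qr}.} The plan is to mirror the development of Sections~\ref{sec:high_dim_ridge} and~\ref{sec:general_losses}, replacing the smooth-loss leave-one-out machinery of \citet{EK2013, EK2018} by an analogue adapted to the piecewise-linear pinball loss. Because $\ell_\alpha$ is neither differentiable nor strongly convex, I would work with a smooth, strictly convex surrogate $\ell_\alpha^\delta$ obtained by rounding the kink at scale $\delta$ (so that Assumptions~1--3 hold), establish the analogues of Lemmas~\ref{lem:gen_resid_char}--\ref{lem:gen_quant_conv} for $\ell_\alpha^\delta$ together with the extra scalar bookkeeping forced by the intercept (which lies outside the standard El~Karoui setup), and then pass $\delta\to0$. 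Throughout, the relevant objects are the full-data saddle point $(\hat\beta_0,\hat\beta,\hat v,\hat\eta)$ of (\ref{eq:qr_min_max}) at $y=Y_{n+1}$ and the leave-one-out fit $(\hat\beta_{0,(n+1)},\hat\beta_{(n+1)})$.

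\emph{Step 1 (leave-one-out representation of $\hat\eta_{n+1}$).} Viewing the full-data program as a rank-one perturbation of the leave-one-out program along the added row $X_{n+1}$, I would use the stationarity conditions for $\hat v_{n+1}$ and $\hat\eta_{n+1}$ to identify $\hat\eta_{n+1}\in\partial\ell_\alpha(Y_{n+1}-\hat\beta_0-X_{n+1}^\top\hat\beta)$, and a cavity expansion to show $Y_{n+1}-\hat\beta_0-X_{n+1}^\top\hat\beta=\textup{prox}(\lambda_{n+1}^2 c_\infty\ell_\alpha)(R_{n+1}^{(n+1)})+o_{\mmp}(1)$, with $c_\infty$ the a.s.\ limit of a normalized trace of the inverse curvature of the leave-one-out fit (the pinball counterpart of the Stieltjes constant of Lemma~\ref{lem:ridge_loo_lemma}). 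The proximal identity $(x-\textup{prox}(c\ell_\alpha)(x))/c\in\partial\ell_\alpha(\textup{prox}(c\ell_\alpha)(x))$ then yields (\ref{eq:eta_loo_rep}); for the surrogate this is a genuine equality of derivatives, and the $\delta\to0$ limit removes the ambiguity at $R_{n+1}^{(n+1)}=0$.

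\emph{Step 2 (law of the leave-one-out residual and the fixed-point system).} As in Lemmas~\ref{lem:ridge_norm_conv}--\ref{lem:ridge_resid_characterization}, the El~Karoui-type characterization for $\ell_\alpha^\delta$, augmented by a scalar equation for the intercept (cf.~\citet{Bai2021}), gives $\|\hat\beta_{(n+1)}-\beta^*\|_2\stackrel{\mmp}{\to}N_\infty$ and $\hat\beta_{0,(n+1)}\stackrel{\mmp}{\to}\beta_{0,\infty}$; writing $R_{n+1}^{(n+1)}=\epsilon_{n+1}-\hat\beta_{0,(n+1)}+\lambda_{n+1}W_{n+1}^\top(\beta^*-\hat\beta_{(n+1)})$ and applying a conditional Lindeberg CLT yields $R_{n+1}^{(n+1)}\stackrel{D}{\to}W:=\epsilon-\beta_{0,\infty}+\lambda N_\infty Z$. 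The three self-consistency equations are then read off from the scalarization: the norm equation $\mme[(W-\textup{prox}(\lambda^2 c_\infty\ell_\alpha)(W))^2/\lambda^2]=\gamma N_\infty^2$ matches the variance of the residual increments to the squared estimation error scaled by $\gamma=\lim d/n$; the degrees-of-freedom equation $\mmp(W\in[-\lambda^2 c_\infty\alpha,\lambda^2 c_\infty(1-\alpha)])=\gamma$ counts the limiting fraction of points pinned at the kink, where the interpolation constraint is active and $\hat\eta_i$ is interior; and the centering equation $\mme[(W-\textup{prox}(\lambda^2 c_\infty\ell_\alpha)(W))/(\lambda^2 c_\infty)]=0$ is the limiting first-order condition for the intercept. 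For $\ell_\alpha^\delta$ these come out of the convex Gaussian min-max / approximate message passing scalarization, and are then continued to $\delta=0$.

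\emph{Step 3 (coverage) and the main obstacle.} Conditionally on the training data, $\hat\eta_{n+1}\in[-\alpha,1-\alpha]$ while $U\sim\textup{Uniform}(-\alpha,1-\alpha)$ is independent, so $\mmp(\hat\eta_{n+1}<U\mid\{(X_i,Y_i)\}_{i=1}^n)=1-\alpha-\mme[\hat\eta_{n+1}\mid\{(X_i,Y_i)\}_{i=1}^n]$; by (\ref{eq:eta_loo_rep}), the weak convergence of $R_{n+1}^{(n+1)}$, and the centering equation, $\mme[\hat\eta_{n+1}\mid\{(X_i,Y_i)\}_{i=1}^n]\stackrel{\mmp}{\to}0$, which gives the $1-\alpha$ limit for $\hat C^{\textup{QR}}_{\textup{dual}}$. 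For standard quantile regression, $\{Y_{n+1}\le\hat\beta_{0,(n+1)}+X_{n+1}^\top\hat\beta_{(n+1)}\}=\{R_{n+1}^{(n+1)}\le0\}$, so the conditional probability converges to $\mmp(W\le0)$ by Step~2. I expect the main obstacle to be Step~1 together with the $\delta\to0$ interchange: by the second equation the pinball kink carries positive limiting mass ($\gamma>0$), so the proximal heuristic must be justified on a non-vanishing event, and one must show that $(c_\infty^\delta,N_\infty^\delta,\beta_{0,\infty}^\delta)$ and the associated residual laws converge to the $\ell_\alpha$ fixed point --- a two-limit exchange in which the active-set geometry can move discontinuously. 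Carrying the intercept direction through the leave-one-out expansions is a secondary technical complication.
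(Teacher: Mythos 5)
Your plan follows essentially the same route as the paper's own (explicitly heuristic) treatment of this statement: the subgradient/proximal identity for $\hat\eta_{n+1}$ giving (\ref{eq:eta_loo_rep}), a smoothed-pinball extension of the El~Karoui leave-one-out characterization to obtain the law of $R_{n+1}^{(n+1)}$ and the first two fixed-point equations, the centering equation from the dual first-order condition $\sum_{i}\hat\eta_i=0$ together with exchangeability, and the coverage identity $\mmp(\hat\eta_{n+1}<U\mid\{(X_i,Y_i)\}_{i=1}^n)=1-\alpha-\mme[\hat\eta_{n+1}\mid\{(X_i,Y_i)\}_{i=1}^n]$. Be aware that the paper states this result only as a conjecture, justified by the informal derivation in the quantile-regression section and Section \ref{sec:app_qr_calcs} of the Appendix plus simulations; the obstacles you flag (the kink carrying limiting mass $\gamma$, the $\delta\to0$ two-limit exchange, and the intercept direction) are precisely the steps the paper leaves open, so your proposal reproduces the paper's argument but, like the paper, does not constitute a proof.
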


Empirical support for this conjecture is shown in Figure \ref{fig:qr_validation}. The top two panels compare empirical estimates of $\mme[\|\hat{\beta} - \beta^*\|_2]$ and $\mme[\hat{\beta}_0]$ against the values predicted by our theory. The bottom right panel forms the same comparison for the marginal miscoverage of both full conformal quantile regression (orange) and its vanilla uncorrected counterpart (blue). We see that in all cases the empirical estimates (dots) are extremely close to the values predicted by our theory (dotted lines). Finally, the bottom left panel displays an empirical estimate of the distribution of the error in our leave-one-out formula for $\eta_{n+1}$ (namely the absolute value of the right-hand side of (\ref{eq:eta_loo_rep})). As expected, we find that this distribution is tightly concentrated near zero. Overall, these results strongly support Conjecture \ref{conj:qr} and thus give us a tight characterization of the behaviour of full conformal quantile regression in high dimensions.  

\section{Acknowledgments}

E.J.C. was supported by the Office of Naval Research grant N00014-24-1-2305, the National Science Foundation grant DMS-2032014, and the Simons Foundation under award 814641. I.G. was also supported by the Office of Naval Research grant N00014-24-1-2305 and the Simons Foundation award 814641, as well as additionally by the Overdeck Fellowship Fund. The authors would like to thank Daniel LeJeune for helpful discussions on this work.

\newpage

\bibliographystyle{plainnat}
\bibliography{HighDimFullConformal}

\begin{thebibliography}{40}
\providecommand{\natexlab}[1]{#1}
\providecommand{\url}[1]{\texttt{#1}}
\expandafter\ifx\csname urlstyle\endcsname\relax
  \providecommand{\doi}[1]{doi: #1}\else
  \providecommand{\doi}{doi: \begingroup \urlstyle{rm}\Url}\fi

\bibitem[Bai et~al.(2021)Bai, Mei, Wang, and Xiong]{Bai2021}
Yu~Bai, Song Mei, Huan Wang, and Caiming Xiong.
\newblock Understanding the under-coverage bias in uncertainty estimation.
\newblock In A.~Beygelzimer, Y.~Dauphin, P.~Liang, and J.~Wortman Vaughan,
  editors, \emph{Advances in Neural Information Processing Systems}, 2021.
\newblock URL \url{https://openreview.net/forum?id=te8iyHjbPQd}.

\bibitem[Bean et~al.(2012)Bean, Bickel, El~Karoui, Lim, and Yu]{Bean2012}
Derek Bean, Peter Bickel, Noureddine El~Karoui, Chinghway Lim, and Bin Yu.
\newblock Penalized robust regression in high-dimension.
\newblock Technical report, Statistics Department, University of California,
  Berkeley, Apr 2012.
\newblock URL \url{http://digicoll.lib.berkeley.edu/record/85603}.

\bibitem[Bian and Barber(2023)]{Bian2023}
Michael Bian and Rina~Foygel Barber.
\newblock {Training-conditional coverage for distribution-free predictive
  inference}.
\newblock \emph{Electronic Journal of Statistics}, 17\penalty0 (2):\penalty0
  2044 -- 2066, 2023.
\newblock \doi{10.1214/23-EJS2145}.
\newblock URL \url{https://doi.org/10.1214/23-EJS2145}.

\bibitem[Chen et~al.(2016)Chen, Wang, Ha, and Barber]{Chen2016}
Wenyu Chen, Zhaokai Wang, Wooseok Ha, and Rina~Foygel Barber.
\newblock Trimmed conformal prediction for high-dimensional models.
\newblock \emph{arXiv preprint}, 2016.
\newblock arXiv:1611.09933.

\bibitem[Chen et~al.(2018)Chen, Chun, and Barber]{Chen2018}
Wenyu Chen, Kelli-Jean Chun, and Rina Barber.
\newblock Discretized conformal prediction for efficient distribution-free
  inference.
\newblock \emph{Stat}, 7, 02 2018.
\newblock \doi{10.1002/sta4.173}.

\bibitem[Cherubin et~al.(2021)Cherubin, Chatzikokolakis, and
  Jaggi]{Cherubin2021}
Giovanni Cherubin, Konstantinos Chatzikokolakis, and Martin Jaggi.
\newblock Exact optimization of conformal predictors via incremental and
  decremental learning.
\newblock In Marina Meila and Tong Zhang, editors, \emph{Proceedings of the
  38th International Conference on Machine Learning}, volume 139 of
  \emph{Proceedings of Machine Learning Research}, pages 1836--1845. PMLR,
  18--24 Jul 2021.
\newblock URL \url{https://proceedings.mlr.press/v139/cherubin21a.html}.

\bibitem[Das et~al.(2024)Das, Ndiaye, and Takeuchi]{Diptesh2024}
Diptesh Das, Eugene Ndiaye, and Ichiro Takeuchi.
\newblock A confidence machine for sparse high-order interaction model.
\newblock \emph{Stat}, 13\penalty0 (1):\penalty0 e633, 2024.
\newblock \doi{https://doi.org/10.1002/sta4.633}.
\newblock URL \url{https://onlinelibrary.wiley.com/doi/abs/10.1002/sta4.633}.

\bibitem[Dobriban and Sheng(2020)]{Dobriban2020}
Edgar Dobriban and Yue Sheng.
\newblock Wonder: Weighted one-shot distributed ridge regression in high
  dimensions.
\newblock \emph{Journal of Machine Learning Research}, 21\penalty0
  (66):\penalty0 1--52, 2020.
\newblock URL \url{http://jmlr.org/papers/v21/19-277.html}.

\bibitem[Donoho and Montanari(2013)]{Donoho2013}
David~L. Donoho and Andrea Montanari.
\newblock High dimensional robust m-estimation: asymptotic variance via
  approximate message passing.
\newblock \emph{Probability Theory and Related Fields}, 166:\penalty0 935 --
  969, 2013.
\newblock URL \url{https://api.semanticscholar.org/CorpusID:10859430}.

\bibitem[El~Karoui(2013)]{EK2013}
Noureddine El~Karoui.
\newblock Asymptotic behavior of unregularized and ridge-regularized
  high-dimensional robust regression estimators : rigorous results.
\newblock \emph{arXiv preprint}, 2013.
\newblock arXiv:1311.2445.

\bibitem[El~Karoui(2018)]{EK2018}
Noureddine El~Karoui.
\newblock On the impact of predictor geometry on the performance on
  high-dimensional ridge-regularized generalized robust regression estimators.
\newblock \emph{Probability Theory and Related Fields}, 170, 02 2018.
\newblock \doi{10.1007/s00440-016-0754-9}.

\bibitem[Fong and Holmes(2021)]{Fong2021}
Edwin Fong and Chris~C Holmes.
\newblock Conformal bayesian computation.
\newblock In M.~Ranzato, A.~Beygelzimer, Y.~Dauphin, P.S. Liang, and J.~Wortman
  Vaughan, editors, \emph{Advances in Neural Information Processing Systems},
  volume~34, pages 18268--18279. Curran Associates, Inc., 2021.
\newblock URL
  \url{https://proceedings.neurips.cc/paper_files/paper/2021/file/97785e0500ad16c18574c64189ccf4b4-Paper.pdf}.

\bibitem[Gammerman et~al.(1998)Gammerman, Vovk, and Vapnik]{Gammerman1998}
Alexander Gammerman, Volodya Vovk, and Vladimir Vapnik.
\newblock Learning by transduction.
\newblock In Gregory~F. Cooper and Seraf{\'{\i}}n Moral, editors, \emph{{UAI}
  '98: Proceedings of the Fourteenth Conference on Uncertainty in Artificial
  Intelligence, University of Wisconsin Business School, Madison, Wisconsin,
  USA, July 24-26, 1998}, pages 148--155. Morgan Kaufmann, 1998.
\newblock URL
  \url{https://dslpitt.org/uai/displayArticleDetails.jsp?mmnu=1\&smnu=2\&article\_id=243\&proceeding\_id=14}.

\bibitem[Gibbs et~al.(2023)Gibbs, Cherian, and Candès]{CondConf}
Isaac Gibbs, John~J. Cherian, and Emmanuel~J. Candès.
\newblock Conformal prediction with conditional guarantees.
\newblock \emph{arXiv preprint}, 2023.
\newblock arXiv:2305.12616.

\bibitem[Guha et~al.(2023)Guha, Ndiaye, and Huo]{Guha2023}
Etash~Kumar Guha, Eugene Ndiaye, and Xiaoming Huo.
\newblock Conformalization of sparse generalized linear models.
\newblock In \emph{Proceedings of the 40th International Conference on Machine
  Learning}, ICML'23. JMLR.org, 2023.

\bibitem[Hebiri(2010)]{Hebiri1010}
Mohamed Hebiri.
\newblock Sparse conformal predictors.
\newblock \emph{Statistics and Computing}, 20\penalty0 (2):\penalty0 253–266,
  apr 2010.
\newblock ISSN 0960-3174.
\newblock \doi{10.1007/s11222-009-9167-2}.
\newblock URL \url{https://doi.org/10.1007/s11222-009-9167-2}.

\bibitem[Karoui et~al.(2013)Karoui, Bean, Bickel, Lim, and Yu]{EK2013a}
Noureddine~El Karoui, Derek Bean, Peter~J. Bickel, Chinghway Lim, and Bin Yu.
\newblock On robust regression with high-dimensional predictors.
\newblock \emph{Proceedings of the National Academy of Sciences}, 110\penalty0
  (36):\penalty0 14557--14562, 2013.
\newblock \doi{10.1073/pnas.1307842110}.
\newblock URL \url{https://www.pnas.org/doi/abs/10.1073/pnas.1307842110}.

\bibitem[Ledoux(2001)]{Ledoux2001}
M.~Ledoux.
\newblock \emph{The Concentration of Measure Phenomenon}.
\newblock Mathematical surveys and monographs. American Mathematical Society,
  2001.
\newblock ISBN 9780821837924.
\newblock URL \url{https://books.google.com/books?id=mCX_cWL6rqwC}.

\bibitem[Lei(2019)]{Lei2019}
J~Lei.
\newblock {Fast exact conformalization of the lasso using piecewise linear
  homotopy}.
\newblock \emph{Biometrika}, 106\penalty0 (4):\penalty0 749--764, 09 2019.
\newblock ISSN 0006-3444.
\newblock \doi{10.1093/biomet/asz046}.
\newblock URL \url{https://doi.org/10.1093/biomet/asz046}.

\bibitem[Liang and Barber(2023)]{Liang2023}
Ruiting Liang and Rina~Foygel Barber.
\newblock Algorithmic stability implies training-conditional coverage for
  distribution-free prediction methods.
\newblock \emph{arXiv preprint}, 2023.
\newblock arXiv:2311.04295.

\bibitem[Liang et~al.(2024)Liang, Zhu, and Barber]{Liang2024}
Ruiting Liang, Wanrong Zhu, and Rina~Foygel Barber.
\newblock Conformal prediction after efficiency-oriented model selection.
\newblock \emph{arXiv preprint}, 2024.
\newblock arXiv:2408.07066.

\bibitem[Martinez et~al.(2023)Martinez, Bhatt, Weller, and Cherubin]{Abad2023}
Javier~Abad Martinez, Umang Bhatt, Adrian Weller, and Giovanni Cherubin.
\newblock Approximating full conformal prediction at scale via influence
  functions.
\newblock In \emph{Proceedings of the Thirty-Seventh AAAI Conference on
  Artificial Intelligence}, AAAI'23/IAAI'23/EAAI'23. AAAI Press, 2023.
\newblock ISBN 978-1-57735-880-0.
\newblock \doi{10.1609/aaai.v37i6.25814}.
\newblock URL \url{https://doi.org/10.1609/aaai.v37i6.25814}.

\bibitem[Marčenko and Pastur(1967)]{Marcenko1967}
V~A Marčenko and L~A Pastur.
\newblock Distribution of eigenvalues for some sets of random matrices.
\newblock \emph{Mathematics of the USSR-Sbornik}, 1\penalty0 (4):\penalty0 457,
  apr 1967.
\newblock \doi{10.1070/SM1967v001n04ABEH001994}.
\newblock URL \url{https://dx.doi.org/10.1070/SM1967v001n04ABEH001994}.

\bibitem[Moreau(1965)]{Moreau1965ProximitED}
Jean~Jacques Moreau.
\newblock Proximit{\'e} et dualit{\'e} dans un espace hilbertien.
\newblock \emph{Bulletin de la Soci{\'e}t{\'e} Math{\'e}matique de France},
  93:\penalty0 273--299, 1965.

\bibitem[Ndiaye(2022)]{Ndiaye2022a}
Eugene Ndiaye.
\newblock Stable conformal prediction sets.
\newblock In Kamalika Chaudhuri, Stefanie Jegelka, Le~Song, Csaba Szepesvari,
  Gang Niu, and Sivan Sabato, editors, \emph{Proceedings of the 39th
  International Conference on Machine Learning}, volume 162 of
  \emph{Proceedings of Machine Learning Research}, pages 16462--16479. PMLR,
  17--23 Jul 2022.
\newblock URL \url{https://proceedings.mlr.press/v162/ndiaye22a.html}.

\bibitem[Ndiaye and Takeuchi(2019)]{Ndiaye2019}
Eugene Ndiaye and Ichiro Takeuchi.
\newblock Computing full conformal prediction set with approximate homotopy.
\newblock In H.~Wallach, H.~Larochelle, A.~Beygelzimer, F.~d\textquotesingle
  Alch\'{e}-Buc, E.~Fox, and R.~Garnett, editors, \emph{Advances in Neural
  Information Processing Systems}, volume~32. Curran Associates, Inc., 2019.
\newblock URL
  \url{https://proceedings.neurips.cc/paper_files/paper/2019/file/9e3cfc48eccf81a0d57663e129aef3cb-Paper.pdf}.

\bibitem[Ndiaye and Takeuchi(2022)]{Ndiaye2022b}
Eugene Ndiaye and Ichiro Takeuchi.
\newblock Root-finding approaches for computing conformal prediction set.
\newblock \emph{Mach. Learn.}, 112\penalty0 (1):\penalty0 151–176, nov 2022.
\newblock ISSN 0885-6125.
\newblock \doi{10.1007/s10994-022-06233-5}.
\newblock URL \url{https://doi.org/10.1007/s10994-022-06233-5}.

\bibitem[Nouretdinov et~al.(2001)Nouretdinov, Melluish, and
  Vovk]{Nouretdinov2001}
Ilia Nouretdinov, Thomas Melluish, and Volodya Vovk.
\newblock Ridge regression confidence machine.
\newblock In \emph{Proceedings of the Eighteenth International Conference on
  Machine Learning}, ICML '01, page 385–392, San Francisco, CA, USA, 2001.
  Morgan Kaufmann Publishers Inc.
\newblock ISBN 1558607781.

\bibitem[Papadopoulos(2023)]{Papadopoulos2023}
Harris Papadopoulos.
\newblock Guaranteed coverage prediction intervals with gaussian process
  regression.
\newblock \emph{arXiv preprint}, 2023.
\newblock arXiv:2310.15641.

\bibitem[Rubio and Mestre(2011)]{Rubio2011}
Francisco Rubio and Xavier Mestre.
\newblock Spectral convergence for a general class of random matrices.
\newblock \emph{Statistics \& Probability Letters}, 81\penalty0 (5):\penalty0
  592--602, 2011.
\newblock ISSN 0167-7152.
\newblock \doi{https://doi.org/10.1016/j.spl.2011.01.004}.
\newblock URL
  \url{https://www.sciencedirect.com/science/article/pii/S0167715211000113}.

\bibitem[Saunders et~al.(1999)Saunders, Gammerman, and Vovk]{Saunders1999}
C.~Saunders, A.~Gammerman, and V.~Vovk.
\newblock Transduction with confidence and credibility.
\newblock In \emph{Sixteenth International Joint Conference on Artificial
  Intelligence (IJCAI '99) (01/01/99)}, pages 722--726, 1999.
\newblock URL \url{https://eprints.soton.ac.uk/258961/}.

\bibitem[Stroock(2010)]{Stroock_2010}
Daniel~W. Stroock.
\newblock \emph{Probability Theory: An Analytic View}.
\newblock Cambridge University Press, 2 edition, 2010.

\bibitem[Thrampoulidis(2016)]{ThramThesis}
Christos Thrampoulidis.
\newblock \emph{Recovering Structured Signals in High Dimensions via Non-Smooth
  Convex Optimization: Precise Performance Analysis}.
\newblock PhD thesis, Caltech, 2016.

\bibitem[Thrampoulidis et~al.(2018)Thrampoulidis, Abbasi, and
  Hassibi]{Thram2018}
Christos Thrampoulidis, Ehsan Abbasi, and Babak Hassibi.
\newblock Precise error analysis of regularized $m$ -estimators in high
  dimensions.
\newblock \emph{IEEE Transactions on Information Theory}, 64\penalty0
  (8):\penalty0 5592--5628, 2018.
\newblock \doi{10.1109/TIT.2018.2840720}.

\bibitem[Vovk et~al.(1999)Vovk, Gammerman, and Saunders]{Vovk1999}
V.~Vovk, A.~Gammerman, and C.~Saunders.
\newblock Machine-learning applications of algorithmic randomness.
\newblock In \emph{Sixteenth International Conference on Machine Learning
  (ICML-1999) (01/01/99)}, pages 444--453, 1999.
\newblock URL \url{https://eprints.soton.ac.uk/258960/}.

\bibitem[Vovk et~al.(2005)Vovk, Gammerman, and Shafer]{VovkBook}
Vladimir Vovk, Alex Gammerman, and Glenn Shafer.
\newblock \emph{Algorithmic Learning in a Random World}.
\newblock Springer-Verlag, Berlin, Heidelberg, 2005.
\newblock ISBN 0387001522.

\bibitem[Wainwright(2019)]{Wainwright2019}
Martin~J. Wainwright.
\newblock \emph{High-Dimensional Statistics: A Non-Asymptotic Viewpoint}.
\newblock Cambridge Series in Statistical and Probabilistic Mathematics.
  Cambridge University Press, 2019.

\bibitem[Wright(1973)]{Wright1973}
F.~T. Wright.
\newblock {A Bound on Tail Probabilities for Quadratic Forms in Independent
  Random Variables Whose Distributions are not Necessarily Symmetric}.
\newblock \emph{The Annals of Probability}, 1\penalty0 (6):\penalty0 1068 --
  1070, 1973.
\newblock \doi{10.1214/aop/1176996815}.
\newblock URL \url{https://doi.org/10.1214/aop/1176996815}.

\bibitem[Yang and Kuchibhotla(2024)]{Yang2024}
Yachong Yang and Arun~Kumar Kuchibhotla.
\newblock Selection and aggregation of conformal prediction sets.
\newblock \emph{Journal of the American Statistical Association}, 0\penalty0
  (0):\penalty0 1--13, 2024.
\newblock \doi{10.1080/01621459.2024.2344700}.
\newblock URL \url{https://doi.org/10.1080/01621459.2024.2344700}.

\bibitem[Yin et~al.(1988)Yin, Bai, and Krishnaiah]{Yin1988}
YQ~Yin, Z.~Bai, and P.~Krishnaiah.
\newblock On the limit of the largest eigenvalue of the large dimensional
  sample covariance matrix.
\newblock \emph{Probability Theory and Related Fields}, 78:\penalty0 509--521,
  08 1988.
\newblock \doi{10.1007/BF00353874}.

\end{thebibliography}

\newpage

\appendix 

\section{Notation}

The proofs below make use of a number of additional pieces of notation. We will let $P_{\beta}$ denote the distribution of $\sqrt{d}\beta^*_i$ and $P_{\lambda}$ denote the distribution of $\lambda_i$. Since the distribution of $\sqrt{d}\beta^*_i$ does not depend on $d$ (as, in particular, $\beta^*_i$ is resampled as $d$ varies), we will write $\mme[(\sqrt{d}\beta^*_i)^k]$ to denote the $k_{\text{th}}$ moment of $P_{\beta}$. Recalling that we assumed that $P_{\lambda}$ has a bounded support, we let $\|P_{\lambda}\|_{\infty} := \inf \{t : \mmp(\lambda > t) = 0\}$ denote the minimum upper bound on the support. We let $X_{i:j}$ denote the matrix with rows $X_i,\dots,X_j$.

\section{Proofs for Section \ref{sec:high_dim_ridge}}

\subsection{Proof of Lemma \ref{lem:ridge_stab}}

The proof of Lemma \ref{lem:ridge_stab} will make use of the following bound on the norm of the fitted coefficients.
\begin{lemma}\label{lem:coef_bound}
Under no assumptions on the data, the fitted ridge regression coefficients satisfy the deterministic bound,
\[
\|\hat{\beta}\|_2 \leq \sqrt{\frac{1}{\tau(n+1)} \sum_{i=1}^{n+1} Y_i^2}.
\]    
\end{lemma}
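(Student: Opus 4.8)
\textbf{Proof plan for Lemma \ref{lem:coef_bound}.} The plan is to exploit the optimality of $\hat{\beta}$ by comparing the value of the ridge objective at $\hat{\beta}$ with its value at the origin. Recall that, in the notation of Section \ref{sec:high_dim_setting} with $\ell(r) = r^2$, $\hat{\beta} = \hat{\beta}^{Y_{n+1}}$ minimizes
\[
F(\beta) \defeq \frac{1}{n+1} \sum_{i=1}^{n+1} (Y_i - X_i^\top \beta)^2 + \tau \|\beta\|_2^2
\]
over $\beta \in \mmr^d$. The key observations are simply that (i) $F(\hat{\beta}) \leq F(0) = \frac{1}{n+1}\sum_{i=1}^{n+1} Y_i^2$ by optimality, and (ii) every summand of the first term of $F$ is nonnegative, so $\tau \|\hat{\beta}\|_2^2 \leq F(\hat{\beta})$.

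Chaining these two inequalities gives $\tau \|\hat{\beta}\|_2^2 \leq \frac{1}{n+1}\sum_{i=1}^{n+1} Y_i^2$, and dividing by $\tau > 0$ and taking square roots yields the claimed bound. No assumptions on the data are used beyond $\tau > 0$, which is part of the standing setup, so the bound is deterministic.

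There is essentially no obstacle here: the argument is a two-line ``comparison with zero'' estimate, and the only thing to be careful about is bookkeeping with the $1/(n+1)$ normalization in the loss term, which is the reason the factor $1/\big(\tau(n+1)\big)$ (rather than $1/\tau$) appears inside the square root. I would state it in exactly this compressed form in the paper.
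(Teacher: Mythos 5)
Your proof is correct and is exactly the argument the paper uses: compare the ridge objective at $\hat{\beta}$ with its value at $0$ and drop the nonnegative loss term, so $\tau\|\hat{\beta}\|_2^2 \leq \frac{1}{n+1}\sum_{i=1}^{n+1} Y_i^2$. Nothing further is needed.
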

\begin{proof}
    By the optimally of $\hat{\beta}$, we have
    \[
    \tau \|\hat{\beta}\|_2^2 \leq \frac{1}{n+1} \sum_{i=1}^{n+1} (Y_i - X_i^\top \hat{\beta})^2 + \tau \|\hat{\beta}\|_2^2 \leq \frac{1}{n+1} \sum_{i=1}^{n+1} (Y_i- 0)^2 + \tau \|0\|_2^2,
    \]
    as claimed.
\end{proof}

With this result in hand we now prove Lemma \ref{lem:ridge_stab}.

\begin{proof}[Proof of Lemma \ref{lem:ridge_stab}]
By the calculations from the main text it is sufficient to show that
    \begin{enumerate}
\item 
$\mme\left[\left|(Y_{n+1} - X_{n+1}^\top \hat{\beta}_{(n+1)}) \frac{1}{n+1}X_1^\top \left(\frac{1}{n+1} X^\top X + \tau I_d \right)^{-1}X_{n+1} \right|\right] = o(1),$
\item $\mme\left[\left| \frac{\tau}{n+1} X_1^\top\left(\frac{1}{n+1} X^\top X + \tau I_d \right)^{-1}\hat{\beta}_{(n+1)} \right| \right]  = o(1).$
\end{enumerate}
For the first claim, using our calculations from the main text it is easy to show that
\begin{align*}
    & \mme\left[\left|(Y_{n+1} - X_{n+1}^\top \hat{\beta}_{(n+1)}) \frac{1}{n+1}X_1^\top \left(\frac{1}{n+1} X^\top X + \tau I_d \right)^{-1}X_{n+1} \right|\right]\\
    & \leq 2\mme\left[ \left| (Y_{n+1} - X_{n+1}^\top \hat{\beta}_{(n+1)}) \max\left\{1, \frac{\|X_1\|_2^2}{\tau(n+1)} \right\} \frac{1}{n+1}X_1^\top\left(\frac{1}{n+1} X_{2:(n+1)}^\top X_{2:(n+1)} + \tau I_d \right)^{-1} X_{n+1}  \right| \right].
\end{align*}
Our goal will be to bound the above using the Cauchy-Schwartz in equality. To do this, note that since $P_W$ has bounded moments we have the bounds,
\begin{align*}
& \mme[|(Y_{n+1} - X_{n+1}^\top \hat{\beta}_{(n+1)})|^4]  \leq 64 \left(\mme[\epsilon_{n+1}^4] + \mme[|X_{n+1}^\top \beta^*|^4] + \mme[|X_{n+1}^\top \hat{\beta}_{(n+1)}|^4]\right)\\
 & \ \ \ \ \leq 64\left(\mme[\epsilon_{n+1}^4]  + 3\|P_{\lambda}\|_{\infty}^4\mme[W_{11}^4](\mme[\|\beta^*\|_2^4] + \mme[\|\hat{\beta}_{(n+1)}\|_2^4])\right)\\
 & \ \ \ \ \ \ \ \ \leq 64 \left(\mme[\epsilon_{n+1}^4] +  3\|P_{\lambda}\|_{\infty}^4\mme[W_{11}^4]\left(\mme[\|\beta^*\|_2^4] + \mme\left[\left(\frac{1}{\tau n} \sum_{i=1}^{n} Y_i^2\right)^2\right]\right)\right) = O(1),
\end{align*}
where the last inequality applies Lemma \ref{lem:coef_bound}. On the other hand, by assumption,
\[
\mme\left[ \max\left\{1, \frac{\|X_1\|_2^2}{\tau(n+1)} \right\}^4 \right] = O(1),
\]
and 
\begin{align*}
& \mme\left[\left( \frac{1}{n+1}X_1^\top\left(\frac{1}{n+1} X_{2:(n+1)}^\top X_{2:(n+1)} + \tau I_d \right)^{-1} X_{n+1}\right)^2  \right]\\
& \leq  \frac{\|P_{\lambda}\|_{\infty}^2}{(n+1)^2} \mme\left[\left\|\left(\frac{1}{n+1} X_{2:(n+1)}^\top X_{2:(n+1)} + \tau I_d \right)^{-1} X_{n+1}\right\|_2^2\right]\\
& \leq \frac{\|P_{\lambda}\|_{\infty}^2}{(n+1)^2}\frac{1}{\tau^2} \mme[\|X_{n+1}\|^2]\\
& = O\left(\frac{1}{n} \right).
\end{align*}
Combining these results proves claim 1. To prove claim 2 note that, by Lemma \ref{lem:coef_bound}
\begin{align*}
    & \mme\left[\left| \frac{\tau}{n+1} X_1^\top\left(\frac{1}{n+1} X^\top X + \tau I_d \right)^{-1}\hat{\beta}_{(n+1)} \right| \right]\\
    & \leq \mme\left[ \frac{1}{n+1} \|X_1\|_2 \|\hat{\beta}_{(n+1)}\|_2\right]\\
    & \leq \frac{1}{n+1} \mme[\|X_1\|_2^2]^{1/2} \mme\left[\frac{1}{\tau n}\sum_{i=1}^{n} Y_i^2\right]^{1/2}\\
    & = O\left(\frac{1}{\sqrt{n}} \right).
\end{align*}
\end{proof}

\subsection{Proof of Lemma \ref{lem:ridge_norm_conv}}

\begin{proof}[Proof of Lemma \ref{lem:ridge_norm_conv}] Let $Y_{1:n} = (Y_1,\dots,Y_{n})^T$ and $\epsilon_{1:n} = (\epsilon_1,\dots,\epsilon_n)^T$. By standard formula for ridge regression coefficients we have that 
\begin{align*}
\hat{\beta}_{(n+1)} & = \left(\frac{1}{n}X_{1:n}^\top X_{1:n} + \tau I_d\right)^{-1} \frac{1}{n} X_{1:n}^\top Y_{1:n}\\
& = \left(\frac{1}{n}X_{1:n}^\top X_{1:n} + \tau I_d\right)^{-1} \frac{1}{n} X_{1:n}^\top X_{1:n} \beta^* + \left(\frac{1}{n}X_{1:n}^\top X_{1:n} + \tau I_d\right)^{-1} \frac{1}{n} X_{1:n}^\top \epsilon_{1:n}.
\end{align*}
So,
\begin{align*}
    & \|\hat{\beta}_{(n+1)} - \beta^*\|_2^2 =\|\hat{\beta}_{(n+1)}\|_2^2  - 2 \langle \hat{\beta}_{(n+1)}, \beta^*\rangle +  \|\beta^*\|_2^2\\
    &  = ( \beta^*)^\top \frac{1}{n} X_{1:n}^\top X_{1:n}\left(\frac{1}{n}X_{1:n}^\top X_{1:n} + \tau I_d\right)^{-2} \frac{1}{n} X_{1:n}^\top X_{1:n} \beta^*\\
    & + (\epsilon_{1:n})^\top \frac{1}{n} X_{1:n} \left(\frac{1}{n}X_{1:n}^\top X_{1:n} + \tau I_d\right)^{-2} \frac{1}{n} X_{1:n}^\top \epsilon_{1:n}\\
    & \ \ \ \  + 2  ( \beta^*)^\top \frac{1}{n} X_{1:n}^\top X_{1:n} \left(\frac{1}{n}X_{1:n}^\top X_{1:n} + \tau I_d\right)^{-2} \frac{1}{n} X_{1:n}^\top \epsilon_{1:n}\\
    &  \ \ \ \ - 2(\beta^*)^\top\left(\frac{1}{n}X_{1:n}^\top X_{1:n} + \tau I_d\right)^{-1} \frac{1}{n} X_{1:n}^\top X_{1:n} \beta^* - 2(\beta^*)^\top\left(\frac{1}{n}X_{1:n}^\top X_{1:n} + \tau I_d\right)^{-1} \frac{1}{n} X_{1:n}^\top \epsilon_{1:n}\\
    & \ \ \ \ + \|\beta^*\|_2^2.
\end{align*}
We will show that each of the six terms above converges in probability. 

First, since the coordinates of $\beta^*$ are i.i.d., a standard application of the law of large numbers shows that $\|\beta^*\|_2^2 \stackrel{P}{\to} \mme[(\sqrt{d}\beta^*_i)^2]$. Now, to handle the cross terms involving both $\beta^*$ and $\epsilon_{1:n}$, recall that $\epsilon_{1:n} \independent (X_{1:n},\beta^*)$. Thus,
\begin{align*}
    & \mme\left[\left(( \beta^*)^\top \frac{1}{n} X_{1:n}^\top X_{1:n} \left(\frac{1}{n}X_{1:n}^\top X_{1:n} + \tau I_d\right)^{-2} \frac{1}{n} X_{1:n}^\top \epsilon_{1:n}\right)^2 \mid (X_{1:n}, \beta^*) \right]\\
    & = \mme[\epsilon_i^2]\left\|( \beta^*)^\top \frac{1}{n} X_{1:n}^\top X_{1:n} \left(\frac{1}{n}X_{1:n}^\top X_{1:n} + \tau I_d\right)^{-2} \frac{1}{n} X_{1:n}^\top\right\|_2^2\\
    & \leq  \mme[\epsilon_i^2]\| \beta^*\|^2 \left\|\frac{1}{n} X_{1:n}^\top X_{1:n} \left(\frac{1}{n}X_{1:n}^\top X_{1:n} + \tau I_d\right)^{-2} \frac{1}{n} X_{1:n}^\top\right\|_{op}^2\\
    & \leq \mme[\epsilon_i^2]\frac{1}{n} \| \beta^*\|^2 \frac{\|n^{-1/2}X_{1:n}\|_{op}^6}{\tau^4},
\end{align*}
and so, $(\beta^*)^\top \frac{1}{n} X_{1:n}^\top X_{1:n} (\frac{1}{n}X_{1:n}^\top X_{1:n} + \tau I_d)^{-2} \frac{1}{n} X_{1:n}^\top \epsilon_{1:n} = O_{\mmp}( \frac{\|\beta^*\|_2^2 \|n^{-1/2}X_{1:n}\|_{op}^6}{n\tau^4})$. Moreover, by standard results on the spectrum of the empirical covariance (see Theorem 3.1 of \cite{Yin1988}) we have that
\[
\|n^{-1/2}X_{1:n}\|_{op} = \left\|\frac{1}{n} \sum_{i=1}^{n} X_iX_i^\top\right\|^{1/2}_{op} \leq \|P_{\lambda}\|_{\infty} \left\|\frac{1}{n} \sum_{i=1}^{n} W_iW_i^\top\right\|^{1/2}_{op}  \stackrel{\mmp}{\to} \|P_{\lambda}\|_{\infty} (1 + \sqrt{\gamma}),
\]
and thus we conclude that $(\beta^*)^\top \frac{1}{n} X_{1:n}^\top X_{1:n} (\frac{1}{n}X_{1:n}^\top X_{1:n} + \tau I_d)^{-2} \frac{1}{n} X_{1:n}^\top \epsilon_{1:n} = o_{\mmp}(1)$. By a similar argument, one can also easily verify that $(\beta^*)^\top (\frac{1}{n}X_{1:n}^\top X_{1:n} + \tau I_d)^{-1} \frac{1}{n} X_{1:n}^\top \epsilon_{1:n} = o_{\mmp}(1)$. 

It remains to characterize the first, second, and fourth term from our original expression for $\|\hat{\beta}_{(n+1)} - \beta^*\|_2^2$. We claim that all three of these terms can be written as matrix traces. Namely, we claim that 
\begin{enumerate}
    \item  
    \begin{align*}
    \bigg| ( \beta^*)^\top \frac{1}{n} X_{1:n}^\top X_{1:n} & \left(\frac{1}{n}X_{1:n}^\top X_{1:n}  + \tau I_d\right)^{-2} \frac{1}{n} X_{1:n}^\top X_{1:n} \beta^*\\
    & - \frac{\mme[(\sqrt{d}\beta^*_i)^2]}{d} \text{tr}\left( \frac{1}{n} X_{1:n}^\top X_{1:n}\left(\frac{1}{n}X_{1:n}^\top X_{1:n} + \tau I_d\right)^{-2} \frac{1}{n} X_{1:n}^\top X_{1:n} \right) \bigg| \stackrel{\mmp}{\to} 0,
    \end{align*}
    \item 
    \begin{align*}
    \bigg| \epsilon_{1:n}^\top  \frac{1}{n} X_{1:n}\left(\frac{1}{n}X_{1:n}^\top X_{1:n} + \tau I_d\right)^{-2} & \frac{1}{n} X_{1:n}^\top \epsilon_{1:n}\\
    & - \frac{\mme[\epsilon_i^2]}{n} \text{tr}\left( X_{1:n}\left(\frac{1}{n}X_{1:n}^\top X_{1:n} + \tau I_d\right)^{-2} \frac{1}{n} X_{1:n}^\top  \right) \bigg| \stackrel{\mmp}{\to} 0,
    \end{align*}
    \item 
     \begin{align*}
    \bigg|  ( \beta^*)^\top \left(\frac{1}{n}X_{1:n}^\top X_{1:n} + \tau I_d\right)^{-1} & \frac{1}{n} X_{1:n}^\top X_{1:n} \beta^*\\
    & - \frac{\mme[(\sqrt{d}\beta^*_i)^2}{d} \text{tr}\left( \left(\frac{1}{n}X_{1:n}^\top X_{1:n} + \tau I_d\right)^{-1} \frac{1}{n} X_{1:n}^\top X_{1:n} \right) \bigg| \stackrel{\mmp}{\to} 0.
    \end{align*}
\end{enumerate}
We will prove the second claim. The proofs of the first and third claims are identical. For ease of notation, let $A =  X_{1:n}(\frac{1}{n}X_{1:n}^\top X_{1:n} + \tau I_d)^{-2} \frac{1}{n} X_{1:n}^\top$. Then, by a direct computation we have that 
\begin{align*}
    & \mme\left[ \left| \frac{1}{n}\epsilon_{1:n}^\top  A \epsilon_{1:n} - \frac{\mme[\epsilon_i^2]}{n} \text{tr}\left(A  \right) \right|^2 \mid A \right]  = \frac{1}{n^2} \sum_{i=1}^n \mme[(\epsilon^2_i - \mme[\epsilon_i^2])^2] A_{ii}^2 + \frac{1}{n^2} \sum_{i \neq j} \mme[\epsilon_i^2]^2 A_{ij}^2\\
    & \ \ \ \ \leq \frac{\mme[\epsilon_i^4]}{ n^2} \sum_{i,j} A_{ij}^2 =  \frac{\mme[\epsilon_i^4]}{ n^2}  \text{tr}(A^\top A) \leq \frac{\mme[\epsilon_i^4]}{ n} \|A^\top A\|_{op} \leq \frac{\mme[\epsilon_i^4]}{ n} \left(\sup_{x \geq 0} \frac{x^2}{(x^2+\tau)^2} \right)^2,
\end{align*}
where on the last line we have used the fact that $\|A\|_{op} \leq \sup_{x \geq 0} \frac{x^2}{(x^2+\tau)^2} $, which follows immediately from a direct calculation utilising the singular value decomposition of $X_{1:n}$. Since $\sup_{x \geq 0} \frac{x^2}{(x^2+\tau)^2} < \infty$, this proves claim 2.

It remain to study the three matrix traces appearing above. For this purpose, note that by the Woodbury matrix identity (and namely the special case $(I+A)^{-1} = I - (I+A)^{-1}A$),
\begin{align*}
 & \text{tr}\left( \frac{1}{n}X_{1:n}^\top X_{1:n}\left(\frac{1}{n}X_{1:n}^\top X_{1:n} + \tau I_d\right)^{-2} \frac{1}{n} X_{1:n}^\top X_{1:n} \right)  =  \text{tr}\left(\left(I_d - \tau\left(\frac{1}{n}X_{1:n}^\top X_{1:n} + \tau I_d\right)^{-1} \right)^2\right)\\
     & = \text{tr}\left(I_d\right) - 2\tau\text{tr}\left(\left(\frac{1}{n}X_{1:n}^\top X_{1:n} + \tau I_d\right)^{-1}\right) + \tau^2\text{tr}\left(\left(\frac{1}{n}X_{1:n}^\top X_{1:n} + \tau I_d\right)^{-2}\right),
\end{align*}
while by similar calculations,
\begin{align*}
& \text{tr}\left( X_{1:n}\left(\frac{1}{n}X_{1:n}^\top X_{1:n} + \tau I_d\right)^{-2} \frac{1}{n} X_{1:n}^\top \right)  = \text{tr}\left( \left(\frac{1}{n}X_{1:n}^\top X_{1:n} + \tau I_d\right)^{-2} \frac{1}{n} X_{1:n}^\top X_{1:n} \right)\\
& = \text{tr}\left( \left(\frac{1}{n}X_{1:n}^\top X_{1:n} + \tau I_d\right)^{-1} \left(I_d - \tau\left(\frac{1}{n}X_{1:n}^\top X_{1:n} + \tau I_d\right)^{-1}\right) \right)\\
& =  \text{tr}\left( \left(\frac{1}{n}X_{1:n}^\top X_{1:n} + \tau I_d\right)^{-1}\right)  -  \tau\text{tr}\left( \left(\frac{1}{n}X_{1:n}^\top X_{1:n} + \tau I_d\right)^{-2}\right), 
\end{align*}
and 
\begin{align*}
    \text{tr}\left( \left(\frac{1}{n}X_{1:n}^\top X_{1:n} + \tau I_d\right)^{-1} \frac{1}{n} X_{1:n}^\top X_{1:n} \right) = \text{tr}(I_d) - \tau \text{tr} \left( \left (\frac{1}{n}X_{1:n}^\top X_{1:n} + \tau I_d\right)^{-1}\right).
\end{align*}
Letting $\lambda \sim P_{\lambda}$ denote a generic draw of $\lambda_i$, we will show in Lemma \ref{lem:ridge_tr_final} below that there exists $e_{\infty}(\tau)>0$ such that
\begin{enumerate}
\item $\frac{1}{d}\text{tr}\left((\frac{1}{n}X_{1:n}^\top X_{1:n} + \tau I_d)^{-1}\right) \stackrel{\mmp}{\to} e_{\infty}(\tau)$,
\item $\frac{1}{d}\text{tr}\left((\frac{1}{n}X_{1:n}^\top X_{1:n} + \tau I_d)^{-2}\right) \stackrel{\mmp}{\to} \frac{e_{\infty}(\tau)}{\tau + \mme\left[\frac{\lambda^2}{(1+\gamma\lambda^2e_{\infty}(\tau))^2}\right]}$.
\end{enumerate}
So, plugging this into our original expression for $\|\hat{\beta}_{(n+1)} - \beta^*\|_2^2$ we find that 
\begin{align*}
\|\hat{\beta}_{(n+1)} - \beta^*\|_2^2 & \stackrel{\mmp}{\to} \mme[(\sqrt{d}\beta^*_i)^2] \frac{\tau^2 e_{\infty}(\tau)}{\tau + \mme\left[\frac{\lambda^2}{(1+\gamma\lambda^2e_{\infty}(\tau))^2}\right]}\\
& \ \ \ \ \ + \gamma\mme[\epsilon_i^2] \mme\left[\frac{\lambda^2}{(1+\gamma\lambda^2e_{\infty}(\tau))^2}\right]  \frac{e_{\infty}(\tau)}{\tau + \mme\left[\frac{\lambda^2}{(1+\gamma\lambda^2e_{\infty}(\tau))^2}\right]}\\
& > 0,
\end{align*}
as desired.

\end{proof}

We conclude this section by proving a sequence of lemmas that allow us to control $\frac{1}{d}\text{tr}\left((\frac{1}{n}X_{1:n}^\top X_{1:n} + \tau I_d)^{-1}\right)$ and $\frac{1}{d}\text{tr}\left((\frac{1}{n}X_{1:n}^\top X_{1:n} + \tau I_d)^{-2}\right)$. Our first result characterizes  the limit of $\frac{1}{d} \text{tr}\left( \left(\frac{1}{n}X_{1:n}^\top X_{1:n} + z I_d\right)^{-1}\right)$ over $z$ in the positive complex orthant.

\begin{lemma}\label{lem:ridge_tr_1}
    Let $\lambda \sim P_{\lambda}$ denote a generic draw of $\lambda_i$. Under the setting of Section \ref{sec:high_dim_setting}, we have that for all $\{\Theta_n\} \subseteq \mathbb{C}^{d \times d}$ with $\sup_n \textup{tr}((\Theta_n^H \Theta_n)^{1/2}) < \infty$ and all $z \in \{a + bi\in \mathbb{C} : a, b > 0\}$,
    \[
    \textup{tr}\left(\Theta_n\left(\left(\frac{1}{n}X_{1:n}^\top X_{1:n} + z I_d \right)^{-1} -  e_{\infty}(z) I_d \right) \right) \stackrel{a.s.}{\to} 0,
    \]
    where $e_{\infty}(z)$ is the unique solution in $\{a + bi \in \mathbb{C} : a > 0, b < 0\}$ to the equation
    \[
    \mme\left[\frac{\lambda^2 e_{\infty}(z)}{1+\gamma \lambda^2 e_{\infty}(z)} \right] - 1 + z e_{\infty}(z) = 0.
    \]
\end{lemma}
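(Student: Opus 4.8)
Observe that $\tfrac1n X_{1:n}^\top X_{1:n} = \tfrac1n\sum_{i=1}^n \lambda_i^2 W_i W_i^\top$ is a weighted sample covariance matrix with independent rows, and that for $z=a+bi$ with $a,b>0$ its resolvent $Q(z):=(\tfrac1n\sum_i \lambda_i^2 W_iW_i^\top + zI_d)^{-1}$ satisfies $\|Q(z)\|_{\mathrm{op}}\le 1/b$ and $\operatorname{Re}Q(z)\succeq 0$. The lemma is a \emph{deterministic equivalent} of the kind established by \citet{Rubio2011}: since $W_{ij}$ is sub-Gaussian, the diagonal weights $\lambda_i^2$ are bounded and independent of the $W_i$, and the empirical law of $\{\lambda_i^2\}$ converges a.s.\ to that of $\lambda^2$, one can either verify the Rubio--Mestre hypotheses and specialize their general fixed-point system to the isotropic case (feature covariance $I_d$), where it reduces to the stated scalar equation, or give a self-contained leave-one-out proof. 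I will outline the latter; it has two parts: (i) concentration of $\operatorname{tr}(\Theta_n Q(z))$ about its mean, and (ii) the identification $\mme Q(z) = e_\infty(z)I_d + o(1)$ in operator norm.

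\emph{Concentration.} With the filtration $\mathcal{F}_k := \sigma(W_1,\dots,W_k)$, write $\operatorname{tr}(\Theta_n Q(z)) - \mme[\operatorname{tr}(\Theta_n Q(z))] = \sum_{k=1}^n \Delta_k$ as a sum of martingale differences. Letting $Q_{(k)}(z)$ denote the resolvent with the $k$-th summand removed (independent of $W_k$), Sherman--Morrison gives $\operatorname{tr}(\Theta_n(Q - Q_{(k)})) = -(\lambda_k^2/n)\, W_k^\top Q_{(k)}\Theta_n Q_{(k)}W_k \,/\, (1+(\lambda_k^2/n)W_k^\top Q_{(k)}W_k)$, whose denominator has modulus at least $1$ because $\operatorname{Re}Q_{(k)}\succeq 0$. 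The key point is that $\|Q_{(k)}\Theta_n Q_{(k)}\|_F \le b^{-2}\|\Theta_n\|_F \le b^{-2}\|\Theta_n\|_{\mathrm{nuc}} = O(1)$, so by the Hanson--Wright inequality the numerator quadratic form equals $\operatorname{tr}(\Theta_n Q_{(k)}^2) + O(\log n) = O(\log n)$ outside an event of probability $o(n^{-1})$; truncating on these events, $|\Delta_k| \le C\log n/n$ uniformly in $k$, so $\sum_k |\Delta_k|^2 \to 0$ and Azuma--Hoeffding together with Borel--Cantelli yields $\operatorname{tr}(\Theta_n(Q(z)-\mme Q(z))) \xrightarrow{a.s.} 0$ for each fixed $z$.

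\emph{Identifying the mean and the fixed point.} The resolvent identity $zQ = I_d - \tfrac1n\sum_k \lambda_k^2 W_k(W_k^\top Q)$, combined with $W_k^\top Q = W_k^\top Q_{(k)}/(1+(\lambda_k^2/n)W_k^\top Q_{(k)}W_k)$, gives $zQ = I_d - \tfrac1n\sum_k \lambda_k^2\, W_kW_k^\top Q_{(k)}\,/\,(1+(\lambda_k^2/n)W_k^\top Q_{(k)}W_k)$. Concentrating the quadratic form, $(\lambda_k^2/n)W_k^\top Q_{(k)}W_k = \lambda_k^2\,\tfrac{d}{n}\,\tfrac1d\operatorname{tr}Q_{(k)} + o(1) \to \gamma\lambda_k^2 e_\infty(z)$ with $e_\infty(z):=\lim\tfrac1d\operatorname{tr}Q(z)$; taking expectations, using $\mme[W_kW_k^\top Q_{(k)}] = \mme Q_{(k)}$ and the law of large numbers $\tfrac1n\sum_k g(\lambda_k)\to\mme[g(\lambda)]$, leads to $z\,\mme Q + \mme Q\cdot\mme[\lambda^2/(1+\gamma\lambda^2 e_\infty(z))] = I_d + o(1)$ in operator norm, hence $\mme Q(z) = e_\infty(z)I_d + o(1)$ with $e_\infty(z)$ solving $z e_\infty + \mme[\lambda^2 e_\infty/(1+\gamma\lambda^2 e_\infty)] - 1 = 0$. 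Since $\tfrac1d\operatorname{tr}Q(z)$ lies in $\{a'+b'i : a'>0,\, b'<0\}$, so does every limit point; on that quadrant the fixed-point map is analytic and, after the change of variable $z\mapsto -z$, is a Stieltjes transform of a positive measure on $[0,\infty)$, which pins down a unique solution there (via a contraction estimate, or by determining the solution for $z$ of large imaginary part and continuing). Finally $|\operatorname{tr}(\Theta_n(\mme Q(z)-e_\infty(z)I_d))| \le \|\Theta_n\|_{\mathrm{nuc}}\,\|\mme Q(z)-e_\infty(z)I_d\|_{\mathrm{op}}\to 0$, and adding this to part (i) proves the claim.

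\emph{Main difficulty.} The delicate step is the mean computation of part (ii): making the $o(1)$ bounds hold in operator norm and uniformly, decoupling the numerator $W_kW_k^\top Q_{(k)}$ from the denominator $W_k^\top Q_{(k)}W_k$, and handling the self-referential appearance of $e_\infty(z)$ — one first runs the argument for the scalar quantity $\tfrac1d\operatorname{tr}Q(z)$, for which part (i) already applies, and then bootstraps to general $\Theta_n$ — together with the existence and uniqueness of the fixed point in the quadrant. If one invokes \citet{Rubio2011} directly instead, the difficulty reduces to checking that i.i.d.\ sub-Gaussian entries with a random bounded diagonal weight fall under their assumptions and that their general deterministic equivalent collapses to $e_\infty(z)I_d$, both of which are essentially routine.
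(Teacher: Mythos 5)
Your outline is sound, but it takes a genuinely different route from the paper. The paper does \emph{not} re-derive the deterministic equivalent: it invokes Theorem 1 of \citet{Rubio2011} as a black box, which yields $\textup{tr}\bigl(\Theta_n\bigl((\tfrac1n X_{1:n}^\top X_{1:n}+zI_d)^{-1}-e_n(z)I_d\bigr)\bigr)\stackrel{a.s.}{\to}0$ with a \emph{random, finite-$n$} fixed point $e_n(z)$ defined through the realized $\lambda_i$'s, and then spends essentially all of its effort on the step you describe as ``essentially routine'': showing $e_n(z)\stackrel{a.s.}{\to}e_\infty(z)$. Concretely, the paper (i) localizes $e_n(z)$ in a compact subset of $\{a>0,\,b<0\}$ using the Yin--Bai--Krishnaiah operator-norm bound on the sample covariance, (ii) proves $f_n\to f_\infty$ uniformly on that compact set via a Lipschitz estimate plus the law of large numbers in the $\lambda_i$'s, and (iii) establishes uniqueness of the root of $f_\infty$ in the quadrant by reproducing Rubio--Mestre's Cauchy--Schwarz/imaginary-part contraction — exactly the ``contraction estimate'' you mention in passing. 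Your main route instead re-proves the deterministic equivalent from scratch (martingale concentration with Hanson--Wright truncation, then a leave-one-out self-consistent equation for $\tfrac1d\operatorname{tr}Q(z)$ bootstrapped to general $\Theta_n$); this is a viable and standard strategy, and your concentration step is essentially complete, but the part you yourself flag as delicate — operator-norm control of $\mme Q(z)-e_\infty(z)I_d$, decoupling the numerator and denominator quadratic forms, and closing the self-referential definition of $e_\infty$ — is precisely the heavy lifting the paper avoids by citation, so a full write-up along your lines would be substantially longer. In short: your first (citation-based) alternative is the paper's approach, but you underestimate what remains after the citation; your developed route is a self-contained alternative whose payoff is independence from \citet{Rubio2011} at the cost of redoing the random-matrix work, and whose hardest step is left at outline level.
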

\begin{proof}
By Theorem 1 of \citet{Rubio2011} we have that for all $\{\Theta_n\} \in \mathbb{C}^{d \times d}$ with $\sup_n \text{tr}(\Theta_n^H \Theta_n) < \infty$ and all $z \in \{a + bi\in \mathbb{C} : a,b > 0\}$,
\begin{equation}\label{eq:rub_mestre_limit}
\text{tr}\left(\Theta_n\left(\left(\frac{1}{n}X_{1:n}^\top X_{1:n} + z I_d \right)^{-1} -  e_n(z) I_d \right) \right) \stackrel{a.s.}{\to} 0,
\end{equation}
where $e_n(z)$ is the unique solution in $\{a+bi \in \mathbb{C} : b < 0\}$ to the equation
\[
f_n(e) := \frac{1}{n} \sum_{i=1}^n \frac{\lambda^2_ie}{1 +  \frac{d}{n}\lambda^2_i e} - 1 + z e = 0.
\]
So, to prove the desired result we just need to show that $e_{n}(z) \stackrel{a.s.}{\to} e_{\infty}(z) \in \{a+bi \in \mathbb{C} : a > 0,b < 0\}$.

Let $\rho_1,\dots,\rho_d$ denote the eigenvalues of $\frac{1}{n} X_{1:n}^\top X_{1:n}$. Taking $\Theta_n = \frac{1}{d}I_d$ and writing $z = a+bi$ for some $a,b >0$, we see that (\ref{eq:rub_mestre_limit}) implies that 
\[
\frac{1}{d}\sum_{i=1}^d \frac{1}{\rho_i + z} - e_n(z) \stackrel{a.s.}{\to} 0 \iff \frac{1}{d}\sum_{i=1}^d \frac{\rho_i+ a - b i}{(\rho_i + a)^2 + b^2} - e_n(z) \stackrel{a.s.}{\to} 0.
\]
Now, by Theorem 1 of \cite{Yin1988}, $\|\frac{1}{n} X_{1:n}^\top X_{1:n} \|_{op} \leq \|P_{\lambda}\|_{\infty}^2 \|\frac{1}{n} \sum_{i=1}^{n} W_i W_i^\top \|_{op} \stackrel{a.s.}{\to} \|P_{\lambda}\|_{\infty}^2 (1 + \sqrt{\gamma})^2$. Applying this to the previous expression, we find that there exists constants $c_1, c_2', c_2, c_2' > 0$, such that with probability one, $e_n(z) \in \{x + yi \in \mathbb{C} : c_1 \leq x \leq  c'_1, - c_2 \leq  y \leq  -c'_2\}$ for all $n$ sufficiently large. For ease of notation, let $C := \{x + yi \in \mathbb{C} : c_1 \leq x \leq  c'_1, - c_2 \leq  y \leq  -c'_2\}$. By the law of large numbers, we know that for any $e \in C$, $f_n(e) \stackrel{a.s.}{\to} f_{\infty}(e) :=  \mme[\frac{\lambda^2e}{1 + \gamma \lambda^2e }] - 1 + z e$. Moreover, we have that for all $e \in C$,
\[
\left| \frac{d}{de} f_n(e)\right| = \left|  \frac{1}{n} \sum_{i=1}^n \frac{\lambda^2_i}{(1+\frac{d}{n}\lambda^2_i e)^2} + z \right| \leq \|P_{\lambda}\|_{\infty}^2 + |z|.
\]
Thus, $f_n(e)$ is Lipschitz on $C$ and since $C$ is compact, it follows from standard arguments that with probability one, $ f_n(e) \to f_{\infty}(e)$ for all $e \in C$ (see Lemma \ref{lem:lip_to_unif_as} for details). In particular, we find that with probability 1, if $e_{n_k}(z) \to \tilde{e}$ is a convergent subsequence of $\{e_n(z)\}_{n=1}^{\infty}$, then $f_{\infty}(\tilde{e}) = 0$. So, applying the compactness of $C$ again, we find that in order to prove the desired convergence of $e_n(z)$ it is sufficient to show that $f_{\infty}$ has a unique root on $\{ a+ bi \in \mathbb{C}: a>0, b < 0\}$.

To prove that $f_{\infty}$ has a unique root, we repeat the arguments of \citet{Rubio2011} for demonstrating the uniqueness of the root of $f_n$. Namely, suppose that $\tilde{e}_1, \tilde{e}_2 \in \{ a+ bi  \in \mathbb{C} : a>0, b < 0\}$ are both roots of $f_{\infty}$. For ease of notation, let $x_{\infty}(\tilde{e}_i) := 1/\tilde{e}_i - z = \mme\left[ \frac{\lambda^2}{1+\gamma \lambda^2 \tilde{e}_i} \right]$. Then, we have that 
\begin{align*}
|\tilde{e}_1 - \tilde{e}_2| & = |\tilde{e}_1|\cdot |\tilde{e}_2| \cdot |x_{\infty}(\tilde{e}_1) - x_{\infty}(\tilde{e}_2)| = |\tilde{e}_1|\cdot |\tilde{e}_2| \cdot \left| \mme\left[ \frac{\gamma \lambda^4}{(1+\gamma \lambda^2 \tilde{e}_1) (1+\gamma \lambda^2 \tilde{e}_2)}  \right] \right| \cdot |\tilde{e}_1 - \tilde{e}_2|\\
& \leq  |\tilde{e}_1|\cdot |\tilde{e}_2| \cdot  \mme\left[  \frac{\gamma \lambda^4}{|1+\gamma \lambda^2 \tilde{e}_1|^2} \right]^{1/2} \mme\left[  \frac{\gamma \lambda^4}{|1+\gamma \lambda^2 \tilde{e}_2|^2} \right]^{1/2} |\tilde{e}_1 - \tilde{e}_2|,
\end{align*}
where the last line follows from the Cauchy-Schwartz inequality. Now, note that by definition,
\[
\text{Im}(x_{\infty}(\tilde{e}_i)) = - \text{Im}(\tilde{e}_i)  \mme\left[  \frac{\gamma \lambda^4}{|1+\gamma \lambda^2 \tilde{e}_i|^2} \right],
\]
and thus,
\[
0 < |\tilde{e}_i|\mme\left[  \frac{\gamma \lambda^4}{|1+\gamma \lambda^2 \tilde{e}_i|^2} \right] = |\tilde{e}_i| \frac{\text{Im}(z) - \text{Im}\left(\frac{1}{\tilde{e}_i}\right) }{\text{Im}(\tilde{e}_i)} = |\tilde{e}_i| \frac{\text{Im}(z)}{\text{Im}(\tilde{e}_i)} + 1 < 1,
\]
where the last inequality follows from the fact that by assumption, $\text{Im}(z) > 0$ and $\text{Im}(\tilde{e}_i) <  0$. Plugging this into our previous inequalities on $|\tilde{e}_1 - \tilde{e}_2|$, we conclude that 
\[
|\tilde{e}_1 - \tilde{e}_2| < |\tilde{e}_1 - \tilde{e}_2|,
\]
and thus, $|\tilde{e}_1 - \tilde{e}_2| = 0$.

\end{proof}

Our next result characterizes the limit of $\frac{1}{d} \text{tr}\left( \left(\frac{1}{n}X_{1:n}^\top X_{1:n} + z I_d\right)^{-2}\right)$ over $z$ in the positive complex orthant.

\begin{lemma}\label{lem:ridge_tr_2}
    Let $\lambda \sim P_{\lambda}$ denote a generic draw of $\lambda_i$ and $e_{\infty}(z)$ denote the function appearing in the statement of Lemma \ref{lem:ridge_tr_1}. Then, under the setting of Section \ref{sec:high_dim_setting}, it holds that for all $\{\Theta_n\} \subseteq \mathbb{C}^{d \times d}$ with $\sup_n \textup{tr}((\Theta_n^H \Theta_n)^{1/2}) < \infty$ and all $z \in \{a + bi\in \mathbb{C} : a, b > 0\}$,
    \[
    \textup{tr}\left(\Theta_n\left(\left(\frac{1}{n}X_{1:n}^\top X_{1:n} + z I_d \right)^{-2} +  e_{\infty}'(z) I_d \right) \right) \stackrel{a.s.}{\to} 0.
    \]
    Moreover, we have that 
    \[
    e_{\infty}'(z) = - \frac{e_{\infty}(z)}{z + \mme\left[\frac{\lambda^2}{(1+\gamma\lambda^2e_{\infty}(z))^2}\right]}. 
    \]
\end{lemma}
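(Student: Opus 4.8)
The idea is to obtain the second resolvent moment by differentiating the first one, supplied by Lemma~\ref{lem:ridge_tr_1}, in the spectral parameter. Write $\hat{\Sigma}_n := \frac{1}{n} X_{1:n}^\top X_{1:n}$ and, for $z$ in the quadrant $Q := \{a+bi \in \mmc : a,b > 0\}$, set
\[
\delta_n(z) := \text{tr}\left(\Theta_n \left(\left(\hat{\Sigma}_n + z I_d\right)^{-1} - e_{\infty}(z) I_d\right)\right),
\]
so that $\delta_n(z) \stackrel{a.s.}{\to} 0$ pointwise on $Q$ by Lemma~\ref{lem:ridge_tr_1}. Since $\frac{d}{dz}(\hat{\Sigma}_n + zI_d)^{-1} = -(\hat{\Sigma}_n + zI_d)^{-2}$, differentiating in $z$ gives $\delta_n'(z) = -\text{tr}(\Theta_n(\hat{\Sigma}_n + zI_d)^{-2}) - e_{\infty}'(z)\,\text{tr}(\Theta_n)$, so the lemma will follow once we show (i) $e_\infty$ is holomorphic on $Q$ with the stated derivative, and (ii) the pointwise convergence $\delta_n \to 0$ can be differentiated, i.e.\ $\delta_n' \stackrel{a.s.}{\to} 0$ on $Q$.

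For (i) I would apply the holomorphic implicit function theorem to the defining relation $g(z, e_\infty(z)) = 0$, where $g(z,e) := \mme[\lambda^2 e/(1+\gamma\lambda^2 e)] - 1 + z e$. Because $\text{Re}(1+\gamma\lambda^2 e_\infty(z)) \ge 1$ (as $\text{Re}(e_\infty(z)) > 0$ by Lemma~\ref{lem:ridge_tr_1} and $\lambda > 0$ is bounded), the denominators stay bounded away from $0$, so $g$ is jointly holomorphic in a neighbourhood of the graph of $e_\infty$, with $\partial_z g = e$ and $\partial_e g(z,e) = z + \mme[\lambda^2/(1+\gamma\lambda^2 e)^2]$. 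The key point is that $\partial_e g(z, e_\infty(z)) \neq 0$: an elementary imaginary-part computation --- using $\text{Im}(z) > 0$, $\text{Im}(e_\infty(z)) < 0$, $\text{Re}(e_\infty(z)) > 0$ and $\mmp(\lambda > 0) = 1$ --- shows $\text{Im}(\mme[\lambda^2/(1+\gamma\lambda^2 e_\infty(z))^2]) > 0$, so the whole expression has strictly positive imaginary part. The implicit function theorem then yields that $e_\infty$ is holomorphic on $Q$ and $e_\infty'(z) = -\partial_z g / \partial_e g = -e_\infty(z)/(z + \mme[\lambda^2/(1+\gamma\lambda^2 e_\infty(z))^2])$, which is the claimed formula.

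For (ii) I would invoke Vitali's convergence theorem. Each $\delta_n$ is holomorphic on $Q$: the resolvent $(\hat{\Sigma}_n + zI_d)^{-1}$ is holomorphic there since $\hat{\Sigma}_n$ is positive semidefinite, keeping $-z$ off its spectrum, and $e_\infty$ is holomorphic by (i). The family $\{\delta_n\}$ is locally uniformly bounded on $Q$: by the inequality $|\text{tr}(AB)| \le \text{tr}((A^HA)^{1/2})\|B\|_{op}$ we get $|\text{tr}(\Theta_n(\hat{\Sigma}_n+zI_d)^{-1})| \le C/\text{Im}(z)$ with $C := \sup_n \text{tr}((\Theta_n^H\Theta_n)^{1/2}) < \infty$, while $|e_\infty(z)\,\text{tr}(\Theta_n)| \le C \sup_{z \in K}|e_\infty(z)|$ is bounded on compacts $K \subset Q$. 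Fixing a countable dense set $D \subset Q$, on the almost sure event that $\delta_n(z) \to 0$ for every $z \in D$, Vitali's theorem gives $\delta_n \to 0$ locally uniformly on $Q$, hence (by Cauchy's estimates) $\delta_n' \to 0$ locally uniformly as well. In view of the expression for $\delta_n'$ above, this is exactly $\text{tr}(\Theta_n((\hat{\Sigma}_n + zI_d)^{-2} + e_\infty'(z)I_d)) \stackrel{a.s.}{\to} 0$, as required.

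The main obstacle is the bookkeeping in step (ii) --- verifying local uniform boundedness of $\{\delta_n\}$ throughout $Q$ and arranging a single exceptional null set via a countable dense set so that Vitali applies --- together with the nonvanishing of $\partial_e g$ needed to run the implicit function theorem in step (i); the remaining computations are routine.
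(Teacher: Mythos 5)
Your proposal is correct, and it follows the same overall strategy as the paper: obtain the second resolvent moment by differentiating the Lemma \ref{lem:ridge_tr_1} convergence in the spectral parameter $z$, and obtain the formula for $e_{\infty}'(z)$ by implicitly differentiating the fixed-point equation (the paper likewise justifies exchanging $\frac{d}{dz}$ with $\mme$ via $\textup{Re}(e_{\infty}(z))>0$ keeping $\lambda^2/(1+\gamma\lambda^2 e_{\infty}(z))^2$ bounded). The execution differs in two places. First, where you run a self-contained complex-analysis argument (deterministic local uniform boundedness of $\delta_n$ via $|\textup{tr}(AB)|\leq \textup{tr}((A^HA)^{1/2})\|B\|_{op}$ and the resolvent bound, pointwise a.s.\ convergence on a countable dense set, then Vitali plus Cauchy estimates to pass to derivatives), the paper instead invokes Theorem 11 of \citet{Dobriban2020} and spends its effort verifying that theorem's hypotheses, including almost-sure convergence simultaneously over $z$ in an open ball, which it gets from local Lipschitz estimates; your Vitali route needs only countably many applications of Lemma \ref{lem:ridge_tr_1} and is arguably cleaner, at the cost of redoing machinery the citation packages. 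Second, you prove analyticity of $e_{\infty}$ via the holomorphic implicit function theorem with an explicit check that $\partial_e g(z,e_{\infty}(z))\neq 0$ (the same nonvanishing that both proofs implicitly need in order to solve for $e_{\infty}'$), whereas the paper gets analyticity by showing $g_n(z)=\frac{1}{d}\textup{tr}((\frac{1}{n}X_{1:n}^\top X_{1:n}+zI_d)^{-1})$ converges locally uniformly and applying Morera's theorem. One small point you should make explicit: the implicit function theorem produces a local holomorphic root branch through $(z_0,e_{\infty}(z_0))$, and identifying that branch with $e_{\infty}$ itself requires the uniqueness of the root in $\{a+bi: a>0,\, b<0\}$ established in the proof of Lemma \ref{lem:ridge_tr_1} together with the observation that the branch stays in that open set near $z_0$; this is routine but is the glue that makes step (i) legitimate.
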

\begin{proof}
    Note that 
    \[
    \frac{d}{dz}\left(\frac{1}{n}X_{1:n}^\top X_{1:n} + z I_d \right)^{-1} = - \left(\frac{1}{n}X_{1:n}^\top X_{1:n} + z I_d \right)^{-2}.
    \]
    Thus, by Lemma \ref{lem:ridge_tr_1}, to prove this result it is sufficient to apply the proof of Theorem 11 of \citet{Dobriban2020}. To do this, we must verify the conditions of the theorem. First, note that by following the arguments in the proof of Lemma \ref{lem:ridge_tr_1} (and in particular by arguing that the functions appearing in the theorem are locally Lipschitz in $z$), it is not difficult to show that for all $\{\Theta_n\} \subseteq \mathbb{C}^{d \times d}$ with $\sup_n \textup{tr}((\Theta_n^H \Theta_n)^{1/2}) < \infty$,
    \[\mmp\left(\frac{1}{d}\textup{tr}\left(\Theta_n\left(\left(\frac{1}{n}X_{1:n}^\top X_{1:n} + z I_d \right)^{-1} -  e_{\infty}(z) I_d \right) \right) \to 0,\ \forall z \in D \right) = 1,
    \]
    where $D$ is any open ball in $\{a+bi \in \mathbb{C} : a,b > 0\}$. 
    
    We now show that $e_{\infty}(z)$ is analytic. To begin, note that by taking $\Theta_n = \frac{1}{d} I_d$ in Lemma \ref{lem:ridge_tr_1}, we have that 
    \[
    \frac{1}{d} \text{tr}\left( (\frac{1}{n}X^\top X_{1:n} + zI_d)^{-1} \right) \stackrel{a.s.}{\to} e_{\infty}(z).
    \]
    Now, let $g_n(z) := \frac{1}{d} \text{tr}\left( (\frac{1}{n}X^\top X_{1:n} + zI_d)^{-1} \right)$. Then, $g_n(z)$ is analytic with derivative bounded as,
    \[
    \left|\frac{d}{dz} g_n(z) \right| = \left| \frac{1}{d} \text{tr}\left( (\frac{1}{n}X^\top X_{1:n} + zI_d)^{-2} \right)\right| \leq \left\|(\frac{1}{n}X^\top X_{1:n} + zI_d)^{-2}\right\|_{op} \leq \frac{1}{|z|}. 
    \]
    Thus, for any $z \in \{a + bi\in \mathbb{C} : a, b > 0\}$, there exists an open neighbourhood $C \subseteq \{a + bi\in \mathbb{C} : a, b > 0\}$ of $z$, such that $g_n$ is Lipschitz on $C$. Letting $D$ denote this open neighbourhood, standard arguments (c.f. Lemma \ref{lem:func_uniform_conv}), imply that $\sup_{z \in D}|g_n(z) - e_{\infty}(z)| \stackrel{\mmp}{\to} 0$ and by Morera's theorem, this is sufficient to imply that $e_{\infty}(z)$ is analytic.

    Finally, to complete our verification of the conditions of Theorem 11 of \citet{Dobriban2020}, note that for all $\Theta_n \in \mathbb{C}^{d \times d}$ with $\sup_n \text{tr}((\Theta_n^H \Theta_n)^{1/2}) \leq M$,
    \begin{align*}
    & \left| \text{tr}\left(\Theta_n\left(\left(\frac{1}{n}X_{1:n}^\top X_{1:n} + z I_d \right)^{-1} -  e_{\infty}(z) I_d \right) \right) \right|\\
    & \ \ \ \  \leq \text{tr}((\Theta_n^H \Theta_n)^{1/2}) \left\| \left(\frac{1}{n}X_{1:n}^\top X_{1:n} + z I_d \right)^{-1} - e_{\infty}(z) I_d \right\|_{op}  \leq M \left(\frac{1}{|z|} + |e_{\infty}(z)| \right).
    \end{align*}
    
    This verifies the conditions of Theorem 11 of \citet{Dobriban2020} and thus we conclude that
    \[
    \text{tr}\left(\Theta_n\left(\left(\frac{1}{n}X_{1:n}^\top X_{1:n} - z I_d \right)^{-2} +  e'(z) I_d \right) \right) \stackrel{a.s.}{\to} 0.
    \]
    To get the desired formula for $e_{\infty}'(z)$, one simply notes that 
    \[
     \mme\left[\frac{\lambda^2 e_{\infty}(z)}{1+\gamma \lambda^2 e_{\infty}(z)} \right] - 1 + z e_{\infty}(z) = 0 \implies  \mme\left[\frac{\lambda^2 }{(1+\gamma \lambda^2 e_{\infty}(z))^2} \right] e'_{\infty}(z) + e_{\infty}(z) + z e'_{\infty}(z) = 0,
    \]
    where here we have used the fact that $\frac{\lambda^2 }{(1+\gamma \lambda^2 e_{\infty}(z))^2}$ is bounded (recall that $\text{Re}(e_{\infty}(z))>0$) to swap the derivative in $z$ with the expectation. 
\end{proof}

With the previous two lemmas in hand, we are now ready to prove our desired characterization of the limits of the $\frac{1}{d} \textup{tr}\left(\left(\frac{1}{n}X_{1:n}^\top X_{1:n} + \tau I_d \right)^{-1} \right)$ and $\frac{1}{d} \textup{tr}\left(\left(\frac{1}{n}X_{1:n}^\top X_{1:n} + \tau I_d \right)^{-2}\right)$.

\begin{lemma}\label{lem:ridge_tr_final}
    Let $\lambda \sim P_{\lambda}$ denote a generic draw of $\lambda_i$ and $e_{\infty}(z)$ be the function defined in the statement of Lemma \ref{lem:ridge_tr_1}. Then, under the setting of Section \ref{sec:high_dim_setting},
    \[
    \frac{1}{d} \textup{tr}\left(\left(\frac{1}{n}X_{1:n}^\top X_{1:n} + \tau I_d \right)^{-1}  \right) \stackrel{\mmp}{\to} e_{\infty}(\tau),
    \]
    and 
    \[
    \frac{1}{d} \textup{tr}\left(\left(\frac{1}{n}X_{1:n}^\top X_{1:n} + \tau I_d \right)^{-2}  \right) \stackrel{\mmp}{\to} \frac{e_{\infty}(\tau)}{\tau + \mme\left[\frac{\lambda^2}{(1+\gamma\lambda^2e_{\infty}(\tau))^2}\right]}.
    \]
    Moreover, $e_{\infty}(\tau)$ is a positive real number.
\end{lemma}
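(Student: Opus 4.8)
The plan is to pass from the complex-orthant statements of Lemmas~\ref{lem:ridge_tr_1} and~\ref{lem:ridge_tr_2} to the real point $z=\tau$ by analytic continuation. Write $g_n(z):=\frac{1}{d}\textup{tr}\big((\frac{1}{n}X_{1:n}^\top X_{1:n}+zI_d)^{-1}\big)$ and let $D:=\{z\in\mathbb{C}:|z-\tau|<\tau/2\}$, a disk straddling the positive real axis. Since $\frac{1}{n}X_{1:n}^\top X_{1:n}$ is positive semidefinite, every $z\in D$ has $\textup{Re}(z)>\tau/2$, so $\|(\frac{1}{n}X_{1:n}^\top X_{1:n}+zI_d)^{-1}\|_{op}\le 2/\tau$; hence $\{g_n\}$ is a sequence of holomorphic functions on $D$ bounded deterministically by $2/\tau$. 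Taking $\Theta_n=\frac{1}{d}I_d$ in Lemma~\ref{lem:ridge_tr_1} gives, for each fixed $z$ in the open complex orthant, $g_n(z)\stackrel{a.s.}{\to}e_{\infty}(z)$; intersecting over a countable dense subset $Q$ of the nonempty connected open half-disk $D\cap\{\textup{Im}(z)>0\}$ produces a probability-one event on which $g_n\to e_{\infty}$ pointwise on $Q$. On this event Vitali's convergence theorem applies: $\{g_n\}$ converges locally uniformly on $D$ to a holomorphic limit $\tilde{e}$, and since $\tilde{e}$ agrees with $e_{\infty}$ on $Q$ --- a set with accumulation points in $D\cap\{\textup{Im}(z)>0\}$, on which $e_{\infty}$ is holomorphic by the proof of Lemma~\ref{lem:ridge_tr_2} --- the identity theorem shows $\tilde{e}$ is the unique holomorphic extension of $e_{\infty}$ to $D$. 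We henceforth write $e_{\infty}$ for this extension, so $e_{\infty}(\tau)$ is well defined, $g_n(\tau)\stackrel{a.s.}{\to}e_{\infty}(\tau)$, and, since derivatives of a locally uniformly convergent holomorphic sequence converge locally uniformly, $g_n'(\tau)\stackrel{a.s.}{\to}e_{\infty}'(\tau)$.

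It then remains to identify the limits. For the first trace this is now immediate. To see that $e_{\infty}(\tau)$ is a positive real, note $g_n(\tau)=\frac{1}{d}\sum_{i=1}^d(\rho_i+\tau)^{-1}$, where $\rho_1,\dots,\rho_d\ge 0$ are the eigenvalues of $\frac{1}{n}X_{1:n}^\top X_{1:n}$; this is real and, using $\|\frac{1}{n}X_{1:n}^\top X_{1:n}\|_{op}\le \|P_{\lambda}\|_{\infty}^2\|\frac{1}{n}\sum_i W_iW_i^\top\|_{op}\stackrel{a.s.}{\to}\|P_{\lambda}\|_{\infty}^2(1+\sqrt{\gamma})^2$ (as already invoked in Lemma~\ref{lem:ridge_tr_1}), it is bounded below by $(\|\frac{1}{n}X_{1:n}^\top X_{1:n}\|_{op}+\tau)^{-1}$, which a.s.\ stays above a positive constant for large $n$. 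Hence $e_{\infty}(\tau)=\lim_n g_n(\tau)$ is real and strictly positive. For the second trace, observe that $\frac{d}{dz}(\frac{1}{n}X_{1:n}^\top X_{1:n}+zI_d)^{-1}=-(\frac{1}{n}X_{1:n}^\top X_{1:n}+zI_d)^{-2}$, so $\frac{1}{d}\textup{tr}\big((\frac{1}{n}X_{1:n}^\top X_{1:n}+\tau I_d)^{-2}\big)=-g_n'(\tau)\stackrel{a.s.}{\to}-e_{\infty}'(\tau)$. By Lemma~\ref{lem:ridge_tr_2}, on the open orthant $e_{\infty}'(z)=-e_{\infty}(z)\big/\big(z+\mme[\lambda^2(1+\gamma\lambda^2 e_{\infty}(z))^{-2}]\big)$, and both sides are holomorphic on a sub-disk of $D$ containing $\tau$ --- the right-hand side because $e_{\infty}(\tau)>0$ forces $1+\gamma\lambda^2 e_{\infty}(z)$ to stay uniformly bounded away from $0$ in $\lambda$ for $z$ near $\tau$, so the $\lambda$-expectation is holomorphic there by dominated convergence and Morera's theorem. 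The identity theorem then extends the equality to $z=\tau$, giving $-e_{\infty}'(\tau)=e_{\infty}(\tau)\big/\big(\tau+\mme[\lambda^2(1+\gamma\lambda^2 e_{\infty}(\tau))^{-2}]\big)$, the claimed constant. Finally, $\stackrel{a.s.}{\to}$ implies $\stackrel{\mmp}{\to}$.

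This argument is largely bookkeeping once Lemmas~\ref{lem:ridge_tr_1} and~\ref{lem:ridge_tr_2} are in hand, and I expect the only delicate point to be the order in which things are established near $z=\tau$. The a.s.\ pointwise convergence of Lemma~\ref{lem:ridge_tr_1} is available only on the open complex orthant, so one must first use holomorphy together with the uniform bound on $\{g_n\}$ (via Vitali, or equivalently an Arzel\`a--Ascoli argument exploiting the uniform Lipschitz bound $|g_n'|\le(2/\tau)^2$ on $D$) to carry the convergence onto the real segment; only then can one read off that $e_{\infty}(\tau)$ is a positive real, and only with that fact in hand does the right-hand side of the $e_{\infty}'$ formula become manifestly holomorphic across $z=\tau$, so that the identity theorem applies.
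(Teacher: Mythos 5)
Your proof is correct, and it takes a genuinely different (more complex-analytic) route than the paper. The paper works directly on the vertical segment $z=\tau+ai$: it bounds $|\frac{d}{da}g_n(\tau+ai)|\le 1/\tau^2$, deduces that $a\mapsto e_{\infty}(\tau+ai)$ is Lipschitz and bounded so a continuous extension $e_{\infty}(\tau)=\lim_{a\to 0}e_{\infty}(\tau+ai)$ exists, and then passes to the limit via the triangle inequality $\limsup_n|g_n(0)-e_{\infty}(\tau)|\le 2a/\tau^2$; the second trace is handled by repeating the same argument with Lemma~\ref{lem:ridge_tr_2}, and positivity is obtained exactly as you do, from the eigenvalue representation and the a.s.\ bound on $\|\tfrac1n X_{1:n}^\top X_{1:n}\|_{op}$. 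You instead invoke Vitali's theorem on the disk $D$ straddling the real axis (justified by the uniform bound $\|(\tfrac1n X_{1:n}^\top X_{1:n}+zI_d)^{-1}\|_{op}\le 2/\tau$ there), obtain locally uniform convergence to a holomorphic extension, and get the second limit for free from Weierstrass convergence of derivatives, identifying it by analytically continuing the identity $e_{\infty}'(z)=-e_{\infty}(z)/(z+\mme[\lambda^2(1+\gamma\lambda^2 e_{\infty}(z))^{-2}])$ across the real axis. Your route buys economy (both traces at once, and it makes explicit the continuity/holomorphy of the right-hand side of the $e_{\infty}'$ formula at $z=\tau$, which the paper's ``identical argument'' remark leaves implicit), at the cost of heavier machinery (Vitali, identity theorem) where the paper needs only an elementary Lipschitz estimate; your parenthetical Arzel\`a--Ascoli remark is essentially the paper's argument. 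You also correctly flag the one delicate ordering point: $e_{\infty}(\tau)$ must be shown real and positive before the dominated-convergence/Morera step that extends the derivative formula to $z=\tau$.
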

\begin{proof}
    We will prove the first limit. The second limit follows from an identical argument. By taking $\Theta_n = \frac{1}{d}I_d$ in the statement of Lemma \ref{lem:ridge_tr_1}, we know that for any $z = \tau + ai$ such that $a >0$,
    \[
    \frac{1}{d} \textup{tr}\left(\left(\frac{1}{n}X_{1:n}^\top X_{1:n} + z I_d \right)^{-1}  \right) \stackrel{a.s.}{\to} e_{\infty}(z). 
    \]
    Now, define the map $g_n(a) =  \frac{1}{d} \textup{tr}\left(\left(\frac{1}{n}X_{1:n}^\top X_{1:n} + (\tau + ai) I_d \right)^{-1} \right)$. Note that,
    \[
    |g_n'(a)| = \left|  i \frac{1}{d} \textup{tr}\left(\left(\frac{1}{n}X_{1:n}^\top X_{1:n} + (\tau + ai) I_d \right)^{-2} \right) \right| \leq \frac{1}{\tau^2},
    \]
    and thus $g_n$ is $1/\tau^2$-Lipschitz. Since $g_n(a) \stackrel{a.s.}{\to} e_{\infty}(\tau + ai)$, this implies that $a \mapsto e_{\infty}(\tau + ai)$ is $1/\tau^2$-Lipschitz as well. Moreover, we also clearly have that $|g_n(a)| \leq 1/\tau$ is bounded and thus, $|e_{\infty}(\tau + ai)| \leq 1/\tau$ is bounded as well. Putting these two facts together, we conclude that there exists a continous extension $e_{\infty}(\tau) = \lim_{a \to 0} e_{\infty}(\tau + ai)$. Moreover, for all $a > 0$,
    \[
    \limsup_{n \to \infty} |g_n(0) - e_{\infty}(\tau)| \leq \limsup_{n \to \infty} \frac{2a}{\tau^2} + |g_n(a) - e_{\infty}(\tau + ia)| \stackrel{a.s.}{=}  \frac{2a}{\tau^2},
    \]
    and so sending $a \to 0$, we conclude that $g_n(0) \stackrel{\mmp}{\to} e_{\infty}(\tau)$. This proves the desired limit.

    Finally, to show that $e_{\infty}(\tau)$ is positive and real-valued, let $\rho_1,\dots,\rho_d$ denote the eigenvalues of $\frac{1}{n} X_{1:n}^\top X_{1:n}$. By Theorem 1 of \cite{Yin1988}, $\|\frac{1}{n} X_{1:n}^\top X_{1:n} \|_{op} \leq \|P_{\lambda}\|_{\infty}^2 \|\frac{1}{n} \sum_{i=1}^{n} W_i W_i^\top \|_{op} \stackrel{a.s.}{\to}  \|P_{\lambda}\|_{\infty}^2(1 + \sqrt{\gamma})^2$. Thus, 
    \begin{align*}
    e_{\infty}(\tau) & = \lim_{n \to \infty} g_n(0) = \lim_{n \to \infty} \frac{1}{d} \sum_{i=1}^d \frac{1}{\rho_i + \tau} \geq \limsup_{n \to \infty}  \frac{1}{\|\frac{1}{n} X_{1:n}^\top X_{1:n}\|_{op} + \tau}\\
    &  \stackrel{a.s.}{\geq} \frac{1}{\|P_{\lambda}\|_{\infty}^2(1 + \sqrt{\gamma})^2 + \tau} > 0,
    \end{align*}
    as desired. 
\end{proof}

\subsection{Proof of Lemma \ref{lem:ridge_loo_lemma}}

The proof of this lemma follows naturally from our previous leave-one-out calculations along with the asymptotic characterization of the Stieltjes transform of $\frac{1}{n}X_{1:n}^\top X_{1:n}$ given in the previous section. We give the details of this argument now.

\begin{proof}[Proof of Lemma \ref{lem:ridge_loo_lemma}]
The proof of this lemma follows from the same leave-one-out calculations that we applied in Lemma \ref{lem:ridge_stab}. Namely, by the calculations of Lemma \ref{lem:ridge_stab} we have that
\begin{align*}
Y_{n+1} - X_{n+1}^\top\hat{\beta} = & \left(1 - \frac{1}{n+1}X_{n+1}^\top \left(\frac{1}{n+1} X^\top X + \tau I_d \right)^{-1}X_{n+1} \right)(Y_{n+1} - X_{n+1}^\top\hat{\beta}_{(n+1)})\\
& + \frac{\tau}{n+1}X_{n+1}^\top \left(\frac{1}{n+1} X^\top X + \tau I_d \right)^{-1}\hat{\beta}_{(n+1)}.
\end{align*}
Repeating the calculations of Lemma 2, we have that 
\[
\mme\left[\left| \frac{\tau}{n+1}X_{n+1}^\top \left(\frac{1}{n+1} X^\top X + \tau I_d \right)^{-1}\hat{\beta}_{(n+1)} \right|\right] = O\left(\frac{1}{\sqrt{n}}\right),
\]
and thus $\frac{\tau}{n+1}X_{n+1}^\top \left(\frac{1}{n+1} X^\top X + \tau I_d \right)^{-1}\hat{\beta}_{(n+1)} = o_{\mmp}(1)$. Moreover, since $X_{n+1} \independent \hat{\beta}_{(n+1)}$ and since by Lemma \ref{lem:coef_bound}, $\|\hat{\beta}_{(n+1)}\|^2_{2} = O_{\mmp}(1)$, we know that $(Y_{n+1} - X_{n+1}^\top\hat{\beta}_{(n+1)}) = O_{\mmp}(1)$. Thus, to prove the desired result it is sufficient to show that there exists $c_{\infty}>0$, such that
\[
\left| \left( 1-\frac{1}{n+1}X_{n+1}^\top \left(\frac{1}{n+1} X^\top X + \tau I_d \right)^{-1}X_{n+1}\right) -   \frac{1}{1+2\lambda_{n+1}^2 c_{\infty}} \right| \stackrel{\mmp}{\to} 0
\]
By the Sherman-Morrison-Woodbury formula,
\[
 1-\frac{1}{n+1}X_{n+1}^\top \left(\frac{1}{n+1} X^\top X + \tau I_d \right)^{-1}X_{n+1} = \frac{1}{1+ \frac{1}{n+1}\lambda_{n+1}^2 W_{n+1}^\top \left(\frac{1}{n+1} X_{1:n}^\top X_{1:n} + \tau I_d \right)^{-1}W_{n+1}}.
\]
Now, a direct computation shows that 
\begin{align*}
& \mme\left[ \left(\frac{1}{n+1} W_{n+1}^\top \left(\frac{1}{n+1} X_{1:n}^\top X_{1:n} + \tau I_d \right)^{-1}W_{n+1} - \frac{1}{n+1} \text{tr}\left(\left(\frac{1}{n+1} X_{1:n}^\top X_{1:n} + \tau I_d \right)^{-1}\right)\right)^2 \right]\\
& = \frac{\mme[(W_1^2 - 1)^2]}{(n+1)^2}\sum_{i=1}^d \mme\left[ \left(\left(\frac{1}{n+1} X_{1:n}^\top X_{1:n} + \tau I_d \right)^{-1}_{ii}\right)^2 \right]\\
& \ \ \ \ \ \ \ \ \ \ \ \ \ \ \ \ \ \ \ \ \ \   + \frac{\mme[W_1^2]^2}{(n+1)^2}\sum_{i \neq j} \mme\left[\left(\left(\frac{1}{n+1} X_{1:n}^\top X_{1:n} + \tau I_d \right)^{-1}_{ij}\right)^2 \right]\\
& \leq \frac{\mme[W^4_{11}]}{(n+1)^2} \sum_{i=1}^{d} \sum_{j=1}^{d} \mme\left[ \left(\left(\frac{1}{n+1} X_{1:n}^\top X_{1:n} + \tau I_d \right)^{-1}_{ij}\right)^2\right]\\
& = \frac{\mme[W^4_{11}]}{(n+1)^2} \mme\left[ \text{tr}\left( \left(\frac{1}{n+1} X_{1:n}^\top X_{1:n} + \tau I_d \right)^{-1} \left(\frac{1}{n+1} X_{1:n}^\top X_{1:n} + \tau I_d \right)^{-1} \right) \right]\\
& \leq \frac{d\mme[W^4_{11}]}{\tau^2(n+1)^2}\\
& = O\left( \frac{1}{n}\right), 
\end{align*}
and thus,
\[
 \frac{1}{n+1}W_{n+1}^\top \left(\frac{1}{n+1} X_{1:n}^\top X_{1:n} + \tau I_d \right)^{-1}W_{n+1} - \frac{1}{n+1}\text{tr}\left(\left(\frac{1}{n+1} X_{1:n}^\top X_{1:n} + \tau I_d \right)^{-1}\right) \stackrel{\mmp}{\to} 0.
\]
Finally, by Lemma \ref{lem:ridge_tr_final}, we know that there exists $e_{\infty}(\tau) > 0$ such that 
\[
\frac{1}{n+1}\text{tr}\left(\left(\frac{1}{n+1} X_{1:n}^\top X_{1:n} + \tau I_d \right)^{-1}\right) \stackrel{\mmp}{\to} e_{\infty}(\tau).
\]
This proves the desired result.
    
\end{proof}

\subsection{Proof of Lemma \ref{lem:ridge_resid_characterization}}

To aid in future sections it will be helpful to prove a stronger statement than what is given in Lemma \ref{lem:ridge_resid_characterization}. In particular, we have the following result.

\begin{lemma}\label{lem:ridge_CLT}
    Assume that $\epsilon_i$ and $\sqrt{d}\beta^*_i$ have $8$ bounded moments. Let $(\lambda, \epsilon)$ denote a copy of $(\lambda_{n+1},\epsilon_{n+1})$ independent of $Z \sim N(0,1)$ and $\psi$ be any bounded, continuous function. Then, under the setting of Section \ref{sec:high_dim_setting} with $\ell(r) = r^2$,
    \[
    \mme\left[\psi\left( \frac{1}{1+2\lambda^2_{n+1}c_{\infty}}\left( Y_{n+1} - X_{n+1}^\top\hat{\beta}_{(n+1)} \right)\right) \mid \{(X_i,Y_i)\}_{i=1}^n\right] \stackrel{\mmp}{\to} \mme\left[\psi\left(\frac{1}{1+2\lambda^2c_{\infty}}(\epsilon + \lambda N_{\infty}Z)\right)\right].
    \]
    Moreover, for all $1 \leq i < j \leq n+1$,
    \[
    \mme\left[\psi\left(\frac{1}{1+2\lambda^2_{i}c_{\infty}}\left( Y_{i} - X_{i}^\top\hat{\beta}_{(i)} \right)\right) \psi\left(\frac{1}{1+2\lambda^2_{j}c_{\infty}}\left( Y_{j} - X_{j}^\top\hat{\beta}_{(j)} \right)\right) \right] \to \mme\left[\psi\left(\frac{1}{1+2\lambda^2c_{\infty}}(\epsilon + \lambda N_{\infty}Z)\right)\right]^2.
    \]
\end{lemma}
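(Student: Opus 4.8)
\emph{Proof plan.} Throughout write $R_i:=\frac{1}{1+2\lambda_i^2 c_\infty}\bigl(Y_i-X_i^\top\hat{\beta}_{(i)}\bigr)$ and $c:=\mme\bigl[\psi\bigl(\frac{1}{1+2\lambda^2 c_\infty}(\epsilon+\lambda N_\infty Z)\bigr)\bigr]$, and for a deterministic vector $u\in\mmr^d$ set
\[
g_n(u):=\mme\Bigl[\psi\Bigl(\tfrac{1}{1+2\lambda_{n+1}^2 c_\infty}\bigl(\epsilon_{n+1}+\lambda_{n+1}\,W_{n+1}^\top u\bigr)\Bigr)\Bigr],
\]
where only $(\lambda_{n+1},W_{n+1},\epsilon_{n+1})$ is integrated. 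The whole argument reduces to a deterministic conditional CLT for $g_n$, applied after freezing the relevant leave-one-out coefficient. For the first claim, use $Y_{n+1}-X_{n+1}^\top\hat{\beta}_{(n+1)}=\epsilon_{n+1}+\lambda_{n+1}W_{n+1}^\top(\beta^*-\hat{\beta}_{(n+1)})$ and the fact that $\hat{\beta}_{(n+1)}$ depends only on $\{(X_i,Y_i)\}_{i=1}^n$ while $(\lambda_{n+1},W_{n+1},\epsilon_{n+1})$ is independent of $\mathcal{G}':=\sigma(\{(X_i,Y_i)\}_{i=1}^n,\beta^*)$ to obtain $\mme[\psi(R_{n+1})\mid\mathcal{G}']=g_n(\beta^*-\hat{\beta}_{(n+1)})$; since $g_n$ is bounded by $\|\psi\|_\infty$, the tower property reduces the claim to $g_n(\beta^*-\hat{\beta}_{(n+1)})\stackrel{\mmp}{\to}c$.

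The deterministic input I would establish is: \emph{if $u^{(n)}\in\mmr^{d}$ is a sequence with $\|u^{(n)}\|_2\to N_\infty$ and $\|u^{(n)}\|_\infty\to0$, then $g_n(u^{(n)})\to c$.} Indeed the Lindeberg condition for the weighted sum $W_{n+1}^\top u^{(n)}$ holds trivially once $\|u^{(n)}\|_\infty\to0$ and $\|u^{(n)}\|_2\to N_\infty>0$, so $W_{n+1}^\top u^{(n)}\stackrel{D}{\to}N(0,N_\infty^2)$; for each fixed value of $(\lambda_{n+1},\epsilon_{n+1})$ the continuous mapping theorem and boundedness/continuity of $\psi$ give convergence of the inner expectation, and dominated convergence over $(\lambda_{n+1},\epsilon_{n+1})$ (domination $\|\psi\|_\infty$) concludes. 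A standard subsequence argument upgrades this to $\sup\{|g_n(u)-c|:|\,\|u\|_2-N_\infty|\le\delta_n,\ \|u\|_\infty\le\delta_n\}\to0$ for any $\delta_n\downarrow0$. Together with $\|\beta^*-\hat{\beta}_{(n+1)}\|_2\stackrel{\mmp}{\to}N_\infty$ (Lemma~\ref{lem:ridge_norm_conv}), it then remains only to establish the delocalization estimate $\|\beta^*-\hat{\beta}_{(n+1)}\|_\infty\stackrel{\mmp}{\to}0$, after which a diagonal choice of $\delta_n\to0$ completes the first claim.

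This last estimate is the one genuinely technical step. Since $\|\beta^*\|_\infty=d^{-1/2}\max_{j\le d}|\sqrt d\beta^*_j|$ and, by a union bound using the eighth moment of $\sqrt d\beta^*_j$, $\max_{j\le d}|\sqrt d\beta^*_j|=O_{\mmp}(d^{1/8})$, we have $\|\beta^*\|_\infty\stackrel{\mmp}{\to}0$, so it suffices to show $\|\hat{\beta}_{(n+1)}\|_\infty\stackrel{\mmp}{\to}0$. Writing $B:=(\tfrac1n X_{1:n}^\top X_{1:n}+\tau I_d)^{-1}$ and using $Y_{1:n}=X_{1:n}\beta^*+\epsilon_{1:n}$, one has $\hat{\beta}_{(n+1)}=(\beta^*-\tau B\beta^*)+\tfrac1n B X_{1:n}^\top\epsilon_{1:n}$, and I would bound the $\ell_\infty$ norm of each summand by a union bound over the $d$ coordinates: using $\|B\|_{op}\le1/\tau$ and $\|Be_j\|_2\le1/\tau$, the standard $L_8$ bound for sums of independent mean-zero variables applied to $e_j^\top B\beta^*$ and to the $j$-th coordinate of $\tfrac1n B X_{1:n}^\top\epsilon_{1:n}$ shows each coordinate has eighth moment of order $d^{-4}$ or $n^{-4}$, so the union bound over $d=O(n)$ coordinates vanishes. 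This finishes the first claim and is precisely where the ``eight bounded moments'' hypothesis is used; everything else is soft once the deterministic CLT is in place.

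For the second claim, exchangeability of $(R_1,\dots,R_{n+1})$ reduces it to $\mme[\psi(R_n)\psi(R_{n+1})]\to c^2$. I would introduce the leave-two-out ridge estimator $\hat{\beta}_{(n,n+1)}$ (fit on $\{(X_i,Y_i)\}_{i=1}^{n-1}$) and the proxies $\tilde R_n:=\frac{1}{1+2\lambda_n^2 c_\infty}(\epsilon_n+\lambda_n W_n^\top(\beta^*-\hat{\beta}_{(n,n+1)}))$ and $\tilde R_{n+1}$ defined analogously. The leave-one-out representation from the proof of Lemma~\ref{lem:ridge_stab}, applied to the $(n{-}1)$st and $(n{+}1)$st samples together with the $L_4$ bounds established there, gives $\mme\|\hat{\beta}_{(n+1)}-\hat{\beta}_{(n,n+1)}\|_2^2=O(1/n)$; since $W_{n+1}$ is independent of these coefficients and $\psi$ is bounded continuous, this forces $\mme|\psi(R_{n+1})-\psi(\tilde R_{n+1})|\to0$, and likewise in index $n$, hence $\mme[\psi(R_n)\psi(R_{n+1})]-\mme[\psi(\tilde R_n)\psi(\tilde R_{n+1})]\to0$. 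Finally, conditional on $\mathcal{H}:=\sigma(\beta^*,\{(X_i,Y_i)\}_{i=1}^{n-1})$ the vector $u':=\beta^*-\hat{\beta}_{(n,n+1)}$ is frozen and $(\lambda_n,W_n,\epsilon_n)$, $(\lambda_{n+1},W_{n+1},\epsilon_{n+1})$ are i.i.d.\ and independent of $\mathcal{H}$, so $\tilde R_n,\tilde R_{n+1}$ are conditionally i.i.d.\ with $\mme[\psi(\tilde R_n)\psi(\tilde R_{n+1})\mid\mathcal{H}]=g_n(u')^2$; since $\|u'\|_2\stackrel{\mmp}{\to}N_\infty$ and $\|u'\|_\infty\stackrel{\mmp}{\to}0$ (by the previous two paragraphs applied to $\hat{\beta}_{(n,n+1)}$), the deterministic CLT gives $g_n(u')\stackrel{\mmp}{\to}c$ and hence $\mme[g_n(u')^2]\to c^2$ by bounded convergence, which combined with the proxy estimate yields $\mme[\psi(R_n)\psi(R_{n+1})]\to c^2$.
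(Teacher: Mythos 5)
Your overall strategy is essentially the paper's: reduce the conditional expectation to a CLT for the projection $W_{n+1}^\top(\beta^*-\hat{\beta}_{(n+1)})$ with the coefficient vector frozen, use $\|\beta^*-\hat{\beta}_{(n+1)}\|_2\stackrel{\mmp}{\to}N_\infty$ plus a delocalization estimate to verify Lindeberg, and prove the product statement by passing to the leave-two-out fit and conditioning so the two factors become conditionally i.i.d. The paper does the same, except that it verifies Lindeberg through the ratio $\sum_j|\beta^*_j-(\hat{\beta}_{(n+1)})_j|^3/\|\beta^*-\hat{\beta}_{(n+1)}\|_2^3$, fed by the entrywise bound $\beta^*_j-\hat{\beta}_j=O_{L_4}(1/\sqrt d)$ of Lemma~\ref{lem:ridge_entry_control}, which is proved by a leave-one-\emph{column}-out identity, whereas you aim for $\|\beta^*-\hat{\beta}_{(n+1)}\|_\infty\stackrel{\mmp}{\to}0$ via the closed-form ridge solution and coordinatewise $L_8$ bounds with a union bound. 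Your treatment of the second claim (coefficient-norm stability $\mme\|\hat{\beta}_{(n+1)}-\hat{\beta}_{(n,n+1)}\|_2^2=O(1/n)$, then conditioning to get $g_n(u')^2$) is sound and matches the paper's argument in substance.

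There is, however, a concrete gap exactly at the step you call the only technical one. You bound the coordinates of $\hat{\beta}_{(n+1)}=(\beta^*-\tau B\beta^*)+\tfrac1n BX_{1:n}^\top\epsilon_{1:n}$ by ``the standard $L_8$ bound for sums of independent mean-zero variables'' applied to $e_j^\top B\beta^*$ and to $\tfrac1n e_j^\top BX_{1:n}^\top\epsilon_{1:n}$, but the setting of Section~\ref{sec:high_dim_setting} does \emph{not} assume $\mme[\sqrt d\,\beta^*_j]=0$ or $\mme[\epsilon_i]=0$. Conditionally on $X_{1:n}$, the centered parts of these sums are indeed $O_{L_8}(d^{-1/2})$ and $O_{L_8}(n^{-1/2})$ per coordinate (weights $\|Be_j\|_2\le1/\tau$, $\|\tfrac1nXBe_j\|_2\le(2\sqrt{n\tau})^{-1}$), so the union bound works for them; but the mean components contribute terms proportional to $(B\mathbf 1_d)_j/\sqrt d$ and $\tfrac1n(e_j^\top BX_{1:n}^\top\mathbf 1_n)$, and the crude bounds $\|B\mathbf 1_d\|_2\le\sqrt d/\tau$ only give $O(1)$ uniformly over $j$. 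Showing $\max_j|(B\mathbf 1_d)_j|=o_\mmp(\sqrt d)$ requires its own resolvent/leave-one-out argument of essentially the same difficulty as Lemma~\ref{lem:ridge_entry_control}; the paper sidesteps centering entirely because its concentration in that lemma runs over the left-out column $X_{\cdot d}=\lambda_\cdot W_{\cdot d}$, which \emph{is} mean zero by assumption on $W$. So either add the missing argument for the mean terms (or impose centering of $P_\beta$ and $P_\epsilon$, which the paper does not), or replace this step by the paper's column-leave-out entrywise bound; the rest of your proof then goes through.
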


The main step necessary to prove Lemma \ref{lem:ridge_CLT} is a control on the size of the entries of $\hat{\beta}^* - \hat{\beta}_{(n+1)}$. This is given in the following lemma. 

\begin{lemma}\label{lem:ridge_entry_control}
     Assume that $\epsilon_i$ and $\sqrt{d}\beta^*_i$ have $8$ bounded moments. Then, under the setting of Section \ref{sec:high_dim_setting} with $\ell(r) = r^2$, we have that for all $1 \leq j \leq d$,
    \[
    \beta^*_j - \hat{\beta}_j = O_{L_4}\left(\frac{1}{\sqrt{d}}\right).
    \]
\end{lemma}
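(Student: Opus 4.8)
The plan is to exploit the closed form of the ridge estimator. With $M := \frac{1}{n+1}X^\top X + \tau I_d$ we have $\hat{\beta} = M^{-1}\frac{1}{n+1}X^\top Y$, and substituting $Y = X\beta^* + \epsilon$ together with $\frac{1}{n+1}X^\top X = M - \tau I_d$ rearranges to the exact identity
\[
\beta^* - \hat{\beta} = \tau M^{-1}\beta^* - \frac{1}{n+1}M^{-1}X^\top\epsilon .
\]
I would then bound the fourth moment of the $j$-th coordinate of each of the two terms separately. The structural features that make this straightforward are that $\beta^*$ and $\epsilon$ are each independent of $X$ (so after conditioning on $X$ each coordinate becomes a linear form in independent mean-zero variables), together with the purely deterministic spectral facts $M \succeq \tau I_d$, hence $\|M^{-1}\|_{op} \le 1/\tau$, and $\frac{1}{n+1}X^\top X \preceq M$.

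For the first term, condition on $X$: then $\tau(M^{-1}\beta^*)_j = \tau\sum_{k=1}^d (M^{-1})_{jk}\beta^*_k$ is a weighted sum of the i.i.d.\ mean-zero coordinates $\beta^*_k$ with $\mme[(\beta^*_k)^2] = \mme[(\sqrt{d}\beta^*_1)^2]/d$ and $\mme[(\beta^*_k)^4] = \mme[(\sqrt{d}\beta^*_1)^4]/d^2$. A Rosenthal-type fourth-moment bound gives $\mme[\tau^4(M^{-1}\beta^*)_j^4 \mid X] \lesssim d^{-2}\big(\sum_k (M^{-1})_{jk}^2\big)^2 + d^{-2}\sum_k (M^{-1})_{jk}^4$, and since $\sum_k (M^{-1})_{jk}^2 = (M^{-2})_{jj} \le 1/\tau^2$ and $\sum_k (M^{-1})_{jk}^4 \le \big(\sum_k (M^{-1})_{jk}^2\big)^2$, both pieces are $O(1/d^2)$ uniformly over $X$. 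For the second term, write $v := \frac{1}{n+1}XM^{-1}e_j \in \mmr^{n+1}$, so the coordinate equals $v^\top\epsilon$; conditioning on $X$ and applying the same fourth-moment bound to the i.i.d.\ errors gives $\mme[(v^\top\epsilon)^4 \mid X] \lesssim \|v\|_2^4$. Finally $\|v\|_2^2 = e_j^\top M^{-1}\big(\frac{1}{(n+1)^2}X^\top X\big)M^{-1}e_j \le \frac{1}{n+1}(M^{-1})_{jj} \le \frac{1}{\tau(n+1)}$, using $\frac{1}{(n+1)^2}X^\top X \preceq \frac{1}{n+1}M$, so $\|v\|_2^4 = O(1/n^2) = O(1/d^2)$ because $d/n \to \gamma$. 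Adding the two estimates yields $\mme[(\beta^*_j - \hat{\beta}_j)^4] = O(1/d^2)$, which is the claim.

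I do not expect a genuine obstacle in this lemma; the one point requiring care is that the fourth-moment control must be uniform over the random design $X$, which is why I route every matrix estimate through deterministic operator inequalities ($M \succeq \tau I_d$ and $\frac{1}{n+1}X^\top X \preceq M$) rather than through concentration of the empirical covariance. The only probabilistic input is the Rosenthal step, for which four bounded moments of $\epsilon_i$ and $\sqrt{d}\beta^*_i$ already suffice (the eight-moment hypothesis is carried over from the surrounding results).
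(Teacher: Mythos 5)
Your route is genuinely different from the paper's and, up to one hypothesis discussed below, it is correct. You work directly from the closed form, writing $\beta^*-\hat\beta=\tau M^{-1}\beta^*-\frac{1}{n+1}M^{-1}X^\top\epsilon$ with $M=\frac{1}{n+1}X^\top X+\tau I_d$, and you control each coordinate's fourth moment conditionally on $X$ through the deterministic facts $M\succeq\tau I_d$ and $\frac{1}{n+1}X^\top X\preceq M$; the bounds $\sum_k(M^{-1})_{jk}^2=(M^{-2})_{jj}\le\tau^{-2}$ and $\|\frac{1}{n+1}XM^{-1}e_j\|_2^2\le\frac{1}{\tau(n+1)}$ are both right, and the Rosenthal step is routine. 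The paper instead never invokes the closed form: it leaves out the $d$-th \emph{predictor} (a column, not a data point), solves the first-order conditions to express $\hat\beta_d$ in terms of the reduced fit $\hat\beta^{(d)}$, lower-bounds the resulting denominator by $\tau$, and then uses that, conditionally on everything except the $d$-th column of $W$, the numerator is a weighted sum of the centered variables $W_{id}$; this costs the auxiliary moment bounds of Lemmas \ref{lem:ridge_loo_predic_norm_bound} and \ref{lem:ridge_loo_norm_bound}. When your argument applies it is shorter, gives control uniform over the design, and, as you note, needs only four moments of $\epsilon_i$ and $\sqrt d\,\beta^*_j$ rather than eight.

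The load-bearing caveat is your phrase ``i.i.d.\ mean-zero coordinates $\beta^*_k$'' (and the analogous use of centered $\epsilon_i$). The setting of Section \ref{sec:high_dim_setting} centers $W_{ij}$ but nowhere states $\mme[\sqrt d\,\beta^*_j]=0$ or $\mme[\epsilon_i]=0$, and the paper's column-leave-out proof is arranged precisely so that only $\mme[W_{id}]=0$ is used. Without centering, your two terms acquire conditional means $\tau\,\mme[\sqrt d\,\beta^*_1]\,d^{-1/2}(M^{-1}\mathbf{1})_j$ and $\mme[\epsilon_1]\,\frac{1}{n+1}(M^{-1}X^\top\mathbf{1})_j$, and the operator-norm inequalities you rely on only give these coordinates an $O(1)$ bound (e.g.\ $|(M^{-1}\mathbf{1})_j|\le\sqrt d/\tau$), not $O(1/\sqrt d)$; showing that such coordinates are in fact $O_{L_4}(1/\sqrt d)$ is a delocalization statement of essentially the same flavour as the lemma itself (column exchangeability gives it easily in $L_2$ but not in $L_4$), so the patch is not free. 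If centered noise and signal are taken as given — as the Gaussian example in Section \ref{sec:high_dim_setting} and some of the paper's other computations implicitly do — your proof is complete; as a proof of the lemma under the paper's stated assumptions, this is the one genuine gap.
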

\begin{proof}
    By assumption we know that $\beta^*_j = O_{L_4}(1/\sqrt{d})$. So, it is sufficient to show that the result holds for $\hat{\beta}_j$. 
    
    Without loss of generality, assume that $j=d$. We will derive a formula for the fitted coefficients when a column of $X$ is left out. For this purpose, let $X_{i,-d}$ denote the vector $(X_{i1},\dots,X_{i(d-1)})$ and $X_{(d)}$ denote the matrix with rows $X_{1,-d},\dots,X_{n+1,-d}$. Similarly, let $\hat{\beta}_{-d}$ and $\beta^*_{-d}$ denote the first $d-1$ coordinates of $\hat{\beta}$ and $\beta^*$, respectively. Let $\hat{\beta}^{(d)} \in \mmr^{d-1}$ denote the fitted coefficients obtained on the dataset $\{( X_{i,-d},\epsilon_i + X_{i,-d}^\top\beta^*_{-d})\}_{i=1}^{n+1}$, i.e.,
    \[
    \hat{\beta}^{(d)}  := \text{argmin}_{\beta \in \mmr^{d-1}} \frac{1}{n+1} \sum_{i=1}^{n+1} (\epsilon_i +  X_{i,-d}^\top(\beta^*_{-d} - \beta))^2 + \tau \|\beta\|_2^2.
    \]
     By the optimality of $\hat{\beta}$ and $\hat{\beta}^{(d)}$, we have the first order conditions
    \[
    \frac{1}{n+1}\sum_{i=1}^{n+1} (X_i^\top(\hat{\beta} - \beta^*) - \epsilon_i)X_i + \tau \hat{\beta} = 0 \ \ \ \text{ and } \ \  \ \frac{1}{n+1}\sum_{i=1}^{n+1} (X_{i,-d}^\top(\hat{\beta}^{(d)} - \beta^*_{-d}) - \epsilon_i)X_{i,-d} + \tau \hat{\beta}^{(d)} = 0.
    \]
    In particular, taking the difference between the above two equations, we find that
    \begin{align*}
    & \left(\begin{matrix} \frac{1}{n+1} \sum_{i=1}^{n+1} (X_{i,-d}^\top(\hat{\beta}_{-d} - \hat{\beta}^{(d)}))X_{i,-d} + \tau(\hat{\beta}_{-d} - \hat{\beta}^{(d)}) \\ \frac{1}{n+1} \sum_{i=1}^{n+1}  (X_i^\top\hat{\beta} - Y_i)X_{id} + \tau \hat{\beta}_d \end{matrix} \right) =  \left(\begin{matrix} \frac{1}{n+1}\sum_{i=1}^{n+1}(X_{id}\beta^*_d - X_{id} \hat{\beta}_d) X_{i,-d} \\ 0 \end{matrix} \right)\\
     \implies & \left(\begin{matrix} \hat{\beta}_{-d} - \hat{\beta}^{(d)} \\  \hat{\beta}_d\end{matrix} \right) = \left(\begin{matrix}   (\beta^*_d - \hat{\beta}_d) \left(\frac{1}{n+1}X_{(d)}^\top X_{(d)} + \tau I_{d-1} \right)^{-1} \left(\frac{1}{n+1} \sum_{i=1}^{n+1} X_{id}X_{i,-d}\right) \\ \frac{\frac{1}{n+1} \sum_{i=1}^{n+1} (Y_i - X_{i,-d}^\top\hat{\beta}_{-d})X_{id} }{\tau + \frac{1}{n+1} \sum_{i=1}^{n+1} X_{id}^2} \end{matrix} \right)\\
     \implies &  \hat{\beta_d} = \frac{ \frac{1}{n+1} \sum_{i=1}^{n+1} (Y_i - X_{i-d}^\top \hat{\beta}^{(d)}) X_{id} }{\tau + \frac{1}{n+1} \sum_{i=1}^{n+1} X_{id}^2 - \frac{1}{n+1} \sum_{i=1}^{n+1} X_{id}X^\top_{i,-d}   \left(\frac{1}{n+1}X_{(d)}^\top X_{(d)} + \tau I_{d-1} \right)^{-1}  \frac{1}{n+1} \sum_{i=1}^{n+1} X_{id}X_{i,-d} }\\
     & \ \ \ \ \ - \frac{  \beta^*_d (\frac{1}{n+1}\sum_{i=1}^{n+1} X_{id} X_{i,-d})^\top \left(\frac{1}{n+1}X_{(d)}^\top X_{(d)} + \tau I_{d-1} \right)^{-1} \frac{1}{n+1}\sum_{i=1}^{n+1} X_{id} X_{i,-d}}{\tau + \frac{1}{n+1} \sum_{i=1}^{n+1} X_{id}^2 - \frac{1}{n+1} \sum_{i=1}^{n+1} X_{id}X^\top_{i,-d}   \left(\frac{1}{n+1}X_{(d)}^\top X_{(d)} + \tau I_{d-1} \right)^{-1}  \frac{1}{n+1} \sum_{i=1}^{n+1} X_{id}X_{i,-d} }. 
    \end{align*}
    Now, focusing on the denominator and letting $X_{\cdot d} := (X_{1d},\dots, X_{(n+1)d})$ be the $d_{th}$ column vector of the covariates, we have that 
    \begin{align*}
    & \frac{1}{n+1} \sum_{i=1}^{n+1} X_{id}^2 - \frac{1}{n+1} \sum_{i=1}^{n+1} X_{id}X^\top_{i,-d}   \left(\frac{1}{n+1}X_{(d)}^\top X_{(d)} + \tau I_{d-1} \right)^{-1}  \frac{1}{n+1} \sum_{i=1}^{n+1} X_{id}X_{i,-d}\\
    & = \frac{1}{n+1} X_{\cdot d}^\top \left(I_n - \frac{1}{n+1}X_{(d)} \left(\frac{1}{n+1}X_{(d)}^\top X_{(d)} + \tau I_{d-1} \right)^{-1} X^\top_{(d)} \right) X_{\cdot d}\\
    & \geq 0,
    \end{align*}
    where here we have used the fact that  $I_d - \frac{1}{n+1}X_{(d)} \left(\frac{1}{n+1}X_{(d)}^\top X_{(d)} + \tau I_{d-1} \right)^{-1} X^\top_{(d)}$ is positive semidefinite (to see this one can simply consider the singular value decomposition of $X_{(d)})$. So, we find that 
    \begin{align*}
    |\hat{\beta}_d| & \leq  \left| \frac{1}{\tau(n+1)} \sum_{i=1}^{n+1} (Y_i - X_{i,-d}^\top\hat{\beta}^{(d)}) X_{id} \right| + \frac{|\beta^*_d|}{\tau} \frac{1}{n+1}\sum_{i=1}^{n+1} X_{id}^2 \\
    & \leq  \left| \frac{1}{\tau(n+1)} \sum_{i=1}^{n+1} (\epsilon_i + X_{i,-d}^\top \beta^*_{-d} - X_{i,-d}^\top\hat{\beta}^{(d)}) X_{id} \right| + 2\frac{|\beta^*_d|}{\tau} \frac{1}{n+1}\sum_{i=1}^{n+1} X_{id}^2.
    \end{align*}
    The fact that the second term is $O_{L_4}(1/\sqrt{d})$ follows immediately by the fact that by assumption  $|\beta^*_d| = O_{L_8}(1/\sqrt{d})$ and $\frac{1}{n+1}\sum_{i=1}^{n+1} X_{id}^2 = O_{L_8}(1)$. For the first term, note that conditional on $\{\epsilon_i + X_{i,-d}^\top \beta^*_{-d} - X_{i,-d}^\top\hat{\beta}^{(d)}\}_{i=1}^{n+1}$, this becomes a sum of independent, mean zero, random variables. So, we find that in order to prove that the first term is $O_{L_4}(1/\sqrt{d})$, it is sufficient to show that 
    \[
    \epsilon_i + X_{i,-d}^\top \beta^*_{-d} - X_{i,-d}^\top\hat{\beta}^{(d)} = O_{L_4}(1).
    \]
    The fact that the first two terms are $O_{L_4}(1)$ is clear. To control $X_{i,-d}^\top\hat{\beta}^{(-d)}$, let $\hat{\beta}^{(d)}_{(i)}$ denote the fitted coefficients obtained when we additionally leave out point $i$. Then, to prove the desired result it is sufficient to show that $X_{i,-d}^\top\hat{\beta}^{(d)}_{(i)} = O_{L_4}(1)$ and $X_{i,-d}^\top\hat{\beta}^{(d)} - X_{i,-d}^\top\hat{\beta}^{(d)}_{(i)} = O_{L_4}(1)$. This is done in Lemmas \ref{lem:ridge_loo_predic_norm_bound} and \ref{lem:ridge_loo_norm_bound} below.
\end{proof} 

\begin{lemma}\label{lem:ridge_loo_predic_norm_bound}
Assume that $\epsilon_i$ and $\sqrt{d}\beta^*_i$ have $8$ bounded moments. Then, under the setting of Section \ref{sec:high_dim_setting} with $\ell(r) = r^2$,
\[
 X_1^\top\hat{\beta}_{(1)} = O_{L_8}(1)
\]
\end{lemma}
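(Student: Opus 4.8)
The plan is to reduce the claim to an $L_8$ bound on $\|\hat{\beta}_{(1)}\|_2$, exploiting the independence of the leave-one-out fit from the left-out point. First I would observe that $\hat{\beta}_{(1)}$ is the ridge estimator fit on $\{(X_i,Y_i)\}_{i=2}^{n+1}$ and is therefore independent of $(X_1,Y_1)$. Writing $X_1 = \lambda_1 W_1$ and conditioning on $\{(X_i,Y_i)\}_{i=2}^{n+1}$, the quantity $W_1^\top \hat{\beta}_{(1)} = \sum_{j=1}^d W_{1j}(\hat{\beta}_{(1)})_j$ is a linear combination of independent, mean-zero, sub-Gaussian coordinates with fixed weights, hence is itself sub-Gaussian with parameter $\lesssim \|\hat{\beta}_{(1)}\|_2$; in particular $\mme[|W_1^\top \hat{\beta}_{(1)}|^8 \mid \hat{\beta}_{(1)}] \lesssim \|\hat{\beta}_{(1)}\|_2^8$. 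Since $\lambda_1 \le \|P_\lambda\|_\infty$ is bounded, this gives $\mme[|X_1^\top \hat{\beta}_{(1)}|^8] \lesssim \|P_\lambda\|_\infty^8\, \mme[\|\hat{\beta}_{(1)}\|_2^8]$, so it remains to show $\mme[\|\hat{\beta}_{(1)}\|_2^8] = O(1)$.

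For the latter I would apply the deterministic bound of Lemma \ref{lem:coef_bound}, in the version appropriate to the fit on the $n$ samples indexed by $\{2,\dots,n+1\}$, namely $\|\hat{\beta}_{(1)}\|_2 \le \bigl(\tfrac{1}{\tau n}\sum_{i=2}^{n+1} Y_i^2\bigr)^{1/2}$. By Jensen's inequality (convexity of $t\mapsto t^4$ applied to $\tfrac1n\sum_{i=2}^{n+1}Y_i^2$), $\mme[\|\hat{\beta}_{(1)}\|_2^8] \le \tfrac{1}{\tau^4}\mme\bigl[\tfrac1n\sum_{i=2}^{n+1}Y_i^8\bigr] = \tfrac{1}{\tau^4}\mme[Y_2^8]$. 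The final ingredient is $\mme[Y_i^8] = O(1)$: writing $Y_i = \lambda_i W_i^\top\beta^* + \epsilon_i$ and conditioning on $\beta^*$, the term $W_i^\top\beta^*$ is sub-Gaussian with parameter $\lesssim \|\beta^*\|_2$, so $\mme[|W_i^\top\beta^*|^8] \lesssim \mme[\|\beta^*\|_2^8] = \mme\bigl[(\tfrac1d\sum_{j=1}^d(\sqrt d\beta^*_j)^2)^4\bigr] \le \mme[(\sqrt d\beta^*_1)^8] = O(1)$ by Jensen again and the eight-moment assumption; combined with $\mme[\epsilon_i^8]<\infty$ and boundedness of $\lambda_i$, this yields $\mme[Y_i^8] = O(1)$ and hence the claim.

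I do not expect a serious obstacle here; this lemma is essentially bookkeeping. The only mild technical points are (i) recording the moment comparison $\mme[|W_1^\top v|^8 \mid v] \lesssim \|v\|_2^8$ for a fixed vector $v$, which follows from the fact that a sum of independent sub-Gaussian variables is sub-Gaussian with parameter controlled by the $\ell_2$-norm of the weights (or, alternatively, from Rosenthal's inequality), and (ii) tracking constants through the repeated Jensen steps, both of which are routine.
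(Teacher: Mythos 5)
Your proposal is correct and follows essentially the same route as the paper: bound $\lambda_1$ by $\|P_\lambda\|_\infty$, use the independence of $W_1$ from $\hat{\beta}_{(1)}$ to bound the eighth moment of the weighted sum $W_1^\top\hat{\beta}_{(1)}$ by a constant times $\mme[\|\hat{\beta}_{(1)}\|_2^8]$ (the paper does this by a direct multinomial expansion using only eight moments of $W_{11}$, you via sub-Gaussianity, which is assumed and gives the same bound), and then control $\mme[\|\hat{\beta}_{(1)}\|_2^8]$ through the deterministic bound of Lemma \ref{lem:coef_bound} and moment bounds on $Y_i$. Your explicit verification that $\mme[Y_i^8]=O(1)$ is a detail the paper leaves implicit, and it is handled correctly.
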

\begin{proof}
    Since $\lambda_1$ is bounded, we have that $|X_1^\top\hat{\beta}_{(1)}| = |\lambda_1 W_1^\top\hat{\beta}_{(1)}| \leq \|P_{\lambda}\|_{\infty}|W_1^\top\hat{\beta}_{(1)}|$. Thus, it is enough to prove the result for $|W_1^\top\hat{\beta}_{(1)}|$. For this, using the fact that $W_{1} \independent  \hat{\beta}_{(1)}$ we directly compute that
    \begin{align*}
    & \mme[(W_{1}^\top \hat{\beta}_{(1)})^8]\\
    & \leq  \mme[W_{11}^8] \sum_{j=1}^d \mme[( \hat{\beta}_{(1)})_j^8] + \frac{1}{2}\binom{8}{2}\mme[W_{11}^2]\mme[W_{11}^6]\sum_{j \neq k} \mme[( \hat{\beta}_{(1)})_j^2 (\hat{\beta}_{(1)})_k^6] + \frac{1}{2}\binom{8}{4}\mme[W_{11}^4]^2 \sum_{j \neq k} \mme[( \hat{\beta}_{(1)})_j^4 (\hat{\beta}_{(1)})_k^4]\\
    & \ \ \ \ \ \   + \frac{1}{6}\binom{8}{4} \binom{4}{2}\mme[W_{11}^4]\mme[W_{11}^2]^2 \sum_{j \neq k \neq i} \mme[( \hat{\beta}_{(1)})_j^4 (\hat{\beta}_{(1)})_k^2(\hat{\beta}_{(1)})_i^2]\\
    & \ \ \ \ \ \ + \frac{1}{24}\binom{8}{2} \binom{6}{2} \binom{4}{2} \mme[W_{11}^2]^4 \sum_{j \neq k \neq i \neq r} \mme[( \hat{\beta}_{(1)})_j^2 (\hat{\beta}_{(1)})_k^2(\hat{\beta}_{(1)})_i^2 (\hat{\beta}_{(1)})_r^2]  \\
    &  \leq 105\mme[W_{11}^8] \mme[\| \hat{\beta}_{(i)}\|_2^8],
    \end{align*}
    and applying Lemma \ref{lem:coef_bound} we arrive at the desired bound,
    \[
     \mme[W_{11}^8]\mme[\| \hat{\beta}_{(1)}\|_2^8]  \leq 105\mme[W_{11}^8]\mme\left[\left(\frac{1}{\tau n} \sum_{i=2}^{n+1}Y_i^2\right)^4 \right] = O(1).
    \]
\end{proof}

\begin{lemma}\label{lem:ridge_loo_norm_bound}
    Assume that $\epsilon_i$ and $\sqrt{d}\beta^*_i$ have $8$ bounded moments. Then, under the setting of Section \ref{sec:high_dim_setting} with $\ell(r) = r^2$,
    \[
    X_{1}^\top\hat{\beta} - X_1^\top\hat{\beta}_{(1)} = O_{L_4}(1).
    \]
\end{lemma}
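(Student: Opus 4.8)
\textbf{Proof plan for Lemma~\ref{lem:ridge_loo_norm_bound}.} The plan is to reuse the leave-one-out identity already derived in the proof sketch of Lemma~\ref{lem:ridge_stab}, with the index $n+1$ there replaced by the index $1$. Writing $M := \tfrac{1}{n+1}X^\top X + \tau I_d$, the same manipulation of the first-order conditions gives
\[
X_1^\top\hat{\beta} - X_1^\top\hat{\beta}_{(1)} \;=\; \frac{1}{n+1}\bigl(Y_1 - X_1^\top\hat{\beta}_{(1)}\bigr)\,X_1^\top M^{-1}X_1 \;-\; \frac{\tau}{n+1}\,X_1^\top M^{-1}\hat{\beta}_{(1)},
\]
so it suffices to show that each of the two terms on the right is $O_{L_4}(1)$. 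I would first dispose of the second term, which is in fact negligible: bounding $\|M^{-1}\|_{op}\le 1/\tau$ and applying Cauchy--Schwarz yields the deterministic estimate $\tfrac{1}{n+1}\|X_1\|_2\|\hat{\beta}_{(1)}\|_2$, whose fourth moment is at most $(n+1)^{-4}\,\mme[\|X_1\|_2^8]^{1/2}\,\mme[\|\hat{\beta}_{(1)}\|_2^8]^{1/2}$. Since the $W_{ij}$ are sub-Gaussian, $\|W_1\|_2^2$ is a sum of $d$ i.i.d.\ unit-mean terms, so $\mme[\|X_1\|_2^8]=\mme[\lambda_1^8]\,\mme[\|W_1\|_2^8]=O(d^4)$, and Lemma~\ref{lem:coef_bound} together with the assumed eight bounded moments of $Y_i$ gives $\mme[\|\hat{\beta}_{(1)}\|_2^8]=O(1)$; hence this term has fourth moment $O(d^2n^{-4})=O(n^{-2})$ and is $o_{L_4}(1)$ since $d/n\to\gamma$.

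For the first term the key observation is that its scalar multiplier is deterministically bounded: by the Sherman--Morrison--Woodbury identity, writing $M_{(1)} := \tfrac{1}{n+1}X_{2:(n+1)}^\top X_{2:(n+1)} + \tau I_d$,
\[
\frac{1}{n+1}X_1^\top M^{-1}X_1 \;=\; \frac{\tfrac{1}{n+1}X_1^\top M_{(1)}^{-1}X_1}{1+\tfrac{1}{n+1}X_1^\top M_{(1)}^{-1}X_1}\;\in\;[0,1).
\]
Therefore $\bigl|\tfrac{1}{n+1}(Y_1 - X_1^\top\hat{\beta}_{(1)})X_1^\top M^{-1}X_1\bigr|\le |Y_1 - X_1^\top\hat{\beta}_{(1)}|$, and it only remains to bound the leave-one-out residual $Y_1 - X_1^\top\hat{\beta}_{(1)} = \epsilon_1 + X_1^\top\beta^* - X_1^\top\hat{\beta}_{(1)}$ in $L_4$ (indeed in $L_8$): $\epsilon_1 = O_{L_8}(1)$ by assumption; $X_1^\top\hat{\beta}_{(1)} = O_{L_8}(1)$ is precisely Lemma~\ref{lem:ridge_loo_predic_norm_bound}; and for $X_1^\top\beta^* = \lambda_1 W_1^\top\beta^*$ one conditions on $\beta^*$ (independent of $W_1$) and uses the standard bound $\mme[(W_1^\top v)^8]\le C\,\mme[W_{11}^8]\|v\|_2^8$ for deterministic $v$, together with $\mme[\|\beta^*\|_2^8]=O(1)$, to get $X_1^\top\beta^* = O_{L_8}(1)$. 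Adding the two contributions yields $X_1^\top\hat{\beta} - X_1^\top\hat{\beta}_{(1)} = O_{L_4}(1)$, as claimed.

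There is no serious obstacle here: the whole argument is the same leave-one-out-plus-moment bookkeeping already carried out for Lemmas~\ref{lem:ridge_stab}, \ref{lem:ridge_norm_conv} and \ref{lem:ridge_loo_predic_norm_bound}. The one point worth flagging is the treatment of the first term: because the scalar $\tfrac{1}{n+1}X_1^\top M^{-1}X_1$ is correlated with $Y_1-X_1^\top\hat{\beta}_{(1)}$, the expectation cannot be factored, so one must settle for the elementary deterministic fact that this scalar lies in $[0,1)$ and then invoke the residual bound. The only place the eight-moment hypothesis is genuinely binding is in controlling $\mme[\|\hat{\beta}_{(1)}\|_2^8]$ (and, through Lemma~\ref{lem:ridge_loo_predic_norm_bound}, $\mme[(X_1^\top\hat{\beta}_{(1)})^8]$) via Lemma~\ref{lem:coef_bound}, which is why the conclusion is stated in $L_4$ rather than a higher $L_p$.
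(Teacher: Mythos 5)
Your proposal is correct and follows essentially the same route as the paper: the same leave-one-out identity, the same Cauchy--Schwarz treatment of the term $\frac{\tau}{n+1}X_1^\top M^{-1}\hat{\beta}_{(1)}$ via Lemmas \ref{lem:coef_bound} and \ref{lem:ridge_loo_predic_norm_bound}, and the same $L_8$ control of the leave-one-out residual $Y_1 - X_1^\top\hat{\beta}_{(1)}$. The only (minor) difference is in the first term, where the paper bounds the scalar $\frac{1}{n+1}X_1^\top M^{-1}X_1$ by $\|X_1\|_2^2/((n+1)\tau)$ and absorbs it through a moment split, while you use the Sherman--Morrison leverage identity to bound it deterministically by $1$; both are valid, and your version even yields that term in $L_8$ rather than $L_4$.
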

\begin{proof}
    The proof of this result is nearly identical to the proof of Lemma \ref{lem:ridge_stab}. Namely, repeating the steps of Lemma \ref{lem:ridge_stab} we know that 
    \begin{align*}
    \left| X_{1}^\top\hat{\beta} - X_1^\top\hat{\beta}_{(1)} \right| & = \left|\frac{1}{n+1} X_1^\top \frac{1}{n+1} \left(\frac{1}{n+1} X^\top X + \tau I_d \right)^{-1}X_1 \left( Y_{1} - X_1^\top \hat{\beta}_{(1)} \right) \right|\\
    & \ \ \ \ +  \left|X_1^\top  \left(\frac{1}{n+1} X^\top X + \tau I_d \right)^{-1} \frac{\tau}{n+1} \hat{\beta}_{(1)}\right|\\
    & \leq \frac{\|X_1\|_2^2}{(n+1)\tau} \left( Y_{1} - X_1^\top \hat{\beta}_{(1)} \right) + \frac{\|X_1\|_2 \|\hat{\beta}_{(1)}\|_2}{n+1}
    \end{align*}
    By Lemma \ref{lem:ridge_loo_predic_norm_bound} we know that $ X_1^\top \hat{\beta}_{(1)}  = O_{L_8}(1)$. Moreover, our assumptions clearly imply that $\frac{\|X_1\|_2^2}{(n+1)\tau}, Y_{1} = O_{L_8}(1)$. This controls the first term. For the second term, by the Cauchy-Schwartz inequality,
    \[
    \mme\left[\left(\frac{\|X_1\|_2 \|\hat{\beta}_{(1)}\|_2}{n+1}\right)^4\right] \leq \mme\left[ \frac{\|X_1\|^8_2}{(n+1)^8}\right]^{1/2} \mme[\hat{\beta}_{(1)}\|_2^8]^{1/2} = O\left(\frac{1}{n^2} \right)\mme[\|\hat{\beta}_{(1)}\|_2^8]^{1/2},
    \]
    and finally by Lemma \ref{lem:coef_bound},
    \[
    \mme[\| \hat{\beta}_{(1)}\|_2^8]  \leq \mme\left[\left(\frac{1}{\tau n} \sum_{i=2}^{n+1}Y_i^2\right)^4 \right] = O(1).
    \]
\end{proof}

With the above results in hand we are ready to prove Lemma \ref{lem:ridge_CLT}
\begin{proof}[Proof of Lemma \ref{lem:ridge_CLT}]
    We will first show that the second part of the lemma follows from the first. Let $\hat{\beta}_{(i,j)}$ denote the fitted coefficients when both the data points $i$ and $j$ are removed from the dataset. Then, by Lemma \ref{lem:ridge_stab},
    \begin{align*}
    & \mme\left[\psi\left(\frac{1}{1+2\lambda^2_{i}c_{\infty}}\left( Y_{i} - X_{i}^\top\hat{\beta}_{(i)} \right)\right) \psi\left(\frac{1}{1+2\lambda^2_{j}c_{\infty}}\left( Y_{j} - X_{j}^\top\hat{\beta}_{(j)} \right)\right) \right]\\
    & = \mme\left[\psi\left(\frac{1}{1+2\lambda^2_{i}c_{\infty}}\left( Y_{i} - X_{i}^\top\hat{\beta}_{(i,j)} \right)\right) \psi\left(\frac{1}{1+2\lambda^2_{j}c_{\infty}}\left( Y_{j} - X_{j}^\top\hat{\beta}_{(i,j)} \right)\right) \right] + o(1)\\
    & = \mme\left[\mme\left[ \psi\left(\frac{1}{1+2\lambda^2_{i}c_{\infty}}\left( Y_{i} - X_{i}^\top\hat{\beta}_{(i,j)} \right)\right) \mid \{(X_k,Y_k)\}_{k \neq i,j} \right]^2\right] + o(1),
    \end{align*}
    and applying the first part of the lemma we find that 
    \[
    \mme\left[ \psi\left(\frac{1}{1+2\lambda^2_{i}c_{\infty}}\left( Y_{i} - X_{i}^\top\hat{\beta}_{(i,j)} \right)\right) \mid \{(X_k,Y_k)\}_{k \neq i,j} \right]^2 = \mme\left[\psi\left(\frac{1}{1+2\lambda^2c_{\infty}}(\epsilon + \lambda N_{\infty}Z)\right)\right]^2 + o_{L_1}(1).
    \]
    This proves the second part of the lemma.

    To prove the first part of the lemma, write 
    \[
    \psi\left( \frac{1}{1+2\lambda^2_{n+1}c_{\infty}}\left( Y_{n+1} - X_{n+1}^\top\hat{\beta}_{(n+1)}\right) \right) = \psi\left( \frac{1}{1+2\lambda^2_{n+1}c_{\infty}}\left( \epsilon_{n+1} + \lambda_{n+1} W_{n+1}^\top(\beta^* - \hat{\beta}_{(n+1)})\right) \right).
    \]
    Since $W_{n+1} \independent (\lambda_{n+1}, \epsilon_{n+1})$, by marginalizing over $(\lambda_{n+1},\epsilon_{n+1})$ and applying the bounded convergence theorem we find that it is sufficient to show that for any bounded, continuous functions $\phi$,
    \[
    \mme[\phi(W_{n+1}^\top (\beta^* - \hat{\beta}_{(n+1)})) \mid \{(X_i,Y_i)\}_{i=1}^{n}] \stackrel{\mmp}{\to} \mme[\phi(N_{\infty}Z)].
    \]
    
    Suppose $\phi$ is bounded with three bounded derivatives. Then, by the Lindenberg central limit theorem (e.g. Theorem 2.1.4 of \cite{Stroock_2010}), there exists a constant $C>0$, such that for all $\epsilon > 0$,
    \begin{align*}
    & \left| \mme[\phi(W_{n+1}^\top (\beta^* -  \hat{\beta}_{(n+1)})) \mid \{(X_i,Y_i)\}_{i=1}^{n}]  - \mme[\phi(N_{\infty}Z)]\right|\\
    & \leq C\left(\epsilon\|\phi'''\|_{\infty} + \frac{1}{\epsilon} \frac{\sum_{i=1}^d|\beta^*_i - (\hat{\beta}_{(n+1)})_i|^3}{(\sum_{i=1}^d(\beta^*_i - (\hat{\beta}_{(n+1)})_i)^2)^{3/2}} (\|\phi''\|_{\infty} + \|\phi'''\|_{\infty} ) \right). \end{align*}
    Now, by Lemma \ref{lem:ridge_norm_conv}, $(\sum_{i=1}^d(\beta^*_i - (\hat{\beta}_{(n+1)})_i)^2)^{3/2} = O_{\mmp}(1)$, while by Lemma \ref{lem:ridge_entry_control}, 
    \[
    \mme\left[\sum_{i=1}^d|\beta^*_i - (\hat{\beta}_{(n+1)})_i|^3 \right] = O\left( \frac{1}{\sqrt{d}}\right) \implies \sum_{i=1}^d|\beta^*_i - (\hat{\beta}_{(n+1)})_i|^3 = o_{\mmp}(1).
    \]
    Thus, we find that for all $\epsilon > 0$,
    \[
    \left| \mme[\phi(W_{n+1}^\top (\beta^* -   \hat{\beta}_{(n+1)})) \mid \{(X_i,Y_i)\}_{i=1}^{n}]  - \mme[\phi(N_{\infty}Z)]\right| \leq O\left(\epsilon + \frac{1}{\epsilon} o_{\mmp}(1)\right),
    \]
    and so choosing $\epsilon$ appropriately we conclude that 
    \[
    \mme[\phi(W_{n+1}^\top (\beta^* -   \hat{\beta}_{(n+1)})) \mid \{(X_i,Y_i)\}_{i=1}^{n}] \stackrel{\mmp}{\to} \mme[\phi(N_{\infty}Z)].
    \]
    This proves the desired result for bounded functions with three bounded derivatives. To prove the result for bounded, continuous functions one may use standard arguments to approximate the later class by the former. 
\end{proof}

\subsection{Proof of Lemma \ref{lem:ridge_quant_conv}}

\begin{proof}[Proof of Lemma \ref{lem:ridge_quant_conv}]
    To prove the first part of the lemma, one simply verifies that by the results of Lemma \ref{lem:ridge_CLT},
    \[
    \mme\left[\frac{1}{n+1} \sum_{i=1}^{n+1} \psi(Y_i - X_i^\top\hat{\beta}) \right] \to \mme\left[  \frac{1}{1+2\lambda^2c_{\infty}}\left( \epsilon + \lambda N_{\infty} Z)\right)  \right],
    \]
    while,
    \[
    \text{Var}\left(\frac{1}{n+1} \sum_{i=1}^{n+1} \psi(Y_i - X_i^\top\hat{\beta}) \right)  = o(1),
    \]
    (for detailed steps demonstrating this variance bound see the proof of Lemma \ref{lem:conv_in_dist} below).

    The second part of the lemma follows immediately from the fact that the limiting distribution, $\frac{1}{1+\lambda^2c_{\infty}}( \epsilon + \lambda N_{\infty} Z)$ has a strictly increasing cumulative distribution function (see Lemma \ref{lem:asymptotic_prox_dist_facts}) and standard results relating convergence in distribution to convergence of the associated quantiles (see Lemma \ref{lem:quantile_conv_fact}).
\end{proof}

\subsection{Proof of Theorem \ref{thm:ridge_asymp_cov}}

With the above preliminaries in hand, the proof of Theorem \ref{thm:ridge_asymp_cov} is identical to the proof of Theorem \ref{thm:conv_tau_fixed}, which considers losses with bounded derivatives. Thus, we omit the proof of this result and refer the interested reader to Theorem \ref{thm:conv_tau_fixed}.

\section{Proofs for Section \ref{sec:general_losses}}

\subsection{Additional notation}

For ease of notation we let ${R}_i := Y_i - X_i^\top \hat{\beta}$ and $\{{R}^{(i)}_{j}\}_{j=1}^{n+1} = \{Y_j - X_j^\top \hat{\beta}_{(i)} \}_{j=1}^{n+1}$ denote the residuals and leave-one-out residuals. Throughout, we will let $(Z,\lambda,\epsilon)$ denote a sample from $\mathcal{N}(0,1)\otimes P_{\lambda} \otimes P_{\epsilon}$. 

\subsection{Proof of Theorem \ref{thm:conv_tau_fixed}}

The proof of Theorem \ref{thm:conv_tau_fixed} will make extensive use of the following results of \citet{EK2018}.

\begin{theorem}\label{thm:EK_facts}
Under Assumptions 1-3 from Section \ref{sec:general_losses}, we have that for any fixed $k \in \mmn$ and $1 \leq i \leq n+1$,
\begin{enumerate}
    \item 
    \[
    \sup_{1 \leq i \leq n + 1} \sup_{j \neq i} |R^{(i)}_j - R_j| \leq O_{L_k}\left(\frac{\textup{polylog}(n)}{n^{1/2}} \right).
    \]
    \item
    \[
    \sup_{1 \leq i \leq n+1} \left|R_{i} - \textup{prox}(c_i\ell)(R^{(i)}_i)\right| \leq O_{L_k}\left(\frac{\textup{polylog}(n)}{n^{1/2}} \right).
    \]
    where
    \[
    c_i := \frac{1}{n+1} X_i^\top\left( \frac{1}{n+1}\sum_{j \neq i} \ell''(R^{(i)}_j)X_jX_j^\top + 2\tau I_d \right)X_i.
    \]
    \item
    There exists a constant $c_{\infty} > 0$ such that for all $i$,
    \[
    |c_i - \lambda_i^2 c_{\infty}| \stackrel{\mmp}{\to} 0.
    \]
    \item 
    There exists a constant $N_{\infty} > 0$ such that,
    \[
    \|\hat{\beta} - \beta^*\|_2 \stackrel{\mmp}{\to} N_{\infty}.
    \]
    \item 
    For any bounded continuous function $\psi$ and any $1 \leq i \leq n+1$,
    \[
    \mme[\psi(\textup{prox}(\lambda_i^2c_{\infty}\ell)(R^{(i)}_i)) \mid \{(X_j,Y_j)\}_{j \neq i}]  \stackrel{\mmp}{\to} \mme[\psi(\textup{prox}(\lambda^2c_{\infty}\ell)(\epsilon + \lambda N_{\infty}Z))],
    \]
    where $(Z,\lambda,\epsilon) \sim N(0,1) \otimes P_{\lambda} \otimes P_{\epsilon}$. 
\end{enumerate}
\end{theorem}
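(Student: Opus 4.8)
The plan is to obtain all five claims by transferring the leave-one-out analysis of \citet{EK2018} to our elliptical, ridge-penalized setting. The structural fact exploited throughout is that, since the $\lambda_i$ are bounded, a.s.\ positive, and independent of $(W_j)$, conditioning on $(\lambda_j)_{j=1}^{n+1}$ reduces the problem to a regression with isotropic design $W_i$ reweighted by the fixed scalars $\lambda_i$, while the additional ridge term $2\tau I_d$ only improves the conditioning of every Hessian that appears; neither modification disrupts the arguments of \citet{EK2018}. A deterministic energy bound in the spirit of Lemma~\ref{lem:coef_bound} (comparing the objective at $\hat\beta$ with its value at $\beta=0$, now with a general nonnegative loss) gives $\|\hat\beta\|_2, \|\hat\beta_{(i)}\|_2 = O_{L_k}(1)$ uniformly in $i$, which underlies all the subsequent moment bounds.

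For claims~1 and~2 I would write the first-order conditions for $\hat\beta$ and $\hat\beta_{(i)}$, subtract them, and Taylor-expand $\ell'$ (using the third bounded derivative) to see that $\hat\beta - \hat\beta_{(i)}$ is, up to an $O_{L_k}(\textup{polylog}(n)/n)$ correction, a rank-one update proportional to $S_{(i)}^{-1}X_i$, where $S_{(i)} = \tfrac{1}{n+1}\sum_{j\neq i}\ell''(R^{(i)}_j)X_jX_j^\top + 2\tau I_d$ is the $\ell''$-reweighted, ridge-regularized Gram matrix of the samples $j\neq i$, and the scalar coefficient is pinned down by the $i$-th condition. Substituting this update into $R_i = Y_i - X_i^\top\hat\beta$ turns the $i$-th condition into $\ell'(R_i) = (R^{(i)}_i - R_i)/c_i$ with $c_i = \tfrac{1}{n+1}X_i^\top S_{(i)}^{-1}X_i$ the scalar recorded in the theorem, which is exactly the fixed-point equation $R_i = \textup{prox}(c_i\ell)(R^{(i)}_i)$ (claim~2); propagating the same rank-one update into the residuals at indices $j\neq i$ gives claim~1. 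The $\textup{polylog}(n)/\sqrt n$ rates arise from applying the Lipschitz-concentration hypothesis~\eqref{eq:lip_tail_condition} to the relevant linear and quadratic functionals of $W_i$, exactly as in \citet{EK2018}; Assumption~2 was tailored so that this input is available, and the uniformity over $i$ comes from a union bound against the polynomial tails.

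Claim~3 is a concentration statement for $c_i$: since $X_i = \lambda_i W_i$ is independent of $S_{(i)}$ given $\{(X_j,Y_j)\}_{j\neq i}$, conditionally on that data $c_i$ equals $\lambda_i^2$ times a quadratic form in $W_i$, which concentrates around $\lambda_i^2$ times the corresponding normalized trace by a Hanson--Wright argument (the operator-norm control needed here follows from $\|S_{(i)}^{-1}\|_{op}\le 1/(2\tau)$ together with a Bai--Yin bound on $\|\tfrac1n\sum_j W_jW_j^\top\|_{op}$), and that trace converges to a deterministic $c_\infty>0$ via a deterministic-equivalent / Stieltjes-transform analysis of $S_{(i)}$ of the kind carried out for ridge in Lemmas~\ref{lem:ridge_tr_1}--\ref{lem:ridge_tr_final}, now with the complication that the weights $\ell''(R^{(i)}_j)$ are themselves random and must be shown to have a converging empirical distribution. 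This last point couples claims~3,~4 and~5 into a single self-consistent fixed-point system: the limiting residual law $\textup{prox}(\lambda^2c_\infty\ell)(\epsilon+\lambda N_\infty Z)$ determines the limiting spectral measure governing $c_\infty$, while a second equation, obtained by expressing $\|\hat\beta-\beta^*\|_2^2$ through the first-order conditions, pins down $N_\infty$. Existence of a solution follows by a compactness/continuity argument; uniqueness requires a monotonicity estimate analogous to the contraction bound in the proof of Lemma~\ref{lem:ridge_tr_1}. I expect this interlocking fixed-point argument, together with the delocalization estimate below, to be the main obstacle.

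For claim~5, observe that conditionally on $\{(X_j,Y_j)\}_{j\neq i}$ we have $R^{(i)}_i = \epsilon_i + \lambda_i W_i^\top(\beta^*-\hat\beta_{(i)})$ with $W_i$ independent of the rest, so $W_i^\top(\beta^*-\hat\beta_{(i)})$ is a sum of independent mean-zero variables with total variance $\|\beta^*-\hat\beta_{(i)}\|_2^2\to N_\infty^2$ (claim~4). A Lindeberg central limit theorem then gives $W_i^\top(\beta^*-\hat\beta_{(i)})\mid\{(X_j,Y_j)\}_{j\neq i}\stackrel{D}{\approx}N(0,N_\infty^2)$, provided the Lyapunov ratio $\sum_k|(\beta^*-\hat\beta_{(i)})_k|^3/\|\beta^*-\hat\beta_{(i)}\|_2^3\stackrel{\mmp}{\to}0$; this needs a coordinatewise ``delocalization'' bound $(\beta^*-\hat\beta_{(i)})_k = O_{L_4}(1/\sqrt d)$ of the type established for ridge in Lemma~\ref{lem:ridge_entry_control}, which I would prove by a column-leave-one-out argument on the design, now with a general smooth loss. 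Composing with the continuous map $\psi\circ\textup{prox}(\lambda_i^2c_\infty\ell)$, conditioning on $(\lambda_i,\epsilon_i)$, and invoking bounded convergence then yields claim~5. Throughout, the delicate ingredients are the uniformity over $i$ in claims~1--2 and the interlocking fixed-point / delocalization arguments behind claims~3--5, which are precisely the places where we lean hardest on \citet{EK2018}.
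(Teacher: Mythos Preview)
Your proposal sketches, in reasonable outline, how the results of \citet{EK2018} are actually proved: leave-one-out expansions for claims~1--2, quadratic-form concentration plus a deterministic-equivalent argument for claim~3, a fixed-point system for claims~3--4, and a Lindeberg CLT with coordinate delocalization for claim~5. That is broadly the right methodology, and nothing you wrote is obviously wrong.

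However, the paper takes a much shorter route: Theorem~\ref{thm:EK_facts} is treated as a \emph{citation} theorem. Parts~1--2 are invoked directly from Theorem~3.9 of \citet{EK2018}, parts~3--4 from Theorem~2.6, Corollary~3.17, and Lemma~3.26 of the same work, and part~5 from the proof of their Lemma~3.23. The only thing the paper actually argues is the strict positivity $c_\infty>0$ and $N_\infty>0$ (since \citet{EK2018} only gives nonnegativity): this is done by writing out the two fixed-point equations from Theorem~2.6 of \citet{EK2018} and observing that $c=0$ is incompatible with the first equation (the left side equals $1$ while the right side equals $1-\gamma<1$), after which $c_\infty>0$ forces $N_\infty>0$ in the second equation. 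You omitted this short positivity check, and it is the only piece of content the paper contributes here.

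So your proposal is not incorrect, but it re-derives from scratch what the paper simply imports, while missing the one small argument the paper does supply.
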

\begin{proof}
    The first two parts of this result follow from Theorem 3.9 of \citet{EK2018}, while the last part follows from the proof of Lemma 3.23 of the same article. Parts 3 and 4 follow from Theorem 2.6, Corollary 3.17, and Lemma 3.26 of \citet{EK2018} with exception that in that article $c_{\infty}$ and $N_{\infty}$ are only shown to be non-negative. To verify that these scalars must be positive, let $(Z,\lambda,\epsilon) \sim N(0,1) \otimes P_{\lambda} \otimes P_{\epsilon}$. Then, by Theorem 2.6 of \citet{EK2018}, $(c_{\infty}, N_{\infty})$ are the solutions to the following system of equations in variables $(c,N)$,
    \[
    \begin{cases}
    \mme\left[\left(1 + c\lambda^2\ell''(\text{prox}(\lambda^2c \ell)(\epsilon + \lambda N Z)) \right)^{-1} \right] = 1 - \gamma + 2\tau c,\\
    \gamma^2 N^2 = \gamma \mme[(c\lambda\ell'(\text{prox}(\lambda^2c \ell)(\epsilon + \lambda N Z))^2] + 4\tau^2 \mme[\|\beta^*\|^2] c^2.
    \end{cases}
    \]
    Since $\gamma > 0$ we clearly see that $c=0$ cannot be a solution to the first equation. Thus, we must have $c_{\infty} > 0$. Plugging in $c_{\infty}>0$ into the second equation, it becomes clear that $N_{\infty} > 0$.
\end{proof}

The above facts almost immediately imply the asymptotic characterization of the test residual given in Lemma \ref{lem:gen_resid_char} from the main text. Namely, part 2 of Theorem \ref{thm:EK_facts} gives $R_{n+1} - \textup{prox}(c_{n+1} \ell)(R_{n+1}^{(n+1)}) \stackrel{\mmp}{\to} 0$. Lemma \ref{lem:gen_resid_char} then immediately follows from the following result which shows that $c_{n+1}$ can be replaced by $\lambda_{n+1}^2 c_{\infty}$.

\begin{lemma}\label{lem:prox_resid_convergence}
Suppose that Assumptions 1-3 from Section \ref{sec:general_losses} hold. Then, as $n \to \infty$,
\[
\textup{prox}(c_{n+1} \ell)(R_{n+1}^{(n+1)}) - \textup{prox}(\lambda_{n+1}^2 c_{\infty} \ell)\left(\epsilon_{n+1} + X_{n+1}^\top(\beta^* - \hat{\beta}^{(n+1)})\right) \stackrel{\mmp}{\to} 0.
\]
\end{lemma}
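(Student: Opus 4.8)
The plan is to reduce the statement to a single elementary observation: the two proximal maps being compared are evaluated at the \emph{same} point, and differ only in their parameter. Indeed, by the notational conventions of this section $R_{n+1}^{(n+1)} = Y_{n+1} - X_{n+1}^\top \hat\beta_{(n+1)}$, and substituting $Y_{n+1} = X_{n+1}^\top\beta^* + \epsilon_{n+1}$ shows $R_{n+1}^{(n+1)} = \epsilon_{n+1} + X_{n+1}^\top(\beta^* - \hat\beta_{(n+1)})$. So, writing $R := R_{n+1}^{(n+1)}$, the claim is exactly that $\textup{prox}(c_{n+1}\ell)(R) - \textup{prox}(\lambda_{n+1}^2 c_\infty \ell)(R) \stackrel{\mmp}{\to} 0$, and part 3 of Theorem \ref{thm:EK_facts} already tells us $|c_{n+1} - \lambda_{n+1}^2 c_\infty| \stackrel{\mmp}{\to} 0$. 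It remains to transfer convergence of the parameters to convergence of the proximal values.

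The key step is a Lipschitz estimate for the proximal map in its parameter, valid for all $c_1,c_2 \ge 0$ and all $x \in \mmr$:
\[
\left|\textup{prox}(c_1\ell)(x) - \textup{prox}(c_2\ell)(x)\right| \le \|\ell''\|_\infty\,|x|\,|c_1 - c_2|.
\]
To prove this, I would differentiate the first-order condition $v + c\ell'(v) = x$ defining $v = \textup{prox}(c\ell)(x)$, which gives $\frac{\partial}{\partial c}\textup{prox}(c\ell)(x) = -\ell'(v)/\bigl(1 + c\ell''(v)\bigr)$; the denominator is at least $1$ since $\ell$ is convex, and $|\ell'(v)| = |\ell'(v) - \ell'(0)| \le \|\ell''\|_\infty |v|$ because $\ell'(0) = 0$ (Assumption 1) and $\ell''$ is bounded. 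Combining this with the standard facts that $\textup{prox}(c\ell)$ is $1$-Lipschitz and fixes the origin (so $|v| = |\textup{prox}(c\ell)(x) - \textup{prox}(c\ell)(0)| \le |x|$) yields $|\partial_c \textup{prox}(c\ell)(x)| \le \|\ell''\|_\infty |x|$ uniformly in $c \ge 0$, and integrating over $c$ gives the displayed bound.

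Applying this with $c_1 = c_{n+1}$, $c_2 = \lambda_{n+1}^2 c_\infty$ (both nonnegative: $c_{n+1}$ by construction, $\lambda_{n+1}^2 c_\infty$ since $\lambda_{n+1} > 0$ a.s.\ and $c_\infty > 0$) and $x = R$, the left-hand side of the lemma is bounded by $\|\ell''\|_\infty |R|\,|c_{n+1} - \lambda_{n+1}^2 c_\infty|$, so it suffices to check $|R| = O_{\mmp}(1)$; then the product of an $O_{\mmp}(1)$ and an $o_{\mmp}(1)$ term is $o_{\mmp}(1)$, as required. For the tightness of $R = \epsilon_{n+1} + \lambda_{n+1} W_{n+1}^\top(\beta^* - \hat\beta_{(n+1)})$: $\epsilon_{n+1} \sim P_\epsilon$ is $O_{\mmp}(1)$, $\lambda_{n+1}$ is bounded, and $W_{n+1}$ is independent of $(\beta^*,\hat\beta_{(n+1)})$, so $\mme[(W_{n+1}^\top(\beta^* - \hat\beta_{(n+1)}))^2] = \mme[\|\beta^* - \hat\beta_{(n+1)}\|_2^2] = O(1)$, using $\mme[\|\beta^*\|_2^2] = O(1)$ and an optimality bound on $\|\hat\beta_{(n+1)}\|_2$ exactly as in Lemma \ref{lem:coef_bound} (now via $\ell(y) \le \tfrac12\|\ell''\|_\infty y^2$); alternatively one may invoke Theorem \ref{thm:EK_facts}.4 applied to the $n$-point estimator $\hat\beta_{(n+1)}$. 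I do not expect a real obstacle here; the only points needing care are the uniformity in $c \ge 0$ of the proximal Lipschitz estimate and the clean combination of the in-probability convergence $c_{n+1} \to \lambda_{n+1}^2 c_\infty$ with the stochastic boundedness of $R_{n+1}^{(n+1)}$.
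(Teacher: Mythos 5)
Your reduction is sound and, at its core, matches the paper's argument: since $R_{n+1}^{(n+1)} = Y_{n+1} - X_{n+1}^\top\hat\beta_{(n+1)} = \epsilon_{n+1} + X_{n+1}^\top(\beta^*-\hat\beta_{(n+1)})$, only the proximal parameter differs, and the conclusion follows from $|c_{n+1}-\lambda_{n+1}^2 c_\infty| \stackrel{\mmp}{\to} 0$ (Theorem \ref{thm:EK_facts}, part 3) together with tightness of the leave-one-out residual. Where you differ is the mechanism for transferring parameter convergence to the prox values: the paper invokes joint continuity of $(c,x)\mapsto\textup{prox}(c\ell)(x)$ (Lemma \ref{lem:cont_diff_prox}) plus the continuous-mapping variant Lemma \ref{lem:cont_map}, whereas you prove the explicit Lipschitz bound $|\textup{prox}(c_1\ell)(x)-\textup{prox}(c_2\ell)(x)|\le \|\ell''\|_\infty |x|\,|c_1-c_2|$ from the derivative formula, $\ell'(0)=0$, and the facts that $\textup{prox}(c\ell)$ is $1$-Lipschitz and fixes $0$. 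That estimate is correct under Assumption 1 and is a perfectly legitimate (indeed more quantitative) substitute for the soft continuity argument; both routes then reduce the problem to $R_{n+1}^{(n+1)} = O_{\mmp}(1)$.

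The one genuine weak point is your primary justification of that tightness. Assumption 3 imposes no moment conditions on $P_\epsilon$ (a Cauchy error law satisfies it), so the bound $\mme[(W_{n+1}^\top(\beta^*-\hat\beta_{(n+1)}))^2]=\mme[\|\beta^*-\hat\beta_{(n+1)}\|_2^2]=O(1)$ cannot be established via the Lemma \ref{lem:coef_bound}-style optimality bound: that route needs $\mme[Y_i^2]<\infty$ (or at least $\mme[\ell(Y_i)]<\infty$), which is exactly what heavy-tailed errors in the robust-regression setting deny you; indeed $\mme[\|\beta^*-\hat\beta_{(n+1)}\|_2^2]$ itself may be infinite. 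Your fallback — invoking Theorem \ref{thm:EK_facts}, part 4, for the $n$-point estimator — is the right move and is what the paper does, but it gives only $\|\beta^*-\hat\beta_{(n+1)}\|_2 = O_{\mmp}(1)$, so you must convert this to tightness of $W_{n+1}^\top(\beta^*-\hat\beta_{(n+1)})$ by a truncation step: on the event $\{\|\beta^*-\hat\beta_{(n+1)}\|_2\le N_\infty+1\}$ apply conditional Chebyshev using $W_{n+1}\independent(\beta^*,\hat\beta_{(n+1)})$, and bound the complementary event's probability by $o(1)$. With that one-line fix in place of the unconditional moment identity, your proof is complete.
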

\begin{proof}
    Since the proximal function is continuous (see Lemma \ref{lem:cont_diff_prox}) and $c_{n+1} \stackrel{\mmp}{\to} c_{\infty}$ it is sufficient to apply a variant of the continuous mapping theorem. Formally, we will show in Lemma \ref{lem:cont_map} that the desired conclusion holds so long as the random variables appearing in the above expression are of size $O_{\mmp}(1)$. The only subtlety here is to verify that $X_{n+1}^\top(\beta^* - \hat{\beta}^{(n+1)}) = O_{\mmp}(1)$. For this, we note that by part 4 of Theorem \ref{thm:EK_facts} we have that for any $M>N_{\infty} + 1 > 0$,
    \begin{align*}
    \mmp(X_{n+1}^\top(\beta^* - \hat{\beta}^{(n+1)}) \geq M) & \leq \mmp(\|\beta^* - \hat{\beta}^{(n+1)}\|_2 > N_{\infty} + 1)\\
    & \hspace{2cm} + \frac{\mme[(X_{n+1}^\top(\beta^* - \hat{\beta}^{(n+1)}))^2\bone\{\|\beta^* - \hat{\beta}^{(n+1)}\|_2 \leq N_{\infty} + 1\}]}{M^2} \\
    & \leq o(1) + \|P_{\lambda}\|^2_{\infty}\frac{(N_{\infty}+1)^2}{M^2}.
    \end{align*}
    This proves the desired result.
\end{proof}

Our next result shows that the empirical distribution of the residuals matches the limit identified in Part 5 of Theorem \ref{thm:EK_facts}.

\begin{lemma}\label{lem:conv_in_dist}
    Suppose that Assumptions 1-3 from Section \ref{sec:general_losses} hold. Let $\psi$ be any bounded, continous function. Then, as $n \to \infty$,
    \[
    \lim_{n \to \infty} \frac{1}{n+1} \sum_{i=1}^{n+1} \psi(R_i) \stackrel{\mmp}{\to} \mme[\psi(\textup{prox}(\lambda^2c_{\infty}\ell)(\epsilon + \lambda N_{\infty}Z))],
    \]
    where $(Z,\lambda,\epsilon) \sim N(0,1) \otimes P_{\lambda} \otimes P_{\epsilon}$.
\end{lemma}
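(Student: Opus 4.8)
The plan is a standard second-moment argument: show that the mean of $\frac{1}{n+1}\sum_{i=1}^{n+1}\psi(R_i)$ converges to the stated limit and that its variance tends to zero, then conclude by Chebyshev's inequality.

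\textbf{The mean.} By exchangeability of the data it suffices to analyze $\mme[\psi(R_{n+1})]$. Part 2 of Theorem \ref{thm:EK_facts} gives $R_{n+1} - \textup{prox}(c_{n+1}\ell)(R_{n+1}^{(n+1)}) = o_{L_1}(1)$, and since $\psi$ is bounded and continuous this yields $\mme[\psi(R_{n+1})] = \mme[\psi(\textup{prox}(c_{n+1}\ell)(R_{n+1}^{(n+1)}))] + o(1)$. Next, exactly as in the proof of Lemma \ref{lem:prox_resid_convergence} — using that the proximal map is continuous, that $c_{n+1}\stackrel{\mmp}{\to}\lambda_{n+1}^2c_{\infty}$, and that $X_{n+1}^\top(\beta^* - \hat\beta^{(n+1)}) = O_{\mmp}(1)$ — one replaces $c_{n+1}$ by $\lambda_{n+1}^2c_{\infty}$ inside the prox at the cost of an $o_{\mmp}(1)$ error. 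Conditioning on $\{(X_j,Y_j)\}_{j\le n}$ and applying Part 5 of Theorem \ref{thm:EK_facts} gives
\[
\mme\bigl[\psi(R_{n+1}) \mid \{(X_j,Y_j)\}_{j\le n}\bigr] \stackrel{\mmp}{\to} \mme\bigl[\psi(\textup{prox}(\lambda^2c_{\infty}\ell)(\epsilon + \lambda N_{\infty}Z))\bigr],
\]
and since the left side is bounded, bounded convergence upgrades this to convergence of $\mme[\psi(R_{n+1})]$ to the same limit.

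\textbf{The variance.} Decompose
\[
\textup{Var}\Bigl(\tfrac{1}{n+1}\textstyle\sum_i\psi(R_i)\Bigr) = \tfrac{1}{(n+1)^2}\sum_i\textup{Var}(\psi(R_i)) + \tfrac{1}{(n+1)^2}\sum_{i\ne j}\textup{Cov}(\psi(R_i),\psi(R_j)).
\]
The diagonal sum is $O(1/n)$ because $\psi$ is bounded, so it remains to show $\textup{Cov}(\psi(R_1),\psi(R_2)) \to 0$. The idea is a leave-two-out argument: writing $\hat\beta_{(1,2)}$ for the fit with points $1$ and $2$ both removed, two applications of Part 1 of Theorem \ref{thm:EK_facts} (together with the analogous control on the scalings $c_1,c_2$, and using that $R_1,R_2 = O_{\mmp}(1)$ and $\psi$ is continuous) show that replacing $R_i$ by the leave-two-out residual $\widetilde R_i^{(1,2)}$ and $c_i$ by $\lambda_i^2 c_{\infty}$ perturbs $\psi(R_1)$ and $\psi(R_2)$ by $o_{\mmp}(1)$; since $\psi$ is bounded these errors are also $o_{L_2}(1)$ and do not affect the covariance in the limit. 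After this replacement, conditionally on $\{(X_j,Y_j)\}_{j\ge3}$ the two factors are functions of the independent pairs $(X_1,\epsilon_1)$ and $(X_2,\epsilon_2)$ alone, hence conditionally independent, so
\[
\textup{Cov}(\psi(R_1),\psi(R_2)) = \mme[h_1 h_2] - \mme[h_1]\,\mme[h_2] + o(1),
\]
where $h_i := \mme[\psi(\textup{prox}(\lambda_i^2c_{\infty}\ell)(\widetilde R_i^{(1,2)})) \mid \{(X_j,Y_j)\}_{j\ge3}]$ and $\mme[h_1 h_2] = \mme[h_1^2]$ by symmetry. By Part 5 of Theorem \ref{thm:EK_facts}, $h_1$ is bounded and converges in probability to the deterministic constant $\mme[\psi(\textup{prox}(\lambda^2c_{\infty}\ell)(\epsilon+\lambda N_{\infty}Z))]$, so both $\mme[h_1^2]$ and $\mme[h_1]\mme[h_2]$ converge to the square of that constant and the difference vanishes.

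\textbf{Main obstacle.} The delicate point is the leave-two-out replacement: Theorem \ref{thm:EK_facts} controls single leave-one-out quantities, and one must verify that performing a second deletion preserves the $o_{\mmp}(1)$ (hence, after using boundedness of $\psi$, $o_{L_2}(1)$) accuracy while cleanly exposing the conditional-independence structure in the pair $(X_1,\epsilon_1),(X_2,\epsilon_2)$. This is bookkeeping-heavy but routine; everything else reduces to bounded and dominated convergence.
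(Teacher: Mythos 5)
Your proposal matches the paper's proof in both structure and substance: the mean is handled via Part 2 of Theorem \ref{thm:EK_facts}, Lemma \ref{lem:prox_resid_convergence}, Part 5, and bounded convergence, and the variance is killed by exactly the same exchangeability decomposition together with a leave-two-out replacement of $R_1,R_2$ so that the cross moment factors as the square of the Part-5 limit. The leave-two-out bookkeeping you flag as the main obstacle is treated at the same level of detail in the paper (it simply invokes Parts 1, 2, 5 and Lemma \ref{lem:prox_resid_convergence}), so there is no gap relative to the paper's own argument.
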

\begin{proof}
    We compute the mean and variance of the left-hand side. Applying parts 2 and 5 of Theorem \ref{thm:EK_facts}, Lemma \ref{lem:prox_resid_convergence}, and the bounded convergence theorem we have that
    \begin{align*}
    \mme\left[\frac{1}{n+1} \sum_{i=1}^{n+1} \psi(R_i)\right] & = \mme\left[\psi(R_i)\right] = \mme[\psi(\text{prox}(\lambda_i^2c_i\ell)(R^{(i)}_i))] + o(1) =  \mme[\psi(\text{prox}(\lambda_i^2c_{\infty}\ell))(R^{(i)}_i)] + o(1)\\
    & = \mme[\psi(\text{prox}(\lambda_i^2c_{\infty}\ell)(\epsilon + \lambda_iN_{\infty}Z))] + o(1),
    \end{align*}
    as desired. To evaluate the variance note that by the exchangeability of the residuals
    \begin{align*}
        \text{Var}\left( \frac{1}{n+1} \sum_{i=1}^{n+1} \psi(R_i)\right) & = \frac{\mme[\psi(R_i)]^2}{n+1} + \frac{(n+1)n}{(n+1)^2} (\mme[\psi(R_1)\psi(R_2)] - \mme[\psi(R_1)]\mme[\psi(R_2)])\\
        & \leq O\left( \frac{1}{n}\right) + |\mme[\psi(R_1)\psi(R_2)] - \mme[\psi(R_1)]\mme[\psi(R_2)]|.
    \end{align*}
    Letting $R^{(1,2)}_i = Y_i - X_i^\top \hat{\beta}_{(1,2)}$ denote the residual for point $i$ obtained when the first two points are excluded from the fit and applying parts 1, 2 and 5 of Theorem \ref{thm:EK_facts}, Lemma \ref{lem:prox_resid_convergence}, and the bounded convergence theorem we have that 
    \begin{align*}
    & \mme[\psi(R_1)\psi(R_2)]  = \mme\left[\psi\left(\text{prox}(\lambda_1^2 c_{\infty}\ell)\left(R_1^{(1,2)}\right)\right) \psi\left(\text{prox}(\lambda_2^2 c_{\infty}\ell)\left(R_2^{(1,2)}\right)\right)\right] + o(1)\\
    & = \mme [\psi(\text{prox}(\lambda^2 c_{\infty}\ell)(\epsilon + N_{\infty} \lambda Z)]^2  + o(1).
    \end{align*}
     Plugging this and our approximation on the first moment into our expression for the variance yields that $\text{Var}\left( \frac{1}{n+1} \sum_{i=1}^{n+1} \psi(R_i)\right) = o(1)$, as desired.

\end{proof}

The final preliminary that we will need before proving Theorem \ref{thm:conv_tau_fixed} is an asymptotic characterization of the empirical quantiles (namely Lemma \ref{lem:gen_quant_conv} from the main text). This characterization will follow immediately from the convergence of the empirical distribution established above and the following standard lemma.

\begin{lemma}\label{lem:quantile_conv_fact}
    Let $\{P_n\}$ and $P$ be a sequence of random probability measures and a fixed probability measure on $\mmr$. Suppose that for all bounded, continuous functions $\psi$,
    \[
    \mme_{P_n}[\psi(X)] \stackrel{\mmp}{\to} \mme_P[\psi(X)],
    \]
    and let $\alpha \in (0,1)$ be such that the cumulative distribution function of $P$ is strictly increasing at $\textup{Quantile}(1-\alpha,P)$. Then,
    \[
    \textup{Quantile}(1-\alpha,P_n) \stackrel{\mmp}{\to} P.
    \]
\end{lemma}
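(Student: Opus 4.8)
The plan is to reduce the claim to a two-sided sandwich argument with two fixed bounded continuous test functions. Write $q := \textup{Quantile}(1-\alpha,P)$ and let $F$ be the cumulative distribution function of $P$, so that $q = \inf\{x : F(x) \ge 1-\alpha\}$. By this definition one has $F(q-\eta) < 1-\alpha$ for every $\eta > 0$, while the hypothesis that $F$ is strictly increasing at $q$ supplies the complementary strict inequality $F(q+\eta) > 1-\alpha$ for every $\eta > 0$. Fix $\delta > 0$; it suffices to prove $\mmp\bigl(|\textup{Quantile}(1-\alpha,P_n) - q| > \delta\bigr) \to 0$ and then let $\delta \downarrow 0$.

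For the lower bound, choose a bounded continuous function $\psi_1$ (for instance a piecewise-linear ramp) with $\bone\{x \le q - \delta\} \le \psi_1(x) \le \bone\{x \le q - \delta/2\}$. Letting $F_{P_n}$ denote the distribution function of $P_n$, we then have $\mme_{P_n}[\psi_1] \ge F_{P_n}(q-\delta)$ and $\mme_P[\psi_1] \le F(q-\delta/2) < 1-\alpha$. Setting $\epsilon_1 := \tfrac{1}{2}(1-\alpha - \mme_P[\psi_1]) > 0$ and invoking the hypothesis $\mme_{P_n}[\psi_1] \stackrel{\mmp}{\to} \mme_P[\psi_1]$, we obtain $\mmp(\mme_{P_n}[\psi_1] \ge 1-\alpha - \epsilon_1) \to 0$; on the complementary event $F_{P_n}(q-\delta) \le \mme_{P_n}[\psi_1] < 1-\alpha$, and since $F_{P_n}$ is nondecreasing this forces $\textup{Quantile}(1-\alpha,P_n) \ge q-\delta$. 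Symmetrically, choose bounded continuous $\psi_2$ with $\bone\{x \le q+\delta/2\} \le \psi_2(x) \le \bone\{x \le q+\delta\}$, so that $F_{P_n}(q+\delta) \ge \mme_{P_n}[\psi_2]$ and $\mme_P[\psi_2] \ge F(q+\delta/2) > 1-\alpha$; the same reasoning gives $\mmp(\mme_{P_n}[\psi_2] \le 1-\alpha) \to 0$, and on the complement $F_{P_n}(q+\delta) > 1-\alpha$, so $q+\delta \in \{x : F_{P_n}(x) \ge 1-\alpha\}$ and hence $\textup{Quantile}(1-\alpha,P_n) \le q+\delta$.

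Intersecting the two high-probability events yields $\mmp\bigl(q - \delta \le \textup{Quantile}(1-\alpha,P_n) \le q+\delta\bigr) \to 1$ for every $\delta > 0$, which is precisely $\textup{Quantile}(1-\alpha,P_n) \stackrel{\mmp}{\to} q$. I do not anticipate a genuine obstacle: the argument only uses the stated convergence-in-probability hypothesis for the two fixed functions $\psi_1$ and $\psi_2$, so no uniformity over test functions is required, and the randomness of the measures $P_n$ enters only through these two scalar random variables. The only points that demand a little care are translating ``strictly increasing at the quantile'' into the strict inequality $F(q+\eta) > 1-\alpha$, and correctly converting the one-sided bounds on $F_{P_n}$ at $q\mp\delta$ into the matching one-sided bounds on $\textup{Quantile}(1-\alpha,P_n)$ through the defining infimum.
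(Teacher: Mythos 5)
Your proof is correct and follows essentially the same route as the paper's own argument: both sandwich the indicators of the half-lines near $q$ between two fixed piecewise-linear bounded continuous test functions (the paper's $h^{\pm}_{\epsilon}$ play exactly the role of your $\psi_1,\psi_2$), apply the convergence-in-probability hypothesis only to those two functions, and translate the resulting strict inequalities on $F_{P_n}$ at $q\mp\delta$ into the two-sided bound $\textup{Quantile}(1-\alpha,P_n)\in[q-\delta,q+\delta]$ with probability tending to one. Your handling of the two ingredients (the quantile definition giving $F(q-\eta)<1-\alpha$ and strict increase giving $F(q+\eta)>1-\alpha$) matches the paper's use of these facts, so there is nothing to correct.
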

\begin{proof}
    Fix any $\epsilon > 0$. Let 
    \[
    h^{+}_{\epsilon}(x) := \begin{cases}
        1, \text{ if } x \leq \textup{Quantile}(1-\alpha,P) + \epsilon,\\
        (\textup{Quantile}(1-\alpha,P) + 2\epsilon - x)/\epsilon, \text{ if } \textup{Quantile}(1-\alpha,P) + \epsilon < x \leq \textup{Quantile}(1-\alpha,P) + 2\epsilon\\
        0, \text{ if } x > \textup{Quantile}(1-\alpha,P) + 2\epsilon,
    \end{cases} 
    \]
    and
    \[
    h^-_{\epsilon} := \begin{cases}
        1, \text{ if } x \leq \textup{Quantile}(1-\alpha,P) - 2\epsilon,\\
        (\textup{Quantile}(1-\alpha,P) - \epsilon - x)/\epsilon, \text{ if } \textup{Quantile}(1-\alpha,P) - 2\epsilon < x \leq \textup{Quantile}(1-\alpha,P) - \epsilon\\
        0, \text{ if } x > \textup{Quantile}(1-\alpha,P) - \epsilon.
    \end{cases}
    \]
    Since the cumulative distribution function of $P$ is strictly increasing at $\text{Quantile}(1-\alpha,P)$ we must have that $\mme_P[h^-(X)] < 1-\alpha < \mme_P[h^+(X)]$. Moreover, since $h^-$ and $h^+$ are bounded and continuous we have that with probability approaching $1$, 
    \begin{align*}
    \mmp_{P_n}(X \leq \text{Quantile}(1-\alpha,P) - 2\epsilon) & \leq \mme_{P_n}[h^-(X)] <  1-\alpha\\
    & < \mme_{P_n}[h^+(X)] \leq \mmp_{P_n}(X \leq \text{Quantile}(1-\alpha,P) + 2\epsilon),
    \end{align*}
    and thus with probability approaching 1, 
    \[
    \text{Quantile}(1-\alpha,P_n) \in [\text{Quantile}(1-\alpha,P) - 2\epsilon, \text{Quantile}(1-\alpha,P) + 2\epsilon].
    \]
    This proves the desired result.
\end{proof}

To apply this result to the empirical measure over the absolute residuals we need to show that the limiting distribution has a strictly increasing cumulative distribution function. This is handled in the next lemma.

\begin{lemma}\label{lem:asymptotic_prox_dist_facts}
    Let $(Z,\lambda,\epsilon) \sim N(0,1) \otimes P_{\lambda} \otimes P_{\epsilon}$, where $P_{\epsilon}$ is arbitrary and $\mmp_{P_{\lambda}}(\lambda > 0) = 1$. Let $\ell$ denote any convex, twice differentiable loss function, with a bounded second derivative. Then for any constants $\tilde{c} \geq 0$, $\tilde{N} > 0$, the cumulative distribution function of $|\text{prox}(\lambda^2\tilde{c} \ell)(\epsilon + \lambda \tilde{N} Z)|$ is strictly increasing and continous on $\mmr_{\geq 0}$ .
\end{lemma}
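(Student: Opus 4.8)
The plan is to condition on $\lambda$ and reduce the statement to two elementary facts: that $\text{prox}(\lambda^2\tilde{c}\ell)$ is a continuous, strictly increasing bijection of $\mmr$ onto $\mmr$, and that, conditionally on $\lambda$, the random variable $\epsilon + \lambda\tilde{N}Z$ is absolutely continuous with a strictly positive density.

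\emph{Properties of the proximal map.} For fixed $c \ge 0$ write $G_c(v) := v + c\,\ell'(v)$. Since $\ell$ is convex with bounded second derivative, $G_c$ is continuously differentiable with $G_c'(v) = 1 + c\,\ell''(v) \in [1,\, 1 + c\|\ell''\|_\infty]$, so $G_c$ is a strictly increasing $C^1$ bijection of $\mmr$ onto $\mmr$; by the first-order condition for the objective $v \mapsto c\ell(v) + \frac{1}{2}(x-v)^2$ (strictly convex, so $\text{prox}(c\ell)$ is well defined), $\text{prox}(c\ell) = G_c^{-1}$, which is the content of Lemma \ref{lem:cont_diff_prox} (for $c=0$ it is the identity). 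Thus $\text{prox}(c\ell)$ is a continuous, strictly increasing bijection of $\mmr$; such a map carries every nondegenerate interval onto a nondegenerate interval, and its inverse is of the same type. Consequently, for any random variable $X$, the image $\text{prox}(c\ell)(X)$ has no atoms whenever $X$ has none, and $\mmp(\text{prox}(c\ell)(X) \in I) > 0$ for every nondegenerate interval $I$ whenever the corresponding property holds for $X$.

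\emph{Conditioning on $\lambda$.} Set $W := \text{prox}(\lambda^2\tilde{c}\ell)(\epsilon + \lambda\tilde{N}Z)$ and $F(t) := \mmp(|W| \le t)$. By the product structure $\mathcal{N}(0,1)\otimes P_\lambda \otimes P_\epsilon$, for $P_\lambda$-almost every value $\ell_0 > 0$ of $\lambda$ the conditional law of $W$ given $\lambda = \ell_0$ is the law of $\text{prox}(\ell_0^2\tilde{c}\ell)(\epsilon + \ell_0\tilde{N}Z)$ with $Z \sim \mathcal{N}(0,1)$ independent of $\epsilon \sim P_\epsilon$. Since $\ell_0\tilde{N} > 0$, the variable $\epsilon + \ell_0\tilde{N}Z$ has density $t \mapsto \mme_\epsilon[\varphi_{\ell_0\tilde{N}}(t-\epsilon)]$, with $\varphi_\sigma$ the $\mathcal{N}(0,\sigma^2)$ density; this is finite and strictly positive for every $t \in \mmr$, so $\epsilon + \ell_0\tilde{N}Z$ has no atoms and assigns positive probability to every nondegenerate interval. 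By the previous paragraph the same holds for $W$ given $\lambda = \ell_0$, and hence for $|W|$ on $[0,\infty)$: for each $t \ge 0$, $\mmp(|W|=t \mid \lambda=\ell_0) \le \mmp(W=t\mid\lambda=\ell_0) + \mmp(W=-t\mid\lambda=\ell_0) = 0$, and for $0 \le s < t$, $\mmp(s < |W| \le t \mid \lambda = \ell_0) \ge \mmp(s < W \le t \mid \lambda = \ell_0) > 0$.

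\emph{Mixing over $\lambda$, and the main obstacle.} Taking $\mme_\lambda$ of the conditional statements gives the result: $\mmp(|W|=t) = \mme_\lambda[\mmp(|W|=t\mid\lambda)] = 0$ for every $t$, so $F$ is continuous; and for $0 \le s < t$, $F(t) - F(s) = \mme_\lambda[\mmp(s < |W| \le t \mid \lambda)]$ is the expectation of a nonnegative random variable that is $P_\lambda$-almost surely strictly positive, so $F(t) > F(s)$. Hence $F$ is continuous and strictly increasing on $\mmr_{\geq 0}$. The only step that is not pure bookkeeping is the surjectivity of $\text{prox}(c\ell)$ onto $\mmr$, which is what keeps $|W|$ unbounded so that $F$ does not become eventually constant; this is precisely where the lower bound $G_c' \ge 1$ (equivalently, $\text{prox}(c\ell)' \le 1$) is used. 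Everything else follows from the behaviour of monotone bijections, Gaussian convolution, and conditioning.
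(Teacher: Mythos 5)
Your proof is correct and follows essentially the same route as the paper's: establish that $\textup{prox}(\lambda^2\tilde{c}\ell)$ is a continuous, strictly increasing bijection of $\mmr$ onto $\mmr$ (the paper does this via the derivative formula of Lemma \ref{lem:cont_diff_prox}, you via $G_c = \mathrm{id} + c\ell'$, which is the same fact), then condition and exploit the full support of the Gaussian component. The only cosmetic differences are that you condition on $\lambda$ alone and deduce continuity of the CDF from atomlessness of the Gaussian convolution, whereas the paper conditions on $(\lambda,\epsilon)$ and argues continuity via bounded convergence and continuity of the inverse proximal map; both are sound.
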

\begin{proof}
    We will first show that the cumulative distribution function is strictly increasing. Fix any $x \geq 0$ and $\delta > 0$. Our goal will be to show that 
    \[
    \mmp(|\text{prox}(\lambda^2 \tilde{c}\ell)(\epsilon + \lambda \tilde{N} Z)| \leq x) < \mmp(|\text{prox}(\lambda^2\tilde{c} \ell)(\epsilon + \lambda \tilde{N} Z)| \leq x + \delta).
    \]
    To begin, first note that the proximal function is strictly increasing in its second argument. Namely, by Lemma \ref{lem:cont_diff_prox} we have that for any fixed $c \geq 0$,
    \[
    \frac{d}{dx} \text{prox}(c\ell)(x) = \frac{1}{1 + c\ell''(\text{prox}(c\ell)(x))} > 0.
    \]
    Since $\ell''(\cdot)$ is bounded, this clearly implies that $\text{prox}(c\ell)(\cdot)$ has image equal to $\mathbb{R}$ and thus, we find that $\text{prox}(c\ell)(\cdot)$ admits an increasing inverse function, $\text{prox}(c\ell)^{-1}:\mmr \to \mmr$. Using this fact, we compute that
    \begin{align*}
        & \mmp(|\text{prox}(\lambda^2 \tilde{c}\ell)(\epsilon + \lambda \tilde{N} Z)| \leq x+ \delta)
         = \mmp(-x - \delta \leq \text{prox}(\lambda^2  \tilde{c}\ell)(\epsilon + \lambda \tilde{N} Z) \leq x + \delta)\\
        & = \mme[\mmp(-x - \delta \leq \text{prox}(\lambda^2  \tilde{c}\ell)(\epsilon + \lambda \tilde{N} Z) \leq x + \delta \mid \lambda , \epsilon)]\\
        & = \mme\left[\mmp\left( \frac{\text{prox}(\lambda^2  \tilde{c})^{-1}(-x - \delta) - \epsilon}{ \tilde{N}} \leq \lambda Z \leq \frac{\text{prox}(\lambda^2  \tilde{c})^{-1}(x + \delta) - \epsilon}{ \tilde{N}} \mid \lambda , \epsilon\right)\right]\\
        & > \mme\left[ \mmp \left( \frac{\text{prox}(\lambda^2  \tilde{c})^{-1}(-x ) - \epsilon}{ \tilde{N}} \leq \lambda Z \leq \frac{\text{prox}(\lambda^2  \tilde{c})^{-1}(x ) - \epsilon}{ N_{\infty}}  \mid \lambda , \epsilon\right)\right]\\
        & =  \mmp(|\text{prox}(\lambda^2  \tilde{c}\ell)(\epsilon + \lambda \tilde{N} Z)| \leq x ),
    \end{align*}
    where the inequality uses the fact that $\text{prox}(\lambda^2  \tilde{c})^{-1}(\cdot)$ is strictly increasing and $\lambda Z$ is supported on $\mmr$ whenever $\lambda \neq 0$ (which by assumption occurs with probability one). Thus, the cumulative distribution function is strictly increasing.

    To show that the cumulative distribution function is continuous fix any $x,x' \geq 0$. We will consider the limit of the cumulative distribution function as $x' \to x$. To do this, we proceed as above and write 
    \begin{align*}
         \mmp(|\text{prox}(\lambda^2 \tilde{c}\ell)(\epsilon + \lambda \tilde{N} Z)| \leq x') & =  \mmp \left( \frac{\text{prox}(\lambda^2 \tilde{c})^{-1}(-x' ) - \epsilon}{ \tilde{N}} \leq  \lambda Z \leq \frac{\text{prox}(\lambda^2 \tilde{c})^{-1}(x' ) - \epsilon}{ \tilde{N}} \right)\\
         & = \mme\left[ \mmp \left( \frac{\text{prox}(\lambda^2 \tilde{c})^{-1}(-x' ) - \epsilon}{ \tilde{N}} \leq  \lambda Z \leq \frac{\text{prox}(\lambda^2 \tilde{c})^{-1}(x' ) - \epsilon}{ \tilde{N}} \mid \lambda, \epsilon \right) \right].
    \end{align*}
    Since $\mmp(\lambda > 0) = 1$, the bounded convergence theorem, the continuity of the inverse proximal function in $x$ (Lemma \ref{lem:cont_diff_prox}), and the continuity of the cumulative distribution function of $Z$ clearly imply that 
    \begin{align*}
    & \lim_{x' \to x}\mme\left[\mmp \left( \frac{\text{prox}(\lambda^2 c_{\infty})^{-1}(-x' ) - \epsilon}{ N_{\infty}} \leq \lambda Z \leq \frac{\text{prox}(\lambda^2 c_{\infty})^{-1}(x' ) - \epsilon}{ N_{\infty}}  \mid \lambda, \epsilon \right)  \right]\\
    & = \mme\left[ \lim_{x' \to x} \mmp \left( \frac{\text{prox}(\lambda^2 c_{\infty})^{-1}(-x' ) - \epsilon}{ N_{\infty}} \leq \lambda Z \leq \frac{\text{prox}(\lambda^2 c_{\infty})^{-1}(x' ) - \epsilon}{ N_{\infty}}  \mid \lambda, \epsilon \right)  \right]\\
    & = \mme\left[\mmp \left( \frac{\text{prox}(\lambda^2 c_{\infty})^{-1}(-x ) - \epsilon}{ N_{\infty}} \leq \lambda Z \leq \frac{\text{prox}(\lambda^2 c_{\infty})^{-1}(x ) - \epsilon}{ N_{\infty}}  \mid \lambda, \epsilon \right)   \right],
    \end{align*}
    as desired.
\end{proof}

Applying the previous three lemmas to the empirical measure over the absolute residuals gives the desired convergence of the empirical quantile.

\begin{corollary}[Lemma \ref{lem:gen_quant_conv} in the main text]\label{corr:full_quantile_conv}
    Under Assumptions 1-3 from Section \ref{sec:general_losses},
    \[
    \textup{Quantile}\left(1-\alpha, \frac{1}{n+1} \sum_{i=1}^{n+1} \delta_{|R_i|} \right) \stackrel{\mmp}{\to} \textup{Quantile}\left(1-\alpha, |\textup{prox}(\lambda^2 c_{\infty}\ell)(\epsilon + \lambda N_{\infty} Z)|\right),
    \]
    where $(Z,\lambda,\epsilon) \sim N(0,1) \otimes P_{\lambda} \otimes P_{\epsilon}$.
\end{corollary}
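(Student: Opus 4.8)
The plan is to obtain this corollary purely by assembling three results already established: the convergence of the empirical distribution of the residuals (Lemma~\ref{lem:conv_in_dist}), the abstract quantile-stability principle (Lemma~\ref{lem:quantile_conv_fact}), and the regularity of the limiting law (Lemma~\ref{lem:asymptotic_prox_dist_facts}). All of the genuine content sits in those lemmas, so what remains is essentially bookkeeping.

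First I would reduce to the convergence of the empirical measure over the \emph{absolute} residuals. If $\psi : \mmr \to \mmr$ is bounded and continuous then so is $x \mapsto \psi(|x|)$, so applying Lemma~\ref{lem:conv_in_dist} to this test function gives
\[
\frac{1}{n+1} \sum_{i=1}^{n+1} \psi(|R_i|) \stackrel{\mmp}{\to} \mme\left[\psi\left(|\textup{prox}(\lambda^2 c_{\infty}\ell)(\epsilon + \lambda N_{\infty} Z)|\right)\right],
\]
with $(Z,\lambda,\epsilon) \sim N(0,1) \otimes P_{\lambda} \otimes P_{\epsilon}$. Writing $P_n := \frac{1}{n+1} \sum_{i=1}^{n+1} \delta_{|R_i|}$ and letting $P$ denote the law of $|\textup{prox}(\lambda^2 c_{\infty}\ell)(\epsilon + \lambda N_{\infty} Z)|$, this says exactly that $\mme_{P_n}[\psi(X)] \stackrel{\mmp}{\to} \mme_P[\psi(X)]$ for every bounded, continuous $\psi$, i.e.\ the first hypothesis of Lemma~\ref{lem:quantile_conv_fact} holds with this pair $(P_n, P)$.

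It then remains to check the regularity hypothesis of Lemma~\ref{lem:quantile_conv_fact}, namely that the cumulative distribution function of $P$ is strictly increasing at $\textup{Quantile}(1-\alpha,P)$. Under Assumptions~1--3 the loss $\ell$ is convex, twice differentiable, and has a bounded second derivative, and $\mmp_{P_\lambda}(\lambda > 0) = 1$; moreover $c_\infty \geq 0$ and $N_\infty > 0$ by Theorem~\ref{thm:EK_facts}. Hence Lemma~\ref{lem:asymptotic_prox_dist_facts}, applied with $\tilde c = c_\infty$ and $\tilde N = N_\infty$, shows that the cumulative distribution function of $P$ is continuous and strictly increasing on $\mmr_{\geq 0}$. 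Since $P$ is supported on $\mmr_{\geq 0}$ and $1-\alpha \in (0,1)$, its $(1-\alpha)$-quantile lies in $\mmr_{\geq 0}$, where strict monotonicity holds. Lemma~\ref{lem:quantile_conv_fact} now yields $\textup{Quantile}(1-\alpha,P_n) \stackrel{\mmp}{\to} \textup{Quantile}(1-\alpha,P)$, which is precisely the assertion of the corollary.

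The only step requiring any thought is confirming that the relevant quantile lands in the range covered by Lemma~\ref{lem:asymptotic_prox_dist_facts}, which is immediate here since $P$ is supported on the nonnegative reals and Lemma~\ref{lem:asymptotic_prox_dist_facts} establishes strict monotonicity on all of $\mmr_{\geq 0}$, including the boundary point $0$. There is no new obstacle: the difficulty of the statement is entirely absorbed into Lemma~\ref{lem:conv_in_dist} (which rests on the leave-one-out characterizations of \citet{EK2018} recorded in Theorem~\ref{thm:EK_facts}, together with Lemma~\ref{lem:prox_resid_convergence}) and into the analysis of the limiting density in Lemma~\ref{lem:asymptotic_prox_dist_facts}.
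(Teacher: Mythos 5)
Your proposal is correct and follows essentially the same route as the paper, whose proof is exactly the assembly of Lemma \ref{lem:conv_in_dist} (applied to test functions of the form $x \mapsto \psi(|x|)$), Lemma \ref{lem:quantile_conv_fact}, and Lemma \ref{lem:asymptotic_prox_dist_facts}. You simply spell out the bookkeeping (composing with the absolute value and verifying the strictly increasing CDF hypothesis) that the paper leaves implicit.
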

\begin{proof}
    This follows immediately from Lemmas \ref{lem:conv_in_dist}, \ref{lem:quantile_conv_fact}, and \ref{lem:asymptotic_prox_dist_facts}. 
\end{proof}

We are now ready to prove Theorem \ref{thm:conv_tau_fixed}. In the work that follows we use $q^*$ to denote the asymptotic quantile $\text{Quantile}(1-\alpha,|\text{prox}(\lambda^2c_{\infty}\ell)(\epsilon + \lambda N_{\infty}Z)|)$.

\begin{proof}[Proof of Theorem \ref{thm:conv_tau_fixed}]
Fix any $\xi > 0$. We have that 
\begin{align*}
\mmp(Y_{n+1} \in \hat{C}_{\text{full}} \mid \{(X_i,Y_i)\}_{i=1}^n ) & = \mmp\left(|R_{n+1}| \leq \text{Quantile}\left(1-\alpha,\frac{1}{n+1}\sum_{i=1}^{n+1} \delta_{|R_i|}\right) \mid \{(X_i,Y_i)\}_{i=1}^n \right)\\
& \leq \mmp\left( |\text{prox}(\lambda_{n+1}^2c_{\infty}\ell)(R^{(n+1)}_{n+1})| \leq q^* + 2\xi \mid \{(X_i,Y_i)\}_{i=1}^n \right)\\
& \ \ \ + \mmp( \left|\text{prox}(\lambda_{n+1}^2c_{\infty}\ell)(R^{(n+1)}_{n+1}) - R_{n+1} \right| > \xi \mid \{(X_i,Y_i)\}_{i=1}^n )\\
& \ \ \ + \mmp\left( \left|  \text{Quantile}\left(1-\alpha,\frac{1}{n+1}\sum_{i=1}^{n+1} \delta_{|R_i|}\right) - q^* \right| \geq \xi \mid \{(X_i,Y_i)\}_{i=1}^n  \right).
\end{align*}
Now, by Lemma \ref{lem:prox_resid_convergence} and Corollary \ref{corr:full_quantile_conv}, the later two expressions above converge to zero in expectation. Since these random variables are non-negative, this is sufficient to imply that they are $o_{\mmp}(1)$ (for a simple proof of this fact see Lemma \ref{lem:marg_to_cond_conv}). So, we find that
\begin{align*}
\mmp(Y_{n+1} \in \hat{C}_{\text{full}} \mid \{(X_i,Y_i)\}_{i=1}^n ) & \leq \mmp\left( | \text{prox}(\lambda_{n+1}^2c_{\infty}\ell)(R^{(n+1)}_{n+1})| \leq q^* + 2\xi \mid \{(X_i,Y_i)\}_{i=1}^n \right) + o_{\mmp}(1).
\end{align*}
To conclude the proof, let
\[
h_{\xi}(x) := \begin{cases}
        1, \text{ if } x \leq q^* + 3\xi,\\
        (q^* + 4\xi - x)/\xi, \text{ if } q^* + 3\xi < x \leq q^* + 4\xi\\
        0, \text{ if } x > q^* + 4\xi,
    \end{cases} 
\]
Then, by part 5 of Theorem \ref{thm:EK_facts} and the definition of $h_{\xi}(\cdot)$ we have that 
\begin{align*}
& \mmp\left( | \text{prox}(\lambda_{n+1}^2c_{\infty}\ell)(R^{(n+1)}_{n+1})| \leq q^* + 2\xi \mid \{(X_i,Y_i)\}_{i=1}^n \right) \\
& \leq \mme\left[ h_{\xi}\left( | \text{prox}(\lambda_{n+1}^2c_{\infty}\ell)(R^{(n+1)}_{n+1})| \right)  \mid \{(X_i,Y_i)\}_{i=1}^n \right]\\
& \stackrel{\mmp}{\to} \mme\left[ h_{\xi}(| \text{prox}(\lambda^2c_{\infty}\ell)(\epsilon + \lambda N_{\infty} Z)|) \right] \leq \mmp\left( | \text{prox}(\lambda^2c_{\infty}\ell)(\epsilon + \lambda N_{\infty} Z)| \leq q^* + 4\xi \right).
\end{align*}
Sending $\xi \to 0$ and applying the continuity of this limiting distribution (Lemma \ref{lem:asymptotic_prox_dist_facts}), we conclude that,
\[
\mmp(Y_{n+1} \in \hat{C}_{\text{full}} \mid \{(X_i,Y_i)\}_{i=1}^n) \leq 1-\alpha + o_{\mmp}(1).
\]
A matching lower bound follows from an identical argument and thus we conclude that $\mmp(Y_{n+1} \in \hat{C}_{\text{full}} \mid \{(X_i,Y_i)\}_{i=1}^n) \stackrel{\mmp}{\to} 1-\alpha $.

To control the coverage of the uncorrected method, we may simply repeat the arguments from above to conclude that 
\[
\mmp(Y_{n+1} \in \hat{C}_{\text{uncorr.}} \mid \{(X_i,Y_i)\}_{i=1}^n) \stackrel{\mmp}{\to} \mmp\left(|\epsilon + \lambda N_{\infty} Z | \leq  \text{Quantile}\left(1-\alpha,|\text{prox}(\lambda^2c_{\infty}\ell) (\epsilon + \lambda N_{\infty} Z )| \right) \right).
\]
Now, by Lemma \ref{lem:cont_diff_prox} we have that for any fixed $c \geq 0$,
\[
\frac{d}{dx}\text{prox}(c\ell)(x) = \frac{1}{1+c\ell''(\text{prox}(c\ell)(x))}.
\]
Since $\mmp(\lambda > 0) = 1$, $\mmp(\epsilon + \lambda N_{\infty} Z  = 0) = 0$, and $\text{prox}(c\ell)(0) = 0$, this immediately implies that 
\[
\mmp(|\text{prox }(\lambda^2c_{\infty}\ell) (\epsilon + \lambda N_{\infty} Z ) | < |\epsilon + \lambda N_{\infty} Z )|) = 1.
\]
 Moreover, since the cumulative distribution functions of $|\epsilon + \lambda N_{\infty} Z |$ and $ |\text{prox}(\lambda^2c_{\infty}\ell)(\epsilon + \lambda N_{\infty} Z )| $ are strictly increasing on $[0,\infty)$ (see the proof of Lemma \ref{lem:asymptotic_prox_dist_facts}) and $\text{prox}(\lambda^2c_{\infty}\ell)(\cdot)$ is continuous, we find that $\text{Quantile}\left(1-\alpha, |\text{prox}(\lambda^2c_{\infty}\ell)(\epsilon + \lambda N_{\infty} Z )| \right) < \text{Quantile}\left(1-\alpha, |\epsilon + \lambda N_{\infty} Z | \right)$ and thus,
\[
\mmp\left(|\epsilon + \lambda N_{\infty} Z | \leq  \text{Quantile}\left(1-\alpha, |\text{prox}(\lambda^2c_{\infty}\ell)(\epsilon + \lambda N_{\infty} Z )| \right) \right) < 1-\alpha.
\]
\end{proof}

\section{Proofs for Section \ref{sec:tau_random}}

In this section we prove Theorem \ref{thm:conv_with_cv}. The main idea is to demonstrate that the pointwise convergence results from the previous sections can be made uniform over $\tau$. 

While the dependence of many quantities on $\tau$ was left implicit in the previous section, here we will need to make this explicit. We will do this by adding the notation $(\tau)$ after any quantity that depends on the level of regularization. For instance, while in the previous section we used $\hat{\beta}$ and $R_i$ to denote the fitted regression coefficients and residuals, here we will write $\hat{\beta}(\tau)$ and $R_i(\tau)$. In some places in the proof it will be helpful to consider what happens in the limit as $\tau \to \infty$, i.e., in the case where the fitted regression coefficients are set to zero. To allow our notation to handle this situation seamlessly, we will use $\tau = \infty$ to denote this. This will give us the natural definitions $\hat{\beta}(\infty) = 0$, $R_i(\infty) = Y_i, R_i^{(i)}(\infty) = Y_i$, $c_i(\infty) = 0, c_{\infty}(\infty) = 0$, and $N_{\infty}(\infty) = \mme[(\sqrt{d}\beta^*_i)^2]^{1/2}$. It is straightforward to verify that all of the results stated in the previous section go through with these choices. 

As a final preliminary remark, recall that Theorem \ref{thm:conv_with_cv} considers two different sets of assumptions in which the loss function is taken to be either the squared loss or to satisfy Assumption 1 from Section \ref{sec:general_losses}. To handle these two sets of assumptions seamlessly in what follows we let $\ell$ denote a generic loss satisfy either set of assumptions. At times we will need to use specific properties of $\ell$ that hold under only one set of assumptions and we will mention these situations explicitly as they arise.

\subsection{Uniform approximations for the residuals}

In this section we will show that
\[
\sup_{\tau \geq \tau_0} |R_{n+1}(\tau_{\text{rand.}}) - \text{prox}(\lambda_{n+1}^2c_{\infty}(\tau_{\text{rand.}})\ell)(R^{(n+1)}_{n+1}(\tau_{\text{rand.}}))| \stackrel{\mmp}{\to} 0.
\]
We begin by giving a few useful preliminary lemmas. Our first result controls the rate of change of the estimated regression coefficients in $\tau$.

\begin{lemma}\label{lem:continuity_of_beta_in_tau}
    For all $\tau_1, \tau_2 > 0$,
    \[
    \| \hat{\beta}(\tau_1) - \hat{\beta}(\tau_2)\|_2 \leq \frac{|\tau_2 - \tau_1|}{\max\{\tau_2,\tau_1\}}  ( \| \hat{\beta}(\tau_1)\|^2_2 + \|\hat{\beta}(\tau_2)\|^2_2)
    \]
\end{lemma}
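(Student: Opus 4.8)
Write $\hat\beta_k := \hat\beta(\tau_k)$ for $k=1,2$ and set $g(\beta) := \frac{1}{n+1}\sum_{i=1}^{n+1}\ell(Y_i - X_i^\top\beta)$, which is convex under either set of assumptions on $\ell$. The plan is to compare the two minimizers through their first-order optimality conditions together with monotonicity of $\partial g$. Since $\hat\beta_k$ minimizes $\beta \mapsto g(\beta) + \tau_k\|\beta\|_2^2$ and the penalty is smooth, we have $-2\tau_k\hat\beta_k \in \partial g(\hat\beta_k)$, and monotonicity of the subdifferential of the convex function $g$ gives
\[
\big\langle (-2\tau_1\hat\beta_1) - (-2\tau_2\hat\beta_2),\ \hat\beta_1 - \hat\beta_2\big\rangle \ \ge\ 0,
\qquad\text{equivalently}\qquad
\big\langle \tau_2\hat\beta_2 - \tau_1\hat\beta_1,\ \hat\beta_1 - \hat\beta_2\big\rangle \ \ge\ 0 .
\]

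Assume without loss of generality $\tau_1 \le \tau_2$, so $\max\{\tau_1,\tau_2\}=\tau_2$. The one algebraic choice that matters is to regroup $\tau_2\hat\beta_2 - \tau_1\hat\beta_1 = -\tau_2(\hat\beta_1 - \hat\beta_2) + (\tau_2-\tau_1)\hat\beta_1$ in terms of the \emph{larger} parameter $\tau_2$; substituting this into the inequality above and applying Cauchy--Schwarz yields
\[
\tau_2\,\|\hat\beta_1 - \hat\beta_2\|_2^2 \ \le\ (\tau_2 - \tau_1)\,\big\langle \hat\beta_1,\ \hat\beta_1 - \hat\beta_2\big\rangle \ \le\ (\tau_2 - \tau_1)\,\|\hat\beta_1\|_2\,\|\hat\beta_1 - \hat\beta_2\|_2 .
\]
Dividing by $\|\hat\beta_1 - \hat\beta_2\|_2$ (the bound is trivial when this vanishes) and using $(\tau_2-\tau_1)/\tau_2 = |\tau_2-\tau_1|/\max\{\tau_1,\tau_2\}$ gives
\[
\|\hat\beta(\tau_1) - \hat\beta(\tau_2)\|_2 \ \le\ \frac{|\tau_2 - \tau_1|}{\max\{\tau_1,\tau_2\}}\,\|\hat\beta(\tau_{\min})\|_2 , \qquad \tau_{\min} := \min\{\tau_1,\tau_2\},
\]
and bounding $\|\hat\beta(\tau_{\min})\|_2 \le \|\hat\beta(\tau_1)\|_2 + \|\hat\beta(\tau_2)\|_2$ delivers the displayed estimate. (If the squared right-hand side is wanted verbatim, note that expanding the same monotonicity inequality gives the symmetric consequence $\tau_1\|\hat\beta_1\|_2^2 + \tau_2\|\hat\beta_2\|_2^2 \le (\tau_1+\tau_2)\langle\hat\beta_1,\hat\beta_2\rangle$, and feeding this into the polarization identity $\|\hat\beta_1-\hat\beta_2\|_2^2 = \|\hat\beta_1\|_2^2+\|\hat\beta_2\|_2^2-2\langle\hat\beta_1,\hat\beta_2\rangle$ yields $\|\hat\beta(\tau_1)-\hat\beta(\tau_2)\|_2^2 \le \tfrac{|\tau_2-\tau_1|}{\max\{\tau_1,\tau_2\}}(\|\hat\beta(\tau_1)\|_2^2+\|\hat\beta(\tau_2)\|_2^2)$; either form is what the subsequent uniform-approximation arguments require, since there $\|\hat\beta(\tau)\|_2 = O_{\mmp}(1)$ uniformly and $\max\{\tau_1,\tau_2\}\ge\tau_0$.)

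There is essentially no obstacle here: the argument is two lines of convex duality. The only point needing care is the regrouping step in the second paragraph --- isolating $\tau_2(\hat\beta_1-\hat\beta_2)$ rather than $\tau_1(\hat\beta_1-\hat\beta_2)$ is precisely what produces $\max\{\tau_1,\tau_2\}$, and not $\min\{\tau_1,\tau_2\}$, in the denominator. It is worth recording, for use later in Section~\ref{sec:tau_random}, that adding the two optimality inequalities $g(\hat\beta_2)+\tau_2\|\hat\beta_2\|_2^2 \le g(\hat\beta_1)+\tau_2\|\hat\beta_1\|_2^2$ and $g(\hat\beta_1)+\tau_1\|\hat\beta_1\|_2^2 \le g(\hat\beta_2)+\tau_1\|\hat\beta_2\|_2^2$ shows that $\tau \mapsto \|\hat\beta(\tau)\|_2$ is non-increasing, so in fact $\|\hat\beta(\tau_{\min})\|_2 = \max_k\|\hat\beta(\tau_k)\|_2$, a mild sharpening of the bound.
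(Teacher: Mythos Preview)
Your argument is correct and takes a slightly different (and in fact sharper) route from the paper. The paper works at the level of objective values: strong convexity of the $\tau_2$-penalized objective gives $\tau_2\|\hat\beta(\tau_1)-\hat\beta(\tau_2)\|_2^2 \le \text{obj}_{\tau_2}(\hat\beta(\tau_1)) - \text{obj}_{\tau_2}(\hat\beta(\tau_2))$, and the right-hand side is then rewritten as the $\tau_1$-objective gap (nonpositive by optimality of $\hat\beta(\tau_1)$) plus $(\tau_2-\tau_1)(\|\hat\beta(\tau_1)\|_2^2-\|\hat\beta(\tau_2)\|_2^2)$. You instead work at the level of first-order conditions, using monotonicity of $\partial g$; this is the operator-theoretic counterpart of the same convexity facts, but it lands directly on the cleaner linear estimate $\|\hat\beta(\tau_1)-\hat\beta(\tau_2)\|_2 \le \frac{|\tau_2-\tau_1|}{\max\{\tau_1,\tau_2\}}\|\hat\beta(\tau_{\min})\|_2$, which is strictly stronger than what the paper obtains. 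Your closing remark that $\tau\mapsto\|\hat\beta(\tau)\|_2$ is non-increasing is exactly the paper's next lemma (Lemma~\ref{lem:monotone_norm}).

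One small slip to flag: the sentence ``bounding $\|\hat\beta(\tau_{\min})\|_2 \le \|\hat\beta(\tau_1)\|_2+\|\hat\beta(\tau_2)\|_2$ delivers the displayed estimate'' does not literally do so, since the displayed right-hand side has \emph{squared} norms; you then correctly supply the squared form in the parenthetical via polarization. In fact the paper's own proof establishes only the squared inequality $\|\hat\beta(\tau_1)-\hat\beta(\tau_2)\|_2^2 \le \frac{|\tau_2-\tau_1|}{\max\{\tau_1,\tau_2\}}(\|\hat\beta(\tau_1)\|_2^2+\|\hat\beta(\tau_2)\|_2^2)$, so the statement as printed is evidently missing a square on the left; you have proved both the (intended) squared form and a sharper linear form, either of which suffices for the downstream uniform-in-$\tau$ arguments.
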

\begin{proof}
    Without loss of generality we may assume that $\tau_2 > \tau_1$. Then, by the strong convexity of the regression objective and the optimality of $\hat{\beta}(\tau_1)$ and $\hat{\beta}(\tau_2)$,
    \begin{align*}
    & \tau_2 \| \hat{\beta}(\tau_1) - \hat{\beta}(\tau_2) \|_2^2\\
    & \leq \frac{1}{n+1} \sum_{i=1}^{n+1} \ell(Y_i - X_i^\top \hat{\beta}(\tau_1)) + \tau_2\|\hat{\beta}(\tau_1)\|_2^2 - \frac{1}{n+1} \sum_{i=1}^{n+1} \ell(Y_i - X_i^\top \hat{\beta}(\tau_2)) - \tau_2\|\hat{\beta}(\tau_2)\|_2^2\\
    & = \left(\frac{1}{n+1} \sum_{i=1}^{n+1} \ell(Y_i - X_i^\top \hat{\beta}(\tau_1)) + \tau_1\|\hat{\beta}(\tau_1)\|_2^2 - \frac{1}{n+1} \sum_{i=1}^{n+1} \ell(Y_i - X_i^\top \hat{\beta}(\tau_2)) - \tau_1\|\hat{\beta}(\tau_2)\|_2^2 \right)\\
    & \hspace{10cm} + (\tau_2 - \tau_1)(\|\hat{\beta}(\tau_1)\|_2^2  - \|\hat{\beta}(\tau_2)\|_2^2)\\
    & \leq 0 + (\tau_2 - \tau_1)(\|\hat{\beta}(\tau_1)\|_2^2  - \|\hat{\beta}(\tau_2)\|_2^2)\\
    & \leq |\tau_2 - \tau_1| (\|\hat{\beta}(\tau_1)\|_2^2  + \|\hat{\beta}(\tau_2)\|_2^2),
    \end{align*}
    where to get the first inequality we have utilized the fact that $\hat{\beta}(\tau_2)$ minimizes $\beta \mapsto \frac{1}{n+1} \sum_{i=1}^{n+1} \ell(Y_i - X_i^\top \beta) + \tau_2 \|\beta\|^2$ and to get the second inequality we have used the fact that $\hat{\beta}(\tau_1)$ minimizes $\beta \mapsto \frac{1}{n+1} \sum_{i=1}^{n+1} \ell(Y_i - X_i^\top \beta) + \tau_1 \|\beta\|^2.$
\end{proof}

Our next lemma shows that the regression coefficients decrease in norm as the penalty, $\tau$, increases.

\begin{lemma}\label{lem:monotone_norm}
    The function $\tau \mapsto \|\hat{\beta}(\tau)\|_2$ is non-increasing.
\end{lemma}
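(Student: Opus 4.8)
The plan is to use the classical four-point monotonicity argument along the regularization path. Write $L(\beta) := \frac{1}{n+1}\sum_{i=1}^{n+1}\ell(Y_i - X_i^\top\beta)$ for the data-fidelity part of the objective; under either set of assumptions in play ($\ell(r) = r^2$ or $\ell$ as in Assumption 1) the function $L$ is convex, so $\beta \mapsto L(\beta) + \tau\|\beta\|_2^2$ is strictly convex for every $\tau > 0$ and hence has a unique minimizer $\hat\beta(\tau)$.

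Fix $0 < \tau_1 < \tau_2$. The first step is to record the two optimality inequalities obtained by testing $\hat\beta(\tau_1)$ and $\hat\beta(\tau_2)$ against each other in the two penalized objectives:
\[
L(\hat\beta(\tau_1)) + \tau_1\|\hat\beta(\tau_1)\|_2^2 \;\le\; L(\hat\beta(\tau_2)) + \tau_1\|\hat\beta(\tau_2)\|_2^2,
\]
\[
L(\hat\beta(\tau_2)) + \tau_2\|\hat\beta(\tau_2)\|_2^2 \;\le\; L(\hat\beta(\tau_1)) + \tau_2\|\hat\beta(\tau_1)\|_2^2.
\]
The second step is simply to add these two inequalities: the $L$-terms cancel and one is left with $(\tau_2 - \tau_1)\bigl(\|\hat\beta(\tau_1)\|_2^2 - \|\hat\beta(\tau_2)\|_2^2\bigr) \ge 0$, whence $\|\hat\beta(\tau_1)\|_2 \ge \|\hat\beta(\tau_2)\|_2$ since $\tau_2 > \tau_1$. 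This is exactly the claimed non-increasing behaviour, and if one wishes to include the convention $\hat\beta(\infty) = 0$ used in this section the inequality is trivially preserved there as well.

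There is essentially no obstacle here: this is the same device already exploited in the proof of Lemma \ref{lem:continuity_of_beta_in_tau}, except that we now retain only the sign of a term rather than bounding its magnitude. The one point worth a sentence in the write-up is the well-definedness and uniqueness of $\hat\beta(\tau)$, which guarantees that the displayed relations are the genuine optimality conditions; this follows from strict convexity of the penalized objective as soon as $\tau > 0$.
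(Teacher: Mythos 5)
Your proof is correct and is essentially the paper's own argument: the paper also tests $\hat\beta(\tau_1)$ and $\hat\beta(\tau_2)$ against each other in the two penalized objectives and combines the resulting inequalities (chaining rather than adding, which is equivalent) to conclude $(\tau_2-\tau_1)\bigl(\|\hat\beta(\tau_1)\|_2^2-\|\hat\beta(\tau_2)\|_2^2\bigr)\ge 0$.
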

\begin{proof}
    For ease of notation let 
    \[
    L(\beta) := \frac{1}{n+1} \sum_{i=1}^{n+1} \ell(Y_i - X_i^\top \beta).
    \]
    Fix any $\tau_2, \tau_1 \geq 0$. By definition of $\hat{\beta}(\cdot)$ we have the inequalities 
    \begin{align*}
        & L(\hat{\beta}(\tau_2)) + \tau_2 \|\hat{\beta}(\tau_2)\|_2^2  \leq L(\hat{\beta}(\tau_1)) + \tau_2 \|\hat{\beta}(\tau_1)\|_2^2, 
        \text{ and }  L(\hat{\beta}(\tau_1)) + \tau_1 \|\hat{\beta}(\tau_1)\|_2^2 \leq L(\hat{\beta}(\tau_2)) + \tau_1 \|\hat{\beta}(\tau_2)\|_2^2. \\
    \end{align*}
    Chaining these inequalities together gives us that,
    \[
    \tau_2 (\|\hat{\beta}(\tau_1)\|_2^2 - \|\hat{\beta}(\tau_2)\|_2^2) \geq L(\hat{\beta}(\tau_2)) - L(\hat{\beta}(\tau_1)) \geq \tau_1 (\|\hat{\beta}(\tau_1)\|_2^2 - \|\hat{\beta}(\tau_2)\|_2^2),
    \]
    and in particular,
    \[
    (\tau_2 - \tau_1)(\|\hat{\beta}(\tau_1)\|_2^2 - \|\hat{\beta}(\tau_2)\|_2^2) \geq 0.
    \]
    This proves the desired result.
\end{proof}

To state our next lemma, let
\[
S_{n+1}(\tau) := \frac{1}{n+1} \sum_{j \neq n+1} \ell''(R^{(n+1)}_j(\tau))X_jX_j^\top.
\]
Our next result shows that the function $\tau \mapsto (S_{n+1}(\tau) + 2\tau I_d)^{-1}$ is Lipschitz in the operator norm.

\begin{lemma}\label{lem:matrix_lip_in_op}
    Fix any $\tau_0 > 0$. Then, under the assumptions of either Theorem \ref{thm:ridge_asymp_cov} or Theorem \ref{thm:conv_tau_fixed}, there exists $L>0$ such that with probability converging to one, $\tau \mapsto (S_{n+1}(\tau) + 2\tau I_d)^{-1}$ is $\sqrt{n}L$-Lipschitz in the operator norm on $[\tau_0,\infty)$.
\end{lemma}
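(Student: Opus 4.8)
The goal is to show that $\tau \mapsto (S_{n+1}(\tau) + 2\tau I_d)^{-1}$ is Lipschitz in the operator norm on $[\tau_0,\infty)$ with constant $\sqrt{n}L$, on an event of probability tending to one. The standard device is the resolvent identity: for invertible $A, B$,
\[
A^{-1} - B^{-1} = A^{-1}(B - A)B^{-1},
\]
so that $\|A^{-1} - B^{-1}\|_{op} \le \|A^{-1}\|_{op}\,\|B^{-1}\|_{op}\,\|A - B\|_{op}$. Applying this with $A = S_{n+1}(\tau_1) + 2\tau_1 I_d$ and $B = S_{n+1}(\tau_2) + 2\tau_2 I_d$ reduces the claim to two sub-tasks: (i) a uniform lower bound on the smallest eigenvalue of $S_{n+1}(\tau) + 2\tau I_d$ over $\tau \ge \tau_0$, which gives $\|(S_{n+1}(\tau)+2\tau I_d)^{-1}\|_{op} \le 1/(2\tau_0)$ deterministically since $S_{n+1}(\tau) \succeq 0$; and (ii) a Lipschitz bound $\|(S_{n+1}(\tau_1)+2\tau_1 I_d) - (S_{n+1}(\tau_2)+2\tau_2 I_d)\|_{op} \le \sqrt{n}\,L'\,|\tau_1 - \tau_2|$ holding with probability $\to 1$.

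**Executing the steps.** For (i), since $\ell$ is convex we have $\ell'' \ge 0$, so $S_{n+1}(\tau) \succeq 0$ and $S_{n+1}(\tau) + 2\tau I_d \succeq 2\tau_0 I_d$ for all $\tau \ge \tau_0$; hence $\|(S_{n+1}(\tau)+2\tau I_d)^{-1}\|_{op} \le (2\tau_0)^{-1}$ always. For (ii), write the difference as $(S_{n+1}(\tau_1) - S_{n+1}(\tau_2)) + 2(\tau_1 - \tau_2) I_d$; the second piece contributes $2|\tau_1-\tau_2|$ to the operator norm, which is harmless. For the first piece, expand
\[
S_{n+1}(\tau_1) - S_{n+1}(\tau_2) = \frac{1}{n+1}\sum_{j \neq n+1}\bigl(\ell''(R_j^{(n+1)}(\tau_1)) - \ell''(R_j^{(n+1)}(\tau_2))\bigr) X_j X_j^\top,
\]
and bound its operator norm by $\bigl(\max_{j}|\ell''(R_j^{(n+1)}(\tau_1)) - \ell''(R_j^{(n+1)}(\tau_2))|\bigr)\cdot\bigl\|\frac{1}{n+1}\sum_{j\neq n+1}X_jX_j^\top\bigr\|_{op}$. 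The spectral-norm factor is $O_{\mmp}(1)$ by the Bai–Yin type bound already invoked in the proof of Lemma \ref{lem:ridge_norm_conv} (it converges a.s.\ to $\|P_\lambda\|_\infty^2(1+\sqrt\gamma)^2$). For the first factor, under Assumption 1 of Section \ref{sec:general_losses} the third derivative $\ell'''$ is bounded, so $\ell''$ is Lipschitz with some constant $C_{\ell}$; under the squared-loss case $\ell'' \equiv 2$ and this factor is simply zero. Thus it remains to control $\max_{j \neq n+1}|R_j^{(n+1)}(\tau_1) - R_j^{(n+1)}(\tau_2)|$. Since $R_j^{(n+1)}(\tau) = Y_j - X_j^\top \hat\beta_{(n+1)}(\tau)$, this is at most $(\max_j \|X_j\|_2)\cdot\|\hat\beta_{(n+1)}(\tau_1) - \hat\beta_{(n+1)}(\tau_2)\|_2$. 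By Lemma \ref{lem:continuity_of_beta_in_tau} (applied to the leave-one-out fit), $\|\hat\beta_{(n+1)}(\tau_1) - \hat\beta_{(n+1)}(\tau_2)\|_2 \le \frac{|\tau_1-\tau_2|}{\max\{\tau_1,\tau_2\}}(\|\hat\beta_{(n+1)}(\tau_1)\|_2^2 + \|\hat\beta_{(n+1)}(\tau_2)\|_2^2) \le \frac{|\tau_1-\tau_2|}{\tau_0}\cdot 2\sup_{\tau \ge \tau_0}\|\hat\beta_{(n+1)}(\tau)\|_2^2$; by Lemma \ref{lem:monotone_norm} the supremum is attained at $\tau_0$, and by Lemma \ref{lem:coef_bound} it is bounded by $\frac{1}{\tau_0 n}\sum_{i \neq n+1}Y_i^2 = O_{\mmp}(1)$. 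Finally $\max_{j}\|X_j\|_2 = O_{\mmp}(\sqrt{n})$ (each $\|X_j\|_2 = O_{\mmp}(\sqrt n)$ and there are $n+1$ of them, with sub-Gaussian tails giving at most a polylog inflation — this single $\sqrt n$ is the source of the $\sqrt n$ in the statement). Assembling: with probability $\to 1$, all the $O_{\mmp}(1)$ quantities lie below fixed constants, and we obtain a bound of the form $\|(S_{n+1}(\tau_1)+2\tau_1 I_d)^{-1} - (S_{n+1}(\tau_2)+2\tau_2 I_d)^{-1}\|_{op} \le (2\tau_0)^{-2}(2 + C_\ell \cdot O_{\mmp}(\sqrt n))|\tau_1-\tau_2| = \sqrt{n}\,L\,|\tau_1-\tau_2|$ for a suitable constant $L$ depending on $\tau_0$ and the data distribution.

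**Main obstacle.** The genuine content is bounding $\max_{j \neq n+1}\|X_j\|_2$ and $\sup_{\tau \ge \tau_0}\|\hat\beta_{(n+1)}(\tau)\|_2$ simultaneously on a high-probability event — routine given the sub-Gaussianity assumption (\ref{eq:lip_tail_condition}) and Lemmas \ref{lem:coef_bound}, \ref{lem:monotone_norm}, \ref{lem:continuity_of_beta_in_tau}, but it does require being careful that "with probability converging to one" is the right qualifier (as opposed to deterministic), since the spectral norm of the sample covariance and the maximum row norm are only controlled in probability. One subtlety worth flagging: the Lipschitz property must hold on the \emph{event}, uniformly in the pair $(\tau_1,\tau_2)$, which is automatic once the finitely many $O_{\mmp}(1)$ random constants are bounded, since the inequality chain above is pointwise in $(\tau_1,\tau_2)$ on that event. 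The unbounded interval $[\tau_0,\infty)$ causes no trouble because every estimate above improves as $\tau$ grows and extends continuously to the $\tau = \infty$ convention ($\hat\beta_{(n+1)}(\infty) = 0$, $S_{n+1}(\infty) + 2\tau I_d$ interpreted in the limit) introduced at the start of this section.
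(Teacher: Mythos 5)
Your proof is correct and follows essentially the same route as the paper: the resolvent identity $A^{-1}-B^{-1}=A^{-1}(B-A)B^{-1}$, the deterministic bound $\|(S_{n+1}(\tau)+2\tau I_d)^{-1}\|_{op}\leq (2\tau_0)^{-1}$, the Lipschitz continuity of $\ell''$ via $\|\ell'''\|_{\infty}$, and Lemmas \ref{lem:continuity_of_beta_in_tau} and \ref{lem:monotone_norm} together with the Bai--Yin control of the sample covariance to bound $\|\hat{\beta}_{(n+1)}(\tau_1)-\hat{\beta}_{(n+1)}(\tau_2)\|_2$. The one place you diverge is in extracting the $\sqrt{n}$: the paper keeps the weighted sum inside the supremum over $v$ and applies Cauchy--Schwarz, bounding $\frac{1}{n+1}\sum_j (X_j^\top v)^4 \leq (n+1)\big(\frac{1}{n+1}\sum_j (X_j^\top v)^2\big)^2$, so its $\sqrt{n}$ comes from the fourth-moment-to-second-moment comparison; you instead pull out $\max_j|\ell''(R_j^{(n+1)}(\tau_1))-\ell''(R_j^{(n+1)}(\tau_2))|$ and use $\max_j\|X_j\|_2=O_{\mmp}(\sqrt{n})$. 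Both give the claimed constant; for yours, note that since $d/n\to\gamma>0$ the sub-exponential concentration of $\|W_j\|_2^2$ gives $\max_j\|X_j\|_2\leq C\sqrt{n}$ with probability tending to one \emph{without} any polylog inflation (a polylog factor would formally break the stated $\sqrt{n}L$ constant, though not its downstream use), so your hedge about polylog terms should be sharpened to this clean bound. One small citation fix: Lemma \ref{lem:coef_bound} is stated only for the squared loss, so under the assumptions of Theorem \ref{thm:conv_tau_fixed} you should invoke Lemma \ref{lem:large_tau_beta_norm_bound} (or the convergence of $\|\hat{\beta}_{(n+1)}(\tau_0)-\beta^*\|_2$ from Theorem \ref{thm:EK_facts} together with the law of large numbers for $\|\beta^*\|_2$) to bound $\|\hat{\beta}_{(n+1)}(\tau_0)\|_2^2=O_{\mmp}(1)$, which is exactly what the paper does.
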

\begin{proof}
Fix any $\tau_1,\tau_2 \geq \tau_0$. Using the representation $A^{-1}-B^{-1} = A^{-1}(B-A)B^{-1}$ we compute that
    \begin{align*}
    & \|(S_{n+1}(\tau_1) + 2\tau_1 I_d)^{-1} -(S_{n+1}(\tau_2) + 2\tau_2 I_d)^{-1}\|_{op}\\
    & \leq \|(S_{n+1}(\tau_1) + 2\tau_1 I_d)^{-1}\|_{op} \|(S_{n+1}(\tau_2) + 2\tau_2 I_d)^{-1}\|_{op} \|(S_{n+1}(\tau_1) + 2\tau_1 I_d) -(S_{n+1}(\tau_2) + 2\tau_2 I_d) \|_{op}\\
    & \leq \frac{1}{4\tau_0^2} \left(2|\tau_1 - \tau_2| +  \|S_{n+1}(\tau_1) -S_{n+1}(\tau_2)\|_{op} \right) \\
    & \leq  \frac{|\tau_1 - \tau_2|}{2\tau_0^2} + \frac{1}{4\tau_0^2} \sup_{\|v\| = 1} \frac{1}{n+1} \sum_{j \neq n+1} |\ell''(R_j^{n+1}(\tau_1)) - \ell''(R_j^{n+1}(\tau_2))| (X_j^\top v)^2\\
    & \leq \frac{|\tau_1 - \tau_2|}{2\tau_0^2} + \frac{1}{4\tau_0^2} \sup_{\|v\| = 1} \frac{1}{n+1} \sum_{j \neq n+1} \| \ell'''\|_{\infty} | X_j^\top(\hat{\beta}_{(n+1)}(\tau_1) - \hat{\beta}_{(n+1)}(\tau_2))| (X_j^\top v)^2 \\
    & \leq \frac{|\tau_1 - \tau_2|}{2\tau_0^2} + \frac{ 
 \|\ell'''\|_{\infty} }{4\tau_0^2} \sup_{v : \|v\|_2 = 1}  \sqrt{\frac{1}{n+1} \sum_{j=1}^{n+1} (X_j^\top v )^4} \sqrt{\frac{1}{n+1} \sum_{j = 1}^{n+1} (X_j^\top(\hat{\beta}_{(n+1)}(\tau_1) - \hat{\beta}_{(n+1)}(\tau_2)) )^2}  \\
    & \leq \frac{|\tau_1 - \tau_2|}{2\tau_0^2}\\
    & + \frac{\|\ell'''\|_{\infty}}{4\tau_0^2}  \sup_{v : \|v\|_2 = 1}     \sqrt{(n+1)\left(\frac{1}{n+1} \sum_{j=1}^{n+1} (X_j^\top v )^2 \right)^2}  \left\| \frac{1}{n+1} \sum_{j=1}^{n+1} X_jX_j^\top \right\|_{op}  \|\hat{\beta}_{(n+1)}(\tau_1) - \hat{\beta}_{(n+1)}(\tau_2)\|_2\\
    & \leq \frac{|\tau_1 - \tau_2|}{2\tau_0^2} + \frac{\|\ell'''\|_{\infty}}{4\tau_0^2}\sqrt{n+1} \left\| \frac{1}{n+1} \sum_{j=1}^{n+1} X_jX_j^\top \right\|_{op}^3  \frac{2\|\hat{\beta}_{(n+1)}(\tau_0)\|^2_2}{\tau_0}|\tau_1 - \tau_2|\\
    & \leq \left(\frac{1}{2\tau_0^2} + \sqrt{n+1}\frac{\|\ell'''\|_{\infty}}{4\tau_0^2} \|P_{\lambda}\|_{\infty}^3 \left\| \frac{1}{n+1} \sum_{j=1}^{n+1} W_jW_j^\top \right\|_{op}^3  \frac{4\|\hat{\beta}_{(n+1)}(\tau_0) - \beta^*\|^2_2 + 4\|\beta^*\|^2_2}{\tau_0}\right)|\tau_1 - \tau_2|,
    \end{align*}
    where to get the second last inequality we have applied Lemmas \ref{lem:continuity_of_beta_in_tau} and \ref{lem:monotone_norm} to control $\|\hat{\beta}_{(n+1)}(\tau_1) - \hat{\beta}_{(n+1)}(\tau_2)\|_2$. Combining the preceding inequalities with a standard control on the operator norm of the empirical covariance (see Theorem 3.1 of \cite{Yin1988}) and the fact that $\|\hat{\beta}_{n+1}(\tau_0) -\beta^*\|^2_2$ and $\|\beta^*\|^2_2$ are both converging in probability to constants (this follows by Lemma \ref{lem:ridge_norm_conv} or part 4 of Theorem \ref{thm:EK_facts} and the law of large numbers) yields the desired result.
\end{proof}

With the above lemmas in hand, we are now ready to prove our main approximation for $R_{n+1}(\tau_{\text{rand.}})$. We will prove this result in two steps, first demonstrating that $$\sup_{\tau \geq \tau_0} |R_{n+1}(\tau) - \text{prox}( c_{n+1}(\tau) \ell)(R^{(n+1)}_{n+1}(\tau))| \stackrel{\mmp}{\to} 0,$$ and then that $$| \text{prox}( c_{n+1}(\tau_{\text{rand.}}) \ell)(R^{(n+1)}_{n+1}(\tau_{\text{rand.}})) - \text{prox}(\lambda_{n+1}^2 c_{\infty}(\tau_{\text{rand.}}) \ell)(R^{(n+1)}_{n+1}(\tau_{\text{rand.}}))| \stackrel{\mmp}{\to} 0.$$ Along the way we will prove a few useful facts about $c_{\infty}(\tau)$ that will be relevant to later results.

\begin{lemma}\label{lem:uniform_loo_resid_swap}
    Under the assumptions of Theorem \ref{thm:ridge_asymp_cov} or Theorem \ref{thm:conv_tau_fixed}, we have that for any $\tau_0 > 0$,
    \[
    \sup_{\tau \geq \tau_0} \left|R_{n+1}(\tau) - \textup{prox}( c_{n+1}(\tau) \ell)(R^{(n+1)}_{n+1}(\tau)) \right| \stackrel{\mmp}{\to} 0.
    \]
\end{lemma}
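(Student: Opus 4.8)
The plan is to upgrade the pointwise-in-$\tau$ estimate of part~2 of Theorem~\ref{thm:EK_facts} to a uniform one by a covering (grid) argument, after splitting the range $[\tau_0,\infty)$ into a compact piece $[\tau_0,M_n]$ and a tail $[M_n,\infty)$ with $M_n$ a fixed polynomial in $n$, say $M_n=n^{2}$. Write $g(\tau):=R_{n+1}(\tau)-\textup{prox}(c_{n+1}(\tau)\ell)(R^{(n+1)}_{n+1}(\tau))$, the quantity to be controlled. Two uniform-in-$\tau$ facts will be used repeatedly: first, $\sup_{\tau\ge\tau_0}\|\hat\beta(\tau)\|_2=O_{\mmp}(1)$ and $\sup_{\tau\ge\tau_0}\|\hat\beta_{(n+1)}(\tau)\|_2=O_{\mmp}(1)$, together with the decay $\|\hat\beta(\tau)\|_2,\|\hat\beta_{(n+1)}(\tau)\|_2=O_{\mmp}(\tau^{-1/2})$, all of which follow from Lemma~\ref{lem:coef_bound} (applied to the full and to the leave-one-out fit) and the monotonicity of Lemma~\ref{lem:monotone_norm}; second, $\|X_{n+1}\|_2=\lambda_{n+1}\|W_{n+1}\|_2=O_{\mmp}(\sqrt n)$ and $c_{n+1}(\tau)\le\|X_{n+1}\|_2^2/(2\tau(n+1))$, the latter because $(S_{n+1}(\tau)+2\tau I_d)^{-1}\preceq(2\tau)^{-1}I_d$.

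On the tail $[M_n,\infty)$ I would argue directly. Using the proximal first-order condition $x-\textup{prox}(c\ell)(x)=c\ell'(\textup{prox}(c\ell)(x))$, write $g(\tau)=X_{n+1}^\top(\hat\beta_{(n+1)}(\tau)-\hat\beta(\tau))+c_{n+1}(\tau)\ell'\!\big(\textup{prox}(c_{n+1}(\tau)\ell)(R^{(n+1)}_{n+1}(\tau))\big)$. The first term is at most $\|X_{n+1}\|_2(\|\hat\beta_{(n+1)}(\tau)\|_2+\|\hat\beta(\tau)\|_2)=O_{\mmp}(\sqrt{n/\tau})$; the second is $c_{n+1}(\tau)=O_{\mmp}(1/\tau)$ times $\|\ell'\|_\infty$ under Assumption~1, resp.\ times $O(|R^{(n+1)}_{n+1}(\tau)|)=O_{\mmp}(\sqrt n)$ for the squared loss. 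Hence $\sup_{\tau\ge M_n}|g(\tau)|=O_{\mmp}(\sqrt{n/M_n})=o_{\mmp}(1)$ for $M_n=n^2$.

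On the compact piece $[\tau_0,M_n]$ I would run the grid argument. First establish that, on an event of probability tending to one, $\tau\mapsto g(\tau)$ is $L_n$-Lipschitz on $[\tau_0,M_n]$ with $L_n$ bounded by a fixed power of $n$: Lemma~\ref{lem:continuity_of_beta_in_tau} together with the norm bounds above makes $\hat\beta(\tau)$, hence $R_{n+1}(\tau)$ and $R^{(n+1)}_{n+1}(\tau)$, $O_{\mmp}(\sqrt n)$-Lipschitz in $\tau$; Lemma~\ref{lem:matrix_lip_in_op} makes $\tau\mapsto(S_{n+1}(\tau)+2\tau I_d)^{-1}$ polynomially Lipschitz in operator norm, hence so is $\tau\mapsto c_{n+1}(\tau)=\tfrac1{n+1}X_{n+1}^\top(S_{n+1}(\tau)+2\tau I_d)^{-1}X_{n+1}$; and $\textup{prox}(c\ell)(x)$ is $1$-Lipschitz in $x$ (by $\tfrac{d}{dx}\textup{prox}(c\ell)(x)=(1+c\ell''(\textup{prox}(c\ell)(x)))^{-1}\le1$ from Lemma~\ref{lem:cont_diff_prox}) and Lipschitz in $c$ with constant $\|\ell'\|_\infty$ under Assumption~1, resp.\ $O(|x|)$ for the squared loss via $\textup{prox}(cr^2)(x)=x/(1+2c)$. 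Then take an equispaced grid $\tau_0=t_0<\cdots<t_K=M_n$ of mesh $n^{-p}$, so $K=O(n^{p+2})$. By part~2 of Theorem~\ref{thm:EK_facts} applied at each fixed $t_k$, $\mme[|g(t_k)|^m]^{1/m}=O(\textup{polylog}(n)/\sqrt n)$ for every fixed $m$, where I use that the constant in this $O_{L_m}$ bound can be taken uniform over $\tau\ge\tau_0$ (inspection of \cite{EK2018} shows $\tau$ enters only through the lower bound, larger regularization being more favourable). A union bound over the $K$ points and Markov's inequality give $\mmp(\max_k|g(t_k)|>\epsilon)\lesssim K\epsilon^{-m}\textup{polylog}(n)^m n^{-m/2}\to0$ once $m>2(p+2)$; combined with the Lipschitz bound, $\sup_{\tau\in[\tau_0,M_n]}|g(\tau)|\le\max_k|g(t_k)|+L_n n^{-p}\stackrel{\mmp}{\to}0$ for $p$ exceeding the degree of $L_n$. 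Together with the tail estimate this proves the lemma. For the squared loss this elaborate argument is unnecessary: the Sherman--Morrison--Woodbury identity from the proof of Lemma~\ref{lem:ridge_loo_lemma} gives $1-\tfrac1{n+1}X_{n+1}^\top(\tfrac1{n+1}X^\top X+\tau I_d)^{-1}X_{n+1}=(1+2c_{n+1}(\tau))^{-1}$ exactly, whence $g(\tau)=\tfrac{\tau}{n+1}X_{n+1}^\top(\tfrac1{n+1}X^\top X+\tau I_d)^{-1}\hat\beta_{(n+1)}(\tau)$ and $\sup_{\tau\ge\tau_0}|g(\tau)|\le\tfrac1{n+1}\|X_{n+1}\|_2\sup_{\tau\ge\tau_0}\|\hat\beta_{(n+1)}(\tau)\|_2=O_{\mmp}(n^{-1/2})$ with no grid.

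The main obstacle is entirely in the compact-range step: verifying that $L_n$ is genuinely polynomial in $n$ (a matter of chaining Lemmas~\ref{lem:continuity_of_beta_in_tau} and~\ref{lem:matrix_lip_in_op} with the uniform norm bounds), and, more delicately, that the $O_{L_m}(\textup{polylog}(n)/\sqrt n)$ control in part~2 of Theorem~\ref{thm:EK_facts} holds with constants uniform over $\tau\ge\tau_0$, so that the union bound over polynomially many grid points still decays. Once these two points are in place the rest is routine.
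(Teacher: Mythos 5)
Your ridge-loss shortcut is exactly the paper's argument: via the leave-one-out/SMW identity, $g(\tau)=\frac{\tau}{n+1}X_{n+1}^\top\bigl(\frac{1}{n+1}X^\top X+\tau I_d\bigr)^{-1}\hat{\beta}_{(n+1)}(\tau)$, bounded uniformly by $\frac{\|X_{n+1}\|_2}{n+1}\sup_{\tau\ge\tau_0}\|\hat{\beta}_{(n+1)}(\tau)\|_2=O_{\mmp}(n^{-1/2})$ using Lemma \ref{lem:monotone_norm}; that branch is correct. Your tail estimate for $\tau$ large and your Lipschitz-plus-grid architecture on the compact range also mirror the paper's structure (the paper splits at $\tau=n$ rather than $n^2$ and likewise uses Lemma \ref{lem:matrix_lip_in_op} for the mesh step).

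The gap is in the general-loss branch, and it is exactly the point you flag yourself: your union bound over $K=O(n^{p+2})$ grid points needs the $O_{L_m}(\textup{polylog}(n)/n^{1/2})$ bound of part 2 of Theorem \ref{thm:EK_facts} to hold with constants (and polylog exponents) uniform over $\tau$ ranging up to $n^2$. Theorem 3.9 of \citet{EK2018} is proved for a fixed regularization level, and its constants enter through a chain of intermediate quantities whose $\tau$-dependence is never tracked; "inspection shows larger regularization is more favourable" is a plausible heuristic, not an argument, and verifying it would amount to redoing a substantial part of that paper with explicit constants. The paper's proof is engineered precisely to avoid this: it imports from \citet{EK2018} only a \emph{deterministic} inequality (Proposition 3.4 and Lemmas 3.5--3.6), which bounds $|g(\tau)|$ by $\frac{\|X_{n+1}\|_2^2}{\tau^2(n+1)}\bigl\|\frac{1}{n}\sum_i X_iX_i^\top\bigr\|_{op}^2\sup_{i<n+1}\frac{|X_i^\top(S_{n+1}(\tau)+2\tau I_d)^{-1}X_{n+1}|}{n+1}$ with explicit $\tau$-dependence, and then does the stochastic work itself: uniform control of the bilinear forms via the sub-Gaussianity of $W_{n+1}$ and its independence from $\{X_i^\top(S_{n+1}(\tau_j)+2\tau_j I_d)^{-1}\}$, a union bound over the same polynomial grid (with exponential rather than fixed-moment tails), and the Lipschitz control of Lemma \ref{lem:matrix_lip_in_op}. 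So your plan is salvageable, but as written the crucial uniformity-in-$\tau$ of the stochastic input is asserted rather than established; replacing that input by the deterministic EK bound plus your own concentration step (as the paper does) is the way to close it.
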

\begin{proof}
    First, suppose the assumptions of Theorem \ref{thm:ridge_asymp_cov} hold. Then, by the calculations of Lemma \ref{lem:ridge_loo_lemma},
    \begin{align*}
    \sup_{\tau \geq \tau_0} \left|R_{n+1}(\tau) -\text{prox}( c_{n+1}(\tau) \ell)(R^{(n+1)}_{n+1}(\tau)) \right| & = \sup_{\tau \geq \tau_0} \left| X_{n+1}^\top\left( \frac{1}{n+1}X^\top X + \tau I_d\right) \frac{\tau}{n+1} \hat{\beta}_{(n+1)}(\tau) \right|\\
    & \leq \frac{\|X_{n+1}\|_2}{n+1} \sup_{\tau \geq \tau_0} \|\hat{\beta}_{(n+1)}(\tau)\|_2.
    \end{align*}
    By the law of large numbers we have that $\|X_{n+1}\|_2 = O_{\mmp}(\sqrt{n})$, while by Lemmas \ref{lem:monotone_norm} and \ref{lem:ridge_norm_conv}, 
    \[
    \sup_{\tau \geq \tau_0} \|\hat{\beta}_{(n+1)}(\tau)\|_2 \leq \|\hat{\beta}_{(n+1)}(\tau_0)\|_2 \leq \|\hat{\beta}_{(n+1)}(\tau_0) - \beta^*\|_2 + \|\beta^*\|_2 = O_{\mmp}(1),
    \]
    which proves the desired result. 

    Now, suppose the assumptions of Theorem \ref{thm:conv_tau_fixed} hold. Then, by Proposition 3.4 and Lemmas 3.5 and 3.6 of \citet{EK2018} we have the deterministic bound,
    \begin{align*}
    & \left|R_{n+1}(\tau) - \text{prox}( c_{n+1}(\tau) \ell)(R^{(n+1)}_{n+1}(\tau)) \right|\\
    & \leq \frac{\|X_{n+1}\|^2_2}{\tau^2(n+1)} \left\|\frac{1}{n} \sum_{i=1}^n X_iX_i^\top\right\|^2_{op} \sup_{i < n+1} \frac{|X_i^\top(S_{n+1}(\tau) + 2\tau I_d)^{-1}X_{n+1}|}{n+1}.
    \end{align*}    
    Now, by well-known results on the operator norm of the empirical covariance, $\|\frac{1}{n+1}\sum_{i=1}^{n+1}X_iX_i^\top \|_{op} \leq \|P_{\lambda}\|^2_{\infty} \|\frac{1}{n+1}\sum_{i=1}^{n+1}W_iW_i^\top \|_{op} \stackrel{\mmp}{\to} \|P_{\lambda}\|^2_{\infty}(1+\sqrt{\gamma})^2$ (Theorem 3.1 of \cite{Yin1988}).  Applying this fact to the previous expression along with the law of large numbers gives us that
    \[
    \sup_{\tau \geq \tau_0} \left|R_{n+1}(\tau) - \textup{prox}( c_{n+1}(\tau) \ell)(R^{(n+1)}_{n+1}(\tau)) \right|  = O_{\mmp}\left( 1\right) \sup_{\tau \geq \tau_0}\sup_{i < n+1} \frac{|X_i^\top(S_{n+1}(\tau) + \tau I_d)^{-1}X_{n+1}|}{n+1}.
    \]
It remains to control $\sup_{\tau \geq \tau_0}\sup_{i < n+1} \frac{|X_i^\top(S_{n+1}(\tau) + 2\tau I_d)^{-1}X_{n+1}|}{n+1}$. We will begin by considering the case where $\tau$ is large. To do this, note that since the entries of $W_{ij}$ are sub-Gaussian, $W_{ij}^2$ is sub-exponential and thus by standard tail bounds (e.g.~Equation 2.18 in \cite{Wainwright2019}), $\sup_{i}\|X_i\|_2/\sqrt{n} \leq \|P_{\lambda}\|_{\infty}\sup_{i}\|W_i\|_2/\sqrt{n} \leq O_{\mmp}(\sqrt{\log(n)})$ and $\|X_{n+1}\|_2/\sqrt{n} = O_{\mmp}(1)$. Thus,
\[
\sup_{\tau \geq n}\sup_{i < n+1} \frac{|X_i^\top(S_{n+1}(\tau) + 2\tau I_d)^{-1}X_{n+1}|}{n+1} \leq \frac{1}{2n} \sup_{i < n+1} \frac{\|X_i\|_2 \|X_{n+1}\|_2}{n+1} = O_{\mmp}\left(\frac{\sqrt{\log(n)}}{n}\right).
\]
On the other hand, to obtain a control for $\tau \leq n$, let $\tau_j := \tau_0 + j/n$ for $1 \leq j \leq n^2$. By Lemma \ref{lem:matrix_lip_in_op}, there exists $L>0$ such that with probability converging to one,
\begin{align*}
& \sup_{\tau \geq n}\sup_{i < n+1} \frac{|X_i^\top(S_{n+1}(\tau) + 2\tau I_d)^{-1}X_{n+1}|}{n+1}\\
& \leq \frac{L}{\sqrt{n}}\sup_{i < n+1} \frac{\|X_i\|_2 \|X_{n+1}\|_2}{n+1} +\sup_{1 \leq j \leq n^2}\sup_{i < n+1}  \frac{|X_i^\top(S_{n+1}(\tau_j) + 2\tau_j I_d)^{-1}X_{n+1}|}{n+1}\\
& \leq O_{\mmp}\left(\sqrt{\frac{\log(n)}{n}}\right) + \sup_{1 \leq j \leq n^2}\sup_{i < n+1}  \frac{|X_i^\top(S_{n+1}(\tau_j) + 2\tau_j I_d)^{-1}X_{n+1}|}{n+1}.
\end{align*}
Finally, applying the fact that $W_{n+1}$ is mean zero, sub-Gaussian, and independent of $\{X_i^\top(S_{n+1}(\tau_j) + 2\tau_j I_d)^{-1}\}_{i \in [n], j \in [n^2]}$ we find that
\begin{align*}
\sup_{1 \leq j \leq n^2}\sup_{i < n+1}  \frac{|X_i^\top(S_{n+1}(\tau_j) + 2\tau_j I_d)^{-1}X_{n+1}|}{n+1} & \leq O_{\mmp}\left(  \sqrt{\log(n)} \sup_{1 \leq j \leq n^2}\sup_{i < n+1} \frac{\|X_i^\top(S_{n+1}(\tau_j) + 2\tau_j I_d)^{-1}\|_2 }{n} \right)\\
& = O_{\mmp}\left( \frac{\log(n)}{\sqrt{n}}\right).
\end{align*}
This proves the desired result.
\end{proof}

We now compare $\text{prox}(c_{n+1}(\tau) \ell)(R^{{(n+1})}_{n+1}(\tau))$ to $\text{prox}(\lambda_{n+1}^2 c_{\infty}(\tau) \ell)(R^{{(n+1})}_{n+1}(\tau))$.  

\begin{lemma}\label{lem:uniform_c_n_swap}
   Suppose that the assumptions of either Theorem \ref{thm:ridge_asymp_cov} or Theorem \ref{thm:conv_tau_fixed} hold. Fix any $\tau_0>0$ and let $\tau_{\textup{rand.}} \independent (X_{n+1},Y_{n+1})$ be independent of the test point and such that $\mmp(\tau_{\textup{rand.}} \geq \tau_0) = 1$. Then, 
    \[
    |\textup{prox}(c_{n+1}(\tau_{\textup{rand.}}) \ell)(R^{{(n+1})}_{n+1}(\tau_{\textup{rand.}})) - \textup{prox}(\lambda_{n+1}^2 c_{\infty}(\tau_{\textup{rand.}}) \ell)(R^{{(n+1})}_{n+1}(\tau_{\textup{rand.}}))| \stackrel{\mmp}{\to} 0. 
    \]
    Moreover, $c_{\infty}(\tau)$ admits the bound $c_{\infty}(\tau) \leq \gamma/\tau$ and there exists a constant $L>0$ such that $\tau \mapsto c_{\infty}(\tau)$ is $L$-Lipschitz on $[\tau_0,\infty)$.
\end{lemma}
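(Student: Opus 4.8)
The plan is to dispatch the three assertions separately, reusing the leave‑one‑out and random‑matrix machinery already in place. Throughout write $g_n(\tau):=\frac{1}{n+1}\operatorname{tr}\big((S_{n+1}(\tau)+2\tau I_d)^{-1}\big)$, and recall that $(c_\infty(\tau),N_\infty(\tau))$ is characterized as the unique positive solution of the two‑equation fixed‑point system exhibited in the proof of Theorem~\ref{thm:EK_facts} (in the squared‑loss case, via Lemmas~\ref{lem:ridge_tr_1}--\ref{lem:ridge_tr_final}, one moreover has $c_\infty(\tau)=\tfrac{\gamma}{2}e_\infty(\tau)$). The bound $c_\infty(\tau)\le \gamma/\tau$ is the easiest: since $\ell$ is convex, $\ell''\ge 0$, so for every $c\ge 0$ the integrand $\big(1+c\lambda^2\ell''(\textup{prox}(\lambda^2c\ell)(\epsilon+\lambda NZ))\big)^{-1}$ lies in $(0,1]$, hence its expectation is $\le 1$; plugging $c=c_\infty(\tau)$ into the first fixed‑point equation $\mme[\,\cdots]=1-\gamma+2\tau c$ gives $2\tau c_\infty(\tau)\le\gamma$. (Equivalently, $S_{n+1}(\tau)\succeq 0$ forces $(S_{n+1}(\tau)+2\tau I_d)^{-1}\preceq (2\tau)^{-1}I_d$, so $g_n(\tau)\le d/(2\tau(n+1))\to\gamma/(2\tau)$, and $g_n(\tau)\to c_\infty(\tau)$.)

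For Lipschitz continuity of $c_\infty$ on $[\tau_0,\infty)$, the squared‑loss case is immediate: there $S_{n+1}(\tau)$ does not depend on $\tau$, $c_\infty=\tfrac{\gamma}{2}e_\infty$, and the estimate $|e_\infty'|\le\tau_0^{-2}$ obtained in the proof of Lemma~\ref{lem:ridge_tr_final} already gives the claim. For a loss satisfying Assumption~1 I would instead apply the implicit function theorem to the fixed‑point system in the variables $(c,N)$: EK2018 supplies a unique positive solution for each $\tau$, so it suffices to show the $2\times2$ Jacobian in $(c,N)$ is invertible with inverse bounded in operator norm uniformly for $\tau\ge\tau_0$, which can be extracted from the strict monotonicities built into the EK uniqueness argument (the left side of the first equation is strictly decreasing in $c$, the right side strictly increasing). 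The $\tau$‑derivatives of the system are $O(c)=O(1/\tau)$ and $O(\tau c^2)=O(1/\tau)$ by the previous paragraph, so $C^1$ dependence plus these bounds yield $|c_\infty'(\tau)|\le L$ on all of $[\tau_0,\infty)$. Verifying the non‑degeneracy of the Jacobian is the one genuinely computational step here, but it is secondary.

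For the proximal swap I would first make a one‑line reduction. By Lemma~\ref{lem:cont_diff_prox}, $\partial_c\,\textup{prox}(c\ell)(x)=-\ell'(\textup{prox}(c\ell)(x))/(1+c\ell''(\textup{prox}(c\ell)(x)))$ and $|\textup{prox}(c\ell)(x)|\le|x|$, so $c\mapsto\textup{prox}(c\ell)(x)$ is Lipschitz with constant $\le C(1+|x|)$; hence, writing $R:=R^{(n+1)}_{n+1}(\tau_{\textup{rand.}})$,
\[
\big|\textup{prox}(c_{n+1}(\tau_{\textup{rand.}})\ell)(R)-\textup{prox}(\lambda_{n+1}^2c_\infty(\tau_{\textup{rand.}})\ell)(R)\big|\le C(1+|R|)\,\big|c_{n+1}(\tau_{\textup{rand.}})-\lambda_{n+1}^2c_\infty(\tau_{\textup{rand.}})\big|.
\]
Since $\tau_{\textup{rand.}}\independent X_{n+1}$ and $\|\hat\beta_{(n+1)}(\tau_{\textup{rand.}})\|_2\le\|\hat\beta_{(n+1)}(\tau_0)\|_2=O_{\mmp}(1)$ by Lemma~\ref{lem:monotone_norm} together with the norm‑convergence results, we have $R=O_{\mmp}(1)$, so it suffices to show $|c_{n+1}(\tau_{\textup{rand.}})-\lambda_{n+1}^2c_\infty(\tau_{\textup{rand.}})|\stackrel{\mmp}{\to}0$. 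Conditioning on the training data (which fixes $\tau_{\textup{rand.}}$ and $S_{n+1}(\cdot)$), $c_{n+1}(\tau_{\textup{rand.}})=\tfrac{\lambda_{n+1}^2}{n+1}W_{n+1}^\top A\,W_{n+1}$ with $A=(S_{n+1}(\tau_{\textup{rand.}})+2\tau_{\textup{rand.}}I_d)^{-1}$ a fixed PSD matrix with $\|A\|_{\mathrm{op}}\le(2\tau_0)^{-1}$; a Hanson--Wright bound exactly as in the proof of Lemma~\ref{lem:ridge_loo_lemma} gives $c_{n+1}(\tau_{\textup{rand.}})-\lambda_{n+1}^2g_n(\tau_{\textup{rand.}})=O_{L_2}(n^{-1/2})$. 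It therefore remains to prove $\sup_{\tau\ge\tau_0}|g_n(\tau)-c_\infty(\tau)|\stackrel{\mmp}{\to}0$, since $\tau_{\textup{rand.}}$ ranges freely over $[\tau_0,\infty)$.

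This last uniform convergence, and the rate it demands, is where I expect the real work to lie. For $\tau\ge n$ both $g_n(\tau)\le d/(2n(n+1))$ and $c_\infty(\tau)\le\gamma/(2n)$ vanish, so the tail is controlled uniformly; on $[\tau_0,n]$ I would discretize on a grid of spacing $n^{-2}$. Between grid points $g_n$ is $O(\sqrt n)$‑Lipschitz (Lemma~\ref{lem:matrix_lip_in_op} together with $|\operatorname{tr}(B)|\le d\|B\|_{\mathrm{op}}$ and $d/(n+1)=O(1)$) and $c_\infty$ is $L$‑Lipschitz by the second paragraph, so the interpolation error is $O(n^{-3/2})\to0$; over the $O(n^{3})$ grid points one then takes a union bound. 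The obstacle is that this union bound requires a \emph{quantitative} version of the pointwise convergence $g_n(\tau)\to c_\infty(\tau)$ — a moment bound $\mme[|g_n(\tau)-c_\infty(\tau)|^{p}]=O(\mathrm{polylog}(n)\,n^{-p/2})$, uniform in $\tau\ge\tau_0$, for some $p>6$, which kills the union bound via Markov. In the squared‑loss case this is immediate, since (up to the negligible factor $d/(n+1)-\gamma$) $g_n(\tau)-c_\infty(\tau)$ is a linear spectral statistic of $\tfrac1{n+1}X_{1:n}^\top X_{1:n}$ whose bias and fluctuations are both $O(n^{-1})$. For losses satisfying Assumption~1 the corresponding rate‑equipped refinement of part~3 of Theorem~\ref{thm:EK_facts} must be obtained by carrying the $\mathrm{polylog}(n)/\sqrt n$ error terms through the leave‑one‑out arguments of EK2018; that bookkeeping is the heaviest part of the proof.
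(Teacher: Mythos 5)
Your overall skeleton is close to the paper's: reduce, via the Lipschitzness of $c\mapsto\textup{prox}(c\ell)(x)$ and $R^{(n+1)}_{n+1}(\tau_{\textup{rand.}})=O_{\mmp}(1)$, to showing $|c_{n+1}(\tau_{\textup{rand.}})-\lambda_{n+1}^2c_\infty(\tau_{\textup{rand.}})|\stackrel{\mmp}{\to}0$, pass through the normalized trace, and get $c_\infty(\tau)\le\gamma/\tau$ from the trace bound or the fixed-point system; and your observation that one may condition on the training data and apply Hanson--Wright at the single (conditionally fixed) value $\tau_{\textup{rand.}}$ is a legitimate simplification of the paper's first step, which instead proves $\sup_{\tau\ge\tau_0}|c_{n+1}(\tau)-\lambda_{n+1}^2 g_n(\tau)|\to0$ by a $1/(n+1)$-grid and a union bound supported by the exponential Hanson--Wright tails. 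The genuine gap is exactly the step you flag as ``where the real work lies'': $\sup_{\tau\ge\tau_0}|g_n(\tau)-c_\infty(\tau)|\stackrel{\mmp}{\to}0$. Because you only invoke the $O(\sqrt n)$-Lipschitz bound of Lemma~\ref{lem:matrix_lip_in_op}, you are forced onto a grid of polynomially many points and hence need a quantitative high-moment rate $\mme[|g_n(\tau)-c_\infty(\tau)|^{p}]=O(\mathrm{polylog}(n)\,n^{-p/2})$, $p>6$, uniformly in $\tau$. No such rate is available from part 3 of Theorem~\ref{thm:EK_facts} (convergence in probability only), and your assertion that the ridge case is ``immediate'' is itself an unproved claim about fluctuations of linear spectral statistics; so the crux of the lemma is left unestablished. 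The paper avoids rates entirely by proving a sharper smoothness estimate: the map $\tau\mapsto\frac{1}{n+1}\mathrm{tr}\big((\frac{1}{n+1}\sum_i\ell''(R_i(\tau))X_iX_i^\top+2\tau I_d)^{-1}\big)$ is Lipschitz with constant $O_{\mmp}(1)$, not $O(\sqrt n)$ --- the extra $\frac{1}{n+1}$ in front of the trace, combined with Cauchy--Schwarz, $\|\ell'''\|_\infty$, and the $\tau$-Lipschitzness of $\hat\beta(\tau)$ from Lemmas~\ref{lem:continuity_of_beta_in_tau} and~\ref{lem:monotone_norm}, removes the $\sqrt n$ factor that is unavoidable in the operator-norm bound. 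With an $O(1)$ Lipschitz constant, pointwise convergence in probability plus compactness (Lemma~\ref{lem:func_uniform_conv}) upgrades to uniform convergence on $[\tau_0,M]$ with no union bound and no rates, and the tail $\tau\ge M$ is disposed of by $g_n(\tau),\,c_\infty(\tau)\le\gamma/\tau+o(1)$.

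The same $O(1)$-Lipschitz estimate also yields the final claim of the lemma for free: $c_\infty$ is Lipschitz on $[\tau_0,\infty)$ as the pointwise limit of (w.h.p.) uniformly Lipschitz functions. Your alternative route via the implicit function theorem on the EK fixed-point system leaves the uniform-in-$\tau$ invertibility of the $2\times2$ Jacobian unverified, which you acknowledge but do not supply. As written, therefore, both the key uniform-convergence step and the Lipschitz claim in the general-loss case rest on unproved ingredients, and the missing idea in both places is the same one: a direct $O(1)$ Lipschitz bound for the normalized trace rather than the cruder operator-norm bound.
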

\begin{proof}
    We claim it is sufficient to show that $\sup_{\tau \geq \tau_0} |c_{n+1}(\tau) - \lambda_{n+1}^2 c_{\infty}(\tau)|$ = $o_{\mmp}(1)$. To see this, first suppose that the assumptions of Theorem \ref{thm:conv_tau_fixed} hold. Then, by Lemma \ref{lem:cont_diff_prox} we know that for any fixed $x \in \mmr$,
    \[
     \frac{d}{dc} \textup{prox}(c\ell)(x) = - \frac{\ell'(\textup{prox}(c\ell)(x))}{1 + c\ell''(\textup{prox}(c\ell)(x))}.
    \]
    Combining this with our assumption that the first derivative of $\ell$ is bounded, we find that there exists a constant $C >0$ such that for all fixed $x \in \mmr$, $c \mapsto \text{prox}(c \ell)(x)$ is $C$-Lipschitz. In particular,
    \[
    \sup_{\tau \geq \tau_0} |\text{prox}(c_{n+1}(\tau) \ell)(R^{{(n+1})}_{n+1}(\tau)) - \text{prox}(\lambda_{n+1}^2 c_{\infty}(\tau) \ell)(R^{{(n+1})}_{n+1}(\tau))| \leq C \sup_{\tau \geq \tau_0} |c_{n+1}(\tau) - \lambda_{n+1}^2 c_{\infty}(\tau)|.
    \]
    On the other hand, suppose the assumptions of Theorem \ref{thm:ridge_asymp_cov} hold (namely the loss is the squared loss). Then, 
    \begin{align*}
     &  |\textup{prox}(c_{n+1}(\tau_{\textup{rand.}}) \ell)(R^{{(n+1})}_{n+1}(\tau_{\textup{rand.}})) - \textup{prox}(\lambda_{n+1}^2 c_{\infty}(\tau_{\textup{rand.}}) \ell)(R^{{(n+1})}_{n+1}(\tau_{\textup{rand.}}))| \\
     & =   \left|\frac{R^{{(n+1})}_{n+1}(\tau_{\textup{rand.}})}{1+2c_{n+1}(\tau_{\textup{rand.}})} -\frac{R^{{(n+1})}_{n+1}(\tau_{\textup{rand.}})}{1+2\lambda_{n+1}^2c_{\infty}(\tau_{\textup{rand.}})}  \right| \leq 2  |R^{{(n+1})}_{n+1}(\tau_{\textup{rand.}})| \sup_{\tau \geq \tau_0} | c_{n+1}(\tau) - \lambda_{n+1}^2 c_{\infty}(\tau)|.
    \end{align*}
    By Lemma \ref{lem:monotone_norm} we have that 
    \[
    \mme[|R^{{(n+1})}_{n+1}(\tau_{\textup{rand.}})|^2] \leq 2\mme[Y_{n+1}^2] + 2\mme[\lambda_{n+1}^2(W_{n+1}^\top\hat{\beta}_{n+1}(\tau_{\textup{rand.}}))^2] \leq 2\mme[Y_{n+1}^2] + 2\mme[\lambda_{n+1}^2] \mme[\|\hat{\beta}_{n+1}(\mathrm{\tau_0})\|^2_2].
    \]
    Moreover, by Lemma \ref{lem:large_tau_beta_norm_bound} below $\mme[\|\hat{\beta}_{n+1}(\mathrm{\tau_0})\|^2_2]$ is $O(1)$. Thus, $|R^{{(n+1})}_{n+1}(\tau_{\textup{rand.}})| = O_{\mmp}(1)$ and to get the desired result it is sufficient to bound $\sup_{\tau \geq \tau_0} | c_{n+1}(\tau) - \lambda_{n+1}^2 c_{\infty}(\tau)|$.

    Now suppose either set of assumptions holds. We will show that $\tau \mapsto c_{n+1}(\tau)$ is smooth enough to imply that the pointwise convergence of $c_{n+1}(\tau)$ to $\lambda_{n+1}^2 c_{\infty}(\tau)$ also guarantees uniform convergence. Unfortunately, $c_{n+1}(\tau)$ is somewhat delicate and thus it will be useful to go through an intermediate quantity that is easier to control. In particular, we will proceed in two steps, first bounding 
    \[
    \sup_{\tau \geq \tau_0} \left|c_{n+1}(\tau) -  \frac{ \lambda_{n+1}^2}{n+1}\text{tr}\left( \left(\frac{1}{n+1} \sum_{i=1}^{n} \ell''(R_i(\tau)) X_iX_i^\top + 2\tau I_d \right)^{-1} \right)\right|,
    \]
    and then controlling 
    \[
    \sup_{\tau \geq \tau_0} \left| \frac{ \lambda_{n+1}^2}{n+1}\text{tr}\left( \left(\frac{1}{n+1} \sum_{i=1}^{n} \ell''(R_i(\tau)) X_iX_i^\top + 2\tau I_d \right)^{-1} \right) -  \lambda_{n+1}^2c_{\infty}(\tau) \right|.
    \]
    
    To begin, first note that by the Hanson-Wright inequality (\cite{Wright1973}) we have that for any fixed $\tau \geq \tau_0$ and $t>0$,
    \begin{equation}\label{eq:pointwise_c_qf}
    \mmp\left(\left|\frac{c_{n+1}(\tau)}{\lambda_{n+1}^2} - \frac{1}{n+1} \text{tr}\left( \left(\frac{1}{n+1} \sum_{i=1}^{n} \ell''(R_i(\tau)) X_iX_i^\top + 2\tau I_d \right)^{-1} \right) \right| \geq t \right) \leq \exp\left(- \frac{C}{\tau_0^2} \min\{nt, nt^2\}\right),
    \end{equation}
    for some fixed constant $C>0$ not dependent on $\tau$ or $t$.
    
    We claim that both $c_{n+1}(\tau)$ and the trace above are smooth functions of $\tau$ and thus this pointwise convergence can be made uniform over $\tau$. To see this, fix any $M>0$ large. We clearly have that  
    \[
    \sup_{\tau > M} |c_{n+1}(\tau)| \leq \frac{\lambda_{n+1}^2\|W_{n+1}\|^2_2}{M(n+1)} = \frac{\lambda_{n+1}^2}{M} + o_{\mmp}(1),
    \]
    while
    \[
     \sup_{\tau > M} \left| \frac{\lambda_{n+1}^2}{n+1} \text{tr}\left( \left(\frac{1}{n+1} \sum_{i=1}^{n} \ell''(R_i(\tau)) X_iX_i^\top + 2\tau I_d \right)^{-1} \right) \right| \leq \frac{\lambda_{n+1}^2}{M}.
    \]
    Thus, it is sufficient to focus on $\tau_0 \leq \tau \leq M$. For $i \in \{1,\dots,(n+1) M\}$ define $\tau_i = \tau_0 + i/(n+1)$. Observe that for any $\tau \in [\tau_i, \tau_{i+1})$ and $j \neq n+1$,
    \begin{align*}
    |c_{n+1}(\tau) - c_{n+1}(\tau_i)| & =  \left|\frac{1}{n+1} X_{n+1}^\top(S_{n+1}(\tau) + 2\tau I_d)^{-1}X_{n+1} - \frac{1}{n+1} X_{n+1}^\top(S_{n+1}(\tau_i) + 2\tau_i I_d)^{-1}X_{n+1} \right|\\
    & \leq \frac{\|X_{n+1}\|^2_2}{n+1} \|(S_{n+1}(\tau) + 2\tau I_d)^{-1} -(S_{n+1}(\tau_i) + 2\tau_i I_d)^{-1}\|_{op}.
    \end{align*}
    So, applying the fact that  $\tau \mapsto (S_{n+1}(\tau) + 2\tau I_d)^{-1}$ is asymptotically $O(\sqrt{n})$-Lipschitz in the operator norm (Lemma \ref{lem:matrix_lip_in_op}), we find that
    \begin{align*}
         &  \sup_{\tau_0 \leq \tau \leq M} \inf_{i \in [(n+1)M]} |c_{n+1}(\tau) - c_{n+1}(\tau_i)| \leq O_{\mmp}\left(\frac{1}{\sqrt{n}} \right).
    \end{align*}
    A completely identical argument shows that the same result also holds for the matrix trace and thus in total we conclude that,
     \begin{align*}
     & \sup_{\tau \geq \tau_0} \left| c_{n+1}(\tau) - \frac{\lambda_{n+1}^2}{n+1} \text{trace}\left( \left(\frac{1}{n+1} \sum_{i=1}^{n} \ell''(R_i(\tau)) X_iX_i^\top + 2\tau I_d \right)^{-1} \right) \right|\\
     & \leq \sup_{\tau_i = \tau_0 + \frac{i}{n+1}, i \in [(n+1)M]} \left|c_{n+1}(\tau_i) - \frac{\lambda_{n+1}^2}{n+1} \text{trace}\left( \left(\frac{1}{n+1} \sum_{i=1}^{n} \ell''(R_i(\tau_i)) X_iX_i^\top + 2\tau_i I_d \right)^{-1} \right) \right|\\
     & \hspace{11cm} + O_{\mmp}\left(\frac{1}{\sqrt{n}} \right) + \frac{\|P_{\lambda}\|_{\infty}}{M}.
     \end{align*}
    Finally, combining the pointwise convergence result (\ref{eq:pointwise_c_qf}) with a union bound, one easily verifies that the first term above is of size $o_{\mmp}(1)$. Thus, sending $M \to \infty$ we conclude that 
    \[
     \sup_{\tau \geq \tau_0} \left| c_{n+1}(\tau) - \frac{\lambda_{n+1}^2}{n+1} \text{trace}\left( \left(\frac{1}{n+1} \sum_{i=1}^{n} \ell''(R_i(\tau)) X_iX_i^\top + 2\tau I_d \right)^{-1} \right) \right| \stackrel{\mmp}{\to} 0.
    \]
    
    It remains to compare the matrix trace to $c_{\infty}(\tau)$. The argument will be nearly identical to that given above, except that here we will make use of a tighter smoothness result. In particular, observe that for any $\tau_1, \tau_2 \geq \tau_0$,
    \begin{align*}
        & \left| \frac{1}{n+1} \text{trace}\left( \left(S_{n+1}(\tau_1) + 2\tau_1 I_d \right)^{-1} \right) - \frac{1}{n} \text{trace}\left( \left(S_{n+1}(\tau_2) + 2\tau_2 I_d \right)^{-1} \right) \right|\\
        & = \left| \frac{1}{n+1} \text{trace}\left( \left(S_{n+1}(\tau_1) + 2\tau_1 I_d \right)^{-1} \left(S_{n+1}(\tau_2) - S_{n+1}(\tau_1) + 2(\tau_2 - \tau_1)I_d \right) \left(S_{n+1}(\tau_2) + 2\tau_2 I_d \right)^{-1} \right)   \right|\\
        & \leq \frac{1}{n+1} \sum_{i=1}^{n} \left| (\ell''(R_i(\tau_2)) - \ell''(R_i(\tau_1))) \text{trace}\left(\left(S(\tau_1) + 2\tau_1 I_d \right)^{-1} \frac{X_iX_i^\top}{n+1}  \left(S(\tau_2) + 2\tau_2 I_d \right)^{-1}\right)\right|\\
        & \qquad \qquad + \|\left(S(\tau_1) + 2\tau_1 I_d \right)^{-1}\|_{op} \|\left(S(\tau_2) + 2\tau_2 I_d \right)^{-1}\|_{op} 2|\tau_1 - \tau_2|\\
        & \leq \frac{1}{4\tau_0^2} \left(\frac{1}{n+1} \sum_{i=1}^{n} |\ell''(R_i(\tau_2)) - \ell''(R_i(\tau_1))| \frac{\|X_i\|^2}{n+1} + 2|\tau_1 - \tau_2| \right)\\
        & \leq \frac{1}{4\tau_0^2} \left( \sqrt{\frac{1}{n+1} \sum_{i=1}^{n} \|\ell'''\|_{\infty}^2 |X_i^\top (\hat{\beta}(\tau_2) - \hat{\beta}(\tau_1))|^2 } \sqrt{\frac{1}{n+1} \sum_{i=1}^{n} \frac{\|X_i\|_2^4}{(n+1)^2}} + 2|\tau_1 - \tau_2| \right)\\
        & \leq \frac{|\tau_1 - \tau_2|}{2\tau_0^2}  \left(\|\ell'''\|_{\infty}  \left\| \frac{1}{n+1} \sum_{i=1}^{n} X_iX_i^\top \right\|_{op} \frac{2\|\hat{\beta}(\tau_0)\|^2}{\tau_0}   \sqrt{\frac{1}{n+1} \sum_{i=1}^{n} \frac{\|X_i\|_2^4}{(n+1)^2}} + 1 \right)\\
        & \leq \frac{|\tau_1 - \tau_2|}{2\tau_0^2}  \left(\|\ell'''\|_{\infty}  \left\| \frac{1}{n+1} \sum_{i=1}^{n} X_iX_i^\top \right\|_{op} \frac{4(\|\hat{\beta}(\tau_0) - \beta^*\|^2 + \|\beta^*\|_2^2)}{\tau_0}   \sqrt{\frac{1}{n+1} \sum_{i=1}^{n} \frac{\|X_i\|_2^4}{(n+1)^2}} + 1 \right)\\
       & \leq \frac{|\tau_1 - \tau_2|}{2\tau_0^2}  \left(\|\ell'''\|_{\infty} \left\| \frac{\|P_{\lambda}\|_{\infty}^2}{n+1} \sum_{i=1}^{n} W_iW_i^\top \right\|_{op} \frac{4(\|\hat{\beta}(\tau_0) - \beta^*\|^2 + \|\beta^*\|_2^2)}{\tau_0}   \sqrt{\frac{1}{n+1} \sum_{i=1}^{n} \frac{\|X_i\|_2^4}{(n+1)^2}} + 1 \right),
    \end{align*}
    where the second last inequality above uses Lemmas \ref{lem:continuity_of_beta_in_tau} and \ref{lem:monotone_norm} to control $\|\hat{\beta}(\tau_1) - \hat{\beta}(\tau_2)\|_2$. Now, all the stochastic elements above converge in probability to constants. Thus, we find that there exists a constant $L'>0$ such that with probability tending to $1$, $\tau \mapsto \frac{\lambda_{n+1}^2}{n+1} \text{trace}\left( \left(\frac{1}{n+1} \sum_{i=1}^{n} \ell''(R_i(\tau)) X_iX_i^\top + 2\tau I_d \right)^{-1} \right)$ is $L'$-Lipschitz. By standard arguments on the convergence of Lipschitz functions (see e.g.~Lemma \ref{lem:func_uniform_conv} or equivalently consider a grid over $\tau$ and argue as above), this is sufficient to conclude that for fixed any $M > 0$,
    \[
    \sup_{\tau_0 \leq \tau \leq M} \left| \frac{1}{n+1} \text{trace}\left( \left(\frac{1}{n+1} \sum_{i=1}^{n} \ell''(R_i(\tau)) X_iX_i^\top + 2\tau I_d \right)^{-1} \right) - c_{\infty}(\tau) \right| \stackrel{\mmp}{\to} 0.
    \]
    Moreover, 
    \[
     \frac{1}{n+1} \text{trace}\left( \left(\frac{1}{n+1} \sum_{i=1}^{n} \ell''(R_i(\tau)) X_iX_i^\top + 2\tau I_d \right)^{-1} \right)  \leq \frac{d}{\tau(n+1)} \to \frac{\gamma}{\tau},
    \]
    and using the fact that $c_{\infty}(\tau)$ is the pointwise limit of the trace, also $c_{\infty}(\tau) \leq \gamma/\tau$. Thus, for $\tau \geq M$,
    \[
    \sup_{\tau \geq M} \left| \frac{1}{n+1} \text{trace}\left( \left(\frac{1}{n+1} \sum_{i=1}^{n} \ell''(R_i(\tau)) X_iX_i^\top + 2\tau I_d \right)^{-1} \right) - c_{\infty}(\tau) \right|  \leq 2\frac{\gamma}{M}.
    \]
    The desired result then follows immediately by sending $M \to \infty$.

    Finally, to get the last part of the lemma, note that since $c_{\infty}(\tau)$ is the pointwise limit of the trace, the fact that the trace function is asymptotically Lipschitz immediately implies the Lipschitz continuity of $c_{\infty}(\tau)$.

\end{proof}

\subsection{Uniform convergence of the empirical quantille}

In this section we show that the empirical quantile, $\text{Quantile}(1-\alpha, \frac{1}{n+1} \sum_{i=1}^{n+1} \delta_{|R_i(\tau)|})$ converges uniformly to $q^*(\tau) := \text{Quantile}(1-\alpha,|\text{prox}(\lambda^2c_{\infty}(\tau)\ell)(\epsilon + \lambda N_{\infty}(\tau)Z)|)$ in $\tau$. As a preliminary step, we first bound the norm of the fitted regression coefficients. 

\begin{lemma}\label{lem:large_tau_beta_norm_bound}
    We have the deterministic bound
    \[
    \|\hat{\beta}(\tau)\|_2 \leq \sqrt{ \frac{1}{\tau(n+1)} \sum_{i=1}^{n+1} \ell(Y_i) }.
    \]
\end{lemma}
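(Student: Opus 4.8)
The plan is to mimic exactly the argument used for Lemma~\ref{lem:coef_bound}, exploiting only the optimality of $\hat\beta(\tau)$ and the nonnegativity of the loss. First I would record that under either set of assumptions of Theorem~\ref{thm:conv_with_cv} the loss is nonnegative: for the squared loss $\ell(r)=r^2\ge 0$ trivially, while under Assumption~1 of Section~\ref{sec:general_losses} one has $\ell(x)\ge\ell(0)=0$ for all $x$. In particular $\tfrac{1}{n+1}\sum_{i=1}^{n+1}\ell(Y_i-X_i^\top\hat\beta(\tau))\ge 0$.

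The main step is then a single comparison. Since $\hat\beta(\tau)$ minimizes $\beta\mapsto \tfrac{1}{n+1}\sum_{i=1}^{n+1}\ell(Y_i-X_i^\top\beta)+\tau\|\beta\|_2^2$, evaluating this objective at $\hat\beta(\tau)$ and at $0$ yields
\[
\tau\|\hat\beta(\tau)\|_2^2 \;\le\; \frac{1}{n+1}\sum_{i=1}^{n+1}\ell\bigl(Y_i-X_i^\top\hat\beta(\tau)\bigr)+\tau\|\hat\beta(\tau)\|_2^2 \;\le\; \frac{1}{n+1}\sum_{i=1}^{n+1}\ell(Y_i)+\tau\|0\|_2^2,
\]
where the first inequality uses $\ell\ge 0$ and the second uses optimality. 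Dividing through by $\tau>0$ and taking square roots gives the claimed bound
\[
\|\hat\beta(\tau)\|_2 \le \sqrt{\frac{1}{\tau(n+1)}\sum_{i=1}^{n+1}\ell(Y_i)}.
\]

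There is essentially no obstacle here: the only point requiring a word of care is the nonnegativity of $\ell$, which I have already noted follows from the standing assumptions (and the degenerate case $\tau=\infty$, $\hat\beta(\infty)=0$, is consistent with the bound since the right-hand side then vanishes). I would also remark that, just as for Lemma~\ref{lem:coef_bound}, the statement is fully deterministic and requires no distributional hypotheses; combined with $\mme[\ell(Y_i)]=O(1)$ (which holds since $Y_i$ has bounded moments and $\ell$ grows at most polynomially under either assumption set), it immediately yields $\mme[\|\hat\beta(\tau)\|_2^2]=O(1/\tau)$ uniformly, which is the form in which the lemma is invoked in the proof of Lemma~\ref{lem:uniform_c_n_swap}.
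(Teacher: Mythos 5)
Your proof is correct and is essentially identical to the paper's own argument: compare the penalized objective at $\hat\beta(\tau)$ and at $0$, using optimality and the nonnegativity of $\ell$ (which the paper leaves implicit but which follows from $\ell(x)\ge\ell(0)=0$ under Assumption~1 or trivially for the squared loss). No gaps.
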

\begin{proof}
    Comparing the regression objective at $\beta = \hat{\beta}(\tau)$ and $\beta = 0$ and applying the optimality of $\hat{\beta}(\tau)$ gives the inequalities
    \[
    {\tau} \|\hat{\beta}(\tau)\|_2^2 \leq \frac{1}{n+1} \sum_{i=1}^{n+1} \ell(Y_i - X_i^\top \hat{\beta}(\tau)) + {\tau} \|\hat{\beta}(\tau)\|_2^2 \leq \frac{1}{n+1} \sum_{i=1}^{n+1}  \ell(Y_i),
    \]
    as claimed.
\end{proof}

\begin{lemma}\label{lem:uniform_quant_conv}
    Suppose that the assumptions of either Theorem \ref{thm:ridge_asymp_cov} or Theorem \ref{thm:conv_tau_fixed} hold. Then, for any $\tau_0 > 0$,
    \[
    \sup_{\tau \geq \tau_0} \left| \textup{Quantile}\left(1-\alpha, \frac{1}{n+1} \sum_{i=1}^{n+1} \delta_{|R_{n+1}(\tau)|} \right) - q^*(\tau) \right| \stackrel{\mmp}{\to} 0.
    \]
    Moreover, for any bounded, Lipschitz function $\psi(\cdot)$, the map
    \[
    \tau \mapsto \mme[\psi(\textup{prox}(\lambda^2 c_{\infty}(\tau)\ell)(\epsilon + \lambda N_{\infty}(\tau)Z)) ],
    \]
    is continous and satisfies
    \[
    \lim_{\tau \to \infty} \mme[\psi(\textup{prox}(\lambda^2 c_{\infty}(\tau)\ell)(\epsilon + \lambda N_{\infty}(\tau)Z)) ] = \mme[\psi(\epsilon + \lambda \mme[(\sqrt{d}\beta^*_i)^2]^{1/2}Z) ].
    \]
    Similarly, $q^*(\tau)$ is continuous and satisfies $\lim_{\tau \to \infty} q^*(\tau) = q^*(\infty)$.

\end{lemma}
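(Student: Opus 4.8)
The plan is to prove the uniform quantile convergence by the same route as the pointwise statement (Corollary~\ref{corr:full_quantile_conv}) --- convergence of a smoothed empirical distribution of $\{|R_i(\tau)|\}$, then passage to quantiles --- but with each pointwise-in-$\tau$ ingredient upgraded to hold uniformly on $[\tau_0,\infty)$. The observation that makes this feasible is that, although $\tau\mapsto R_i(\tau)$ is only $O_{\mmp}(\sqrt n)$-Lipschitz on compacts in the $\sup_i$ sense, its \emph{averaged} increments are tame: writing $v=\hat\beta(\tau_1)-\hat\beta(\tau_2)$, Lemmas~\ref{lem:continuity_of_beta_in_tau} and~\ref{lem:monotone_norm} give $\|v\|_2=O_{\mmp}(|\tau_1-\tau_2|)$ on $[\tau_0,M]$, whence by Cauchy--Schwarz and the operator-norm bound of~\cite{Yin1988},
\[
\frac{1}{n+1}\sum_{i=1}^{n+1}\bigl||R_i(\tau_1)|-|R_i(\tau_2)|\bigr|\ \le\ \sqrt{v^\top\Bigl(\tfrac{1}{n+1}\textstyle\sum_i X_iX_i^\top\Bigr)v}\ =\ O_{\mmp}(|\tau_1-\tau_2|),
\]
with a constant not depending on $n$. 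Hence for each fixed $\epsilon>0$ the smoothed empirical distribution function $(\tau,x)\mapsto \bar F_n^\tau(x):=\frac{1}{n+1}\sum_i h\bigl((|R_i(\tau)|-x)/\epsilon\bigr)$, for $h$ a fixed $1$-Lipschitz cutoff, is $O_{\mmp}(1/\epsilon)$-Lipschitz jointly in $(\tau,x)$ on $[\tau_0,M]\times\mmr$, uniformly in $n$.

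Granting this, I would argue as follows. Pointwise in $(\tau,x)$, the mean-plus-variance computation behind Lemma~\ref{lem:conv_in_dist} --- using parts 1, 2 and 5 of Theorem~\ref{thm:EK_facts}, Lemma~\ref{lem:prox_resid_convergence} and bounded convergence --- gives $\bar F_n^\tau(x)\stackrel{\mmp}{\to}\mme[h((|\textup{prox}(\lambda^2 c_\infty(\tau)\ell)(\epsilon+\lambda N_\infty(\tau)Z)|-x)/\epsilon)]=:\bar F^\tau(x)$, the limit being jointly continuous in $(\tau,x)$ by continuity of $c_\infty(\tau)$ (Lemma~\ref{lem:uniform_c_n_swap}), of $N_\infty(\tau)$ (see below), and of $\textup{prox}$ in its scaling parameter (Lemma~\ref{lem:cont_diff_prox}). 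Combining pointwise convergence with the uniform-in-$n$ joint Lipschitz bound and the standard Lipschitz-to-uniform argument (Lemma~\ref{lem:func_uniform_conv}) yields $\sup_{(\tau,x)\in[\tau_0,M]\times[-K,K]}|\bar F_n^\tau(x)-\bar F^\tau(x)|\stackrel{\mmp}{\to}0$ for every $M,K$. Feeding this into the sandwich in the proof of Lemma~\ref{lem:quantile_conv_fact}, with the upper/lower cutoffs $h^{\pm}_\epsilon$ centered at $q^*(\tau)\pm\epsilon$ --- legitimate since $q^*(\tau)$ is bounded on $[\tau_0,M]$, varies continuously there, and the limit CDFs are strictly increasing with a gap at $q^*(\tau)$ that is uniform over the compact $[\tau_0,M]$ (Lemma~\ref{lem:asymptotic_prox_dist_facts}) --- gives, after $\epsilon\to0$, $\sup_{\tau\in[\tau_0,M]}|\textup{Quantile}(1-\alpha,\frac{1}{n+1}\sum_i\delta_{|R_i(\tau)|})-q^*(\tau)|\stackrel{\mmp}{\to}0$. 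To remove the truncation at $M$, I would use Lemma~\ref{lem:large_tau_beta_norm_bound}: $\frac{1}{n+1}\sum_i|R_i(\tau)-Y_i|\le O_{\mmp}(1)\,\|\hat\beta(\tau)\|_2=O_{\mmp}(1/\sqrt M)$ uniformly over $\tau\ge M$, so both $\sup_{\tau\ge M}|\textup{Quantile}(\dots)-\textup{Quantile}(1-\alpha,\frac{1}{n+1}\sum_i\delta_{|Y_i|})|$ and $\sup_{\tau\ge M}|q^*(\tau)-q^*(\infty)|$ are small for $M$ large; together with the $\tau=\infty$ case of Corollary~\ref{corr:full_quantile_conv} this closes the first assertion.

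For the continuity and limit statements, I would first upgrade Lemma~\ref{lem:ridge_norm_conv}\slash part 4 of Theorem~\ref{thm:EK_facts} to a uniform statement by the same Lipschitz-to-uniform argument: $\tau\mapsto\|\hat\beta(\tau)-\beta^*\|_2$ is $O_{\mmp}(|\cdot|)$-Lipschitz on compacts (Lemmas~\ref{lem:continuity_of_beta_in_tau},~\ref{lem:monotone_norm}) and converges pointwise to $N_\infty(\tau)$, so $N_\infty(\cdot)$ is itself Lipschitz --- in particular continuous --- on $[\tau_0,M]$ for each $M$, and $N_\infty(\tau)\to\mme[(\sqrt d\beta^*_i)^2]^{1/2}$ as $\tau\to\infty$ by Lemma~\ref{lem:large_tau_beta_norm_bound} and the law of large numbers. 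Since $c_\infty(\tau)$ is Lipschitz on $[\tau_0,\infty)$ with $c_\infty(\tau)\le\gamma/\tau\to0$ (Lemma~\ref{lem:uniform_c_n_swap}) and $\textup{prox}(c\ell)(x)$ is jointly continuous in $(c,x)$ with $\textup{prox}(0\cdot\ell)(x)=x$ (Lemma~\ref{lem:cont_diff_prox}), bounded convergence gives continuity of $\tau\mapsto\mme[\psi(\textup{prox}(\lambda^2 c_\infty(\tau)\ell)(\epsilon+\lambda N_\infty(\tau)Z))]$ for bounded continuous $\psi$ together with the claimed limit $\mme[\psi(\epsilon+\lambda\mme[(\sqrt d\beta^*_i)^2]^{1/2}Z)]$. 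Finally $q^*(\tau)$ is the $(1-\alpha)$-quantile of a family of laws varying weakly continuously in $\tau\in[\tau_0,\infty]$ (by the previous sentence) whose members all have strictly increasing continuous CDFs on $\mmr_{\ge0}$ (Lemma~\ref{lem:asymptotic_prox_dist_facts}); the elementary quantile-continuity argument of Lemma~\ref{lem:quantile_conv_fact} then yields continuity of $q^*$ on $[\tau_0,\infty)$ and $q^*(\tau)\to q^*(\infty)$.

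The step I expect to be the main obstacle is the uniform-in-$n$ joint Lipschitz control of $\bar F_n^\tau(x)$ in the first assertion: one must use the $L_1$-averaged rather than $\sup_i$ modulus of continuity of $\tau\mapsto R_i(\tau)$ so that the Lipschitz constant stays $O(1)$, and organise the joint $(\tau,x)$ uniformity so that the quantile-sandwich cutoffs, whose centers $q^*(\tau)\pm\epsilon$ themselves move with $\tau$, are covered. Everything downstream is a routine combination of this uniform convergence with the continuity and strict-monotonicity facts already recorded for the limiting objects.
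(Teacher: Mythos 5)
Your proposal is correct and follows essentially the same route as the paper: the engine in both is the averaged Lipschitz-in-$\tau$ control of the residuals obtained from Lemmas \ref{lem:continuity_of_beta_in_tau} and \ref{lem:monotone_norm} together with the operator-norm bound on the empirical covariance, upgraded to uniform convergence of smoothed indicator functionals over a grid/cover in $\tau$, with the tail $\tau \geq M$ handled via Lemma \ref{lem:large_tau_beta_norm_bound} and uniqueness of the $\tau=\infty$ quantile. The only organizational difference is that you obtain continuity of $q^*$ and of the $\psi$-functional from the limiting representation (continuity of $c_{\infty}(\tau)$ from Lemma \ref{lem:uniform_c_n_swap}, of $N_{\infty}(\tau)$ via the same Lipschitz-limit trick, and of the proximal map from Lemma \ref{lem:cont_diff_prox}), whereas the paper extracts these continuity facts from the asymptotically Lipschitz pre-limit empirical functionals; both are valid and rest on the same estimates.
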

\begin{proof}
    We begin by showing that $q^*(\tau)$ is continuous. Fix any $\tau_1, \tau_2 \geq \tau_0$ and $\xi > 0$. Define the functions 
    \[
    h^{+}_{\xi}(x; \tau_i) := \begin{cases}
        1, \text{ if } x \leq q^*(\tau_i) + \xi,\\
        (q^*(\tau_i) + 2\xi - x)/\xi, \text{ if } q^*(\tau_i) + \xi < x \leq q^*(\tau_i) + 2\xi,\\
        0, \text{ if } x > q^*(\tau_i) + 2\xi,
    \end{cases}
    \]
    and 
    \[
    h^{-}_{\xi}(x; \tau_i) := \begin{cases}
        1, \text{ if } x \leq q^*(\tau_i) - 2\xi,\\
        (q^*(\tau_i) - \xi - x)/\xi, \text{ if } q^*(\tau_i) - 2\xi < x < q^*(\tau_i) - \xi,\\
        0, \text{ if } x > q^*(\tau_i) - \xi.
        \end{cases}
    \]
    In what follows we use the notation $h^{\pm}_{\xi}$ to denote results that hold for both $h^+_{\xi}$ and $h^{-}_{\xi}$. 
    
    Now, we know from Lemmas \ref{lem:ridge_quant_conv} and \ref{lem:conv_in_dist} that for $i,j \in \{1,2\}$,
    \[
    \frac{1}{n+1} \sum_{k=1}^{n+1}h^{\pm}_{\xi}(R_k(\tau_j); \tau_i) \stackrel{\mmp}{\to} \mme[h^{\pm}_{\xi}(|\text{prox}(\lambda^2c_{\infty}(\tau_j)\ell )(\epsilon + \lambda N_{\infty}(\tau_j))| ; \tau_i)].
    \]
    Moreover, 
    \begin{align*}
     & \left|\frac{1}{n+1} \sum_{k=1}^{n+1}h^{\pm}_{\xi}(R_k(\tau_1); \tau_i) -  \frac{1}{n+1} \sum_{k=1}^{n+1}h^{\pm}_{\xi}(R_k(\tau_2); \tau_i) \right|  \leq \frac{1}{\xi} \frac{1}{n+1} \sum_{i=1}^{n+1} |R_i(\tau_1) - R_i(\tau_2)|\\
     & \ \ \ \ \leq \frac{1}{\xi} \frac{1}{n+1} \sum_{i=1}^{n+1} |X_i^\top(\hat{\beta}(\tau_2) - \hat{\beta}(\tau_1))| \leq \frac{1}{\xi} \sqrt{\frac{1}{n+1} \sum_{i=1}^{n+1} |X_i^\top(\hat{\beta}(\tau_2) - \hat{\beta}(\tau_1))|^2}\\
    & \ \ \ \ \leq \frac{1}{\xi} \left\|\frac{1}{n+1} \sum_{i=1}^{n+1} X_iX_i^\top\right\|_{op} \|\hat{\beta}(\tau_2) - \hat{\beta}(\tau_1)\|_2 \leq \frac{1}{\xi} \left\|\frac{1}{n+1} \sum_{i=1}^{n+1} X_iX_i^\top\right\|_{op} \frac{2\|\hat{\beta}(\tau_0)\|_2^2}{\tau_0} |\tau_2 - \tau_1|,
    \end{align*}
    where the last inequality follows from Lemmas \ref{lem:continuity_of_beta_in_tau} and \ref{lem:monotone_norm}. Now, by well-known controls on the empirical covariance matrix, $\left\|\frac{1}{n+1} \sum_{i=1}^{n+1} X_iX_i^\top\right\|_{op} \leq \|P_{\lambda}\|^2_{\infty} \left\|\frac{1}{n+1} \sum_{i=1}^{n+1} W_iW_i^\top\right\|_{op}  \stackrel{\mmp}{\to} \|P_{\lambda}\|^2_{\infty} (1 + \sqrt{\gamma})^2$ (see Theorem 3.1 of \cite{Yin1988}), while part 4 of Theorem \ref{thm:EK_facts} and our assumptions on $\beta^*$ imply that $\|\hat{\beta}(\tau_0)\|_2^2 \leq 2(\|\hat{\beta}(\tau_0) - \beta^*\|_2^2 + \|\beta^*\|_2^2) \stackrel{\mmp}{\to} 2N_{\infty}(\tau_0)^2 + 2\mme[(\sqrt{d}\beta^*_i)^2])$. So, in particular, we find that there exists $L>0$ such that with probability converging to 1,
    \[
    \left|\frac{1}{n+1} \sum_{k=1}^{n+1}h^{\pm}_{\xi}(R_k(\tau_1); \tau_i) -  \frac{1}{n+1} \sum_{k=1}^{n+1}h^{\pm}_{\xi}(R_k(\tau_2); \tau_i) \right|  \leq L |\tau_1 - \tau_2|.
    \]
    Finally, since by Lemma \ref{lem:asymptotic_prox_dist_facts}, the cumulative distribution function of $|\text{prox}(\lambda^2c_{\infty}(\tau_2)\ell )(\epsilon + \lambda N_{\infty}(\tau_2))|$ is strictly increasing, we find that for all $\tau_1$ sufficiently close to $\tau_2$,
    \[
    \lim_{n \to \infty} \frac{1}{n+1} \sum_{k=1}^{n+1}h^{+}_{\xi}(R_k(\tau_1); \tau_2) \stackrel{\mmp}{\geq}  \lim_{n \to \infty}    \frac{1}{n+1} \sum_{k=1}^{n+1}h^{+}_{\xi}(R_k(\tau_2); \tau_2) - L |\tau_1 - \tau_2| \stackrel{\mmp}{>} 1-\alpha,
    \]
    and 
    \[
    \lim_{n \to \infty} \frac{1}{n+1} \sum_{k=1}^{n+1}h^{-}_{\xi}(R_k(\tau_1); \tau_2) \stackrel{\mmp}{\leq}   \lim_{n \to \infty}  \frac{1}{n+1} \sum_{k=1}^{n+1}h^{-}_{\xi}(R_k(\tau_2); \tau_2) + L |\tau_1 - \tau_2| \stackrel{\mmp}{<} 1-\alpha.
    \]
    By definition of $h^{\pm}_{\xi}$ and the fact that the empirical quantiles converge pointwise (Lemmas \ref{lem:ridge_quant_conv} and \ref{lem:gen_quant_conv}), this implies that for all $\tau_1$ sufficiently close to $\tau_2$,  $|q^*(\tau_1) - q^*(\tau_2)| \leq 4\xi$. Thus, $q^*(\cdot)$ is continuous.

    With the continuity of $q^*(\tau)$ in hand, we will now prove the uniformity of the quantile convergence over any bounded interval $\tau_0 \leq \tau \leq M$. Namely, fix any $M>0$ large and $\xi > 0$ small. Fix any $\rho > 0$ small and let $\tau_i = \tau_0 + i\rho$ for all $0 \leq i \leq \lceil M/\rho \rceil$. By Lemmas \ref{lem:ridge_quant_conv} and \ref{lem:gen_quant_conv},
    \[
    \sup_{i} \left|\textup{Quantile}\left(1-\alpha, \frac{1}{n+1} \sum_{i=1}^{n+1} \delta_{|R_{n+1}(\tau_i)|} \right) -q^*(\tau_i)\right| \stackrel{\mmp}{\to} 0.
    \]
    Moreover, by Lemma \ref{lem:asymptotic_prox_dist_facts} and the continuity of $h^{\pm}_{\xi}$ we know that 
    \[
    \inf_{\tau \in [\tau_0,M]} \left|\mme[h^{\pm}_{\xi}(|\text{prox}(\lambda^2c_{\infty}(\tau)\ell )(\epsilon + \lambda N_{\infty}(\tau))| ; \tau)] - (1-\alpha) \right| > 0,
    \]
     and by repeating our previous argument, there exists $L>0$ such that with probability converging to one, it holds that for all $\tilde{\tau} \geq \tau_0$, $\tau \mapsto \frac{1}{n+1} \sum_{i=1}^{n+1} h^{\pm}_{\xi}(R_i(\tau); \tilde{\tau})$ is $L$-Lipschitz. Using these facts, it is straightforward to show for all $\rho$ sufficiently small, we have that with probability converging to one,
    \[
    \sup_{\tau_0 \leq \tau \leq M} \inf_{0 \leq i \leq \lceil M/\rho \rceil} \left|\textup{Quantile}\left(1-\alpha, \frac{1}{n+1} \sum_{i=1}^{n+1} \delta_{|R_{n+1}(\tau)|} \right) - \textup{Quantile}\left(1-\alpha, \frac{1}{n+1} \sum_{i=1}^{n+1} \delta_{|R_{n+1}(\tau_i)|} \right) \right| \leq 4\xi.
    \]
    Applying the continuity of $q^*(\tau)$ we find that once again by taking $\rho$ sufficiently small,
    \begin{align*}
    & \sup_{\tau_0 \leq \tau \leq M} \left|\textup{Quantile}\left(1-\alpha, \frac{1}{n+1} \sum_{i=1}^{n+1} \delta_{|R_{n+1}(\tau)|} \right) - q^*(\tau) \right|\\
    & \ \ \ \ \stackrel{\mmp}{\leq} 5\xi + \sup_{i}\left|\textup{Quantile}\left(1-\alpha, \frac{1}{n+1} \sum_{i=1}^{n+1} \delta_{|R_{n+1}(\tau_i)|} \right) - q^*(\tau_i) \right|  = 5\xi + o_{\mmp(1)}.
    \end{align*}
    Since $\xi>0$ was arbitrary, we conclude that 
    \[
    \sup_{\tau_0 \leq \tau \leq M} \left|\textup{Quantile}\left(1-\alpha, \frac{1}{n+1} \sum_{i=1}^{n+1} \delta_{|R_{n+1}(\tau)|} \right) - q^*(\tau) \right| \stackrel{\mmp}{\to} 0.
    \]

    To handle the case where $\tau \geq M$, we simply remark that proceeding exactly as above
    \begin{align*}
    & \left|\frac{1}{n+1} \sum_{i=1}^{n+1} h^{\pm}_{\xi}(R_i(\tau);\infty) - \frac{1}{n+1} \sum_{i=1}^{n+1} h^{\pm}_{\xi}(R_i(\infty);\infty) \right|  \leq \frac{1}{\xi} \left\|\frac{1}{n+1} \sum_{i=1}^{n+1} X_iX_i^\top\right\|_{op} \|\hat{\beta}(\tau)\|_2\\ 
    &  \leq \frac{1}{\xi} \|P_{\lambda}\|^2_{\infty} \left\|\frac{1}{n+1} \sum_{i=1}^{n+1} W_iW_i^\top\right\|_{op} \sqrt{ \frac{1}{\tau(n+1)} \sum_{i=1}^{n+1} \ell(Y_i)} \stackrel{\mmp}{\to} \frac{1}{\xi}\|P_{\lambda}\|^2_{\infty}  (1+\sqrt{\gamma})^2 \sqrt{\frac{1}{\tau}\mme[\ell(Y_i)]},
    \end{align*}
    where to get the second inequality we have applied Lemma \ref{lem:large_tau_beta_norm_bound}. Since $\frac{1}{n+1} \sum_{i=1}^{n+1} h^{\pm}_{\xi}(R_i(\infty);\infty) \stackrel{\mmp}{\to} \mme[ h^{\pm}_{\xi}(|\epsilon + \lambda \mme[(\sqrt{d}\beta^*_i)^2]^{1/2}Z|;\infty)]$ and by the fact that $|\epsilon + \lambda \mme[(\sqrt{d}\beta^*_i)^2]^{1/2}Z|$ has a unique $1-\alpha$ quantile, we find that for $M$ sufficiently large,
    \[
    \mmp\left(\sup_{\tau \geq M} \left|\textup{Quantile}\left(1-\alpha, \frac{1}{n+1} \sum_{i=1}^{n+1} \delta_{|R_{n+1}(\tau)|} \right) - q^*(\infty) \right| \leq 4\xi \right) \stackrel{\mmp}{\to} 1. 
    \]
    Again, arguing as above, it is straightforward to show that $\lim_{\tau \to \infty} q^*(\tau) = q^*(\infty)$ and thus choosing $M$ sufficiently large we find that
    \[
    \mmp\left(\sup_{\tau \geq M} \left|\textup{Quantile}\left(1-\alpha, \frac{1}{n+1} \sum_{i=1}^{n+1} \delta_{|R_{n+1}(\tau)|} \right) - q^*(\tau) \right| > 5\xi\right) \to 0.
    \]
    This proves the desired result. 
    
\end{proof}

\subsection{Final preliminaries}

Before proving Theorem \ref{thm:conv_with_cv}, we will give one final preliminary result which provides a uniform bound on the amount that $\text{prox}(\lambda^2c_{\infty}(\tau)\ell)(\epsilon + \lambda N_{\infty}(\tau) Z)$ can concentrate in any small interval.

\begin{lemma}\label{lemma:uniformly_bounded_cdf}
    Suppose that the assumptions of either Theorem \ref{thm:ridge_asymp_cov} or Theorem \ref{thm:conv_tau_fixed} hold. Then, for all $\tau_0 >0$ there exists a constant $C> 0 $ such that for all $\tau \in [\tau_0,\infty]$ and $a < b$,
    \[
    \mmp(\textup{prox}(\lambda^2c_{\infty}(\tau)\ell)(\epsilon + \lambda N_{\infty}(\tau) Z) \in [a,b]) \leq C|a-b|.
    \]
\end{lemma}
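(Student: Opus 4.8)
The plan is to reduce the statement to two elementary facts: (i) for each fixed level $c\ge 0$ the map $\textup{prox}(c\ell)(\cdot)$ is a strictly increasing bijection of $\mmr$ whose inverse is globally Lipschitz with a constant controlled by $c$; and (ii) $\epsilon+\lambda N_{\infty}(\tau)Z$, being $\epsilon$ plus an independent perturbation, has a Lebesgue density bounded by $\|f_{\epsilon}\|_{\infty}$ regardless of the value of $N_{\infty}(\tau)$. The only wrinkle is that the prox level $\lambda^{2}c_{\infty}(\tau)$ is random through $\lambda$, so one first conditions on $\lambda$.

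First I would record, via Lemma \ref{lem:cont_diff_prox}, that $\tfrac{d}{dx}\textup{prox}(c\ell)(x)=\bigl(1+c\ell''(\textup{prox}(c\ell)(x))\bigr)^{-1}$. Since $\ell''$ is bounded (it equals $2$ under the hypotheses of Theorem \ref{thm:ridge_asymp_cov}, and is bounded by Assumption 1 under Theorem \ref{thm:conv_tau_fixed}), this derivative is bounded below by $(1+c\|\ell''\|_{\infty})^{-1}>0$; hence $\textup{prox}(c\ell)$ is a continuous strictly increasing surjection of $\mmr$, and its inverse $h_{c}:=\textup{prox}(c\ell)^{-1}$ satisfies $h_{c}'(y)=1+c\ell''(h_{c}(y))\le 1+c\|\ell''\|_{\infty}$, so $|h_{c}(b)-h_{c}(a)|\le (1+c\|\ell''\|_{\infty})|b-a|$ for all $a<b$. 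Next I would invoke the bound $c_{\infty}(\tau)\le\gamma/\tau$ from Lemma \ref{lem:uniform_c_n_swap} (with $c_{\infty}(\infty)=0$ at the endpoint), together with $\lambda\le\|P_{\lambda}\|_{\infty}$, to conclude that $\lambda^{2}c_{\infty}(\tau)\le\|P_{\lambda}\|_{\infty}^{2}\gamma/\tau_{0}$ deterministically for all $\tau\ge\tau_{0}$, so the Lipschitz constant of $h_{\lambda^{2}c_{\infty}(\tau)}$ is at most $L_{0}:=1+\|P_{\lambda}\|_{\infty}^{2}(\gamma/\tau_{0})\|\ell''\|_{\infty}$, uniformly in $\tau$ and $\lambda$.

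With these in hand, fix $\tau\in[\tau_{0},\infty]$ and $a<b$, and condition on $\lambda$. Using the equivalence $\textup{prox}(\lambda^{2}c_{\infty}(\tau)\ell)(w)\in[a,b]\iff w\in[h_{\lambda^{2}c_{\infty}(\tau)}(a),h_{\lambda^{2}c_{\infty}(\tau)}(b)]$,
\[
\mmp\bigl(\textup{prox}(\lambda^{2}c_{\infty}(\tau)\ell)(\epsilon+\lambda N_{\infty}(\tau)Z)\in[a,b]\mid\lambda\bigr)=\mmp\bigl(\epsilon+\lambda N_{\infty}(\tau)Z\in[h_{\lambda^{2}c_{\infty}(\tau)}(a),\,h_{\lambda^{2}c_{\infty}(\tau)}(b)]\mid\lambda\bigr).
\]
Since $Z\sim N(0,1)$ is independent of $(\lambda,\epsilon)$ and $\epsilon$ is independent of $\lambda$ with density $f_{\epsilon}$, the conditional law of $\epsilon+\lambda N_{\infty}(\tau)Z$ given $\lambda$ is a convolution of $f_{\epsilon}$ with an independent law, hence has density bounded by $\|f_{\epsilon}\|_{\infty}$; so the right-hand side is at most $\|f_{\epsilon}\|_{\infty}\,|h_{\lambda^{2}c_{\infty}(\tau)}(b)-h_{\lambda^{2}c_{\infty}(\tau)}(a)|\le\|f_{\epsilon}\|_{\infty}L_{0}|a-b|$. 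Taking expectation over $\lambda$ gives the claim with $C=\|f_{\epsilon}\|_{\infty}L_{0}$, which depends only on $\tau_{0}$ and fixed model constants. I do not expect a genuine obstacle here; the only points needing care are that the density bound and the prox Lipschitz estimate both survive conditioning on $\lambda$ without any factor $1/\lambda$ entering (they do, since $f_{\epsilon}$, and not the Gaussian factor, controls the density), and the degenerate endpoint $\tau=\infty$, where $\textup{prox}(0\cdot\ell)$ is the identity and the bound collapses to $\|f_{\epsilon}\|_{\infty}|a-b|$.
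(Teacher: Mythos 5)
Your proposal is correct and takes essentially the same route as the paper: the paper's ``change of variables'' step is exactly your pullback of $[a,b]$ through $\textup{prox}(\lambda^2 c_{\infty}(\tau)\ell)^{-1}$, yielding a density bound $\|f_{\epsilon}\|_{\infty}(1+\lambda^2 c_{\infty}(\tau)\|\ell''\|_{\infty})$ conditionally on $\lambda$, followed by the same uses of $c_{\infty}(\tau)\leq \gamma/\tau$ from Lemma \ref{lem:uniform_c_n_swap}, the convolution bound $\|f_{\epsilon}\|_{\infty}$, and the trivial endpoint $\tau=\infty$. One immaterial slip: since $\textup{prox}(c\ell)(h_c(y))=y$ (indeed $h_c(y)=y+c\ell'(y)$), the inverse derivative is $h_c'(y)=1+c\ell''(y)$ rather than $1+c\ell''(h_c(y))$, but your Lipschitz bound $1+c\|\ell''\|_{\infty}$ is unaffected.
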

\begin{proof}
    Recall that by assumption $\epsilon$ has a bounded density, $f_{\epsilon}$. Using standard formula for convolutions of random variables, we find that for any fixed values of $\lambda$ and $\tau$, the density of $\epsilon + \lambda N_{\infty}(\tau) Z$ exists and is bounded by $\|f_{\epsilon}\|_{\infty}$. 
    Marginalizing over $\lambda$, this immediately proves the lemma for $\tau = \infty$ with $C = \|f_{\epsilon}\|_{\infty}$ (recall that in this case $c_{\infty}(\tau) = 0$ and thus $\textup{prox}(\lambda^2c_{\infty}(\tau)\ell)(\epsilon + \lambda N_{\infty}(\tau) Z) = \epsilon + \lambda N_{\infty}(\tau) Z$). For $\tau < \infty$, note that by Lemma \ref{lem:cont_diff_prox} we have that for any fixed $c>0$,
    \[
    \frac{d}{dx} \text{prox}(c\ell)(x) = \frac{1}{1+c\ell''(\text{prox}(c\ell)(x))}.
    \]
    So, a simple change of variables calculation shows that for $(\lambda$, $\tau)$ fixed the density of $\text{prox}(\lambda^2c_{\infty}(\tau)\ell)(\epsilon + \lambda N_{\infty}(\tau) Z)$ exists and is bounded by $\|f_{\epsilon}\|_{\infty}(1+\lambda^2c_{\infty}(\tau)\|\ell''\|_{\infty})$. In particular, this implies that
    \begin{align*}
    \mmp(\text{prox}(\lambda^2c_{\infty}(\tau)\ell)(\epsilon + \lambda N_{\infty}(\tau) Z) \in [a,b]) & = \mme[\mmp(\text{prox}(\lambda^2c_{\infty}(\tau)\ell)(\epsilon + \lambda N_{\infty}(\tau) Z) \in [a,b] \mid \lambda)]\\
    & \leq \mme[|a-b| \cdot \|f_{\epsilon}\|_{\infty}(1+\mme[\lambda^2]c_{\infty}(\tau)\|\ell''\|_{\infty})]\\
    & = |a-b| \cdot \|f_{\epsilon}\|_{\infty}(1+\mme[\lambda^2]c_{\infty}(\tau)\|\ell''\|_{\infty}).
    \end{align*}
    Finally, recalling that $c_{\infty}(\tau) \leq \gamma/\tau$ (Lemma \ref{lem:uniform_c_n_swap}), we conclude that for $\tau \geq \tau_0$,
    \[
     \mmp(\text{prox}(\lambda^2c_{\infty}(\tau)\ell)(\epsilon + \lambda N_{\infty}(\tau) Z) \in [a,b]) \leq |a-b|\cdot \|f_{\epsilon}\|_{\infty}\left(1+\mme[\lambda^2]\frac{\gamma }{\tau_0}\|\ell''\|_{\infty}\right).
    \]
    Taking $C = \max\{\|f_{\epsilon}\|_{\infty},\|f_{\epsilon}\|_{\infty}(1+\mme[\lambda^2]\frac{\gamma }{\tau_0}\|\ell''\|_{\infty})\}$ gives the desired result.
\end{proof}

\subsection{Final proof of Theorem \ref{thm:conv_with_cv}}

With the above preliminaries out of the way, we are now ready to prove Theorem \ref{thm:conv_with_cv}.

\begin{proof}[Proof of Theorem \ref{thm:conv_with_cv}]
    The main structure of the proof is similar to the proof for $\tau$ fixed (Theorem \ref{thm:conv_tau_fixed}). Namely, using the uniform convergence results of Lemmas \ref{lem:uniform_loo_resid_swap}, \ref{lem:uniform_c_n_swap}, and \ref{lem:uniform_quant_conv} along with Lemma \ref{lem:marg_to_cond_conv}, which shows that results which hold marginally with high probability also hold conditionally with high probability, we have that for any fixed $\xi >0 $,
\begin{align*}
& \mmp(Y_{n+1} \in \hat{C}_{\text{full}} \mid \{(X_i,Y_i)\}_{i=1}^n )\\
& = \mmp\left(|R_{n+1}(\tau_{\textup{rand.}})| \leq \text{Quantile}\left(1-\alpha,\frac{1}{n+1}\sum_{i=1}^{n+1} \delta_{|R_i(\tau_{\textup{rand.}})|}\right) \mid \{(X_i,Y_i)\}_{i=1}^n \right)\\
& \leq \mmp\left( |\text{prox}(\lambda_{n+1}^2 c_{\infty}(\tau_{\textup{rand.}})\ell)(R^{(n+1)}_{n+1}(\tau_{\textup{rand.}}))| \leq q^*(\tau_{\textup{rand.}}) + \xi \mid \{(X_i,Y_i)\}_{i=1}^n \right) + o_{\mmp}(1).
\end{align*}
To conclude the proof, we need to compare the distribution of $|\text{prox}(\lambda_{n+1}^2 c_{\infty}(\tau_{\textup{rand.}})\ell)(R^{(n+1)}_{n+1}(\tau_{\textup{rand.}}))|$ to that of its asymptotic counterpart, $|\text{prox}(\lambda^2c_{\infty}(\tau_{\textup{rand.}})\ell)(\epsilon + \lambda N_{\infty}(\tau_{\textup{rand.}}) Z)|$. Since our previous lemmas only allow us to make this comparison pointwise, we will begin by rounding $\tau_{\textup{rand.}}$ to a finite grid. 

Fix any $M>0$ large and $\rho > 0$ small. Let $\{\tau_1,\dots,\tau_k\}$ be a $\rho$-cover of $[\tau_0,M]$ of size at most $(M/\rho) + 1$. Let 
\[
g(\tau) := \begin{cases} \text{argmin}_{\tau_\in \{\tau_1,\dots,\tau_k\}} |\tau - \tau_i|, \text{ if } \tau \leq M,\\
\infty, \text{ otherwise},
\end{cases}
\]
denote the point in the cover closest to $\tau$ with the exception that large values are ``rounded" to infinity. 

Now, suppose the assumptions of Theorem \ref{thm:conv_tau_fixed} hold. Then, using  the fact that the proximal function is Lipschitz in both its arguments (Lemma \ref{lem:cont_diff_prox}), that $c_{\infty}(\tau)$ is Lipschitz and satisfies $|c_{\infty}(\tau) - c_{\infty}(\infty)|  \leq \gamma/\tau$ (Lemma \ref{lem:uniform_c_n_swap}), and that $q^*(\tau)$ is continuous with $\lim_{\tau \to \infty} q^*(\tau) = q^*(\infty)$ (Lemma \ref{lem:uniform_quant_conv}) we may choose $M$ sufficiently large and $\rho$ sufficiently small such that 
\begin{align*}
& \mmp\left( |\text{prox}(\lambda_{n+1}^2 c_{\infty}(\tau_{\textup{rand.}})\ell)(R^{(n+1)}_{n+1}(\tau_{\textup{rand.}}))| \leq q^*(\tau_{\textup{rand.}}) + \xi \mid \{(X_i,Y_i)\}_{i=1}^n \right)\\
& \leq \mmp\left( |\text{prox}(\lambda_{n+1}^2 c_{\infty}(g(\tau_{\textup{rand.}}))\ell)(R^{(n+1)}_{n+1}(g(\tau_{\textup{rand.}})))| \leq q^*(g(\tau_{\textup{rand.}})) + 3\xi \mid \{(X_i,Y_i)\}_{i=1}^n \right)\\
& \ \ \ \ \ + \mmp\left(|X_{n+1}^\top(\hat{\beta}_{(n+1)}(g(\tau_{\textup{rand.}})) - \hat{\beta}_{(n+1)}(\tau_{\textup{rand.}}))| \geq \xi \mid \{(X_i,Y_i)\}_{i=1}^n \right).
\end{align*}
Under the assumptions of Theorem \ref{thm:ridge_asymp_cov} (namely, when loss is the squared loss) this same result holds with the extra caveat that the proximal function is no longer Lipschitz in its first argument and thus we require an additional bound on $R_{n+1}^{(n+1)}(\tau_{\textup{rand.}})$ (see the arguments at the beginning of Lemma \ref{lem:uniform_c_n_swap}).

Now, since $g(\tau_{\textup{rand.}})$ takes on only finitely many values, we may apply the results of Lemma \ref{lem:ridge_CLT} and part 5 of Theorem \ref{thm:EK_facts} regarding the convergence in distribution of $(R^{(n+1)}_{n+1}(g(\tau)),\lambda)$ for $\tau$ fixed along with standard calculations relating convergence of the cumulative distribution function to convergence of the expectation of continuous functions (see e.g., the arguments at the end of the proof of Theorem \ref{thm:conv_tau_fixed}), to conclude that
\begin{align*}
&\mmp\left( |\text{prox}(\lambda_{n+1}^2 c_{\infty}(g(\tau_{\textup{rand.}}))\ell)(R^{(n+1)}_{n+1}(g(\tau_{\textup{rand.}})))| \leq q^*(g(\tau_{\textup{rand.}})) + 3\xi \mid \{(X_i,Y_i)\}_{i=1}^n \right)\\
& \leq \mmp\left( |\text{prox}(\lambda^2 c_{\infty}(g(\tau_{\textup{rand.}}))\ell)(\epsilon + \lambda N_{\infty}(g(\tau_{\textup{rand.}})) Z) | \leq q^*(g(\tau_{\textup{rand.}})) + 4\xi \mid \tau_{\textup{rand.}} \right) + o_{\mmp}(1).
\end{align*}
Finally, note that by our uniform bounds on the cumulative distribution function of this asymptotic distribution (Lemma \ref{lemma:uniformly_bounded_cdf}), there exists a constant $C>0$ such that,
\begin{align*}
& \mmp\left( |\text{prox}(\lambda^2 c_{\infty}(g(\tau_{\textup{rand.}}))\ell)(\epsilon + \lambda N_{\infty}(g(\tau_{\textup{rand.}})) Z) | \leq q^*(g(\tau_{\textup{rand.}})) + 4\xi  \mid \tau_{\textup{rand.}}   \right)\\
& \leq \mmp\left( |\text{prox}(\lambda^2 c_{\infty}(g(\tau_{\textup{rand.}}))\ell)(\epsilon + \lambda N_{\infty}(g(\tau_{\textup{rand.}})) Z) | \leq q^*(g(\tau_{\textup{rand.}})) \mid \tau_{\textup{rand.}} \right)  + 4C\xi = 1-\alpha + 4C\xi.
\end{align*}
It remains to bound the error term $ \mmp\left(|X_{n+1}^\top(\hat{\beta}_{(n+1)}(g(\tau_{\textup{rand.}})) - \hat{\beta}_{(n+1)}(\tau_{\textup{rand.}}))| \geq \xi \mid \{(X_i,Y_i)\}_{i=1}^n \right)$. By the results of Lemmas \ref{lem:continuity_of_beta_in_tau} and \ref{lem:monotone_norm} we have that for $\tau \leq M$,
\[
\|\hat{\beta}_{(n+1)}(g(\tau)) - \hat{\beta}_{(n+1)}(\tau)\|_2 \leq \frac{2\rho}{\tau_0} \|\hat{\beta}_{(n+1)}(\tau_0)\|_2^2 \leq  \frac{4\rho}{\tau_0} (\|\hat{\beta}_{(n+1)}(\tau_0) - \beta^*\|_2^2 + \|\beta^*\|_2^2) = \rho O_{\mmp}(1).
\]
On the other hand, for $\tau > M$, Lemma \ref{lem:large_tau_beta_norm_bound} implies that
\[
\|\hat{\beta}_{(n+1)}(g(\tau)) - \hat{\beta}_{(n+1)}(\tau)\|_2 \leq \sqrt{\frac{1}{\tau(n+1)} \sum_{i=1}^{n+1} \ell(Y_i)} \stackrel{\mmp}{\to} \sqrt{\frac{1}{\tau} \mme[\ell(Y_i)]} \leq  \sqrt{\frac{1}{M} \mme[\ell(Y_i)]} .
\]
So, by taking $\rho$ sufficiently small and $M$ sufficiently large and using the fact that $X_{n+1} \independent \hat{\beta}_{(n+1)}(g(\tau_{\textup{rand.}})) - \hat{\beta}_{(n+1)}(\tau_{\textup{rand.}})$, we may guarantee that
\[
\mmp\left(|X_{n+1}^\top(\hat{\beta}_{(n+1)}(g(\tau_{\textup{rand.}})) - \hat{\beta}_{(n+1)}(\tau_{\textup{rand.}}))| \geq \xi \mid \{(X_i,Y_i)\}_{i=1}^n \right) \leq \xi +  o_{\mmp}(1).
\]
Putting all of our previous work together, we conclude that 
\[
\mmp(Y_{n+1} \in \hat{C}_{\text{full}} \mid \{(X_i,Y_i)\}_{i=1}^n ) \leq 1-\alpha + (4C+1)\xi + o_{\mmp}(1),
\]
and sending $\xi \to 0$ at an appropriate rate we arrive at our final bound,
\[
\mmp(Y_{n+1} \in \hat{C}_{\text{full}} \mid \{(X_i,Y_i)\}_{i=1}^n ) \leq 1-\alpha + o_{\mmp}(1).
\]
A matching lower bound follows from an identical argument.

\end{proof}

\section{System of equations for full conformal quantile regression}\label{sec:app_qr_calcs}

In this section we give a short derivation of the system of equations for the constants $N_{\infty}$, $c_{\infty}$, $\beta_{0,\infty}$ appearing in our asymptotic characterization of quantile regression from Conjecture \ref{conj:qr}. As outlined in the main text, this derivation is heuristic and while we find it to be highly accurate empirically, rigorously proving these statements is left as an open problem for future work.

To begin, consider the behaviour of quantile regression when the intercept term is held fixed. Namely, consider the regression
\begin{equation}\label{eq:fixed_b_qr}
\hat{\beta}_{(n+1)}(b) \in \text{argmin}_{\beta \in \mmr^d} \frac{1}{n} \sum_{i=1}^n \ell_{\alpha}(Y_i - b - X_i^\top\beta).
\end{equation}
Letting $\tilde{\epsilon}_i = \epsilon_i - b$ we may interpret this as a regression of $\tilde{Y}_i = X_i^\top\beta^* + \tilde{\epsilon}_i$ on $X_i$. 

Now, due to the fact that $\ell_{\alpha}$ is non-differentiable and (\ref{eq:fixed_b_qr}) is unregularized we cannot directly apply the results of \citet{EK2018} to characterize the behaviour of $\hat{\beta}_{(n+1)}(b)$. Nevertheless, we conjecture that by taking a smooth approximate to $\ell_{\alpha}$ and considering vanishing regularization it is reasonable to expect the results of \citet{EK2018} to extend to this setting (for further discussion of this conjecture see also Section 2.3.5 of \citet{EK2018} as well as Theorem 6.1 of \citet{EK2013}). Thus, extending Theorem 2.1 of \citet{EK2018} to this setting, we conjecture that for a fixed intercept, the asymptotic behaviour of quantile regression is characterized by constants $c_{\infty}(b)$, $N_{\infty}(b)$ satisfying the system of equations
\[
\begin{cases}
   &   \mme\left[ \frac{(W(b) - \text{prox}(\lambda^2c_{\infty}\ell_{\alpha}(W(b)))^2}{\lambda^2} \right] = \gamma N_{\infty}^2,\\
    & \mmp\left(W(b) \in [-\lambda^2c_{\infty}\alpha,\lambda^2c_{\infty}(1-\alpha)] \right) = \gamma,
\end{cases}
\]
where $W(b) := \epsilon - b + \lambda N_{\infty} Z$ and $(Z,\lambda,\epsilon) \sim N(0,1) \otimes P_{\lambda} \otimes P_{\epsilon}$.

It remains to derive an equation for the intercept. To do this, recall that in the main text we conjectured the approximation
\[
\hat{\eta}_i = \frac{R_i^{(i)} - \textup{prox}(\lambda_{n+1}^2c_{\infty}\ell_{\alpha})(R_i^{(i)})}{\lambda_{n+1}^2c_{\infty}}.
\]
Now, the first-order condition for the intercept in the min-max formulation of quantile regression (see (\ref{eq:qr_min_max})) necessitates that $\sum_{i=1}^{n+1} \hat{\eta}_i = 0$. By the exchangeability of $(\hat{\eta}_i)_{i=1}^{n+1}$ this immediately implies that $\mme[\hat{\eta}_i] = \frac{1}{n+1} \sum_{j=1}^{n+1}\mme[\hat{\eta}_j] = 0$. Finally, applying the approximation $R_{i}^{(i)} \stackrel{D}{\approx} W(\beta_{0,\infty})$ we arrive at the desired third equation
\[
\mme\left[ \frac{W(\beta_{0,\infty}) - \textup{prox}(\lambda_{n+1}^2c_{\infty}\ell_{\alpha})(W(\beta_{0,\infty}))}{\lambda_{n+1}^2c_{\infty}}\right] = 0.
\]

\subsection{Comparison to the results of \citet{Bai2021}}

A system of equations characterizing the asymptotic values of $\|\hat{\beta}_{(n+1}) - \beta^*\|_2$ and $\hat{\beta}_{0,(n+1)}$ was also given in \citet{Bai2021}. They consider a more restricted setting in which there is no elliptical parameter and $X_i \sim N(0,I_d)$ is Gaussian. To describe their results formally, let
\[
e(\ell)(x) := \min_v \ell(v) + \frac{1}{2}(x-v)^2, 
\]
denote the Moreau envelope of loss $\ell$. Then, \citet{Bai2021} derive the following asymptotic characterization.
\begin{theorem}[Informal Theorem C.1 of \citet{Bai2021}] Suppose $Y_i = X_i^\top\beta^* + \epsilon_i$ with $X_i \sim N(0,I_d)$. Assume $P_{\epsilon}$ satisfies (Assumption A of \citet{Bai2021}). Then, there exists constants $(N_{\infty}, c_{\infty}, \beta_{0,\infty})$ such that any empirical quantile regression minimizer $(\hat{\beta}_{0,(n+1)}, \hat{\beta}_{(n+1)})$ satisfies
\[
\hat{\beta}_{0,(n+1)} \stackrel{\mmp}{\to} \beta_{0,\infty} \ \ \ \text{ and } \ \ \ \|\hat{\beta}_{(n+1}) - \beta^*\|_2 \stackrel{\mmp}{\to} N_{\infty}.
\]
Moreover, letting $W = \epsilon - \beta_{0,\infty} + N_{\infty}Z$ for $(Z,\epsilon) \sim N(0,1) \otimes P_{\epsilon}$, $(N_{\infty}, c_{\infty}, \beta_{0,\infty})$ are solutions to the system of equations
\begin{equation}\label{eq:bai_system}
\begin{cases}
    & \mme[(W - \text{prox}(c_{\infty}\ell_{\alpha})(W))^2] = \gamma N_{\infty}^2, \\
    & \mme[(W - \text{prox}(c_{\infty}\ell_{\alpha})(W))Z] = \gamma N_{\infty}, \\
    &  \mme[W - \text{prox}(c_{\infty}\ell_{\alpha})(W)] = 0.
\end{cases}
\end{equation}
    
\end{theorem}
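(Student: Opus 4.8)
The plan is to follow the Convex Gaussian Min--Max Theorem (CGMT) of \citet{Thram2018}, which is the natural tool here because the design is exactly isotropic Gaussian, $X_i\sim N(0,I_d)$. Using the variational form of the pinball loss, $\ell_{\alpha}(t)=\max_{\eta\in[-\alpha,1-\alpha]}\eta t$ (its conjugate is the indicator of $[-\alpha,1-\alpha]$), and recentering the slope as $\beta=\beta^*+b$, the empirical quantile regression can be written as the bilinear saddle-point problem
\[
\min_{\beta_0\in\mmr,\,b\in\mmr^d}\ \max_{\eta\in[-\alpha,1-\alpha]^n}\ \eta^\top\big(\epsilon-\beta_0\mathbf{1}-Xb\big),
\]
in which the Gaussian matrix $X$ enters only through the bilinear term $-\eta^\top Xb$. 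First I would replace this ``primal optimization'' with the associated CGMT ``auxiliary optimization'' (AO), in which $-\eta^\top Xb$ is swapped for $\|b\|_2\,g^\top\eta+\|\eta\|_2\,h^\top b$ with $g\sim N(0,I_n)$ and $h\sim N(0,I_d)$ independent.

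Second, I would scalarize the AO. Optimizing over the direction of $b$ collapses the $h^\top b$ term to $\|h\|_2\|b\|_2$, and optimizing over the direction of $\eta$ subject to the box and a scale constraint, against $\epsilon-\beta_0\mathbf{1}+\|b\|_2 g$, produces a coordinatewise sum that the law of large numbers sends to a deterministic scalar objective in the three parameters $(N,c,\beta_0)$, where $N=\|b\|_2$ and $c$ is the dual scale; the relevant coordinatewise term is of the form $\tfrac{1}{2c}\big((\epsilon-\beta_0+NZ)^2-\mathrm{prox}(c\ell_{\alpha})(\epsilon-\beta_0+NZ)^2\big)$. The saddle point of this scalar objective is the claimed triple $(N_\infty,c_\infty,\beta_{0,\infty})$, and its first-order conditions are exactly \eqref{eq:bai_system}: stationarity in the dual scale $c$ gives the norm-balance equation $\mme[(W-\mathrm{prox}(c_\infty\ell_\alpha)(W))^2]=\gamma N_\infty^2$; stationarity in $N$, combined with Gaussian integration by parts (Stein's identity) for the $Z$-term and the Moreau-envelope identity $\tfrac{d}{dx}e(f)(x)=x-\mathrm{prox}(f)(x)$, gives $\mme[(W-\mathrm{prox}(c_\infty\ell_\alpha)(W))Z]=\gamma N_\infty$; and stationarity in the intercept $\beta_0$ gives $\mme[W-\mathrm{prox}(c_\infty\ell_\alpha)(W)]=0$, where $W=\epsilon-\beta_{0,\infty}+N_\infty Z$. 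Finally, the standard CGMT deviation argument transfers concentration of the AO's optimal scalars back to the primal, yielding $\hat{\beta}_{0,(n+1)}\stackrel{\mmp}{\to}\beta_{0,\infty}$ and $\|\hat{\beta}_{(n+1)}-\beta^*\|_2\stackrel{\mmp}{\to}N_\infty$ for any empirical minimizer.

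The hard part will be two intertwined technical points, both of which are where Assumption A on $P_\epsilon$ is needed. First, the pinball loss is nonsmooth and the regression carries no penalty, so the smoothness-based leave-one-out arguments of \citet{EK2018} cannot be imported directly; the CGMT tolerates nonsmooth convex Lipschitz losses, but one must check that the limiting scalar objective is strictly convex with a unique, well-separated minimizer so that the deviation bounds bite --- this requires the density of $\epsilon$ to be bounded away from zero on the relevant range. Second, the intercept $\beta_0$ is an unpenalized direction, so a priori it could diverge; one must verify that the scalar objective is coercive in $\beta_0$ (again a consequence of the tail and positivity assumptions on $P_\epsilon$) and that the minimization over $\beta_0$ commutes cleanly with the CGMT reduction. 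A secondary point is showing that \eqref{eq:bai_system} has a solution and that it is the unique one, which follows from monotonicity of $c\mapsto\mathrm{prox}(c\ell_\alpha)$ once the curvature is in hand.

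An alternative route, closer in spirit to the rest of this paper, is the leave-one-out argument: perturbing a single data point gives $R_i\approx\mathrm{prox}(c_\infty\ell_\alpha)(R_i^{(i)})$ with the leave-one-out residuals $R_i^{(i)}$ asymptotically distributed as $\epsilon-\beta_{0,\infty}+N_\infty Z$, after which self-consistency of the residual norm and of the first-order conditions (including the intercept condition $\sum_{i=1}^{n+1}\hat{\eta}_i=0$) reproduces \eqref{eq:bai_system}. Making this rigorous for the nonsmooth, unregularized pinball loss, however, requires a smoothing-plus-vanishing-regularization limit --- precisely the heuristic step left open in Section~\ref{sec:app_qr_calcs}.
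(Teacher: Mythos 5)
Your CGMT-based sketch follows essentially the same route as the paper's source for this statement: the paper does not reprove the result but imports it as Theorem C.1 of \citet{Bai2021}, whose proof, as the paper itself notes, rests on exactly the convex Gaussian minimax machinery of \citet{Thram2018, ThramThesis} that you outline (variational form of the pinball loss, scalarized auxiliary optimization, stationarity conditions in the dual scale, the norm, and the intercept giving the three equations, with Stein's identity for the $Z$-equation). The technical caveats you flag --- nonsmoothness of $\ell_{\alpha}$, the unpenalized intercept, and uniqueness of the limiting saddle point --- are precisely the points handled in that reference, so there is no substantive gap and nothing structurally different to compare.
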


The first and third equations in (\ref{eq:bai_system}) exactly match those in our Conjecture \ref{conj:qr} in the case where $\mmp(\lambda = 1) = 1$. For the second equation, note that 
\[
W-\text{prox}(c_{\infty}\ell_{\alpha})(W) = W\bone\{W \in [-c_{\infty}\alpha,c_{\infty}(1-\alpha)]\} -c_{\infty}\alpha \bone\{W < -c_{\infty}\alpha\} + c_{\infty}(1-\alpha)\bone\{W > c_{\infty}\alpha\}.
\]
So, by Stein's Lemma
\[
\mme[(W-\text{prox}(c_{\infty}\ell_{\alpha})(W))Z] = N_{\infty}\mmp(W \in [-c_{\infty}\alpha,c_{\infty}(1-\alpha)]),
\]
which exactly matches the second equation appearing in Conjecture \ref{conj:qr}.

Overall, we find that the system of equations appearing in our Conjecture \ref{conj:qr} exactly matches Theorem C.1 of \citet{Bai2021} for the special case of Gaussian covariates. The proof of Theorem C.1 given in \citet{Bai2021} uses machinery derived from the convex Gaussian minimax theorem (\citet{Thram2018, ThramThesis}). Unfortunately, this technique provides no insight into the leave-one-out approximations needed to complete a formal proof of Conjecture \ref{conj:qr}. Thus, even in this special case, the proof of Conjecture \ref{conj:qr} remains an open problem.

\section{Technical Lemmas}

In this section we state and prove a number of technical lemmas that aid in the proofs of our main results. Our first two results demonstrate that pointwise convergence of Lipschitz functions on a compact domain implies uniform convergence. Both these results are well-known and we include proofs simply for completeness.

\begin{lemma}\label{lem:lip_to_unif_as}
    Let $A$ and $B$ be normed vector spaces with norms $\|\cdot\|_A$ and $\|\cdot\|_B$, respectively. Let $f_n : A \to B$ be a sequence of random $L$-Lipschitz functions for some $0 \leq L < \infty$. Assume that $A$ is compact and that for all $x \in A$, $\lim_{n \to \infty} f_n(x) \stackrel{a.s.}{=} f(x)$. Then, $\mmp(\forall x \in A,\ \lim_{n \to \infty} f_n(x) = f(x)) = 1$.
\end{lemma}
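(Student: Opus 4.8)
The plan is to prove uniform convergence via a standard $\varepsilon/3$ argument combined with compactness, being careful that the statement only asserts pointwise almost-sure convergence on the (possibly uncountable) set $A$, so I need to first upgrade this to almost-sure convergence on a countable dense subset and then exploit Lipschitz continuity to fill in the gaps. First I would note that since $A$ is a compact metric (normed) space, it is separable, so there is a countable dense set $D = \{x_1, x_2, \dots\} \subseteq A$. By hypothesis $\lim_{n\to\infty} f_n(x_k) = f(x_k)$ almost surely for each fixed $k$, and a countable intersection of probability-one events has probability one, so on an event $\Omega_0$ with $\mmp(\Omega_0) = 1$ we have $f_n(x_k) \to f(x_k)$ for every $k$ simultaneously.

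Next I would work deterministically on $\Omega_0$. Fix $\varepsilon > 0$. Using compactness of $A$, choose a finite subset $\{x_{k_1}, \dots, x_{k_m}\} \subseteq D$ that is an $(\varepsilon/(3L+1))$-net of $A$ (such a finite net drawn from the dense set $D$ exists by total boundedness of $A$ together with density of $D$). For each $j \in \{1,\dots,m\}$ there is $N_j$ such that $\|f_n(x_{k_j}) - f(x_{k_j})\|_B < \varepsilon/3$ for all $n \geq N_j$; set $N = \max_j N_j$. Now for arbitrary $x \in A$, pick $j$ with $\|x - x_{k_j}\|_A < \varepsilon/(3L+1)$, and for $n \geq N$ estimate
\[
\|f_n(x) - f(x)\|_B \leq \|f_n(x) - f_n(x_{k_j})\|_B + \|f_n(x_{k_j}) - f(x_{k_j})\|_B + \|f(x_{k_j}) - f(x)\|_B.
\]
The first term is at most $L\|x - x_{k_j}\|_A \leq L\varepsilon/(3L+1) \le \varepsilon/3$ since each $f_n$ is $L$-Lipschitz; the third term is at most $L\|x - x_{k_j}\|_A \le \varepsilon/3$ because $f$, being a pointwise limit of $L$-Lipschitz functions, is itself $L$-Lipschitz; and the middle term is less than $\varepsilon/3$ by the choice of $N$. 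Hence $\|f_n(x) - f(x)\|_B < \varepsilon$ for all $n \geq N$ and all $x \in A$, which in particular gives $f_n(x) \to f(x)$ for every $x \in A$ on the event $\Omega_0$.

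Since this holds on $\Omega_0$ and $\mmp(\Omega_0) = 1$, we conclude $\mmp(\forall x \in A,\ \lim_{n\to\infty} f_n(x) = f(x)) = 1$, as desired. The only mild subtlety — and the one point worth stating carefully rather than the "main obstacle," since the argument is routine — is the passage from pointwise a.s.\ convergence on all of $A$ to simultaneous a.s.\ convergence, which is why I reduce to the countable dense set $D$ before invoking the net; everything else is the classical fact that a sequence of equi-Lipschitz functions converging pointwise on a dense subset of a compact set converges uniformly. I would also record, as a small lemma or inline remark, that the pointwise limit $f$ of $L$-Lipschitz maps is $L$-Lipschitz, since this is used in bounding the third term above.
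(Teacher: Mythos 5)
Your proof is correct and follows essentially the same route as the paper: pass to a countable dense subset of the compact set $A$, intersect the countably many probability-one events, and use the $L$-Lipschitz property of the $f_n$ (and of the limit $f$) to transfer convergence to arbitrary points. The only difference is that you additionally extract a finite net to obtain uniform convergence on the good event, whereas the paper simply fixes an arbitrary $x$, picks one nearby dense point, and bounds $\limsup_n \|f_n(x)-f(x)\|_B$ directly, which already gives the stated pointwise conclusion; your stronger uniform statement is fine but not needed here.
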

\begin{proof}
    Since $A$ is compact we may find a countable dense subset $\{x_n\}_{n \in \mmn} \subseteq A$ of $A$. Let $E$ denote the event that $\forall k \in \mmn$,  $\lim_{n \to \infty} f_n(x_k) = f(x_k)$. By a simple union bounded we have that $\mmp(E) = 1$. Fix any $x \in A$ and $\epsilon >0$. Let $k \in \mmn$ be such that $\|x_k - x\|_A \leq \epsilon/(2L)$ and observe that since $f$ is the pointwise limit of $f_n$ we clearly have that $f$ is $L$-Lipschitz. Thus, we find that on the event $E$,
    \begin{align*}
    \limsup_{n \to \infty} \|f_n(x) - f(x)\|_B & \leq \limsup_{n \to \infty} \|f_n(x_k) - f_n(x)\|_B +  \|f(x_k) - f(x)\|_B +  \|f_n(x_k) - f(x_k)\|_B\\
    & \leq 2L \|x_k - x\|_A + \limsup_{n \to \infty}  \|f_n(x_k) - f(x_k)\|_B \leq \epsilon,
    \end{align*}
    and sending $\epsilon \to 0$ we find that on the event $E$, $\lim_{n \to \infty} f_n(x) = f(x)$. Since the choice of $x$ was arbitrary, this proves the desired result.
\end{proof}

\begin{lemma}\label{lem:func_uniform_conv}
    Let $A$ and $B$ be normed vector spaces with norms $\|\cdot\|_A$ and $\|\cdot\|_B$, respectively. Let $f_n : A \to B$ be a sequence of random functions with the property that there exists $L>0$ such that $\mmp(f_n \text{ is L-Lispchitz on } A) \to 1$. Assume that $A$ is compact and that for all $x \in A$, $\lim_{n \to \infty} f_n(x) \stackrel{\mmp}{=} f(x)$. Then,
    \[
    \sup_{x \in A} \|f_n(x) - f(x)\|_B \stackrel{\mmp}{\to} 0.
    \]
\end{lemma}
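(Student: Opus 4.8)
The plan is to combine the compactness of $A$ with a union bound over a finite net, using the high-probability Lipschitz property of $f_n$ to bridge from the net points to arbitrary points of $A$. As a preliminary step I would first check that the limit $f$ is itself $L$-Lipschitz. Fix $x,y \in A$ and $\delta > 0$. By hypothesis the event on which $\|f_n(x) - f(x)\|_B < \delta$, $\|f_n(y) - f(y)\|_B < \delta$, and $f_n$ is $L$-Lipschitz (so that $\|f_n(x) - f_n(y)\|_B \le L\|x-y\|_A$) all hold simultaneously has probability tending to one; on this event the triangle inequality gives $\|f(x) - f(y)\|_B \le L\|x-y\|_A + 2\delta$. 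Since the left-hand side is deterministic and $\delta > 0$ is arbitrary, $\|f(x)-f(y)\|_B \le L\|x-y\|_A$.

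Next, fix $\epsilon > 0$. By compactness of $A$ choose a finite set $x_1,\dots,x_N \in A$ such that every $x \in A$ satisfies $\|x - x_j\|_A \le \epsilon$ for some $j$. Let $E_n$ denote the event that $f_n$ is $L$-Lipschitz on $A$, so that $\mmp(E_n) \to 1$ by assumption. On $E_n$, for an arbitrary $x \in A$ pick $x_j$ with $\|x - x_j\|_A \le \epsilon$ and bound, using the Lipschitz property of $f_n$ and of $f$,
\[
\|f_n(x) - f(x)\|_B \le \|f_n(x) - f_n(x_j)\|_B + \|f_n(x_j) - f(x_j)\|_B + \|f(x_j) - f(x)\|_B \le 2L\epsilon + \max_{1 \le k \le N} \|f_n(x_k) - f(x_k)\|_B .
\]
Taking the supremum over $x \in A$ yields $\sup_{x \in A}\|f_n(x) - f(x)\|_B \le 2L\epsilon + \max_{1 \le k \le N}\|f_n(x_k) - f(x_k)\|_B$ on the event $E_n$.

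Finally I would conclude by a union bound over the fixed finite net. For any $\eta > 0$, since each $\|f_n(x_k) - f(x_k)\|_B \stackrel{\mmp}{\to} 0$ and $N$ does not depend on $n$, we have $\mmp\!\left(\max_{1 \le k \le N}\|f_n(x_k) - f(x_k)\|_B > \eta\right) \le \sum_{k=1}^N \mmp(\|f_n(x_k) - f(x_k)\|_B > \eta) \to 0$. Combining this with $\mmp(E_n^c) \to 0$ gives $\mmp\!\left(\sup_{x \in A}\|f_n(x) - f(x)\|_B > 2L\epsilon + \eta\right) \to 0$, and since $\epsilon > 0$ and $\eta > 0$ were arbitrary while $L$ is a fixed constant, this establishes $\sup_{x \in A}\|f_n(x) - f(x)\|_B \stackrel{\mmp}{\to} 0$. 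The argument is entirely routine; the only step needing a little care is the preliminary verification that $f$ inherits the Lipschitz constant $L$ even though the convergence $f_n \to f$ is only in probability, which is handled by the deterministic-quantity argument in the first paragraph.
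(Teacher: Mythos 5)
Your proof is correct and follows essentially the same route as the paper's: a finite net from compactness, the high-probability Lipschitz event to bridge from net points to arbitrary points, and a union bound over the (fixed, finite) net. The only difference is that you spell out explicitly why $f$ inherits the Lipschitz constant $L$, a step the paper asserts without proof; your deterministic-limit argument for it is valid.
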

\begin{proof}
    Note that the given conditions clearly imply that that $f$ is $L$-Lipschitz on $A$. Fix any $\epsilon > 0$. Let $\{x_k\}_{k=1}^m$ be a finite $\epsilon/(4L)$-covering of $A$. Then, on the event that $f_n$ is $L$-Lipschitz we have that
    \begin{align*}
    \sup_{x \in A} \|f_n(x) - f(x)\|_B & \leq \sup_{x \in A} \inf_{1 \leq i \leq m} (\|f_n(x) - f_n(x_i)\|_B +   \|f(x) - f(x_i)\|_B)  +  \sup_{1 \leq i \leq m}  \|f_n(x_i) - f(x_i)\|_B\\
    & \leq \frac{\epsilon}{2} + \sup_{1 \leq i \leq m}  \|f_n(x_i) - f(x_i)\|_B. 
    \end{align*}
    Thus,
    \begin{align*}
    \mmp\left(\sup_{x \in A} \|f_n(x) - f(x)\|_B > \epsilon\right) & \leq \mmp(f_n \text{ is not L-Lispchitz on } A)\\
    & \ \ \ \ \ + \mmp\left(\sup_{1 \leq i \leq m}  \|f_n(x_i) - f(x_i)\|_B > \epsilon/2\right) \to 0,
    \end{align*}
    as desired.
\end{proof}

Our next lemma is a variant of the continuous mapping theorem. This result is well-known and we include a proof simply for completeness.

\begin{lemma}\label{lem:cont_map}
    Let $f : \mmr^k \times \mmr^m \to \mmr$ be continuous. Let $\{A_n,B_n,C_n\} \subseteq \mmr^k \times \mmr^m \times \mmr^k$ be a sequence of random variables such that $\|A_n - C_n\|_2 \stackrel{\mmp}{\to} 0$ and $\|B_n\|_2, \|C_n\|_2 = O_{\mmp}(1)$. Then,
    \[
    f(A_n,B_n) - f(C_n,B_n) \stackrel{\mmp}{\to} 0.
    \]
\end{lemma}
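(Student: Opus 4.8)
The statement to prove is Lemma~\ref{lem:cont_map}, a variant of the continuous mapping theorem: if $f : \mmr^k \times \mmr^m \to \mmr$ is continuous, $\|A_n - C_n\|_2 \stackrel{\mmp}{\to} 0$, and $\|B_n\|_2, \|C_n\|_2 = O_{\mmp}(1)$, then $f(A_n,B_n) - f(C_n,B_n) \stackrel{\mmp}{\to} 0$.

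\medskip

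The plan is to use the fact that a continuous function is uniformly continuous on any compact set, combined with the fact that $O_{\mmp}(1)$ sequences are stochastically bounded, so that with high probability all the relevant arguments of $f$ live in a fixed compact box. First I would fix $\epsilon > 0$ and $\delta > 0$. Since $\|B_n\|_2, \|C_n\|_2 = O_{\mmp}(1)$, there exists a radius $M > 0$ such that, for all $n$, the event $G_n := \{\|B_n\|_2 \le M\} \cap \{\|C_n\|_2 \le M\}$ has probability at least $1 - \delta/3$. On $G_n$, the point $(C_n, B_n)$ lies in the compact set $K := \{x \in \mmr^k : \|x\|_2 \le M\} \times \{y \in \mmr^m : \|y\|_2 \le M\}$, and (enlarging $M$ by one if necessary, or working with the closed ball of radius $M+1$) $(A_n, B_n)$ lies in a slightly larger compact set $K'$ whenever additionally $\|A_n - C_n\|_2 \le 1$.

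\medskip

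The key step is then uniform continuity: $f$ restricted to $K'$ is uniformly continuous, so there exists $\rho > 0$ (with $\rho \le 1$, depending only on $\epsilon$, $f$, and $M$, not on $n$) such that for any two points $u, u' \in K'$ with $\|u - u'\|_2 \le \rho$ we have $|f(u) - f(u')| \le \epsilon$. Applying this to $u = (A_n, B_n)$ and $u' = (C_n, B_n)$, whose distance is exactly $\|A_n - C_n\|_2$, gives that on the event $G_n \cap \{\|A_n - C_n\|_2 \le \rho\}$ we have $|f(A_n,B_n) - f(C_n,B_n)| \le \epsilon$. Finally, since $\|A_n - C_n\|_2 \stackrel{\mmp}{\to} 0$, for all $n$ large enough $\mmp(\|A_n - C_n\|_2 > \rho) \le \delta/3$; combining with $\mmp(G_n^c) \le \delta/3$ and a union bound, we get $\mmp(|f(A_n,B_n) - f(C_n,B_n)| > \epsilon) \le 2\delta/3 < \delta$ for all large $n$. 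Since $\epsilon$ and $\delta$ were arbitrary, this establishes $f(A_n,B_n) - f(C_n,B_n) \stackrel{\mmp}{\to} 0$.

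\medskip

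There is no genuine obstacle here; the only point requiring mild care is the bookkeeping of the compact sets. One must ensure that the modulus of continuity $\rho$ is chosen relative to a compact set large enough to contain $(A_n,B_n)$ as well as $(C_n,B_n)$; this is handled cleanly by first restricting to $\|A_n - C_n\|_2 \le 1$ (which holds with probability $\to 1$) so that $(A_n,B_n)$ is within distance $1$ of the box $K$ and hence in the enlarged box $K'$. Everything else is a routine $\epsilon$--$\delta$ argument plus a three-way union bound over the "bad" events (unbounded $B_n$ or $C_n$; large $\|A_n - C_n\|_2$).
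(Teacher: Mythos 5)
Your proof is correct and follows essentially the same route as the paper's: uniform continuity of $f$ on a compact ball that contains the relevant arguments with high probability, combined with a union bound over the events $\{\|B_n\|_2 > M\}$, $\{\|C_n\|_2 > M\}$, and $\{\|A_n - C_n\|_2 > \rho\}$. If anything, your bookkeeping with the enlarged box $K'$ (so that both $(A_n,B_n)$ and $(C_n,B_n)$ lie in the set on which the modulus of continuity is taken) is slightly more careful than the paper's version of the same argument.
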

\begin{proof}
    Fix any $\delta > 0$ small and $M > 0$ large. Since $S : = \{(A,B) : \|A\|_2, \|B\|_2 \leq M\}$ is compact and $f$ is continuous there exists $\rho > 0$ such that if $(A,B), (A',B) \in S$ and $\|A - A'\|_2 \leq \rho$, then $|f(A,B) - f(A',B)| \leq \delta$. Thus,
    \begin{align*}
    \limsup_{n \to \infty} \mmp(|f(A_n,B_n) - f(C_n,B_n)| > \delta) & \leq \limsup_{n \to \infty} \mmp( \|A_n - C_n\|_2 > \rho) + \mmp((A_n,B_n) \notin S)\\
    & = \limsup_{n \to \infty} \mmp((A_n,B_n) \notin S).
    \end{align*}
    Moreover, 
    \[
    \limsup_{n \to \infty} \mmp((A_n,B_n) \notin S) \leq  \limsup_{n \to \infty} \mmp(\|B_n\| > M) + \mmp(\|A_n - C_n\| > M/2)  + \mmp(\|C_n\| > M/2).
    \]
    By our assumptions, this last expression goes to 0 as $M \to \infty$. 
\end{proof}

Our next result shows that the proximal function is differentiable in each of its arguments and continuous in their combination.

\begin{lemma}\label{lem:cont_diff_prox}
    Let $\ell$ be any twice differentiable convex loss. Then, the function $(c,x) \mapsto \text{prox}(c\ell)(x)$ is continuous on $\mmr_{\geq 0} \times \mmr$. Moreover, for any fixed $x \in \mmr$,
    \[
    \frac{d}{dc} \textup{prox}(c\ell)(x) = - \frac{\ell'(\textup{prox}(c\ell)(x))}{1 + c\ell''(\textup{prox}(c\ell)(x))},
    \]
    and for any fixed $c \geq 0$,
    \[
    \frac{d}{dx}\textup{prox}(c\ell)(x) = \frac{1}{1+c\ell''(\textup{prox}(c\ell)(x))} \leq 1.
    \]
\end{lemma}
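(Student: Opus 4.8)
The plan is to establish the formula $\textup{prox}(c\ell)(x) = \textup{argmin}_v\, \ell(v) + \tfrac12(x-v)^2$ via its first-order optimality condition, then differentiate that condition implicitly. First I would note that since $\ell$ is convex and the added quadratic is strongly convex, the minimand $v \mapsto c\ell(v) + \tfrac12(x-v)^2$ is strongly convex with a unique minimizer, so $\textup{prox}(c\ell)(x)$ is well-defined and characterized by the stationarity equation
\[
c\,\ell'(\textup{prox}(c\ell)(x)) + \textup{prox}(c\ell)(x) - x = 0.
\]
Writing $p = p(c,x) := \textup{prox}(c\ell)(x)$, this reads $c\ell'(p) + p - x = 0$, i.e. $x = p + c\ell'(p)$.

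For the differentiability claims, I would apply the implicit function theorem to $F(c,x,p) := c\ell'(p) + p - x$. Since $\ell$ is twice differentiable, $F$ is $C^1$ in $(c,x,p)$, and $\partial F/\partial p = 1 + c\ell''(p)$, which is $\geq 1 > 0$ by convexity of $\ell$ (so $\ell'' \geq 0$) and $c \geq 0$. Hence $p(c,x)$ is $C^1$ on $\mathbb{R}_{\geq 0} \times \mathbb{R}$ (in particular jointly continuous), and implicit differentiation of $c\ell'(p) + p - x = 0$ gives, holding $x$ fixed,
\[
\ell'(p) + c\ell''(p)\frac{dp}{dc} + \frac{dp}{dc} = 0 \quad\Longrightarrow\quad \frac{d}{dc}\textup{prox}(c\ell)(x) = -\frac{\ell'(p)}{1+c\ell''(p)},
\]
and, holding $c$ fixed,
\[
c\ell''(p)\frac{dp}{dx} + \frac{dp}{dx} - 1 = 0 \quad\Longrightarrow\quad \frac{d}{dx}\textup{prox}(c\ell)(x) = \frac{1}{1+c\ell''(p)} \leq 1,
\]
the final inequality again because $c\ell''(p) \geq 0$. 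Substituting $p = \textup{prox}(c\ell)(x)$ yields the stated formulas.

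I do not anticipate a serious obstacle here: the only mild point of care is verifying the hypotheses of the implicit function theorem cleanly at the boundary $c=0$ (where everything degenerates to $\textup{prox}(0\cdot\ell)(x) = x$, consistent with both formulas) and confirming that $1 + c\ell''(p)$ never vanishes so that no division-by-zero arises — both of which follow immediately from convexity and $c \geq 0$. One could alternatively avoid the implicit function theorem and argue directly: monotonicity of $x \mapsto p + c\ell'(p)$ gives continuity and strict monotonicity of the inverse map $p(c,x)$ in $x$, and a difference-quotient computation using the stationarity equation recovers the derivatives; but the implicit function theorem route is the cleanest and I would present that.
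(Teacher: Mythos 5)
Your derivative computations are the standard ones and agree with the paper, which for those two formulas simply cites Lemmas 3.32--3.33 of El Karoui (2018); your implicit-differentiation derivation is a reasonable self-contained substitute. Where you genuinely diverge is the continuity claim. The paper does not get joint continuity from smoothness at all: it compares the strongly convex prox objective $f(v;c,x)=c\ell(v)+\tfrac12(x-v)^2$ at the two minimizers, bounds $\tfrac12\bigl(\textup{prox}(c\ell)(x)-\textup{prox}(c'\ell)(x')\bigr)^2$ by differences of objective values, and uses boundedness of $\textup{prox}(c'\ell)(x')$ near $(c,x)$ — an argument needing only convexity and continuity of $\ell$. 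Your route instead asserts that $F(c,x,p)=c\ell'(p)+p-x$ is $C^1$ "since $\ell$ is twice differentiable," but the lemma's hypothesis is only twice differentiability, which does not give continuity of $\ell''$, so the classical implicit function theorem is not literally applicable as you invoke it. This is a mild mismatch rather than a fatal flaw: in every application in the paper ($\ell(r)=r^2$, or losses with three bounded derivatives) $\ell''$ is continuous, and under the literal hypotheses your own fallback — strict monotonicity of $p\mapsto p+c\ell'(p)$ plus a difference-quotient/monotone-inverse argument in each variable separately — recovers both derivative formulas, while joint continuity would then need the paper's convexity argument (or an equivalent) rather than the $C^1$ conclusion of the IFT. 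In short: your approach buys a self-contained derivation of the derivatives, the paper's buys continuity under weaker smoothness; if you present the IFT version you should either strengthen the stated hypothesis to $\ell\in C^2$ or replace the continuity step with the strong-convexity comparison.
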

\begin{proof}
    The second and third parts of the lemma follow from Lemmas 3.32 and 3.33 of \citet{EK2018} (see also Lemmas A-2 and A-3 of \cite{EK2013} and \cite{Moreau1965ProximitED}). To get the first part of the lemma, let
    \[
    f(v;x,c) := c \ell(x) + \frac{1}{2}(x-v)^2,
    \]
    denote the objective of the proximal function's optimization. Observe that for any $(c,x), (c',x') \in \mmr_{\geq 0} \times \mmr$, the strong convexity of $f$ implies that
    \begin{align*}
        & \frac{1}{2} (\text{prox}(c\ell)(x) - \text{prox}(c'\ell)(x'))^2  \leq f(\text{prox}(c'\ell)(x');c,x) - f(\text{prox}(c\ell)(x);c,x)\\
        & \leq (f(\text{prox}(c'\ell)(x');c',x') - f(\text{prox}(c\ell)(x);c',x')) + (f(\text{prox}(c'\ell)(x');c,x) - f(\text{prox}(c'\ell)(x');c',x'))\\
        & \ \ \ + (f(\text{prox}(c\ell)(x);c',x') - f(\text{prox}(c\ell)(x);c,x))\\
        & \leq 0 + (c-c')\ell(\text{prox}(c'\ell)(x')) + \frac{1}{2}(\text{prox}(c'\ell)(x') - x)^2 - \frac{1}{2}(\text{prox}(c'\ell)(x') - x')^2\\
        & \ \ \ + (c'-c)\ell(\text{prox}(c\ell)(x)) + \frac{1}{2}(\text{prox}(c\ell)(x) - x')^2 - \frac{1}{2}(\text{prox}(c\ell)(x) - x)^2
    \end{align*}
    To get our desired result, we need to verify that this last expression goes to zero as $(c',x') \to (c,x)$. For this, it is clearly sufficient to show that $\ell(\text{prox}(c'\ell)(x'))$ and $\text{prox}(c'\ell)(x')$ remain bounded. Now, note that by the optimality of $\text{prox}(c'\ell)(x')$ we have 
\[
\frac{1}{2}(\text{prox}(c'\ell)(x') - x')^2 \leq  f(\text{prox}(c'\ell)(x'); c',x') \leq f(0; c',x') = c'\ell(0) + (x')^2.
\]
Thus, $\text{prox}(c'\ell)(x')$ is clearly bounded as $(c',x') \to (c,x)$. Since $\ell(\cdot)$ is continuous, this also shows that $\ell(\text{prox}(c'\ell)(x'))$ is bounded.

\end{proof}

Our final lemma allows us to translate convergence in probability to an analogous conditional statement. Once again, this result is well-known and we include a proof simply for completeness.

\begin{lemma}\label{lem:marg_to_cond_conv}
    Let $\{A_n,B_n,Z_n\}_{n=1}^{\infty} \subseteq \mmr \times \mmr \times \mathcal{Z}$ be a sequence of random variables such that $|A_n - B_n| \stackrel{\mmp}{\to} 0$. Then, for all $\delta > 0$,
    \[
    \mmp(|A_n - B_n| > \delta \mid Z_n) \stackrel{\mmp}{\to} 0.
    \]
\end{lemma}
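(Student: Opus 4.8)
The plan is to reduce the conditional statement to the marginal one via the tower property and Markov's inequality. Fix $\delta > 0$ and set $P_n := \mmp(|A_n - B_n| > \delta \mid Z_n)$, which is a well-defined $[0,1]$-valued random variable (a version of the conditional probability). The first step is to observe that, by the law of total expectation,
\[
\mme[P_n] = \mme\big[\mmp(|A_n - B_n| > \delta \mid Z_n)\big] = \mmp(|A_n - B_n| > \delta).
\]
Since by hypothesis $|A_n - B_n| \stackrel{\mmp}{\to} 0$, the right-hand side tends to $0$ as $n \to \infty$, so $\mme[P_n] \to 0$.

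The second step is to upgrade this to convergence in probability of $P_n$ itself. Because $P_n \geq 0$, for any $\epsilon > 0$ Markov's inequality gives
\[
\mmp(P_n > \epsilon) \leq \frac{\mme[P_n]}{\epsilon} \longrightarrow 0,
\]
and since $P_n$ is also bounded above by $1$ this is exactly the statement $P_n \stackrel{\mmp}{\to} 0$, i.e.\ $\mmp(|A_n - B_n| > \delta \mid Z_n) \stackrel{\mmp}{\to} 0$, as claimed.

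There is no real obstacle here; the only point that requires a word of care is the measurability bookkeeping, namely that the conditional probability $P_n$ is a genuine random variable measurable with respect to $\sigma(Z_n)$ so that the tower property applies — this is standard and needs no argument beyond citing the definition of conditional expectation. The argument does not use any structure of the spaces beyond $|A_n - B_n|$ being a real-valued random variable and $Z_n$ taking values in an arbitrary measurable space $\mathcal{Z}$, so it applies verbatim in all the settings where the lemma is invoked.
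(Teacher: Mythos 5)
Your proof is correct and follows essentially the same route as the paper: the paper applies Markov's inequality to the nonnegative random variable $\mmp(|A_n-B_n|>\delta\mid Z_n)$, with the tower property supplying the bound $\mme[\mmp(|A_n-B_n|>\delta\mid Z_n)] = \mmp(|A_n-B_n|>\delta) \to 0$, exactly as you do.
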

\begin{proof}
By Markov's inequality, we have that for any $\rho > 0$,
\[
\mmp\left(\mmp\left(|A_n - B_n| > \delta  \mid Z_n \right) \geq \rho\right) \leq \frac{1}{\rho} \mmp(|A_n - B_n| > \delta ) \to 0,
\]
as desired.
\end{proof}

\end{document}